\documentclass[11pt]{amsart}

\usepackage[T1]{fontenc}
\IfFileExists{lmodern.sty}{\usepackage{lmodern}%
  \IfFileExists{microtype.sty}{\usepackage{microtype}}{}}{}
\usepackage{amsmath,amssymb,mathtools}
\usepackage{enumitem}
\usepackage{booktabs,array}
\usepackage[colorlinks=true,linkcolor=blue,citecolor=blue,urlcolor=blue,
  pdftitle={The Conway--Parker algebra and the largest Fischer group},
  pdfauthor={Gerald H{\"o}hn}]{hyperref}
\allowdisplaybreaks
\numberwithin{equation}{section}

\newtheorem{theorem}{Theorem}[section]
\newtheorem{proposition}[theorem]{Proposition}
\newtheorem{lemma}[theorem]{Lemma}
\newtheorem{corollary}[theorem]{Corollary}
\theoremstyle{definition}
\newtheorem{definition}[theorem]{Definition}
\newtheorem{remark}[theorem]{Remark}
\newtheorem*{maintheorem}{Main Theorem}

\newcommand{\F}{\mathbb F}
\newcommand{\E}{\mathbb E}
\newcommand{\C}{\mathcal C}
\newcommand{\Ocal}{\mathcal O}
\newcommand{\Hcal}{\mathcal H}
\newcommand{\Rcal}{\mathcal R}
\newcommand{\Dcal}{\mathcal D}
\newcommand{\one}{\mathbf 1}
\newcommand{\Aut}{\operatorname{Aut}}
\newcommand{\St}{\operatorname{St}}
\newcommand{\Fi}{\mathrm{Fi}}
\newcommand{\eps}{\varepsilon}
\newcommand{\vth}{\vartheta}

\newcommand{\parity}{\operatorname{par}}
\newcommand{\ev}{\operatorname{ev}}

\title[The Conway--Parker algebra and the largest Fischer group]
{The Conway--Parker algebra\\
and the largest Fischer group}
\author{Gerald H\"ohn}
\address{Department of Mathematics, Kansas State University, Manhattan, Kansas 66506, USA}
\email{gerald@monstrous-moonshine.de}
\date{September 27, 2026}

\begin{document}

\begin{abstract}
We give a direct, self-contained construction of the three sporadic Fischer
groups $\Fi_{24}'$, $\Fi_{23}$, and $\Fi_{22}$ from the $783$-dimensional
Conway--Parker algebra.  We prove that its distinguished roots define
involutory algebra automorphisms whose projective actions generate the full
Fischer $3$-transposition group $\Fi_{24}$.  Its commutator subgroup gives
$\Fi_{24}'$, while $\Fi_{23}$ and $\Fi_{22}$ arise as centralizer quotients
associated with one and two commuting transpositions.

The root and frame geometry determines the group orders and leads to
elementary proofs of simplicity, as well as natural rank-three actions and
nonsplit central extensions.  The construction uses standard facts about
the Golay code, Parker's loop, and $M_{24}$.  It does not use the Monster
or previously known existence or order results for the Fischer groups.
Fischer's classification and later recognition theorems are used only
for the final identification.
\end{abstract}

\maketitle

\setcounter{tocdepth}{1}
\tableofcontents

\section*{Introduction}

Throughout, $\Fi_{24}$ denotes the full Fischer $3$-transposition group
and $\Fi_{24}'$ its simple subgroup of index two.  We construct the
largest group first from the $783$-dimensional Conway--Parker algebra;
the two smaller groups arise as centralizer quotients.  Until the final
identification, we write $G_{24}=G_{\rm CP}$, $G_{24}^+=G_{24}'$,
$G_{23}$, and $G_{22}$ for the constructed groups.

We start with the extended binary Golay code $\mathcal G_{24}\subseteq\F_2^I$,
Parker's code loop $P$, and the Mathieu group
$M_{24}=\Aut(\mathcal G_{24})$.  The $24$ coordinate positions give
commuting projective root reflections whose relations are the Golay
codewords.  They generate the cocode group $2^{12}$, and the set of these
$24$ reflections has stabilizer $2^{12}.M_{24}$.  The group orders and
simplicity are then proved from the algebra and its roots.
Fischer's classification and Aschbacher's local characterizations enter
only in Section~\ref{sec:identification}, where the constructed groups
are identified with the Fischer groups.

\paragraph{\textbf{The Conway--Parker algebra.}}
The $783$-dimensional representation and its interpretation through a
transposition algebra belong to Norton's early work on the largest Fischer
group; Griess lists Norton's \emph{Transposition algebras and the group $F_{24}$}
as forthcoming in his bibliography~\cite[reference~54]{GriessFriendly}.
Norton later published a structural account of the group~\cite{NortonFi24}.
The $24+759$ coordinate model used here was developed by Conway
and Parker from Parker's remarkable Moufang loop.  Its multiplication and
the three relevant vector shapes were recorded in the
\emph{Atlas}~\cite{Atlas}; Wilson gives a more extended account in
\cite[Sections~5.7.9--5.7.10]{WilsonBook}.  These sources describe the algebra concisely.  Our purpose is to supply
proofs from the Conway--Parker formulas that the root maps are algebra
automorphisms and that they generate the largest Fischer group.

Conway's 1985 paper
\emph{A simple construction for the Fischer--Griess monster group}
\cite{ConwayMonster} cites, as ``in press'' and with the date 1985, a paper
with R.~A. Parker entitled
\begin{quote}
\emph{A remarkable Moufang loop, with an application to the Fischer group
$\Fi_{24}$}.
\end{quote}
No published version of this paper is known; later bibliographies list it
as ``in preparation''~\cite{ConwayParker}.  Robert A. Wilson has informed
the author that he does not recall seeing a completed manuscript and
considers it possible that no finished paper was ever written.  The present
construction starts from the same explicit Conway--Parker data, but we do not
know how closely its proof follows the unpublished argument envisaged by
Conway and Parker.

\paragraph{\textbf{Earlier constructions.}}
Two earlier computer-free routes to $\Fi_{24}$ are relevant here.

The first goes through Fischer's theory of $3$-transposition groups.  The 1971 paper~\cite{FischerThree} is the published
first part of a substantially longer Warwick manuscript from 1969
\cite{FischerWarwick}.  The traditional existence construction for the
exceptional chain is inductive: $\Fi_{22}$ is constructed first, its local
geometry is used to construct $\Fi_{23}$, and the latter is then used to
construct $\Fi_{24}$.  The detailed published completion of the last step
was supplied by Marguerite-Marie Virotte-Ducharme.  Building on her 1985
thesis~\cite{VirotteThesis}, her memoir~\cite{VirotteDucharme} starts with
the Fischer graph of $\Fi_{23}$, constructs the required extension, proves
the point reflections to be automorphisms, establishes existence and
uniqueness in this extension framework, and determines the order.
Her construction is computer-free but takes $\Fi_{23}$ and its geometry
as input; here the order of construction is reversed.

The second route goes through the Monster.  Griess first constructs the
Monster as the automorphism group of his $196884$-dimensional algebra and
computes its order~\cite{GriessFriendly}.  For an element $z$ in the
Monster class now denoted $3A$, the relevant centralizer and normalizer
quotients are then identified with the simple and full largest Fischer
groups.  This is an ambient existence proof rather than a construction of
the $306936$-point Fischer geometry.  In Lemma~13.3 of~\cite{GriessFriendly}, Griess identifies the simple
centralizer quotient using the solution of the $O_2$-extraspecial problem.
For background on the classification of groups with large extraspecial
$2$-subgroups, see~\cite{SmithWidths,TimmesfeldExtraspecial}.
Aschbacher subsequently gave a complete published treatment of
Fischer's classification and uniqueness theorem
\cite[Part~I, Chapter~5]{AschbacherThree}, as well as the
involution-centralizer characterizations needed in the classification of
finite simple groups \cite[Part~II, Chapter~11]{AschbacherThree}.
The latter treatment also establishes existence using the Monster.

Starting from the Conway--Parker multiplication table, we prove that
the three root families are all the reflecting roots and that their root
maps are automorphisms, and we count the Fischer frames.  These results
give the orders and simplicity of the three Fischer groups.

\paragraph{\textbf{Outline.}}
In Section~\ref{sec:construction} we define the coordinate algebra and
the action of $H=2^{12}.M_{24}$.  Section~\ref{sec:roots} constructs the
roots.  We first prove antiunitarity of an octadic root map by its block
decomposition, then multiplicativity by the cubic-tensor identity proved
in Appendix~\ref{app:tensor-contractions}.  For arbitrary reflecting
roots, the root equations force the allowed pairings, and the
symmetric-square Gram matrix gives the bound $\binom{784}{2}=306936$.
The three constructed families attain the bound, proving completeness
and independence of the octad-pair choices in the duadic construction.

In Section~\ref{sec:three-transposition} we obtain the three-transposition
action and its semilinear lift.  Fixing one duadic root and propagating
signs through two octad graphs determines the pointwise stabilizer of the
standard frame; octadic switches supply the Mathieu translations.  Sylow conjugacy gives
conjugacy of frames, and counting frame--pentad incidences gives the
group order.

Section~\ref{sec:intrinsic-groups} treats the centralizer quotients, their
rank-three actions, simplicity, and central extensions, followed by the
identification with the Fischer groups.  In
Section~\ref{sec:integral}, quadratic reconstruction gives
the moment identities used to study the integral Eisenstein algebra
and its reduction modulo $1-\omega$.  Section~\ref{sec:integral} is
not used in Sections~\ref{sec:construction}--\ref{sec:intrinsic-groups}.
Appendix~\ref{app:factor-set} verifies the explicit Parker factor set.

\begin{maintheorem}
The Conway--Parker data determine a $783$-dimensional semilinear algebra
$A$ over $\E=\mathbb Q(\omega)$, a set $\Rcal$ of $306936$ norm-$9$ rays,
and a faithful permutation
group $G_{\rm CP}$ with the following properties.
\begin{enumerate}[label=\textup{\Alph*.},leftmargin=2.8em]
\item The group $H=\Aut_{\St}(P)=2^{12}.M_{24}$ acts by
semilinear algebra automorphisms; its $\E$-linear subgroup has shape
$2^{11}.M_{24}$.
\item The basic, octadic, and duadic roots satisfy $r*r=10r$.
Their root maps are antiunitary involutory algebra automorphisms, and
$\Rcal$ is precisely the set of rays of roots with this property.
In particular the duadic fibres are independent of their octad-pair
construction and every corresponding root reflection preserves $\Rcal$.
\item Their projective reflections form one generating
$3$-transposition class of size $306936$ in the finite centerless
group $G_{\rm CP}$.  Its index-two
commutator subgroup is perfect, and the standard $24$-frame has
stabilizer $2^{12}.M_{24}$.
\item All Fischer frames are conjugate, every commuting pentad lies in
exactly three frames, and
\[
 |G_{\rm CP}|=2^{22}3^{16}5^2 7^3\cdot11\cdot13\cdot17\cdot23\cdot29.
\]
\item The index-two subgroup $G_{24}^+$ and the explicitly defined
residues $G_{23}$ and $G_{22}$ are nonabelian simple.  Their orders are,
respectively,
\[
\begin{gathered}
2^{21}3^{16}5^2 7^3\cdot11\cdot13\cdot17\cdot23\cdot29,\\
2^{18}3^{13}5^2\cdot7\cdot11\cdot13\cdot17\cdot23,\qquad
2^{17}3^9 5^2\cdot7\cdot11\cdot13.
\end{gathered}
\]
The groups $G_{23}$ and $G_{22}$ have faithful primitive rank-three
actions on $31671$ and $3510$ points, respectively.  The centralizer of a
distinguished transposition in $G_{24}$ is isomorphic to
$C_2\times G_{23}$, and that of a distinguished transposition in $G_{23}$
is a perfect nonsplit double cover of $G_{22}$.  The $\E$-linear automorphism
group is the constructed perfect nonsplit central triple cover of
$G_{24}^+$; the full semilinear group is its split extension by a root
reflection, which inverts the scalar subgroup of order three.
The final recognition identifies
$(G_{24},G_{24}^+,G_{23},G_{22})$ with
$(\Fi_{24},\Fi_{24}',\Fi_{23},\Fi_{22})$.
\item The root span $L_{\rm CP}$ over $\mathbb Z[\omega]$ is an integral
Hermitian lattice of rank $783$, closed under $*$ and self-dual at every
odd prime.  Its discriminant module is $2$-primary and annihilated by $8$.
Reduction modulo $1-\omega$ gives a $783$-dimensional commutative
$\F_3$-algebra with a nondegenerate symmetric invariant form, a faithful
$G_{24}$-action, and $306936$ distinct isotropic idempotents.
\end{enumerate}
\end{maintheorem}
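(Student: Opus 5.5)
The plan follows the outline: the Main Theorem is assembled from the sections in order, and I treat each part as the conclusion of the corresponding block of results.

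\textbf{Parts A and B: the algebra and its roots.} For part A, I will write the product on the $24+759$ coordinates with Parker's factor set, whose consistency is checked in Appendix~\ref{app:factor-set}. I will then let $\Aut_{\St}(P)$ act by sign changes indexed by the cocode, together with $M_{24}$ permutations. The semilinear elements are those conjugating scalars, and they form the kernel-of-parity complement giving $2^{11}.M_{24}$ as the linear part. Part B has three steps.
\begin{enumerate}[label=\textup{(\roman*)},leftmargin=2.5em]
\item I verify $r*r=10r$ for the three shapes directly.
\item For the octadic root, antiunitarity follows from its block decomposition. Multiplicativity reduces, via the cubic-tensor identity of Appendix~\ref{app:tensor-contractions}, to checking the product on pairs of basis vectors of each shape. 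The basic and duadic cases follow by $H$-conjugation and by expressing duadic roots through octad pairs.
\item For completeness, I solve the root equations for an arbitrary reflecting root. These force the allowed support pairings. The symmetric-square Gram matrix of the rank-one projectors $r r^*$ then shows that the reflecting rays are linearly independent in $S^2A$, so there are at most $\binom{784}{2}=306936$ of them. Counting the three families gives exactly this number, so equality holds. This yields completeness, the independence of the duadic fibres from the octad-pair choice, and $\Rcal$-invariance.
\end{enumerate}

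\textbf{Parts C and D: the group.} For part C, inner products of norm-$9$ roots take only the values giving orders $2$ or $3$ for products of reflections. Hence the projective reflections form a class of $3$-transpositions. The reflections generate $G_{\rm CP}$ by definition, and conjugacy follows from connectivity of the non-commuting graph. Centerlessness comes from faithfulness on $\Rcal$. For perfection of the index-two subgroup, I show that products of pairs of reflections generate it and that $3$-cycles are commutators. For the frame stabilizer, I fix one duadic root and propagate signs along two octad graphs, which forces the pointwise stabilizer to be $2^{12}$. Octadic switches then realize $M_{24}$ on the frame.

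For part D, conjugacy of frames comes from Sylow conjugacy of their elementary abelian stabilizers. Counting pairs (frame, commuting pentad) gives
\[
|G_{\rm CP}|=\#\text{frames}\cdot|2^{12}.M_{24}|,
\]
where the number of frames is (number of pentads)$\cdot 3/\binom{24}{5}$.

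\textbf{Parts E and F, and the main obstacle.} For part E, I build $G_{23}$ and $G_{22}$ as centralizer quotients. Their orders follow by orbit counting on the commuting transpositions, $31671$ and $3510$. Rank three follows from the three possible inner-product relations between roots. Simplicity follows from Iwasawa's lemma applied to the primitive actions, using the perfectness established above. Nonsplitness of the central extensions is detected on a Sylow $2$- or $3$-subgroup through the explicit lift, as in the Golay-code computation. Identification with the Fischer groups is by Fischer's theorem for the $3$-transposition class, together with Aschbacher's recognition. For part F, I define $L_{\rm CP}$ as the $\mathbb Z[\omega]$-span of $\Rcal$. Closure under $*$ follows from the quadratic reconstruction and moment identities. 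The structure of the discriminant comes from a Gram-determinant computation on an explicit basis. Reducing modulo $1-\omega$ then turns $r*r=10r$ into $r*r=r$, giving idempotents, and $\langle r,r\rangle=9$ into isotropy.

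I expect the main obstacle to be step (iii) of part B. The linear-independence bound in $S^2A$ must be shown in a way that both excludes extra roots and pins down the duadic fibres. I would attack it first by computing the Gram matrix of the projectors $r r^*$ as $|\langle r,s\rangle|^2$ and proving that it is nonsingular via its $G$-invariant eigenvalue structure, namely the three-class association scheme.
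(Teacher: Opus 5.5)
\textbf{Main gap: step B(iii).} Completeness fails in the form you propose, and everything downstream depends on it: independence of the duadic fibres, invariance of $\Rcal$ under $H$ and the $\tau_r$, and hence the definition of $G_{\rm CP}$ on $\Rcal$.

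The Hermitian projectors $rr^*$ lie in $A\otimes\overline A\cong\operatorname{End}(A)$, not in $\operatorname{Sym}^2A$. So even their independence would only bound the number of rays by $783^2$, not by $\binom{784}{2}$.

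Worse, they are not independent. Their Gram matrix $\bigl(|(r,s)|^2\bigr)$ equals $81I+B$, where $B$ is the adjacency matrix of the nonorthogonal (commuting) graph. That graph is strongly regular with parameters $(306936,31671,3510,3240)$ and eigenvalues $31671$, $351$, $-81$. Hence $81I+B$ is singular, of rank only $1+57477$.

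Appealing to the association scheme is also circular. It describes the known configuration and cannot rule out a hypothetical $306937$th reflecting ray.

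The missing idea is to use the complex-bilinear squares $r\otimes r\in\operatorname{Sym}^2(A_{\mathbb C})$, whose Gram entries are $(r,s)^2$. The pairings forced by Lemma~\ref{lem:reflecting-root-pairing} lie in $\{0\}\cup\mu_3$, where $z^2=\bar z$. So this Gram matrix is $72I+\overline G$, with $\overline G\succeq0$ the conjugate Gram matrix of an \emph{arbitrary} family of reflecting roots. It is therefore positive definite with no structural input (Lemma~\ref{lem:quadratic-unisolvence}), and the count \eqref{eq:7.1} then forces completeness.

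\textbf{Other steps that need repair.}

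In B(ii), $K=1002C$ does not reduce multiplicativity to basis checks. It enters only through $K(r,r,r)=90180$, which makes the inner product of $C$ with $C^{\tau_r}$ equal to $\|C\|^2$ (Theorem~\ref{thm:root-tensor-criterion}). Also, $\tau_{r_i}=t_i\in H$ directly; basic roots are not $H$-conjugates of octadic ones.

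For perfectness of $G_{\rm CP}'$, knowing that order-three products are commutators only places them in $G_{\rm CP}'$; the transpositions of $S_4$ show this cannot suffice. The paper obtains generation by both order-three and order-two edge products, from connectivity of the orthogonal graph and of the unit-pairing graph (Proposition~\ref{prop:parity-perfect}). Nonsplitness then follows from perfectness, with no Sylow computation.

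Iwasawa's lemma cannot be applied to $G_{24}^+$ acting on $\Rcal$: its point stabilizer $C_{G_{24}^+}(d)\cong G_{23}$ has no nontrivial abelian normal subgroup. Applied to $G_{24}$, it only shows that every nontrivial normal subgroup contains $G_{24}^+$. You must still exclude $G_{24}^+=N\times N^d$, which Lemma~\ref{lem:primitive-involution-simplicity}(b) does via $|G_{24}^+|\neq|\Dcal_{\rm CP}|^2$.

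For $G_{23}$ and $G_{22}$ you need two further facts. First, their own perfectness, which the octad relation supplies as $u^7=1$. Second, generation of $C_S/E_S$ by the images of $D_S$ (Proposition~\ref{prop:residue-generation}).

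The three pairing values give only rank \emph{at least} three. Rank exactly three needs transitivity on noncommuting partners, which the paper gets by fusing the octad and duad types with an octadic reflection.

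Your order formula uses the three frames through a pentad (Theorem~\ref{thm:pentad-frames}), but you never derive this count.

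In part F, the reconstruction formula only gives $360(x*y)\in L_{\rm CP}$. Closure under $*$ instead comes from the root products in Lemma~\ref{lem:reflection-calculus}.

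A Gram determinant gives the order of $L_{\rm CP}^\#/L_{\rm CP}$, not its exponent. Exponent $8$ comes from $3528x=\sum(x,r)r\in L_{\rm CP}$ with $3528=8\cdot441$. The $2$-primary property comes from $L_R=M_R$ (Proposition~\ref{prop:CP-dyadic-localization}).
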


\paragraph{\textbf{Relation with vertex operator algebras.}}
In forthcoming joint work with Ching Hung Lam and Hiroshi Yamauchi, we will
give a substantially shorter vertex-operator-algebraic construction of the
$783$-dimensional algebra.  That construction explains the axes and their
Miyamoto involutions conceptually.  The two papers are independent.  The aim here is to obtain the groups,
their orders, and their simplicity from the coordinate algebra by frame
geometry and normal-subgroup arguments.

\paragraph{\textbf{Computational checks.}}
Computational checks were used for identities and finite tables during
the development of the paper; the proofs below do not depend on these
checks.

The characteristic-zero constructions of $\Fi_{24}'$, $\Fi_{23}$, and
$\Fi_{22}$, including their order and simplicity arguments, have been
formalized in Lean and verified by its kernel from an earlier version of
this manuscript as part of the \textsc{Atlas} project~\cite{AtlasLean}.
The integral Eisenstein and modular-reduction results below are not
included in that formalization.

\paragraph{\textbf{Acknowledgments.}}
The author thanks Robert A. Wilson for useful historical information about
the Conway--Parker construction, in particular his recollections concerning
the announced but apparently unpublished paper.
The author also thanks Ching Hung Lam and Hiroshi Yamauchi for discussions,
and Caleb Fernelius for his interest and participation in the Spring 2026
lectures on finite simple groups.

\section*{Global notation}
Only symbols used across several sections are collected here; auxiliary
notation local to a proof is introduced where it is needed.

{\small
\begin{description}[style=multiline,leftmargin=4.4cm,labelwidth=3.9cm,
  itemsep=.25ex,parsep=0pt,topsep=.6ex]
\item[$\F_2$] the binary field.
\item[$I$] the set of $24$ coordinate positions.
\item[$\C=\mathcal G_{24}$] the extended binary Golay code in $\F_2^I$.
\item[$\one$] the all-one word of $\C$.
\item[$\Ocal$] the set of the $759$ Golay octads.
\item[$\lambda_s$] the number of Golay octads through a fixed $s$-subset,
  for $0\leq s\leq5$.
\item[$\C^*=\F_2^I/\C$] the Golay cocode.
\item[$\langle c,\delta\rangle$] the canonical code--cocode pairing
  $\C\times\C^*\to\F_2$.
\item[$P$, $d\mapsto\bar d$] Parker's code loop and its quotient map
  $P\to\C$.
\item[$\Omega$] the chosen central lift in $P$ of the all-one word $\one$.
\item[$H=\Aut_{\St}(P)$] the standard Parker-loop automorphism group,
  $H\cong2^{12}.M_{24}$.
\item[$H^+$] the index-two subgroup of $H$ fixing $\Omega$.
\item[$T_\delta$] the semilinear cocode automorphism associated with
  $\delta\in\C^*$.
\item[$\E=\mathbb Q(\omega)$] the coefficient field
  $\mathbb Q(\sqrt{-3})$.
\item[$\mu_3=\{1,\omega,\omega^2\}$] the group of cube roots of unity,
  with $\omega\ne1$.
\item[$\vth=\omega-\bar\omega$] the distinguished square root
  $\sqrt{-3}$.
\item[$U$, $W$] the $24$- and $759$-dimensional coordinate summands.
\item[$A=A_\E=U\oplus W$] the $783$-dimensional Conway--Parker algebra
  over $\E$.
\item[$A_{\mathbb C}$] its complex scalar extension.
\item[$*$] the commutative conjugate-bilinear product on $A$.
\item[$(x,y)$] the Hermitian scalar product on $A$, linear in the first
  variable and conjugate-linear in the second.
\item[$\Phi(x,y,z)$] the symmetric trilinear form $(x,y*z)$.
\item[$B_O$, $B_O^*$] the Golay code shortened off an octad $O$ and its
  multiplicative character group.
\item[$\C_p$, $\C_p^*$] the Golay code shortened off a duad $p$ and its
  multiplicative character group.
\item[$r_i$, $r_{O,\chi}$, $r_{p,\psi}$] the basic, octadic, and duadic
  roots.
\item[$\tau_r$] the conjugate-linear root map
  $\tau_r(x)=x*r-(r,x)r$.
\item[{\([r]=\mu_3r\)}] the three-element root ray of $r$.
\item[$\Rcal_{\rm b}$, $\Rcal_{\rm o}$, $\Rcal_{\rm d}$] the basic,
  octadic, and duadic root-ray families.
\item[$\Rcal$] the full set of $306936$ root rays.
\item[$L_{\rm CP}$] the integral Eisenstein root lattice
  $\mathbb Z[\omega]\langle r:[r]\in\Rcal\rangle$.
\item[$Z=\mu_3\operatorname{id}_A$] the scalar subgroup of order $3$.
\item[$d_{[r]}$] the ray permutation induced by $\tau_rZ$.
\item[$\Dcal_{\rm CP}$] the class of all projective root reflections.
\item[$G_{\rm CP}$] the faithful permutation group
  $\langle\Dcal_{\rm CP}\rangle\leq\operatorname{Sym}(\Rcal)$.
\item[$\mathcal F_0$, $E_0$] the standard Golay frame
  $\{d_i:i\in I\}$ and the elementary abelian group it generates.
\item[$\Gamma=\Aut_{\rm sl}(A,*)$] all $\E$-linear and
  $\E$-conjugate-linear algebra automorphisms; $\overline\Gamma=\Gamma/Z$.
\item[$\widehat G$, $\widehat G^+$] the semilinear root-reflection group
  and its perfect $\E$-linear subgroup.
\item[$G_{24}=G_{\rm CP}$, $G_{24}^+$] the constructed full group and
  its index-two commutator subgroup.
\item[$E_S$, $C_S$, $D_S$] the group generated by a basic subset $S$,
  its pointwise centralizer, and its set of remaining commuting
  distinguished involutions, as in \eqref{eq:residue-definitions}.
\item[$G_{23}$, $G_{22}$] the one- and two-point residue quotients
  defined in \eqref{eq:intrinsic-residue-groups}.
\item[$\Fi_{24}$, $\Fi_{24}'$, $\Fi_{23}$, $\Fi_{22}$] the conventional
  Fischer names identified with the constructed groups in
  Section~\ref{sec:identification}.
\end{description}
}

\section{The Conway--Parker algebra and its base group}\label{sec:construction}

\subsection{The Golay code, its cocode, and Parker's loop}

Let $I$ be a set of cardinality $24$, let
\[
   \C=\mathcal G_{24}\subseteq \F_2^I
\]
be the extended binary Golay code, and let $\one\in\C$ be the all-one word.
Put
\[
              \Ocal:=\{O\in\C:|O|=8\}
\]
for the set of Golay octads.  We identify a word with its support when convenient.  Since $\C$ is
self-dual, the dot product induces a perfect pairing
\begin{equation}
   \langle\ ,\ \rangle:\C\times \C^*\longrightarrow \F_2,
   \qquad
   \C^*:=\F_2^I/\C . \label{eq:1.1}
\end{equation}
Thus $\C^*$ is the Golay cocode.  If $\delta\in\C^*$ and $\widehat\delta$
is any representative in $\F_2^I$, then
\[
   \langle c,\delta\rangle=c\cdot\widehat\delta
\]
is independent of the representative.  The parity
\begin{equation}
   \parity(\delta):=\langle\one,\delta\rangle
              =|\widehat\delta|\pmod2 \label{eq:1.2}
\end{equation}
is also well defined, because every Golay word has even weight.

\begin{lemma}\label{lem:Witt-numerology}
The octads form the Steiner system $S(5,8,24)$.  If $S\subseteq I$ has
size $s\leq5$, then the number of octads through $S$ is
\begin{equation}
 \lambda_s=\frac{\binom{24-s}{5-s}}{\binom{8-s}{5-s}},
 \qquad
 (\lambda_0,\ldots,\lambda_5)=(759,253,77,21,5,1).          \label{eq:Witt-lambda}
\end{equation}
For a fixed octad $O$, the octad intersection distribution is
\begin{equation}
\begin{array}{c|rrrr}
 |D\cap O|&0&2&4&8\\ \hline
 \#D&30&448&280&1
\end{array}                                                     \label{eq:Witt-distributions}
\end{equation}
For a fixed duad $p$, the numbers of octads meeting $p$ in $2$, $1$, $0$
points are respectively
\begin{equation}
                              77,\qquad352,\qquad330.          \label{eq:Witt-duad}
\end{equation}
If $p\subset O$, then among the octads $D\ne O$ through $p$, exactly
$16$ meet $O$ in $p$ and exactly $60$ meet $O$ in four points.  Finally,
if $i\notin O$, the numbers of octads through $i$ meeting $O$ in
$0$, $2$, $4$ points are
\begin{equation}
                              15,\qquad168,\qquad70,            \label{eq:Witt-point-refinement}
\end{equation}
whereas, for $i\in O$, the numbers of octads $D\ne O$ through $i$
meeting $O$ in $2$, $4$ points are
\begin{equation}
                              112,\qquad140.                    \label{eq:Witt-point-inside}
\end{equation}
\end{lemma}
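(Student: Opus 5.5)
The Steiner property is a standard fact about the Golay code. The minimum weight is $8$, so two octads meet in at most $4$ points. Hence every $5$-set lies in at most one octad. A count of the $759$ octads against $\binom{24}{5}/\binom{8}{5}=759$ then shows that every $5$-set lies in exactly one. We take the weight enumerator $1+759z^8+2576z^{12}+759z^{16}+z^{24}$ as a standard fact.

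For $s\le5$, double count pairs $(T,D)$ with $S\subseteq T\subseteq D$, $|T|=5$, and $D$ an octad. Counting via $T$ gives $\binom{24-s}{5-s}$, since each $T$ lies in exactly one octad. Counting via $D$ gives $\lambda_s\binom{8-s}{5-s}$. This yields \eqref{eq:Witt-lambda}, and the numbers $759,253,77,21,5,1$ follow by arithmetic.

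For \eqref{eq:Witt-distributions}, fix an octad $O$. Any octad $D$ meets $O$ in an even number of points, because $D+O\in\C$ has even weight. That number is at most $4$ unless $D=O$, since $|D+O|\ge8$. So $|D\cap O|\in\{0,2,4,8\}$, with counts $n_0,n_2,n_4,1$. Count incidences of $j$-subsets of $O$ with octads containing them, for $j=0,1,2$:
\[
\sum_D 1=759,\qquad \sum_D|D\cap O|=8\lambda_1,\qquad \sum_D\tbinom{|D\cap O|}{2}=\tbinom82\lambda_2 .
\]
These give three linear equations, $n_0+n_2+n_4=758$, $2n_2+4n_4=8\cdot253-8$, and $n_2+6n_4=28\cdot77-28$. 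Solving them gives $n_4=280$, $n_2=448$, $n_0=30$. A consistency check with $j=3$ is available, $\binom43n_4+\binom83=56\cdot21$, which holds.

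The remaining refinements are obtained the same way, restricting the sum to octads through a fixed duad or point.

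\textbf{Duad $p$.} There are $\lambda_2=77$ octads containing $p$. The octads meeting $p$ in exactly one point number $2(\lambda_1-\lambda_2)=352$. The rest number $759-77-352=330$.

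\textbf{Duad $p\subset O$.} Among the $76$ octads $D\neq O$ through $p$, the intersection $D\cap O$ has size $2$ or $4$. Count pairs $(x,D)$ with $x\in O\setminus p$ and $D\supseteq p\cup\{x\}$, $D\ne O$. This gives $6(\lambda_3-1)=120=2m_4$, so $m_4=60$ and $m_2=16$.

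\textbf{Point $i\in O$.} Among the $252$ octads $D\ne O$ through $i$, count pairs $(x,D)$ with $x\in O\setminus\{i\}$. This gives $a_2+3a_4=7(\lambda_2-1)=532$, together with $a_2+a_4=252$. Hence $a_4=140$ and $a_2=112$.

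\textbf{Point $i\notin O$.} The $253$ octads through $i$ meet $O$ in $0$, $2$ or $4$ points. Counting $(x,D)$ with $x\in O$ gives $2b_2+4b_4=8\lambda_2=616$. Counting $(\{x,y\},D)$ with $\{x,y\}\subseteq O$ gives $b_2+6b_4=28\lambda_3=588$. Solving gives $b_4=70$ and $b_2=168$, and then $b_0=253-238=15$.

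No step is a real obstacle beyond the standard Golay facts, namely minimum weight $8$ and the $759$ octads. The only care needed is to exclude $D=O$ consistently in the counts.
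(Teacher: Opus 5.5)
Your proposal is correct and uses the same double counting as the paper: points and pairs of $O$ for the intersection distribution, and the same linear systems for the duad and point refinements. All your equations and solutions check out. Deriving the Steiner property from minimum weight $8$ and the count $759=\binom{24}{5}/\binom{8}{5}$ is a fine substitute for the paper's appeal to the standard design.

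One justification needs repair. Since $|D+O|=16-2|D\cap O|$ is even for every intersection size, the fact that $D+O$ has even weight says nothing about $|D\cap O|$. Use instead the weight enumerator you already assume: Golay weights lie in $\{0,8,12,16,24\}$, so $16-2|D\cap O|\in\{0,8,12,16\}$, which gives $|D\cap O|\in\{8,4,2,0\}$. Alternatively, self-orthogonality of $\C$ gives $|D\cap O|\equiv 0\pmod 2$ directly.
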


\begin{proof}
Formula~\eqref{eq:Witt-lambda} is the standard parameter formula for the
Witt design.  For a fixed octad, double-counting incidences with its points
and pairs, together with the possible intersection sizes $0$, $2$, $4$, $8$, gives
the table in \eqref{eq:Witt-distributions}.  Equation~\eqref{eq:Witt-duad} follows
from $\lambda_2=77$ and
\[
 2(\lambda_1-\lambda_2)=352,
 \qquad 759-77-352=330.
\]
If $p\subset O$
and $a_j$ counts the other octads through $p$ meeting $O$ in $j$ points,
then
\[
 a_2+a_4=\lambda_2-1=76,
 \qquad 2a_4=6(\lambda_3-1)=120,
\]
which gives $a_2=16$ and $a_4=60$.

For $i\notin O$, let $b_j$ count octads through $i$ meeting $O$ in $j$
points.  Counting incidences with points and pairs of $O$ gives
\[
 b_0+b_2+b_4=\lambda_1,
 \quad 2b_2+4b_4=8\lambda_2,
 \quad b_2+6b_4=\binom82\lambda_3,
\]
which yields \eqref{eq:Witt-point-refinement}.  For $i\in O$, the
corresponding counts among $D\ne O$ satisfy
\[
 n_2+n_4=\lambda_1-1=252,\qquad
 n_2+3n_4=7(\lambda_2-1)=532,
\]
giving \eqref{eq:Witt-point-inside}.

The octads span $\C$.  Indeed, choose five points of a dodecad $C$
and let $D$ be their unique octad.  The intersection $C\cap D$ has even
size at least five, and cannot have size eight, since $C+D$ would then
have weight four.  Thus $|C\cap D|=6$, and $C=D+(C+D)$ is a sum of
two octads.
For a weight-$16$ word $c=\one+O$, choose an octad $D$ disjoint
from $O$; there are $30$ choices by \eqref{eq:Witt-distributions}.
Then $c=D+(c+D)$ is a sum of two octads, and adding $O$ expresses
$\one$ as a sum of three.  These are all nonzero Golay weights.
\end{proof}

Let $P$ be Parker's loop.  We write
\[
    P\longrightarrow \C,\qquad d\longmapsto \bar d,
\]
for its quotient by $\langle-1\rangle$, and choose a lift
$\Omega\in P$ of $\one$.  We use the standard code-loop relations
\begin{align}
 d^2&=(-1)^{|\bar d|/4}, \label{eq:1.3}\\
 de&=(-1)^{|\bar d\cap\bar e|/2}ed, \label{eq:1.4}\\
 (de)f&=(-1)^{|\bar d\cap\bar e\cap\bar f|}d(ef). \label{eq:1.5}
\end{align}
These relations determine the square, commutator, and associator maps of
$P$; see Griess~\cite{GriessCodeLoops}.

\begin{lemma}\label{lem:Omega-central}
The element $\Omega$ is a central involution.
\end{lemma}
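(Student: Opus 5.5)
We will read off both properties of $\Omega$ directly from the code-loop relations \eqref{eq:1.3}--\eqref{eq:1.5}, applied with $\bar\Omega=\one$. Throughout we use that $\bar d\cap\one=\bar d$ for every $d\in P$, and that every Golay word has weight in $\{0,8,12,16,24\}$.

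\emph{Involution.} By \eqref{eq:1.3}, $\Omega^2=(-1)^{|\one|/4}=(-1)^{24/4}=(-1)^6=1$. Moreover $\Omega\neq1$, because its image $\bar\Omega=\one$ is nonzero in $\C$. Hence $\Omega$ has order two.

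\emph{Commutation.} For any $d\in P$, \eqref{eq:1.4} gives
\[
 \Omega d=(-1)^{|\one\cap\bar d|/2}d\Omega=(-1)^{|\bar d|/2}d\Omega .
\]
Every Golay weight is divisible by $4$, so $|\bar d|/2$ is even and $\Omega d=d\Omega$.

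\emph{Association.} We must show that $\Omega$ associates with every pair of elements, in each of the three possible positions. Relation \eqref{eq:1.5} applies to all three positions $(\Omega,d,e)$, $(d,\Omega,e)$ and $(d,e,\Omega)$. In each case the triple intersection is $\one\cap\bar d\cap\bar e=\bar d\cap\bar e$. This set has even size: the sum $\bar d+\bar e$ lies in $\C$ and has weight $|\bar d|+|\bar e|-2|\bar d\cap\bar e|$, and all Golay weights are divisible by $4$, so $|\bar d\cap\bar e|$ is even. (This is the doubly even self-orthogonality of the code.) Hence the associator sign is $+1$ in every position, and $\Omega$ lies in the nucleus of $P$.

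Being in the nucleus and commuting with all elements, $\Omega$ is central. Together with the first step, $\Omega$ is a central involution.

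The only point needing care is the parity of $|\bar d\cap\bar e|$, and that follows from the fact that $\C$ is doubly even; there is no real obstacle. We also note that the argument does not depend on which lift of $\one$ is chosen as $\Omega$, since $-\Omega$ satisfies the same relations.
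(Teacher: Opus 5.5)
Your proof is correct and is the same argument as the paper's: both read off $\Omega^2=1$, $[\Omega,d]=1$ and $[\Omega,d,e]=1$ directly from \eqref{eq:1.3}--\eqref{eq:1.5}. Your additions are only small refinements: you note $\Omega\neq1$, you check all three associator positions, and you get evenness of $|\bar d\cap\bar e|$ from double evenness where the paper cites $\C=\C^\perp$.
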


\begin{proof}
Equation \eqref{eq:1.3} gives
$\Omega^2=(-1)^{24/4}=1$.  If $d\in P$, then \eqref{eq:1.4} gives
\[
 [\Omega,d]=(-1)^{|\bar d|/2}=1,
\]
since every Golay word has weight divisible by $4$.  Finally, by
\eqref{eq:1.5},
\[
 [\Omega,d,e]=(-1)^{|\bar d\cap\bar e|}=1,
\]
because $\C=\C^\perp$.  Hence $\Omega\in Z(P)$.
\end{proof}

Let
\[
       H:=\Aut_{\St}(P)
\]
be the group of standard automorphisms of $P$, namely those automorphisms
which induce coordinate permutations preserving $\C$.  There is an exact
sequence
\begin{equation}
  1\longrightarrow \C^*\longrightarrow H
   \stackrel{\pi}{\longrightarrow}M_{24}\longrightarrow1, \label{eq:1.6}
\end{equation}
and this extension is nonsplit.  In Atlas notation,
\begin{equation}
                         H\cong 2^{12}.M_{24}. \label{eq:1.7}
\end{equation}
The kernel in \eqref{eq:1.6} is the cocode.  For
$\delta\in\C^*$ define
\begin{equation}
      \sigma_\delta(d)
      =(-1)^{\langle\bar d,\delta\rangle}d. \label{eq:1.8}
\end{equation}
Then $\sigma_\delta\in\Aut(P)$, and
$\delta\mapsto\sigma_\delta$ identifies $\C^*$ with the kernel of $\pi$.
Indeed, an automorphism inducing the identity on $\C$ has the form
$d\mapsto\chi(\bar d)d$, and preservation of multiplication says precisely
that $\chi:\C\to\{\pm1\}$ is a character.  The perfect pairing
\eqref{eq:1.1} identifies the character group of $\C$ with $\C^*$.
Throughout this identification is written multiplicatively: an additive
functional $\lambda$ corresponds to the character
$c\mapsto(-1)^{\lambda(c)}$.  Accordingly, for a binary subspace
$V\leq\C$, the notation $V^*$ below means
$\operatorname{Hom}(V,\{\pm1\})$ in this multiplicative convention.
The existence of lifts of all elements of $M_{24}$ and the nonsplitting of
\eqref{eq:1.6} are standard facts about the Parker loop; see
\cite{GriessCodeLoops,SeysenMonster}.

\begin{lemma}[Parker factor set]\label{lem:explicit-factor-set}
Let $\C$ be a doubly even self-orthogonal binary code with ordered basis
$g_1$, $\dots$, $g_n$.  For
$a=\sum a_i g_i$ put
\[
 \iota(a,b,c)=|a\cap b\cap c|\pmod2
\]
and define the trilinear form
\[
 \Theta(a,b,c)=\sum_i\ \sum_{j<k}a_i b_j c_k\,
                 \iota(g_i,g_j,g_k).
\]
Let $\beta$ be the bilinear form specified on basis vectors by
\[
\begin{aligned}
 \beta(g_i,g_i)&=\frac{|g_i|}{4},\\
 \beta(g_i,g_j)&=\frac{|g_i\cap g_j|}{2} &&(i<j),\\
 \beta(g_i,g_j)&=0 &&(i>j).
\end{aligned}
\]
Set
\begin{equation}
                f(a,b)=\Theta(a,b,b)+\beta(a,b)\pmod2.              \label{eq:1.12}
\end{equation}
Then $P=\C\times\{\pm1\}$ with
\[
 (a,\zeta_1)(b,\zeta_2)
   =(a+b,\zeta_1\zeta_2(-1)^{f(a,b)})
\]
satisfies the square, commutator, and associator identities
\eqref{eq:1.3}--\eqref{eq:1.5}.  Hence Parker's loop may be constructed
explicitly from the Golay code and a choice of ordered basis.
\end{lemma}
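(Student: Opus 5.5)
The approach is the standard cocycle computation for extensions of an elementary abelian $2$-group by $\{\pm1\}$. First, $P$ is a loop: $(0,1)$ is a two-sided identity, because $f(0,b)=f(a,0)=0$. Left and right division are uniquely solvable, since the first coordinate is additive and the sign is then forced. We also have $-1=(0,-1)$, and it is central and associates with everything because $f$ only depends on the code coordinates. Hence $P/\langle -1\rangle\cong\C$ via $d=(a,\zeta)\mapsto\bar d=a$. It remains to compute the three invariants of $f$:
\[
 d^2=(-1)^{f(a,a)},\qquad [d,e]=(-1)^{f(a,b)+f(b,a)},\qquad [d,e,g]=(-1)^{f(a,b)+f(a+b,c)+f(b,c)+f(a,b+c)}.
\]
The last identity holds because $f(\cdot,\cdot)$ is a normalized $2$-cochain and the associator is its coboundary $\partial f(a,b,c)$.

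The key steps, in order:

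\textbf{Associator.} $\beta$ is bilinear, so $\partial\beta=0$. For $\Theta(a,b,b)$, trilinearity gives
\[
 \partial\bigl[\Theta(x,y,y)\bigr](a,b,c)=\Theta(a,b,c)+\Theta(a,c,b)\pmod2,
\]
after cancelling terms over $\F_2$. This is $\sum_i\sum_{j\ne k}a_ib_jc_k\,\iota(g_i,g_j,g_k)$. We must compare it with $\iota(a,b,c)=\sum_{i,j,k}a_ib_jc_k\,\iota(g_i,g_j,g_k)$, since $\iota$ is trilinear mod $2$. The difference is the diagonal part $\sum_{i,j}a_ib_jc_j\,\iota(g_i,g_j,g_j)=\sum a_ib_jc_j|g_i\cap g_j|$, which vanishes mod $2$ by self-orthogonality. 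This gives \eqref{eq:1.5}.

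\textbf{Commutator.} We have $\Theta(a,b,b)+\Theta(b,a,a)$ plus $\beta(a,b)+\beta(b,a)$. The map $q(a)=f(a,a)$ will turn out to be the quadratic form $|a|/4$. So it is cleanest to derive the commutator from the squaring law: $(de)^2$ relations give $[d,e]=d^2e^2(de)^2$ up to associator corrections. Concretely, one checks the polarization $q(a+b)+q(a)+q(b)=f(a,b)+f(b,a)+\Theta$-correction terms. Using $|a+b|/4=|a|/4+|b|/4+|a\cap b|/2 \pmod 2$ for doubly even words, the commutator becomes $|a\cap b|/2$ once the $\Theta$-correction is shown to vanish.

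\textbf{Square.} Show $f(a,a)=|a|/4\pmod2$ by induction on the number of basis vectors in $a$. For a basis vector, $\Theta(g_i,g_i,g_i)=0$ because there are no $j<k$ with $j=k$, and $\beta(g_i,g_i)=|g_i|/4$. For the induction step, compare $f(a+g_m,a+g_m)$ with $f(a,a)+f(g_m,g_m)$ using trilinearity, and verify that the cross terms equal $|a\cap g_m|/2+|a\cap a\cap g_m|$-type corrections. This requires the standard identity
\[
 |x+y|/4=|x|/4+|y|/4+|x\cap y|/2\pmod2
\]
for $x,y$ in a doubly even self-orthogonal code.

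I expect this square/commutator bookkeeping to be the main obstacle. The cross terms of $\Theta(a,a,a)$ give $\sum_{j<k}$ versus all-index sums, and one must carefully use $|g_i\cap g_j\cap g_k|$ symmetry and $|g_i\cap g_j|$ even. The fallback is to verify everything on basis triples, extending by the (quasi-)multilinearity of the three invariants. Once \eqref{eq:1.3}--\eqref{eq:1.5} hold, they are the defining relations of Parker's loop, which gives the final assertion.
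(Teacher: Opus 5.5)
Your proposal is correct. The associator step is exactly the paper's: $\beta$ has zero coboundary, the coboundary of $\Theta(x,y,y)$ is $\Theta(a,b,c)+\Theta(a,c,b)$, and this differs from the trilinear form $\iota(a,b,c)$ only by the diagonal terms $a_ib_jc_j|g_i\cap g_j|$, which are even.

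For the square and commutator laws you take a different route. The paper proves the two laws independently and without induction.
\begin{itemize}
\item \emph{Square.} It shows $f(a,a)\equiv|a|/4$ by matching the algebraic normal form $\sum_i a_i|g_i|/4+\sum_{i<j}a_ia_j|g_i\cap g_j|/2+\sum_{i<j<k}a_ia_ja_k\,\iota(g_i,g_j,g_k)$ of the cubic form $|a|/4$. The linear and quadratic parts come from $\beta(a,a)$. The cubic part comes from $\Theta(a,a,a)$, whose terms with $i\in\{j,k\}$ are even and in which each triple occurs three times.
\item \emph{Commutator.} It shows that $f(a,b)+f(b,a)+|a\cap b|/2$ is biadditive. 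Summing the associator identities at $(a,a',b)$, $(a,b,a')$, $(b,a,a')$ gives $f(a,b)+f(b,a)$ the additivity defect $\iota(a,a',b)$, the same as that of $|a\cap b|/2$. The expression then vanishes on basis pairs by the triangular definitions.
\end{itemize}

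You instead derive the commutator from the square law by the polarization $f(a+b,a+b)+f(a,a)+f(b,b)=f(a,b)+f(b,a)$. This is the code-loop relation $[d,e]=d^2e^2(de)^2$, so only one quadratic identity has to be checked by hand. The $\Theta$-correction you leave open is $\Theta(a,a,b)+\Theta(a,b,a)+\Theta(b,b,a)+\Theta(b,a,b)$. That is your associator expression with a repeated argument, so it equals $\iota(a,a,b)+\iota(b,b,a)\equiv0$ up to even diagonal terms, and it vanishes for free.

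The one thing to watch is the order of deduction. By the same polarization, the cross term in your inductive step for the square is precisely $f(a,g_m)+f(g_m,a)$. That is the commutator law with one basis-vector argument, so you must compute it directly rather than quote it from your commutator step, which presupposes the square law. The computation is immediate:
\begin{itemize}
\item $\Theta(a,g_m,g_m)=0$;
\item $\beta(a,g_m)+\beta(g_m,a)\equiv\sum_{i\ne m}a_i|g_i\cap g_m|/2$;
\item $\Theta(g_m,a,a)=\sum_{j<k}a_ja_k\,\iota(g_m,g_j,g_k)$.
\end{itemize}
Together these form the expansion of $|a\cap g_m|/2$ modulo $2$.

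With that, your argument closes. The paper's version is tidier in that both laws reduce to basis-level checks, with no induction and no dependence of one law on the other.
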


The verification from the ordered-basis formula is given in Appendix~\ref{app:factor-set}.

For $h\in H$, define $\eps(h)\in\F_2$ by
\begin{equation}
                \Omega^h=(-1)^{\eps(h)}\Omega. \label{eq:1.9}
\end{equation}
This is well defined because $h$ fixes $-1$ and induces an element of
$M_{24}$, which fixes the all-one word.  Applying successive
automorphisms to \eqref{eq:1.9}, and using that they fix $-1$, shows
that $\eps:H\to\F_2$ is a homomorphism.  On the cocode subgroup,
\begin{equation}
       \eps(\sigma_\delta)=\langle\one,\delta\rangle
                           =\parity(\delta). \label{eq:1.10}
\end{equation}
Consequently
\begin{equation}
       H^+:=\ker\eps\cong 2^{11}.M_{24} \label{eq:1.11}
\end{equation}
is the index-two subgroup fixing $\Omega$.  We use the extension itself,
without choosing a complement to the cocode.

\subsection{The algebra and the complementary-octad phase}

Let
\[
      \E=\mathbb Q(\omega)=\mathbb Q(\sqrt{-3}),
      \qquad \mu_3:=\{1,\omega,\omega^2\},\quad \omega^3=1,\quad \omega\ne1,
\]
and put
\begin{equation}
      \vth:=\omega-\bar\omega=\sqrt{-3}. \label{eq:2.1}
\end{equation}
Thus $\bar\vth=-\vth$.
Let $U$ have basis $u_i$ ($i\in I$), and let $W$ be the $\E$-space
generated by symbols $x_d$, where $d\in P$ and $\bar d$ is an octad.
Our $u_i$, $x_d$ are Wilson's $a_i$, $e_d$ (his $A_i=8a_i$); the
normalization is unchanged.  We retain the
present letters to distinguish coordinate vectors from roots, characters,
and the algebra $A$.  The signed octad vectors are subject to
\begin{equation}
                           x_{-d}=-x_d. \label{eq:2.2}
\end{equation}
There are two lifts of each of the $759$ octads, so $\dim W=759$.  Put
\begin{equation}
                            A:=U\oplus W. \label{eq:2.3}
\end{equation}
Thus $\dim_\E A=24+759=783$.  We also write $A_\E=A$ and put
\[
 A_{\mathbb C}=\mathbb C\otimes_\E A_\E.
\]
The conjugate-bilinear product below and the Hermitian form of
\eqref{eq:4.1} extend by the same coordinate formulas.  Complex
orthonormal-basis arguments take place in $A_{\mathbb C}$.  Unless
explicitly extended, algebra automorphisms act
on $A=A_\E$, with scalar action either the identity or conjugation;
we denote the group of these semilinear algebra automorphisms by
$\Aut_{\rm sl}(A,*)$.  Their extensions to $A_{\mathbb C}$ are respectively
complex-linear or conjugate-linear.

The product $*$ is commutative and conjugate-bilinear:
\begin{equation}
  (\alpha x+\beta y)*z
     =\bar\alpha(x*z)+\bar\beta(y*z), \label{eq:2.4}
\end{equation}
and hence likewise in the second argument.  On the coordinate basis it is
defined as follows:
\begin{align}
 128u_i*u_i
    &=-81u_i+15\sum_{j\ne i}u_j, \label{eq:2.5}\\
 128u_i*u_j
    &=15u_i+15u_j-\sum_{k\notin\{i,j\}}u_k
      \qquad(i\ne j), \label{eq:2.6}\\
 16u_i*x_d
    &=\begin{cases}
        3x_d,&i\in\bar d,\\
        -x_d,&i\notin\bar d,
      \end{cases} \label{eq:2.7}\\
 2x_d*x_d
    &=3\sum_{i\in\bar d}u_i-\sum_{i\notin\bar d}u_i. \label{eq:2.8}
\end{align}
If $d\ne\pm f$, then two distinct octads meet in $0$, $2$, or $4$ points,
and we put
\begin{equation}
  2x_d*x_f=
  \begin{cases}
     x_{df},&|\bar d\cap\bar f|=4,\\[2mm]
     \vth\,x_{df\Omega},&|\bar d\cap\bar f|=0,\\[2mm]
     0,&|\bar d\cap\bar f|=2.
  \end{cases} \label{eq:2.9}
\end{equation}
For intersection $4$, the support of $df$ is the octad
$\bar d+\bar f$.  For intersection $0$, the support of $df\Omega$ is the
octad $\bar d+\bar f+\one$, the complement of
$\bar d\cup\bar f$.

\begin{lemma}\label{lem:well-defined}
The multiplication table \eqref{eq:2.5}--\eqref{eq:2.9} is well defined on the
quotient \eqref{eq:2.2}, and it is commutative.
\end{lemma}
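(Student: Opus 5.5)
The plan is to define $*$ on the coordinate basis and extend it conjugate-bilinearly. For this to be consistent, two things are needed. First, each rule must respect the sign relation $x_{-d}=-x_d$ of \eqref{eq:2.2}. Second, each rule must be symmetric under exchanging its two arguments. Because the product is conjugate-bilinear, the sign $-1$, which is real, passes through unchanged. So replacing $d$ by $-d$ must multiply the right-hand side of each formula by $-1$ exactly once for each argument that was replaced. The verification splits according to the cases \eqref{eq:2.5}--\eqref{eq:2.9}.

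\textbf{The rules \eqref{eq:2.5}--\eqref{eq:2.8}.} Formulas \eqref{eq:2.5} and \eqref{eq:2.6} involve only the $u_i$. They are visibly symmetric in $i$ and $j$, and there is nothing to check for signs. Formula \eqref{eq:2.7} depends on $d$ only through the octad $\bar d$, and it is linear in $x_d$. Hence replacing $d$ by $-d$ negates both sides, and we take $x_d*u_i:=u_i*x_d$ as the definition. Formula \eqref{eq:2.8} depends only on $\bar d$. Replacing $d$ by $-d$ changes the left side by $(-1)^2=1$, which is consistent.

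\textbf{The rule \eqref{eq:2.9} with $|\bar d\cap\bar f|=4$.} The left side acquires the sign $(-1)^{a+b}$ when $d,f$ are replaced by $(-1)^a d,(-1)^b f$. Since $-1$ is central in $P$, the product $df$ changes by the same sign, so the definition is consistent. For commutativity we use \eqref{eq:1.4}, which gives $fd=(-1)^{4/2}df=df$, so the rule is symmetric.

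\textbf{The rule \eqref{eq:2.9} with $|\bar d\cap\bar f|=0$.} The sign check is the same as before: $-1$ is central, so $df\Omega$ picks up the correct sign. For symmetry, \eqref{eq:1.4} gives $fd=(-1)^0df=df$. We also need $(fd)\Omega=(df)\Omega$, which follows directly from $fd=df$; no associativity is required, because the expression is read as $(df)\Omega$ and $\Omega$ is central by Lemma~\ref{lem:Omega-central}. The factor $\vth$ is a fixed scalar, so it does not interfere with the check. One must also confirm that $x_{df\Omega}$ is a legitimate basis symbol, that is, that $\overline{df\Omega}=\bar d+\bar f+\one$ is an octad. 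This holds because the union $\bar d\cup\bar f$ has weight $16$, so its complement has weight $8$.

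\textbf{The rule \eqref{eq:2.9} with $|\bar d\cap\bar f|=2$.} Here the product is $0$, and $0$ is consistent with every sign and with symmetry.

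Finally, the case distinction in \eqref{eq:2.9} is exhaustive. By Lemma~\ref{lem:Witt-numerology}, distinct octads meet in $0$, $2$ or $4$ points. The excluded case $d=\pm f$ is covered by \eqref{eq:2.8}, together with $x_d*x_{-d}=-x_d*x_d$. This last identity agrees with \eqref{eq:2.8} by the sign check above.

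None of these steps is a real obstacle. The only point needing care is to read $df\Omega$ unambiguously, and centrality of $\Omega$ settles this. With that, the multiplication is a well-defined commutative conjugate-bilinear product on $A$.
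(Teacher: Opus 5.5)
Your proposal is correct and follows the same route as the paper's proof. Both use the sign check under $d\mapsto -d$ (the off-diagonal formulas change sign and the diagonal formula \eqref{eq:2.8} does not), commutativity $df=fd$ from \eqref{eq:1.4} for intersections $0$ and $4$ together with the zero product for intersection $2$, and centrality of $\Omega$ from Lemma~\ref{lem:Omega-central}. You only write out bookkeeping that the paper leaves to the surrounding text: that the cases are exhaustive, and that $\bar d+\bar f+\one$ is an octad.
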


\begin{proof}
Replacing $d$ by $-d$ changes both sides of every off-diagonal formula by a
minus sign, while the diagonal formula is unchanged.  If two octads meet in
$0$ or $4$ points, then \eqref{eq:1.4} gives $df=fd$; if they meet in $2$
points, the product is defined to be zero.  Finally, $\Omega$ is central by
Lemma~\ref{lem:Omega-central}.
\end{proof}

\begin{lemma}\label{lem:scalar-automorphisms}
For every $\lambda\in\mu_3$, scalar multiplication by $\lambda$ is an
$\E$-linear algebra automorphism of $A$.
\end{lemma}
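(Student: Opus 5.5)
The plan is to compute $(\lambda x)*(\lambda y)$ directly from the conjugate-bilinearity \eqref{eq:2.4} and compare it with $\lambda(x*y)$. The map $m_\lambda\colon x\mapsto\lambda x$ is plainly $\E$-linear and bijective, with inverse $m_{\bar\lambda}$, since $\lambda\bar\lambda=1$ for $\lambda\in\mu_3$. It is also well defined on the quotient \eqref{eq:2.2}, because it commutes with $x_d\mapsto -x_d$. So the only point to check is multiplicativity, $m_\lambda(x*y)=m_\lambda(x)*m_\lambda(y)$ for all $x,y\in A$.

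By \eqref{eq:2.4}, applied in each argument and using commutativity (Lemma~\ref{lem:well-defined}), we have
\[
 (\lambda x)*(\lambda y)=\bar\lambda\,\bar\lambda\,(x*y)=\bar\lambda^{2}(x*y).
\]
For $\lambda\in\mu_3$ we have $\bar\lambda=\lambda^{-1}=\lambda^2$, so $\bar\lambda^2=\lambda^4=\lambda$. Hence $(\lambda x)*(\lambda y)=\lambda(x*y)$, which is exactly the required identity.

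It is enough to check this on basis vectors and then extend, but the displayed computation already holds for arbitrary $x,y$, because it uses only the conjugate-bilinearity of $*$. There is no real obstacle. The only point worth flagging is that the argument relies on the identity $\bar\lambda^2=\lambda$, which singles out the cube roots of unity. For a general unit scalar $\lambda$, the same computation shows that $m_\lambda$ is an automorphism only when $\lambda^3=1$. Since this identity is forced by the conjugate-bilinearity of $*$, no case analysis over the multiplication table \eqref{eq:2.5}--\eqref{eq:2.9} is needed.
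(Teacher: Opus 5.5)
Your proposal is correct and is the paper's proof: conjugate-bilinearity gives $(\lambda x)*(\lambda y)=\bar\lambda^{2}(x*y)$, and $\bar\lambda^{2}=\lambda$ for $\lambda\in\mu_3$. Your remarks on bijectivity and well-definedness are harmless additions that the paper leaves implicit.
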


\begin{proof}
Conjugate-bilinearity and $\bar\lambda^2=\lambda$ give
\[
       (\lambda x)*(\lambda y)=\bar\lambda^2(x*y)
                              =\lambda(x*y).
\]
\end{proof}

Remark~\ref{rem:Wilson-phase} derives the phase and scale of the
complementary-octad coefficient $\vth$ from the root calculation.

\subsection{The nonsplit frame group}

For $h\in H$, let $\pi_h\in M_{24}$ be the induced coordinate permutation.
Define a semilinear right action of $H$ on $A$ by
\begin{equation}
      u_i^h=u_{\pi_h(i)},
      \qquad
      x_d^h=x_{d^h}, \label{eq:3.1}
\end{equation}
and extend scalars according to
\begin{equation}
   (\lambda x)^h=
   \begin{cases}
      \lambda x^h,&\eps(h)=0,\\
      \bar\lambda x^h,&\eps(h)=1.
   \end{cases} \label{eq:3.2}
\end{equation}
This is well defined because $h$ fixes $-1$ and hence
$x_{(-d)^h}=x_{-d^h}=-x_{d^h}$.  Since $\eps$ is a homomorphism and
$(d^h)^k=d^{hk}$, equations \eqref{eq:3.1}--\eqref{eq:3.2} define a
semilinear right action of $H$ on $A$.  We use ordinary function
composition in automorphism groups; the corresponding embedding therefore
sends $h$ to $x\mapsto x^{h^{-1}}$.

\begin{theorem}[Frame-group action]\label{thm:base-action}
The maps \eqref{eq:3.1}--\eqref{eq:3.2} give a faithful action
\begin{equation}
       2^{12}.M_{24}=\Aut_{\St}(P)
          \hookrightarrow \Aut_{\rm sl}(A,*). \label{eq:3.3}
\end{equation}
Every $h\in H^+\cong2^{11}.M_{24}$ acts $\E$-linearly, and every element of
the other coset acts conjugate-linearly.
\end{theorem}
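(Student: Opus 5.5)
The plan is to check directly that each $h\in H$, acting by \eqref{eq:3.1}--\eqref{eq:3.2}, respects the product $*$, and then to check faithfulness. First I would reduce the multiplicativity to the coordinate basis. Write $x\mapsto x^h$, which is $\E$-linear when $\eps(h)=0$ and conjugate-linear when $\eps(h)=1$. Since $*$ is conjugate-bilinear \eqref{eq:2.4} and complex conjugation commutes with conjugation, both $(x,y)\mapsto(x*y)^h$ and $(x,y)\mapsto x^h*y^h$ transform the same way under scalars. For example, $(\lambda x*\mu y)^h=(\bar\lambda\bar\mu\,x*y)^h$, which equals $\bar\lambda\bar\mu(x*y)^h$ or $\lambda\mu(x*y)^h$ according to $\eps(h)$, and the same holds for $(\lambda x)^h*(\mu y)^h$. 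So it suffices to prove $(b*b')^h=b^h*b'^h$ for basis vectors $b,b'\in\{u_i,x_d\}$. Well-definedness of the action on the quotient \eqref{eq:2.2}, and the fact that it is a semilinear right action, are already noted before the theorem.

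Next I would run through the cases of \eqref{eq:2.5}--\eqref{eq:2.9}.
\begin{itemize}
\item The products \eqref{eq:2.5}--\eqref{eq:2.8} have rational coefficients, so conjugation does not affect them. They depend only on the coordinate combinatorics: the equality $i=j$, whether $i\in\bar d$, and the support $\bar d$. The induced permutation $\pi_h$ preserves all of these because $\overline{d^h}=\pi_h(\bar d)$.
\item For two octads meeting in $2$ points, both sides are zero, since $\pi_h$ preserves intersection sizes.
\item For octads meeting in $4$ points, $h$ is a loop automorphism, so $(df)^h=d^hf^h$. This gives $(x_{df})^h=x_{d^hf^h}$, and $2x_{d^h}*x_{f^h}=x_{d^hf^h}$ by \eqref{eq:2.9}.
\item The only delicate case is disjoint octads, where $2(x_d*x_f)^h=(\vth x_{df\Omega})^h$.
\end{itemize}

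In the disjoint case with $\eps(h)=0$, the left side is $\vth x_{d^hf^h\Omega}$ because $\Omega^h=\Omega$. With $\eps(h)=1$, the left side is $\bar\vth\,x_{d^hf^h\Omega^h}$. Here $\bar\vth=-\vth$ and $\Omega^h=-\Omega$ by \eqref{eq:1.9}, and $x_{-e}=-x_e$. The two signs cancel, giving $\vth x_{d^hf^h\Omega}=2x_{d^h}*x_{f^h}$. This cancellation between the conjugation of $\vth=\sqrt{-3}$ and the sign change of $\Omega$ is the heart of the theorem. It is also exactly why the $\eps=1$ coset must act conjugate-linearly, so I expect this step to be the main point to get right, even though it is short.

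Finally, faithfulness. Suppose $h$ acts trivially. Then $u_i^h=u_i$ for all $i$, so $\pi_h=1$, and by \eqref{eq:1.6} $h=\sigma_\delta$ for some $\delta\in\C^*$. An element with $\eps(h)=1$ acts conjugate-linearly, so it moves $\vth u_i$ and cannot be trivial; hence $\parity(\delta)=0$. Triviality on $W$ gives $x_{\sigma_\delta(d)}=(-1)^{\langle\bar d,\delta\rangle}x_d=x_d$ for every octad lift $d$. Therefore $\langle O,\delta\rangle=0$ for all $O\in\Ocal$. The octads span $\C$ by Lemma~\ref{lem:Witt-numerology}, so $\delta$ pairs trivially with all of $\C$. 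The pairing \eqref{eq:1.1} is perfect, so $\delta=0$ and $h=1$. The linearity statement is immediate from \eqref{eq:3.2} and \eqref{eq:1.11}. With the paper's convention, $h\mapsto(x\mapsto x^{h^{-1}})$ gives the embedding \eqref{eq:3.3}.
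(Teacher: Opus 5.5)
Your proposal is correct and follows the paper's proof essentially step by step. You reduce to the coordinate basis by conjugate-bilinearity, run the case analysis of \eqref{eq:2.5}--\eqref{eq:2.9}, and in the disjoint-octad case cancel $\bar\vth=-\vth$ against $\Omega^h=(-1)^{\eps(h)}\Omega$; faithfulness then follows from $\pi_h=1$, $\eps(h)=0$, and the octads spanning $\C$ under the perfect code--cocode pairing.
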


\begin{proof}
Because the product is conjugate-bilinear, it is enough to check
\begin{equation}
                         (a*b)^h=a^h*b^h \label{eq:3.4}
\end{equation}
on the coordinate basis.

The products in $U*U$ are preserved because their coefficients are real and
the formulas depend only on whether the two coordinate labels are equal.
The mixed products are preserved because
\begin{equation}
             i\in\bar d
       \quad\Longleftrightarrow\quad
             \pi_h(i)\in\overline{d^h}. \label{eq:3.5}
\end{equation}
The diagonal products $x_d*x_d$ are preserved for the same reason; any sign
in $d^h$ occurs twice and disappears.

Let $d\ne\pm f$.  If $|\bar d\cap\bar f|=2$, both sides of \eqref{eq:3.4} are
zero.  If $|\bar d\cap\bar f|=4$, then
\begin{equation}
  (2x_d*x_f)^h=x_{(df)^h}=x_{d^hf^h}=2x_{d^h}*x_{f^h}. \label{eq:3.6}
\end{equation}
It remains to consider disjoint octads.  Write $\eps_h=\eps(h)$.  Since
$\bar\vth=-\vth$ and $\Omega^h=(-1)^{\eps_h}\Omega$, we have
\begin{align*}
 (2x_d*x_f)^h
   &=(\vth x_{df\Omega})^h\\
   &=(-1)^{\eps_h}\vth\,x_{(df\Omega)^h}\\
   &=(-1)^{\eps_h}\vth\,
       x_{d^hf^h(-1)^{\eps_h}\Omega}\\
   &=\vth x_{d^hf^h\Omega}\\
   &=2x_{d^h}*x_{f^h}.
\end{align*}
The sign from conjugating $\vth$ is cancelled exactly by the sign with which
$h$ moves the chosen lift $\Omega$.

Thus every $h\in H$ preserves the multiplication.  To prove faithfulness,
suppose $h$ acts trivially.  Its action on $U$ shows that $\pi_h=1$.  If
$\eps(h)=1$, then $h$ sends $\vth u_i$ to $-\vth u_i$, a contradiction; hence
$\eps(h)=0$.  Thus $h=\sigma_\delta$ for some $\delta\in\C^*$.  Triviality on
all octad vectors gives
\begin{equation}
                  \langle D,\delta\rangle=0
       \qquad\text{for every octad }D. \label{eq:3.7}
\end{equation}
The octads span the Golay code, so the perfect pairing \eqref{eq:1.1} gives
$\delta=0$.  Hence $h=1$.
\end{proof}

The cocode subgroup acts as follows.

\begin{corollary}\label{cor:cocode}
For $\delta\in\C^*$ put
\begin{equation}
 T_\delta(u_i)=u_i,
 \qquad
 T_\delta(x_d)=(-1)^{\langle\bar d,\delta\rangle}x_d, \label{eq:3.8}
\end{equation}
and extend $T_\delta$ linearly if $\parity(\delta)=0$ and
conjugate-linearly if $\parity(\delta)=1$.  Then $T_\delta$ is an algebra
automorphism.  In the complementary-octad case the entire verification is
the identity
\begin{equation}
 \begin{split}
 \parity(\delta)+\langle D+F+\one,\delta\rangle
  &=\langle\one,\delta\rangle
    +\langle D+F+\one,\delta\rangle\\
  &=\langle D+F,\delta\rangle.
 \end{split} \label{eq:3.9}
\end{equation}
\end{corollary}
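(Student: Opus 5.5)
The quickest route is to identify $T_\delta$ with the action of the cocode element $\sigma_\delta\in H$ from Theorem~\ref{thm:base-action}, and then check directly the one case where the parity of $\delta$ matters. By \eqref{eq:1.8} and \eqref{eq:3.1}, $\sigma_\delta$ fixes every $u_i$, since $\pi_{\sigma_\delta}=1$. It also sends $x_d$ to $x_{\sigma_\delta(d)}=(-1)^{\langle\bar d,\delta\rangle}x_d$. By \eqref{eq:1.10} and \eqref{eq:3.2}, it acts linearly exactly when $\parity(\delta)=0$ and conjugate-linearly otherwise. Hence $T_\delta$ coincides with the action of $\sigma_\delta$ (up to inversion, which is irrelevant since $\sigma_\delta^2=1$). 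So $T_\delta$ is an algebra automorphism by Theorem~\ref{thm:base-action}.

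For a self-contained account, I would also verify \eqref{eq:3.4} on basis pairs directly, using conjugate-bilinearity. The products $U*U$, $U*W$ and $x_d*x_d$ have real coefficients, and they are preserved because $T_\delta$ fixes $U$. A sign on $x_d$ multiplies $U*x_d$ on both sides alike, and in $x_d*x_d$ it appears squared. Pairs of octads meeting in two points have product zero. For octads meeting in four points, preservation follows from additivity of the pairing: $\langle D,\delta\rangle+\langle F,\delta\rangle=\langle D+F,\delta\rangle$, where $D+F$ is the support of $df$.

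The only case requiring care is that of disjoint octads $D=\bar d$ and $F=\bar f$, where $2x_d*x_f=\vth x_{df\Omega}$. Here $T_\delta$ applied to the left side carries the sign $(-1)^{\langle D+F+\one,\delta\rangle}$ from $x_{df\Omega}$. If $\parity(\delta)=1$, it carries an additional factor $-1$ from $\bar\vth=-\vth$. The right side $2T_\delta(x_d)*T_\delta(x_f)$ carries the sign $(-1)^{\langle D,\delta\rangle+\langle F,\delta\rangle}$, with real signs passing through the conjugate-bilinear product unchanged. The two sides therefore agree exactly when \eqref{eq:3.9} holds. That identity follows from $\parity(\delta)=\langle\one,\delta\rangle$ in \eqref{eq:1.2} together with bilinearity of the pairing, since $\langle\one,\delta\rangle+\langle\one,\delta\rangle=0$.

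This disjoint-octad case is the main obstacle, and it reduces to the same cancellation already seen in the proof of Theorem~\ref{thm:base-action}.
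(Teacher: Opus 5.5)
Your proposal is correct and follows the paper's own argument: $T_\delta$ is the action of the cocode element $\sigma_\delta$ from Theorem~\ref{thm:base-action}, and the only parity-sensitive check, for disjoint octads, is exactly the identity \eqref{eq:3.9}, balancing the sign from conjugating $\vth$ against the sign on the complementary octad. Your additional basis-by-basis verification is a self-contained expansion of the same computation.
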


\begin{proof}
This is Theorem~\ref{thm:base-action} for the kernel of \eqref{eq:1.6}.
Equation \eqref{eq:3.9} says precisely that the sign from conjugating $\vth$
and the sign attached to the complementary octad combine to the product of
the two input signs.
\end{proof}

\section{Root equations and the ray configuration}\label{sec:roots}

\subsection{The Hermitian cubic and the basic roots}

Equip $A$ with the Hermitian scalar product, linear in the first variable and
conjugate-linear in the second, determined on the signed Parker
generators by
\begin{equation}
 (u_i,u_j)=\frac18\delta_{ij},
 \qquad U\perp W,
 \qquad
 (x_d,x_f)=
 \begin{cases}
  1,&f=d,\\
 -1,&f=-d,\\
  0,&\bar f\ne\bar d.
 \end{cases}                                                    \label{eq:4.1}
\end{equation}
When a calculation is independent of the choice of sign on an octad line,
$x_D$ denotes any chosen unit generator of the line above $D\in\Ocal$.

\begin{proposition}\label{prop:cubic}
The Hermitian form is preserved semilinearly: even elements of $H$ are
unitary, whereas odd elements satisfy $(x^h,y^h)=\overline{(x,y)}$ and are
antiunitary.  The cubic
\[
             \Phi(x,y,z)=(x,y*z)
\]
is symmetric and $\E$-trilinear on $A$, and complex-trilinear on
$A_{\mathbb C}$ after scalar extension.
\end{proposition}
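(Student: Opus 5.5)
We first check the semilinear invariance of the Hermitian form, then verify that $\Phi$ is trilinear, and finally prove its symmetry from the multiplication table.

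\emph{Invariance.} By sesquilinearity it suffices to test \eqref{eq:4.1} on basis vectors $u_i$ and $x_d$, together with the rule \eqref{eq:3.2} for scalars. An element $h\in H$ permutes the $u_i$ by $\pi_h$, so $(u_i^h,u_j^h)=\frac18\delta_{\pi_h(i)\pi_h(j)}=\frac18\delta_{ij}$. It also sends $x_d$ to $x_{d^h}$, where $d\mapsto d^h$ is a bijection of the signed octad lifts commuting with $d\mapsto -d$. Hence the Gram matrix on the basis is preserved, and all its entries are real. For $\eps(h)=0$ the map is linear and unitary. For $\eps(h)=1$,
\[
 (\alpha x^h,\beta y^h)\text{ arises from }(\bar\alpha x,\bar\beta y),
\]
which gives $(x^h,y^h)=\overline{(x,y)}$ on general vectors.

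\emph{Trilinearity.} The product is conjugate-linear in $y$ and in $z$, and the form is conjugate-linear in its second slot. Composing the two conjugations makes $\Phi$ linear in $y$ and in $z$; it is linear in $x$ by definition. The same argument applies after extension to $A_{\mathbb C}$.

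\emph{Symmetry.} Commutativity of $*$ (Lemma~\ref{lem:well-defined}) gives symmetry in $(y,z)$. It therefore remains to prove $\Phi(x,y,z)=\Phi(y,x,z)$. Both sides are trilinear, so it suffices to check this on triples of basis vectors, that is, to show that the structure constants $\Phi(a,b,c)$ are totally symmetric. We split into cases by the number of $W$-vectors in the triple.

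\begin{enumerate}[label=(\roman*),leftmargin=2.2em]
\item Three $u$'s. From \eqref{eq:2.5}--\eqref{eq:2.6} with the factor $\frac18$, we get $\Phi(u_i,u_i,u_i)=-81/1024$. When exactly two indices are equal the value is $15/1024$, and for three distinct indices it is $-1/1024$. These values depend only on the equality pattern, so they are symmetric.
\item One $W$-vector. These constants vanish, since $U*U\subseteq U\perp W$ and $U*W\subseteq W\perp U$.
\item Two $W$-vectors and one $u$. From \eqref{eq:2.7},
\[
 \Phi(x_f,u_i,x_d)=(x_f,u_i*x_d)=\tfrac1{16}(\pm3\text{ or }\mp1)(x_f,x_d).
\]
From \eqref{eq:2.8},
\[
 \Phi(u_i,x_d,x_f)=\tfrac1{8}\cdot\tfrac12(3\text{ or }-1)
\]
when $f=d$, and it is zero when $\bar f\ne\bar d$, because $x_d*x_f\in W$ in that case. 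The signs match under $f=-d$, so the two values agree.
\item Three $W$-vectors $x_d,x_e,x_f$. The value $\Phi(x_d,x_e,x_f)$ is nonzero only if $\bar d=\overline{ef}$ (intersection $4$) or $\bar d=\bar e+\bar f+\one$ (intersection $0$). In the first case $\bar d,\bar e,\bar f$ are octads whose pairwise sums are octads, with pairwise intersections all $4$, and the constant is $\frac12$ times the sign comparing $d$ with $ef$. In the second case the three octads partition $I$, the constant is $\frac{\vth}{2}$ times the sign comparing $d$ with $ef\Omega$, and the relation is again symmetric.
\end{enumerate}

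Symmetry in case (iv) therefore reduces to a loop-theoretic identity. In the first case we need $d=\pm ef\iff e=\pm df$ with the same sign. In the second we need $d=\pm ef\Omega\iff e=\pm df\Omega$ with the same sign.

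We expect this sign bookkeeping in case (iv) to be the main step. We would derive it from \eqref{eq:1.3}--\eqref{eq:1.5}. Since the three octads meet pairwise in $4$ or $0$ points, the elements commute by \eqref{eq:1.4}. Their triple intersection is even: it has size $2$ in the intersection-$4$ case, because $|\bar d\cap\bar e\cap\bar f|$ is determined by $\bar d=\bar e+\bar f$, and it is empty in the disjoint case. So by \eqref{eq:1.5} they associate. Octads square to $1$ by \eqref{eq:1.3}, and $\Omega$ is central with $\Omega^2=1$ by Lemma~\ref{lem:Omega-central}. Hence from $d=\eta\,ef$ (or $d=\eta\,ef\Omega$) we get
\[
 df=\eta\,e f^2=\eta\,e \qquad\bigl(\text{respectively } df\Omega=\eta\,e\bigr),
\]
with the same sign $\eta$. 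This gives full $S_3$-symmetry of the constants, and therefore of $\Phi$.
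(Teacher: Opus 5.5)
Your proposal is correct and follows the paper's route. Invariance comes from the coordinate action \eqref{eq:3.1}--\eqref{eq:3.2} with a real Gram matrix, trilinearity from the two cancelling conjugations, and symmetry from checking structure constants on the coordinate basis, where the Parker signs agree under all permutations. Two slips are harmless. First, the trio constant is $\bar\vth/2=-\vth/2$, because the form is conjugate-linear in its second slot. Second, in the sextet case $\bar d\cap\bar e\cap\bar f=\varnothing$, not a duad; in any case the associator you actually use, in $(ef)f=ef^2$, has exponent $|\bar e\cap\bar f|$, which is even for all Golay words.
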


\begin{proof}
The first assertion follows immediately from the coordinate action in
\eqref{eq:3.1}--\eqref{eq:3.2}.  Conjugate-bilinearity of $*$ and
conjugate-linearity of the second argument of the scalar product make
$\Phi$ $\E$-linear in all three variables.  Symmetry is checked on
the coordinate basis from \eqref{eq:2.5}--\eqref{eq:2.9}; the Parker signs in
the octad products are precisely the same signs under every permutation of
the three entries.
\end{proof}
Following the terminology of the \emph{Atlas}, we call a vector
$r\in A$ a \emph{root} if
\begin{equation}
                         (r,r)=9,\qquad r*r=10r.                 \label{eq:root-equations}
\end{equation}
Here ``root'' is Atlas terminology, not Lie-theoretic; the associated
idempotent is $r/10$.

For $r\in A$, define the conjugate-linear map
\begin{equation}
                  \tau_r(x)=x*r-(r,x) r. \label{eq:4.2}
\end{equation}

For $\lambda\in\mu_3$, direct substitution gives
\begin{equation}
                         \tau_{\lambda r}=\lambda^2\tau_r.      \label{eq:root-scalar}
\end{equation}
Thus the $\mu_3$-orbit of $r$ determines a projective root map.
For a root $r$, put $[r]=\mu_3r$; the word \emph{ray} denotes this
three-element set, not its entire complex line.

\begin{lemma}[Symmetric root maps]\label{lem:symmetric-root-maps}
For every $r\in A$,
\[
                       (\tau_r x,y)=(\tau_r y,x).
\]
Consequently $\tau_r$ is antiunitary if and only if $\tau_r^2=1$.
\end{lemma}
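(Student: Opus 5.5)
Both assertions reduce to the symmetry of the cubic $\Phi$ from Proposition~\ref{prop:cubic} and to the Hermitian symmetry of $(\ ,\ )$.

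\textbf{The symmetry identity.} Expand the left side using \eqref{eq:4.2}:
\[
 (\tau_r x,y)=(x*r,y)-(r,x)(r,y).
\]
The second term $(r,x)(r,y)$ is visibly symmetric under $x\leftrightarrow y$, so it suffices to show that $(x*r,y)$ is symmetric. Hermitian symmetry gives $(x*r,y)=\overline{(y,x*r)}=\overline{\Phi(y,x,r)}$. Since $\Phi$ is symmetric, $\Phi(y,x,r)=\Phi(x,y,r)$, and therefore $\overline{\Phi(y,x,r)}=\overline{(x,y*r)}=(y*r,x)$. Combining the two terms gives $(\tau_r x,y)=(\tau_r y,x)$. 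The only care needed is with conjugations: $\tau_r$ is conjugate-linear, and both sides of the identity are linear in $x$ and in $y$, which is consistent. It is worth checking this linearity once to make sure no conjugate is missing.

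\textbf{The equivalence.} Define antiunitary to mean $(\tau x,\tau y)=\overline{(x,y)}=(y,x)$. Apply the identity just proved with $y$ replaced by $\tau_r y$:
\[
 (\tau_r x,\tau_r y)=(\tau_r\tau_r y,x)=(\tau_r^2y,x).
\]
If $\tau_r^2=1$, the right side is $(y,x)$, so $\tau_r$ is antiunitary. Conversely, if $\tau_r$ is antiunitary, then $(\tau_r^2y,x)=(y,x)$ for all $x$. The Hermitian form on $A$ is nondegenerate: by \eqref{eq:4.1} it is positive definite on the basis $u_i,x_D$. Hence $\tau_r^2y=y$ for every $y$.

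\textbf{Main obstacle.} Nothing here is hard. The one point needing attention is the linearity bookkeeping in the first step, namely that $\Phi$ is genuinely trilinear rather than sesquilinear, which is exactly what Proposition~\ref{prop:cubic} supplies.
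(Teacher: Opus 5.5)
Your proof is correct and follows essentially the paper's argument: symmetry comes from $(\tau_r x,y)=\overline{\Phi(x,y,r)}-(r,x)(r,y)$, and the equivalence comes from $(\tau_r x,\tau_r y)=(\tau_r^2 y,x)$ together with nondegeneracy of the form. One small slip in your side remark: both sides are conjugate-linear in each of $x$ and $y$, not linear. This is still consistent and does not affect the argument.
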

\begin{proof}
The expression
\[
       (\tau_r x,y)=\overline{\Phi(x,y,r)}-(r,x)(r,y)
\]
is symmetric in $x$, $y$.  It follows that
$(\tau_r^2x,y)=(\tau_r y,\tau_r x)$; nondegeneracy of the Hermitian
form proves the equivalence.
\end{proof}

\begin{lemma}\label{lem:root-covariance}
Let $g$ be an $\E$-linear unitary or $\E$-conjugate-linear antiunitary
algebra automorphism of $A$.  Then, for every $r\in A$,
\begin{equation}
                         \tau_{g(r)}=g\tau_rg^{-1}.             \label{eq:root-covariance}
\end{equation}
\end{lemma}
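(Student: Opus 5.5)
The plan is to evaluate $g\tau_rg^{-1}$ on an arbitrary vector $y\in A$ and match the two terms of \eqref{eq:4.2} separately. Put $x=g^{-1}(y)$, so that
\[
 g\tau_rg^{-1}(y)=g\bigl(x*r-(r,x)r\bigr).
\]
The first term transforms because $g$ is an algebra automorphism: $g(x*r)=g(x)*g(r)=y*g(r)$. Since $*$ is conjugate-bilinear, this identity is the automorphism condition itself. It needs no additional scalar bookkeeping, whether $g$ is linear or conjugate-linear.

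For the second term I split into two cases according to the semilinearity type of $g$.

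\emph{Case $g$ $\E$-linear and unitary.} Here $g((r,x)r)=(r,x)\,g(r)$. Unitarity gives $(r,x)=(g(r),g(x))=(g(r),y)$.

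\emph{Case $g$ conjugate-linear and antiunitary.} Here $g((r,x)r)=\overline{(r,x)}\,g(r)$. Antiunitarity in the form used in Proposition~\ref{prop:cubic}, namely $(g(a),g(b))=\overline{(a,b)}$, gives $\overline{(r,x)}=(g(r),g(x))=(g(r),y)$.

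In both cases the second term becomes $(g(r),y)\,g(r)$. Combining with the first term,
\[
 g\tau_rg^{-1}(y)=y*g(r)-(g(r),y)\,g(r)=\tau_{g(r)}(y),
\]
which is \eqref{eq:root-covariance}.

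The only step needing care is this scalar bookkeeping in the conjugate-linear case. Two conjugations occur there: one from pulling the scalar $(r,x)$ through $g$, and one from antiunitarity. They cancel, so the result has exactly the same form as in the linear case. I also need the convention that antiunitary means $(ga,gb)=\overline{(a,b)}$, which matches the odd elements of $H$ in Proposition~\ref{prop:cubic}. No other obstacle is expected: the argument uses only the definition \eqref{eq:4.2}, multiplicativity of $g$, and (anti)unitarity.
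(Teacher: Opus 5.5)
Your proof is correct and follows essentially the same route as the paper: multiplicativity handles the term $x*r$, and (anti)unitarity combined with the matching (conjugate-)linearity of $g$ handles the scalar term $(r,x)r$. In the conjugate-linear case the two conjugations cancel, exactly as in the paper's argument.
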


\begin{proof}
For $x\in A$, multiplicativity gives
\[
 g\bigl(g^{-1}x*r\bigr)=x*g(r).
\]
If $g$ is linear, then
$(r,g^{-1}x)=(g(r),x)$ and scalars pass
through $g$ unchanged.  If $g$ is conjugate-linear, antiunitarity gives
\[
 (g(r),x)
   =\overline{(r,g^{-1}x)},
\]
and conjugate-linearity applies the same conjugation to the scalar
multiplying $r$.  Substitution in \eqref{eq:4.2} proves
\eqref{eq:root-covariance} in both cases.
\end{proof}

\begin{lemma}\label{lem:nonorthogonal-rigidity}
Let $r$, $s$ have norm $9$ and satisfy $r*r=10r$, $s*s=10s$.  Assume that
$\tau_r$ is antiunitary.  If
$c=(r,s)\in\mu_3$, then
\begin{equation}
                         \tau_r(s)=\bar c\,s.                  \label{eq:nonorthogonal-rigidity}
\end{equation}
No algebra-automorphism property of $\tau_r$ is required.
\end{lemma}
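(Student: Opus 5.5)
The plan is to show that the vector $v:=\tau_r(s)-\bar c\,s$ has norm zero. Since the Hermitian form \eqref{eq:4.1} is positive definite (the coordinate basis is orthogonal with positive norms), this forces $v=0$. Only three ingredients should be needed: antiunitarity of $\tau_r$, the root equation $s*s=10s$, and the symmetry of the cubic $\Phi$ from Proposition~\ref{prop:cubic}. In particular no multiplicativity of $\tau_r$ enters, which matches the remark in the statement.

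\emph{Step 1.} Antiunitarity of $\tau_r$ gives $(\tau_r s,\tau_r s)=\overline{(s,s)}=9$.

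\emph{Step 2.} I will compute the cross term $(\tau_r s,s)$. From \eqref{eq:4.2},
\[
 (\tau_r s,s)=(s*r,s)-(r,s)(r,s).
\]
For the first term, $(s*r,s)=\overline{(s,s*r)}=\overline{\Phi(s,s,r)}$. Symmetry of $\Phi$ lets us rewrite $\Phi(s,s,r)=\Phi(r,s,s)=(r,s*s)=(r,10s)=10c$, because $10$ is real. Hence $(s*r,s)=10\bar c$. The second term is $c^2$, and $c^2=\bar c$ since $c\in\mu_3$. Therefore
\[
 (\tau_r s,s)=9\bar c .
\]
The only point needing care is tracking which slot is conjugate-linear, so that the factor comes out as $\bar c$ rather than $c$.

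\emph{Step 3.} Expand with $\beta=\bar c$, using $|c|=1$:
\[
 (v,v)=(\tau_r s,\tau_r s)-\bar\beta(\tau_r s,s)-\beta\overline{(\tau_r s,s)}+|\beta|^2(s,s)
      =9-c\cdot9\bar c-\bar c\cdot 9c+9=0 .
\]
Definiteness then gives $\tau_r(s)=\bar c\,s$. Equivalently, Cauchy--Schwarz equality holds in $|(\tau_r s,s)|\le\|\tau_r s\|\,\|s\|$, since both sides equal $9$.

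I do not expect a genuine obstacle here. The only delicate point is the sesquilinear bookkeeping in Step~2, namely the conjugations coming from the conjugate-bilinear product and from the conjugate-linear second slot of the form. If a sign or conjugation slips, the error will show up as $(\tau_r s,s)$ failing to have absolute value $9$, which serves as a built-in check.
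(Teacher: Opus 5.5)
Your proof is correct and is essentially the paper's argument. Both compute the cross term $(s,\tau_r s)=9c$ (your $(\tau_r s,s)=9\bar c$) from cubic symmetry and $s*s=10s$, and combine it with the antiunitarity norm $\|\tau_r s\|^2=9$. Your direct evaluation of $\|\tau_r s-\bar c\,s\|^2=0$ is just the equality case of Cauchy--Schwarz that the paper invokes.
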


\begin{proof}
By Proposition~\ref{prop:cubic}, cubic symmetry and the root equation for
$s$ give
\[
 (s,\tau_r(s))
   =(s,s*r)-(s,cr)
   =10c-\bar c^{\,2}=9c.
\]
Both vectors have norm $9$, so equality holds in Cauchy--Schwarz.  Hence
$\tau_r(s)=\lambda s$ for some $|\lambda|=1$; the displayed inner product
gives $\bar\lambda=c$, proving \eqref{eq:nonorthogonal-rigidity}.
\end{proof}
For $i\in I$, let
\begin{equation}
                  r_i=-7u_i+\sum_{j\ne i}u_j. \label{eq:4.3}
\end{equation}
Let $t_i=T_{\{i\}+\C}$ be the cocode transformation associated with the
singleton $\{i\}$.

\begin{proposition}\label{prop:basic-axis}
For every $i\in I$,
\begin{equation}
                         \tau_{r_i}=t_i. \label{eq:4.4}
\end{equation}
In particular, $\tau_{r_i}$ is a conjugate-linear antiunitary
involutory algebra automorphism.  Moreover,
\begin{equation}
     (r_i,r_i)=9,
     \qquad r_i*r_i=10r_i, \label{eq:4.5}
\end{equation}
so $r_i/10$ is an idempotent.
\end{proposition}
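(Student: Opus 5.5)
The plan is to verify everything by direct coordinate computation with the multiplication table \eqref{eq:2.5}--\eqref{eq:2.8} and the Hermitian form \eqref{eq:4.1}, exploiting the fact that $r_i$ lies in the real subspace $U$ and is fixed by the stabilizer of $i$ in $M_{24}$. By covariance under $H$ (Theorem~\ref{thm:base-action} and Lemma~\ref{lem:root-covariance}) it would suffice to treat one $i$, but the computation is uniform in $i$ anyway.

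\textbf{Step 1: the root equations \eqref{eq:4.5}.} The norm is immediate: $(r_i,r_i)=\frac18(49+23)=9$. For the product, write $r_i=-8u_i+s$ with $s=\sum_j u_j$. Summing \eqref{eq:2.5}--\eqref{eq:2.6} gives $128\,u_i*s=(-81+23\cdot15)u_i+(15+15-22)\sum_{j\ne i}u_j=264u_i+8\sum_{j\ne i}u_j$, hence $128\,s*s=\sum_i(256u_i+8s)=(256+192)s$, so $s*s=\frac{7}{2}s$. Then
\[
128\,r_i*r_i=64(128u_i*u_i)-16(128u_i*s)+128\,s*s
\]
is expanded and compared coefficientwise with $1280\,r_i$: both the $u_i$-coefficient and the common coefficient on $u_j$ for $j\ne i$ must agree. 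Since all coefficients are real, conjugate-bilinearity causes no trouble.

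\textbf{Step 2: $\tau_{r_i}=t_i$ on the basis.} Both $\tau_{r_i}$ and $t_i$ are conjugate-linear ($\parity(\{i\})=1$), so it suffices to compare them on $u_j$ and $x_d$. On $W$ we have $(r_i,x_d)=0$, and \eqref{eq:2.7} gives $16\,x_d*r_i=-7\epsilon_i+\sum_{j\ne i}\epsilon_j$, where $\epsilon_j=3$ if $j\in\bar d$ and $\epsilon_j=-1$ otherwise. For $i\in\bar d$ this is $-21+(7\cdot3-16)=-16$; for $i\notin\bar d$ it is $7+(24-15)=16$. Thus $\tau_{r_i}(x_d)=(-1)^{|\bar d\cap\{i\}|}x_d=t_i(x_d)$. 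On $U$, $\tau_{r_i}(u_j)=u_j*r_i-(r_i,u_j)r_i$ with $(r_i,u_j)=-\frac78$ for $j=i$ and $\frac18$ otherwise. Using the products from Step~1 (for instance $u_i*r_i=-8u_i*u_i+u_i*s$), one checks that $u_j*r_i=u_j+(r_i,u_j)r_i$ in both cases $j=i$ and $j\ne i$, which says $\tau_{r_i}(u_j)=u_j=t_i(u_j)$.

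\textbf{Step 3: consequences.} Since $t_i$ is an automorphism by Corollary~\ref{cor:cocode} and antiunitary by Proposition~\ref{prop:cubic}, and since $t_i^2=T_0=1$, all the remaining assertions follow. Alternatively, Lemma~\ref{lem:symmetric-root-maps} gives antiunitarity from $\tau_{r_i}^2=1$. Finally, $r_i/10$ is an idempotent because $\frac{1}{100}\,r_i*r_i=r_i/10$, the coefficients being real.

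The main obstacle is only the bookkeeping of the $U$-part in Step~2, namely checking the two coefficient identities for $j=i$ and $j\ne i$ without sign slips. I would organize it through the vectors $u_i$ and $s$ as above, which reduces the whole computation to a handful of rational identities.
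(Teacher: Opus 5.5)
Your proposal is correct and follows the paper's proof. Both arguments verify $u_j*r_i=u_j+(r_i,u_j)r_i$ and $x_d*r_i=(-1)^{[i\in\bar d]}x_d$ directly from \eqref{eq:2.5}--\eqref{eq:2.7}, conclude $\tau_{r_i}=t_i$, and take the automorphism, antiunitarity and involution properties from Corollary~\ref{cor:cocode}. The only difference is that the paper does not need your separate Step~1 computation of $r_i*r_i$: since $t_i$ fixes $r_i\in U$, applying $\tau_{r_i}=t_i$ to $r_i$ gives $r_i*r_i-9r_i=r_i$ at once.
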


\begin{proof}
A direct use of \eqref{eq:2.5}--\eqref{eq:2.7} gives
\begin{equation}
  u_j*r_i=u_j+(r_i,u_j) r_i, \label{eq:4.6}
\end{equation}
and
\begin{equation}
  x_d*r_i=
  \begin{cases}
     -x_d,&i\in\bar d,\\
     x_d,&i\notin\bar d.
  \end{cases} \label{eq:4.7}
\end{equation}
Since $(r_i,x_d)=0$, equations
\eqref{eq:4.2}, \eqref{eq:4.6}, and \eqref{eq:4.7} prove \eqref{eq:4.4}.  The norm in
\eqref{eq:4.5} is
\[
        \frac18(49+23)=9.
\]
Finally $t_i(r_i)=r_i$, so applying \eqref{eq:4.2} to $r_i$ gives
$r_i*r_i-9r_i=r_i$, and hence $r_i*r_i=10r_i$.
\end{proof}

\begin{corollary}\label{cor:relations}
The $24$ basic involutions commute and generate the cocode:
\begin{equation}
       \langle t_i:i\in I\rangle\cong\C^*\cong2^{12}. \label{eq:4.8}
\end{equation}
More precisely, for $S\subseteq I$,
\begin{equation}
                    \prod_{i\in S}t_i=1
       \quad\Longleftrightarrow\quad S\in\C. \label{eq:4.9}
\end{equation}
\end{corollary}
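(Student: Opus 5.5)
By Proposition~\ref{prop:basic-axis} we have $\tau_{r_i}=t_i=T_{\{i\}+\C}$, so the plan is to reduce everything to the cocode automorphisms $T_\delta$ of Corollary~\ref{cor:cocode}. The key step is a product formula,
\[
 T_\delta T_\eps=T_{\delta+\eps}\qquad(\delta,\eps\in\C^*).
\]
To check it, I compare both sides on the coordinate basis. On $u_i$ both sides are the identity. On $x_d$ the signs multiply to $(-1)^{\langle\bar d,\delta+\eps\rangle}$ by bilinearity of the pairing \eqref{eq:1.1}.

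The only subtle point is semilinearity. The composite $T_\delta T_\eps$ is conjugate-linear exactly when precisely one of $\delta,\eps$ is odd. Since $\parity$ is additive, this is exactly when $\parity(\delta+\eps)=1$. A semilinear map is determined by its values on a basis together with its scalar behaviour, so the two sides agree. Equivalently, $\delta\mapsto\sigma_\delta$ is a homomorphism into $H$, and Theorem~\ref{thm:base-action} gives a group action. Either way, $\delta\mapsto T_\delta$ is a homomorphism $\C^*\to\Aut_{\rm sl}(A,*)$.

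This homomorphism is also injective. If $T_\delta=1$, then it is $\E$-linear, so $\parity(\delta)=0$, and $\langle D,\delta\rangle=0$ for every octad $D$. The octads span $\C$ (Lemma~\ref{lem:Witt-numerology}), so perfectness of the pairing gives $\delta=0$. This is the same argument as the faithfulness part of Theorem~\ref{thm:base-action}.

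It follows that the $t_i$ commute, since $\C^*$ is abelian, and that
\[
 \prod_{i\in S}t_i=T_{\sum_{i\in S}(\{i\}+\C)}=T_{S+\C}.
\]
By injectivity this product is trivial iff $S+\C=0$ in $\C^*$, that is, iff $S\in\C$; this is \eqref{eq:4.9}. For \eqref{eq:4.8}, the singleton classes $\{i\}+\C$ generate $\F_2^I/\C=\C^*$, because singletons span $\F_2^I$. So the image of $\langle t_i\rangle$ under the isomorphism onto its image is all of $\C^*$, and $\C^*\cong2^{12}$ since $\dim\C=12$.

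I expect no real obstacle. The one point needing care is the semilinearity bookkeeping in the composition formula, i.e.\ that parities add correctly. I would settle it by appealing to the action of $H$ from Theorem~\ref{thm:base-action} rather than recomputing it.
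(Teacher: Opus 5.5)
Your proof is correct and is essentially the paper's argument: the paper notes that $\prod_{i\in S}t_i$ is the cocode transformation $T_{S+\C}$. That map is trivial exactly when $S+\C=0$, by the faithfulness part of Theorem~\ref{thm:base-action}. You have simply written out the steps the paper leaves implicit: the homomorphism property with its parity bookkeeping, the injectivity, and generation of $\C^*$ by the singleton classes.
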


\begin{proof}
The product in \eqref{eq:4.9} is the cocode transformation associated with
$S+\C$.  It is trivial precisely when this cocode element is zero.
\end{proof}

\subsection{Octadic roots and their block decomposition}

Fix an octad $O\in\Ocal$, choose a lift $o\in P$, and put
\[
                     X:=O^c=I\setminus O=\one+O.
\]
Define the shortened code
\[
       B_O:=\{c\in\C:c\cap O=\varnothing\}.
\]
Write
\[
               B_O^*:=\operatorname{Hom}(B_O,\{\pm1\})
\]
for its multiplicative character group.  For $j\in X$ write
\begin{equation}
                    \ev_j(b):=(-1)^{[j\in b]}\qquad(b\in B_O)       \label{eq:evaluation-character}
\end{equation}
for the corresponding evaluation character.  The code $B_O$ is the
$[16,5,8]$ code $RM(1,4)$ on $X$.  Its
nonzero words are the thirty affine hyperplanes of $X\cong\F_2^4$ and the
word $X$ itself.

\begin{lemma}\label{lem:elem-abelian}
Let $K\le\C$ be a subcode with
\begin{equation}
 |c|\equiv0\ (8),\qquad |c\cap c'|\equiv0\ (4),\qquad
 |c\cap c'\cap c''|\equiv0\ (2)                                 \label{eq:5.0}
\end{equation}
for all $c$, $c'$, $c''\in K$.  Then the inverse image of $K$ in $P$ is
elementary abelian of order $2|K|$.  The splittings $q:K\to P$ form a coset of
$\operatorname{Hom}(K,\{\pm1\})$, and may be chosen with any prescribed
lift over one nonzero word of $K$.  Every subcode of $B_O$ satisfies
\eqref{eq:5.0}.
\end{lemma}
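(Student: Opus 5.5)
Let $\widetilde K$ denote the inverse image of $K$ in $P$. I will read the three conditions in \eqref{eq:5.0} directly through the code-loop relations \eqref{eq:1.3}--\eqref{eq:1.5}. These relations give the square, commutator and associator of lifts of words of $K$.

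First, for $d\in\widetilde K$ we have $d^2=(-1)^{|\bar d|/4}=1$, because $|\bar d|\equiv0\pmod 8$. Second, the commutator of two lifts is $(-1)^{|\bar d\cap\bar e|/2}=1$, because $|\bar d\cap\bar e|\equiv0\pmod4$. Third, the associator of three lifts is $(-1)^{|\bar d\cap\bar e\cap\bar f|}=1$. Hence $\widetilde K$ is an associative, commutative loop of exponent $2$. It is therefore an elementary abelian $2$-group, with $|\widetilde K|=2|K|$ since $\{\pm1\}$ is the fibre.

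For the splittings I proceed as follows. Since $\widetilde K$ is elementary abelian, the central extension $1\to\{\pm1\}\to\widetilde K\to K\to1$ of $\F_2$-vector spaces splits. So a homomorphic section $q_0\colon K\to\widetilde K$ exists. Now let $q$ be any other section. Then $c\mapsto q(c)q_0(c)^{-1}\in\{\pm1\}$ is a homomorphism, because everything commutes. Conversely, twisting $q_0$ by any character again gives a homomorphic section. Thus the splittings form a coset (torsor) of $\operatorname{Hom}(K,\{\pm1\})$.

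To prescribe the lift over a nonzero word $c_0\in K$, I choose a linear functional on $K$ that takes the value $1$ on $c_0$; this is possible since $c_0\neq0$. If $q_0(c_0)$ is not the prescribed lift, I twist $q_0$ by the corresponding character, which flips that sign.

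It remains to verify \eqref{eq:5.0} for subcodes of $B_O$; this is the only step needing any Golay input. It suffices to check $B_O$ itself. The nonzero words of $B_O$ are $X$, of weight $16$, and the $30$ octads disjoint from $O$, of weight $8$. So every weight is divisible by $8$.

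For the pairwise and triple conditions I use the identification $B_O=RM(1,4)$ on $X\cong\F_2^4$, whose words are affine hyperplanes, $X$, and $\varnothing$. The intersection of two such words is empty, an affine subspace of dimension $\ge2$, or $X$, so its size lies in $\{0,4,8,16\}$. Likewise a triple intersection is empty or an affine subspace of dimension $\ge1$, so it has even size.

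If one prefers not to quote the $RM(1,4)$ description, the Golay code gives the same facts directly. For two octads disjoint from $O$, their sum is a Golay word disjoint from $O$, hence of weight $0$, $8$ or $16$. This forces $|c\cap c'|\in\{8,4,0\}$, and intersections with $X$ are trivial. For the triple condition, use $|a+b+c|=|a|+|b|+|c|-2(|a\cap b|+|a\cap c|+|b\cap c|)+4|a\cap b\cap c|$. The left side and the first terms are divisible by $8$, so $4|a\cap b\cap c|\equiv0\pmod 8$ and $|a\cap b\cap c|$ is even.

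I expect no real obstacle. The only care needed is to remember that $\widetilde K$ is a priori only a subloop, so associativity must come from the associator condition rather than be assumed.
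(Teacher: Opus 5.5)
Your proposal is correct and follows the paper's proof. The conditions in \eqref{eq:5.0} trivialize the square, commutator and associator via \eqref{eq:1.3}--\eqref{eq:1.5}, the splittings form a torsor under $\operatorname{Hom}(K,\{\pm1\})$, and the conditions for $B_O$ come from the affine-hyperplane description of $RM(1,4)$. Beyond that, you spell out the character twist for the prescribed lift, which the paper leaves implicit, and your alternative weight-formula argument shows that the last two conditions in \eqref{eq:5.0} already follow from all weights in $K$ being divisible by $8$.
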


\begin{proof}
By \eqref{eq:1.3}--\eqref{eq:1.5} the three conditions in \eqref{eq:5.0}
say precisely that every element of the inverse image squares to $1$, that
any two of them commute, and that any three of them associate.  A group of
exponent two is elementary abelian; a complement of $\langle-1\rangle$ in
$2^{k+1}$ is a hyperplane avoiding $-1$, and the set of these is a coset of
the character group of $K$.  For $K\le B_O$: the nonzero words of $B_O$ have
weight $8$ or $16$, so $|c|\equiv0\ (8)$; two distinct affine hyperplanes of
$X$ meet in $4$ or $0$ points and each meets $X$ in $8$, so
$|c\cap c'|\equiv0\ (4)$; and a nonempty intersection of at most three affine hyperplanes in
$X\cong\F_2^4$ has dimension at least one, hence even cardinality
(the empty intersection has cardinality zero).
\end{proof}

By Lemma~\ref{lem:elem-abelian} we may choose an elementary abelian
complement
\[
             Q_O=\{q_b:b\in B_O\}\cong 2^5
\]
in the inverse image of $B_O$ in $P$, calibrated by
\[
                         q_X=\Omega o,
\]
and we call the resulting basis of $A$ the \emph{calibrated basis} at $O$.
Write
\[
 \Hcal_O=B_O\setminus\{0,X\},\qquad y_b=x_{q_b}\quad(b\in\Hcal_O).
\]

For a character $\chi\in B_O^*$ set
\begin{equation}
 r_{O,\chi}=\frac12\left(
       -\sum_{i\in O}u_i+\sum_{i\in X}u_i
       +\vth\chi(X)x_o+\sum_{b\in\Hcal_O}\chi(b)y_b
                         \right).                                      \label{eq:5.1}
\end{equation}
Changing $o$ or $Q_O$ translates the character $\chi$ and changes the
chosen representatives by Parker signs; the resulting set of $\mu_3$-rays
is unchanged.

\begin{proposition}\label{prop:octadic-roots}
For every $\chi\in B_O^*$,
\begin{equation}
 (r_{O,\chi},r_{O,\chi})=9,
 \qquad r_{O,\chi}*r_{O,\chi}=10r_{O,\chi}.                 \label{eq:5.2}
\end{equation}
The cocode is transitive on the thirty-two roots over $O$, and $H$ is
transitive on all pairs $(O,\chi)$.
\end{proposition}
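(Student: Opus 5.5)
The plan has three parts: compute the norm, expand $r*r$ block by block, and get transitivity from the cocode and from $M_{24}$.

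\textbf{Norm.} By \eqref{eq:4.1}, the norm is
\[
\tfrac14\bigl(24\cdot\tfrac18+|\vth|^2+|\Hcal_O|\bigr)=\tfrac14(3+3+30)=9,
\]
since $|\Hcal_O|=30$ and all $\chi$-values are $\pm1$.

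\textbf{Product.} Write $r=\tfrac12(u+v+w)$ with
\[
u=-\sum_{i\in O}u_i+\sum_{i\in X}u_i,\qquad v=\vth\chi(X)x_o,\qquad w=\sum_{b\in\Hcal_O}\chi(b)y_b .
\]
Then $4\,r*r=u*u+v*v+w*w+2u*v+2u*w+2v*w$, and I will compute each term in $U$ and $W$ components.

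The term $u*u$ comes from \eqref{eq:2.5}--\eqref{eq:2.6}. It is a linear combination of $\sum_O u_i$ and $\sum_X u_i$, with coefficients depending only on the counts $8$ and $16$.

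For $2u*v$ and $2u*w$ I use \eqref{eq:2.7}, with $16u_i*x_d=3x_d$ or $-x_d$. For $x_o$ the coefficient is $\tfrac1{16}(-8\cdot3+16\cdot(-1))=-\tfrac52$. For $y_b$ with $b\subset X$ a hyperplane it is $\tfrac1{16}(-8\cdot(-1)+8\cdot3-8\cdot(-1))=\tfrac52$. Conjugation of the scalars is harmless: $u$ has real coefficients and the sign of $\bar\vth=-\vth$ must be tracked.

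For the diagonal pieces, $v*v=\overline{\vth}^2\,x_o*x_o=-3\,x_o*x_o$ and $w*w\ni\sum_b y_b*y_b$ both contribute only to $U$ via \eqref{eq:2.8}. Summing over the $30$ hyperplanes, each point of $X$ lies in $15$ of them, which gives multiples of $\sum_O u_i$ and $\sum_X u_i$.

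The off-diagonal pieces use \eqref{eq:2.9}. For two distinct hyperplanes $b,b'$ with $b'\ne b+X$ (meeting in $4$ points), the product is $\tfrac12 y_{b+b'}$. This uses $q_bq_{b'}=q_{b+b'}$, valid because $Q_O$ is elementary abelian by Lemma~\ref{lem:elem-abelian}. For complementary hyperplanes $b'=b+X$, the intersection is empty and the product is $\tfrac12\vth\,x_{q_bq_{b'}\Omega}=\tfrac12\vth\,x_{q_X\Omega}=\tfrac12\vth\,x_o$, by the calibration $q_X=\Omega o$. Since $\chi(b)\chi(b')=\chi(b+b')$ is multiplicative, each $y_c$ collects $\chi(c)$ times the number of ordered pairs $(b,b')$ with $b+b'=c$ and $b'\neq c+\ldots$; that is $28$ pairs, giving $14\chi(c)y_c$. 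The $x_o$ coefficient similarly collects $\chi(X)$ times the $30$ ordered complementary pairs, which yields $\vth$ up to the conjugation sign.

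For $2v*w$, the octad $o$ meets each hyperplane in $0$ points, so the product is $\vth$-type: $x_o*y_b=\tfrac12\vth\,x_{o q_b\Omega}$. Here $oq_b\Omega=q_{b+X}$ up to a sign checked with \eqref{eq:1.4}--\eqref{eq:1.5} and the calibration. Conjugation of $\vth$ in $\bar v$ produces $-\vth$, and $\vth\cdot\vth=-3$ gives a real coefficient $\pm3\chi(X)\chi(b)$, which should combine into $y_{b+X}$ with coefficient $\chi(b+X)$.

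Collecting the $u$, $x_o$ and $y_c$ coefficients should give $10r$ (since $r$ has $\chi$-coefficients, real parts are determined by the conjugation pattern $\bar\chi=\chi$). \textbf{The main obstacle is sign bookkeeping:} the Parker signs in $q_bq_{b'}$ and $oq_b\Omega$, and the conjugation of $\vth$. I would fix these using Lemma~\ref{lem:elem-abelian} and \eqref{eq:1.5}, noting $|o\cap q_b\cap\cdot|=0$, and then check the $x_o$ and $y$ coefficients numerically as $-\tfrac52+\ldots=5$-scaled.

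\textbf{Transitivity.} For the cocode, $T_\delta$ fixes $u$ and multiplies $y_b$ and $x_o$ by $(-1)^{\langle\cdot,\delta\rangle}$. For even $\delta$ this translates $\chi$ by the restriction of $\langle\cdot,\delta\rangle$ to $B_O$. Restriction $\C^*\to B_O^*$ is surjective by the perfect pairing, and even $\delta$ already surject because $B_O$ consists of words inside $X$. Hence the $32$ roots over $O$ form one orbit (odd $\delta$ also conjugate $\vth$, changing the sign of $\chi(X)$ consistently). Since $M_{24}$ is transitive on octads and $H\to M_{24}$ is onto, $H$ is transitive on pairs $(O,\chi)$. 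Finally, $H$ maps roots to roots because it consists of (anti)unitary automorphisms (Theorem~\ref{thm:base-action}, Proposition~\ref{prop:cubic}), and by the remark after \eqref{eq:5.1} it carries the root family over $O$ to the family over the image octad.
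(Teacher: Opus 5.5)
Your plan follows the paper's route. The paper first uses cocode transitivity to reduce to $\chi=1$, then expands $4\,r_O*r_O$ term by term as you do. Carrying a general $\chi$ through is equally valid, since every $\chi(b)$ is real and $\chi(b)\chi(b')=\chi(b+b')$.

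\textbf{The gap: one coefficient is wrong, and the tally is never done.} The eight points of $X\setminus b$ have coefficient $+1$ in $u$, and there $u_i*y_b=-\tfrac1{16}y_b$. So
\[
 u*y_b=\tfrac1{16}\bigl(8\cdot1+8\cdot3-8\bigr)y_b=\tfrac32\,y_b,
\]
not $\tfrac52\,y_b$. With your value, the $y_c$-coefficient of $4\,r*r$ comes out as $5+3+14=22$, and the root equation fails. Since the proposition is precisely this coefficient check, writing ``should give $10r$'' does not complete the proof.

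The signs you left open are forced. By the calibration and nuclearity of $\Omega$, we have $o=\Omega q_X$. Hence $oq_b\Omega=q_{b+X}$ exactly, and $x_o*y_b=\tfrac12\vth\,y_{b+X}$. This gives
\[
 2v*w=-\vth^2\chi(X)\sum_b\chi(b)y_{b+X}=3\sum_c\chi(c)y_c .
\]
The complementary pairs in $w*w$ give $15\vth\chi(X)x_o$. There is no conjugation sign here, because the $\chi(b)$ are real.

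The tally that proves \eqref{eq:5.2} is then:
\begin{itemize}
\item on $u_i$ with $i\in O$: $-\tfrac12-\tfrac92-15=-20$;
\item on $u_j$ with $j\in X$: $\tfrac72+\tfrac32+15=20$;
\item on $x_o$: $5\vth\chi(X)+15\vth\chi(X)=20\vth\chi(X)$;
\item on $y_c$: $3\chi(c)+3\chi(c)+14\chi(c)=20\chi(c)$.
\end{itemize}
That is, $4\,r*r=40r$.

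Your norm computation and transitivity argument are fine. For $H$-transitivity, one point deserves explicit mention. An odd $h$ sends $q_X=\Omega o$ to $-\Omega o^h$. This sign is exactly absorbed by $\bar\vth=-\vth$, so $h$ maps the thirty-two roots over $O$ onto those over $\pi_h(O)$ for the calibrated choice $(-o^h,Q_O^h)$.
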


\begin{proof}
The restriction map $\C^*\to B_O^*$ is onto.  By
Corollary~\ref{cor:cocode}, a cocode element $\delta$ multiplies the
$y_b$-coefficient by $(-1)^{\langle b,\delta\rangle}$ and the
$\vth x_o$-coefficient by
\[
 (-1)^{\parity(\delta)+\langle O,\delta\rangle}
       =(-1)^{\langle X,\delta\rangle}.
\]
Thus the cocode is transitive on the thirty-two vectors in \eqref{eq:5.1}, and it
is enough to prove the assertions for $\chi=1$.

Put
\[
 U_O=-\sum_{i\in O}u_i+\sum_{i\in X}u_i,
 \qquad Y_O=\sum_{b\in\Hcal_O}y_b.
\]
Then $r_O=(U_O+\vth x_o+Y_O)/2$.  The three mutually orthogonal
summands have squared norms $3$, $3$, and $30$, respectively; hence
\[
                         \|r_O\|^2=\frac14(3+3+30)=9.
\]
For the product, it is useful to display the whole calculation.  Fix
$i\in O$, $j\in X$, and $c\in\Hcal_O$.  The entries below are the
coefficients before multiplication by the common outer factor $1/4$:
\begin{equation}
\begin{array}{c|rrrr}
 &u_i&u_j&x_o&y_c\\ \hline
 U_O*U_O&-\frac12&\frac72&0&0\\
 2U_O*(\vth x_o)&0&0&5\vth&0\\
 2U_O*Y_O&0&0&0&3\\
 (\vth x_o)*(\vth x_o)&-\frac92&\frac32&0&0\\
 2(\vth x_o)*Y_O&0&0&0&3\\
 Y_O*Y_O&-15&15&15\vth&14\\ \hline
 \text{sum}&-20&20&20\vth&20.
\end{array}                                                     \label{eq:5.2a}
\end{equation}
Here the first five rows follow immediately from
\eqref{eq:2.5}--\eqref{eq:2.9}.  For the last row, the thirty diagonal
terms $y_b*y_b$ give $-15$ on $O$ and $15$ on $X$, because each point of
$X\cong\F_2^4$ lies in fifteen affine hyperplanes.  The fifteen
complementary pairs $\{b,b+X\}$ give $15\vth x_o$.  Finally, for fixed
$c$, exactly twenty-eight ordered hyperplanes $b$ have
$b+c\in\Hcal_O$; since $Q_O$ is elementary abelian, their products all
have the same Parker sign and contribute $14y_c$.  The partner $c+X$ in
$2(\vth x_o)*Y_O$ contributes the remaining $3y_c$ in the preceding row.
Thus \eqref{eq:5.2a}, divided by $4$, is exactly $10r_O$.

The cocode transitivity already proved carries the norm and root equation
to every $r_{O,\chi}$.  Finally $M_{24}$ is transitive
on octads, and its lifts in $H$ may be adjusted by the cocode to carry any
phase to any other phase.  Hence $H$ is transitive on all pairs $(O,\chi)$.
\end{proof}

\begin{remark}\label{rem:Wilson-phase}
Replace $\vth$ in the complementary-octad line of \eqref{eq:2.9} by
$\xi\in\E$.  For the odd cocode transformation
$t_i=T_{\{i\}+\C}$, equation~\eqref{eq:3.8} gives
\[
 t_i(u_j)=u_j,
 \qquad t_i(x_d)=(-1)^{[i\in\bar d]}x_d,
\]
with conjugate-linear scalar action.  If $D$, $F$ are disjoint octads and
$G=(D\cup F)^c$, invariance of their product is equivalent to
\[
 \bar\xi(-1)^{[i\in G]}
   =\xi(-1)^{[i\in D]+[i\in F]}.
\]
Since $[i\in G]=1+[i\in D]+[i\in F]\pmod2$, this forces
$\bar\xi=-\xi$; conversely, that identity is sufficient for all basic
cocode transformations.  Thus the coefficient must be purely imaginary.
The root calculation \eqref{eq:5.2a} fixes its scale as well: with
$\xi$ in place of $\vth$, the $x_o$-coordinate would give
\[
                         5\vth+15\xi=20\vth,
\]
and hence $\xi=\vth$.

Thus the semilinear action and the octadic roots used here determine
the complementary-octad term as $\vth x_{df\Omega}$.  This fixes our
normalization of the Conway--Parker multiplication table; compare
\cite[Section~5.7.10]{WilsonBook}.
Replacing $(\Omega,\vth)$ simultaneously by $(-\Omega,-\vth)$ leaves
the algebra unchanged.
\end{remark}

For the trivial character write $r_O:=r_{O,1}$ and put
$\tau_O=\tau_{r_O}$.  Its block decomposition comes from the character spaces of an
$\E$-linear cocode subgroup.

Let
\begin{equation}
 V_O=\{S\subseteq O:|S|\text{ even}\}/\langle O\rangle,
 \qquad q([S])=\frac{|S|}{2}\pmod2.                              \label{eq:5.7}
\end{equation}
The polar form is $([S],[T])\mapsto|S\cap T|\pmod2$; its radical on
the even-weight code is $\langle O\rangle$.  Hence $V_O$ is
nondegenerate of dimension six.  Labeling $O$ by $\F_2^3$, the affine
functions form a doubly even four-dimensional subcode containing $O$;
its image in $V_O$ is totally singular of dimension three.  Thus $V_O$
has plus type.  Its $28$ nonsingular points are the duad classes, and
its $35$ nonzero singular points are the complementary tetrad pairs.

\begin{lemma}\label{lem:grading}
Put $N_O=B_O^\perp$ and
\[
 N_O^+=N_O\cap H^+=(B_O+\langle O\rangle)^\perp\cong2^6.
\]
The subgroup $N_O^+$ fixes $r_O$ and commutes with $\tau_O$.  Its
character group is naturally $V_O$, and its character spaces give
\begin{equation}
 A=\bigoplus_{[S]\in V_O}A_{[S]},\qquad
 A_{[S]}*A_{[T]}\subseteq A_{[S\triangle T]}.                    \label{eq:5.4}
\end{equation}
The coordinate vectors $u_i$ have character $[0]$, and $x_d$ has
character $[\bar d\cap O]$.  The $\E$-dimensions are
\begin{equation}
 \dim A_{[0]}=55,\qquad
 \dim A_{[p]}=16,\qquad \dim A_{[T]}=8,                         \label{eq:5.6}
\end{equation}
where $p$ is a duad and $T$ is a tetrad in $O$.
\end{lemma}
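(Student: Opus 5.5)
The plan is to read off the grading from the cocode action of Corollary~\ref{cor:cocode}, restricted to a subgroup that is $\E$-linear and fixes $r_O$.

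\textbf{Step 1: the group $N_O^+$.} First identify $N_O^+$ inside $\C^*$. By \eqref{eq:1.10}, $T_\delta$ lies in $H^+$ exactly when $\langle\one,\delta\rangle=0$. Since $\one=X+O$ and $X\in B_O$, this means that for $\delta\in B_O^\perp$ the conditions $\langle\one,\delta\rangle=0$ and $\langle O,\delta\rangle=0$ are equivalent. Hence
\[
N_O\cap H^+=(B_O+\langle O\rangle)^\perp .
\]
The code $B_O+\langle O\rangle$ has dimension $6$, and the pairing \eqref{eq:1.1} is perfect, so $|N_O^+|=2^{12-6}=2^6$.

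\textbf{Step 2: $N_O^+$ fixes $r_O$ and commutes with $\tau_O$.} Let $\delta\in N_O^+$. By the computation in the proof of Proposition~\ref{prop:octadic-roots}, $T_\delta$ acts linearly and multiplies:
\begin{itemize}
\item each $y_b$-coefficient by $(-1)^{\langle b,\delta\rangle}=1$;
\item the $\vth x_o$-coefficient by $(-1)^{\langle X,\delta\rangle}=1$.
\end{itemize}
Since $T_\delta$ also fixes all $u_i$, it fixes $r_O$. Each $T_\delta$ is an $\E$-linear unitary automorphism (Proposition~\ref{prop:cubic}). Lemma~\ref{lem:root-covariance} then gives $T_\delta\tau_OT_\delta^{-1}=\tau_{T_\delta r_O}=\tau_O$.

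\textbf{Step 3: the character group and the grading.} The character group of $N_O^+$ is $\C/(B_O+\langle O\rangle)$, again by the perfect pairing. Consider the map $c\mapsto c\cap O$.
\begin{itemize}
\item Its kernel is $B_O$.
\item Its image has dimension $12-5=7$, and it lies in the $7$-dimensional even-weight code on $O$, so it is all of that code.
\end{itemize}
Since $O$ maps to $O$, this gives $\C/(B_O+\langle O\rangle)\cong V_O$, with $c$ corresponding to $[c\cap O]$.

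By \eqref{eq:3.8}, $u_i$ has trivial character and $x_d$ has character $[\bar d\cap O]$, so the character spaces are spanned by coordinate vectors. Because the $T_\delta$ are $\E$-linear automorphisms, products of character vectors multiply characters, which gives $A_{[S]}*A_{[T]}\subseteq A_{[S\triangle T]}$. The product rules \eqref{eq:2.9} confirm this directly, since $\one\cap O=O\equiv0$ in $V_O$.

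\textbf{Step 4: dimensions.} Count coordinate vectors in each class using Lemma~\ref{lem:Witt-numerology}.
\begin{itemize}
\item Class $[0]$: the $24$ vectors $u_i$, the $30$ octads disjoint from $O$, and $O$ itself, giving $55$.
\item A duad class $[p]=[O\setminus p]$: only the duad $p$ occurs, since $O\setminus p$ has size $6$ and no octad meets $O$ in six points. This gives the $16$ octads meeting $O$ exactly in $p$.
\item A tetrad class $\{T,O\setminus T\}$: each tetrad lies in $\lambda_4=5$ octads, one of which is $O$. This gives $4+4=8$.
\end{itemize}
As a check, $55+28\cdot16+35\cdot8=783$.

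The only step requiring care is the surjectivity of $c\mapsto c\cap O$ onto the even subsets of $O$, which is settled by the dimension count above. Everything else is bookkeeping.
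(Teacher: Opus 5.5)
Your proposal is correct and follows essentially the same route as the paper. Both arguments convert even parity into annihilation of $O$ via $\one=X+O$, check that $N_O^+$ fixes $r_O$ coordinatewise and then apply covariance, identify $\C/(B_O+\langle O\rangle)$ with $V_O$ by restriction to $O$, and get the dimensions from the Witt-design counts $30$, $16$, and $\lambda_4-1=4$. The one addition is that you justify surjectivity of $c\mapsto c\cap O$ onto the even subsets of $O$ by a dimension count, which the paper leaves implicit.
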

\begin{proof}
Since $X\in B_O$ and $\one=X+O$, even parity on $N_O$ is equivalent
to annihilating $O$.  Thus $\dim N_O^+=12-5-1=6$.
Its elements fix $U$, $x_o$, and all $y_b$, hence $r_O$; covariance
then gives commutation with $\tau_O$.

Restriction to $O$ identifies $\C/B_O$ with the even-weight code
on $O$, and therefore identifies $\C/(B_O+\langle O\rangle)$ with
$V_O$.  The perfect code--cocode pairing gives the asserted characters.
Because $N_O^+$ acts $\E$-linearly by algebra automorphisms, its
character spaces satisfy \eqref{eq:5.4}.  There are thirty octads
disjoint from $O$, sixteen meeting it in a prescribed duad, and
$\lambda_4-1=4$ meeting it in a prescribed tetrad.  Together with
$U$ and $x_o$, these give \eqref{eq:5.6} and
\[
                         55+28\cdot16+35\cdot8=783.
\]
\end{proof}

Restriction to $X$ induces an isometry
\begin{equation}
     V_O\cong RM(2,4)/RM(1,4),                                  \label{eq:5.8}
\end{equation}
where the quadratic form on the right is
$[f]\mapsto\operatorname{wt}(f)/2\pmod2$; all words of $RM(2,4)$
have even weight.  Indeed, restriction of $\C$ to $X$ has kernel $\langle O\rangle$
and hence image of dimension eleven.  Self-duality makes this image
orthogonal to $B_O=RM(1,4)$.  The eleven-dimensional space $RM(2,4)$
is this orthogonal complement: products of a quadratic monomial and
an affine function have degree at most three and sum to zero on
$\F_2^4$.  Quotienting by $B_O$ therefore gives \eqref{eq:5.8}.
The weight-half form descends modulo $RM(1,4)$, since its words have
weights divisible by four and are orthogonal to $RM(2,4)$.  The forms
agree because every $c\in\C$ has
$|c\cap O|+|c\cap X|\equiv0\pmod4$.

Modulo affine functions, every nonzero quadratic class is represented,
after a linear change of coordinates, by $v_1v_2$ or
$v_1v_2+v_3v_4$, of polar rank two and four, respectively.
These representatives have weights four and six, and the weight-half form
is invariant under linear coordinate changes.  Thus the rank is four
for a duad class, two for a tetrad class, and zero for the zero class.

For reference, substitution of the multiplication table into
\eqref{eq:4.2} gives the following formulas.  Put
\[
 s_O=\sum_{i\in O}u_i,\qquad s_X=\sum_{i\in X}u_i.
\]
Then
\begin{align}
 \tau_O(u_i)&=u_i-\frac3{16}s_O-\frac{\vth}{16}x_o,
                                      &&i\in O,                 \label{eq:5.10}\\
 \tau_O(x_o)&=-\frac{\vth}{2}s_O-\frac12x_o,                  \label{eq:5.11}\\
 \tau_O(u_i)&=\frac1{16}\left(s_X-
          \sum_{b\in\Hcal_O}\ev_i(b)y_b\right),
                                      &&i\in X,                 \label{eq:5.12}\\
 \tau_O(y_b)&=\frac12\left(
       \sum_{i\in b}u_i-\sum_{i\in X\setminus b}u_i
       +y_b+y_{b+X}\right), &&b\in\Hcal_O.                    \label{eq:5.13}
\end{align}
If $D$ is an octad with $|D\cap O|=2$ or $4$, then, with Parker signs
understood,
\begin{align}
 \tau_O(x_d)={}&\frac{3-|D\cap O|}{4}x_d
   -[|D\cap O|=4]\frac{\vth}{4}x_{do}                 \notag\\
 &+\frac14\sum_{\substack{b\in\Hcal_O\\|D\cap b|=4}}x_{dq_b}
  +\frac{\vth}{4}
       \sum_{\substack{b\in\Hcal_O\\D\cap b=\varnothing}}
                 x_{dq_b\Omega}.                               \label{eq:5.14}
\end{align}

Set
\[
\begin{aligned}
 A_O^{(9)}&=\operatorname{span}_{\E}
               \{u_i\ (i\in O),\ x_o\},\\
 A_X^{(46)}&=\operatorname{span}_{\E}
               \{u_j\ (j\in X),\ y_b\ (b\in\Hcal_O)\}.
\end{aligned}
\]

Thus $A_{[0]}=A_O^{(9)}\oplus A_X^{(46)}$.  For a nonzero class
$[S]$, write $A_p^{(16)}=A_{[p]}$ for a duad $p$ and
$A_T^{(8)}=A_{[T]}$ for a tetrad $T$.

\begin{proposition}[Octadic involution]\label{prop:octadic-involution}
The map $\tau_O$ is a conjugate-linear antiunitary involution.  Its support
matrix has the orthogonal decomposition
\begin{equation}
\begin{aligned}
 A={}&A_O^{(9)}\oplus A_X^{(46)}\\
   &\oplus\bigoplus_{p\in\binom O2}A_p^{(16)}\\
   &\oplus\bigoplus_{\{T,O\setminus T\}}A_T^{(8)}.
\end{aligned}                                                   \label{eq:5.15}
\end{equation}
The superscripts are $\E$-dimensions.  In suitable signed Parker
bases, each $16$-block is $H_{16}/4$ for a Hadamard matrix $H_{16}$,
and each $8$-block has conjugate-linear matrix
\begin{equation}
 \frac14\bigl((J_4-2I_4)\otimes J_\vth\bigr),\qquad
 J_\vth=\begin{pmatrix}1&\vth\\\vth&1\end{pmatrix},                \label{eq:5.16}
\end{equation}
where $J_4$ is the all-one matrix.
By $H$-covariance, every map $\tau_{r_{F,\chi}}$ attached to an octadic
root is likewise a conjugate-linear antiunitary involution.
\end{proposition}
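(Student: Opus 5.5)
By Lemma~\ref{lem:symmetric-root-maps}, $\tau_O$ is antiunitary if and only if $\tau_O^2=1$. It therefore suffices to prove that $\tau_O$ is an involution, and I will do this block by block.

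\textbf{Step 1: $\tau_O$ respects the decomposition \eqref{eq:5.15}.} By Lemma~\ref{lem:grading}, $N_O^+$ acts $\E$-linearly, fixes $r_O$ and commutes with $\tau_O$. Since $\tau_O$ is conjugate-linear, it sends the character space $A_{[S]}$ to the character space of the conjugate character. These characters take values $\pm1$, so every $A_{[S]}$ is $\tau_O$-stable. For the refinement $A_{[0]}=A_O^{(9)}\oplus A_X^{(46)}$, formulas \eqref{eq:5.10}--\eqref{eq:5.13} show that $\tau_O$ maps $A_O^{(9)}$ and $A_X^{(46)}$ into themselves. Orthogonality of the summands is immediate from \eqref{eq:4.1}, since distinct blocks involve disjoint sets of coordinate vectors.

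\textbf{Step 2: the two zero-class blocks.} On $A_O^{(9)}$, I use \eqref{eq:5.10}--\eqref{eq:5.11}. In the basis $u_i$ ($i\in O$), $x_o$, the map $\tau_O$ is the identity plus a rank-two perturbation supported on $\operatorname{span}\{s_O,x_o\}$. On the invariant plane spanned by $s_O$ and $x_o$ the conjugate-linear matrix is
\[
\begin{pmatrix}-1/2&-\vth/2\\-\vth/16&-1/2\end{pmatrix}.
\]
Its square, computed with the conjugation $\bar\vth=-\vth$ applied to the inner entries, is the identity. The orthogonal complement of $s_O$ in $\operatorname{span}\{u_i:i\in O\}$ is fixed pointwise, because there $\tau_O(u_i-u_k)=u_i-u_k$.

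On $A_X^{(46)}$, I identify $X\cong\F_2^4$ and use \eqref{eq:5.12}--\eqref{eq:5.13}. All coefficients there are real, so conjugation plays no role. Squaring reduces to the character orthogonality $\sum_{b}\ev_i(b)\ev_j(b)$ over affine hyperplanes, together with the incidence counts $|b\cap b'|\in\{0,4,8\}$. I will check $\tau_O^2(u_j)=u_j$ and $\tau_O^2(y_b)=y_b$ directly. Concretely, I will write $\tau_O$ in the Fourier basis of $\F_2^4$, where it should decompose into $2\times2$ blocks that are each easily seen to square to $1$.

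\textbf{Step 3: the $16$- and $8$-blocks.} I use \eqref{eq:5.14}. For a duad $p$, the sixteen octads meeting $O$ in $p$ form one $B_O$-torsor under $D\mapsto D+b$, with $D+X$ counted as well. The coefficient $(3-2)/4=1/4$ and the $\pm1/4$ entries give a $\pm1/4$ matrix. Here the $\vth$-terms multiply with $\vth x_{dq_b\Omega}$, which lies in the same block. Choosing the signed Parker basis via an elementary abelian splitting as in Lemma~\ref{lem:elem-abelian}, the matrix becomes the character table of $2^4$, suitably twisted, which is $H_{16}/4$.

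This is the point I expect to be the main obstacle: the Parker signs must be controlled so that the result really is a symmetric real Hadamard matrix, despite the $\vth$ coefficients. I would try pairing each $x_d$ with $\vth x_{d\Omega\cdots}$, or else rescaling half of the basis vectors by $\vth$ so that all entries become real. Once that is done, $(H_{16}/4)^2=16I/16=I$ follows from $H_{16}H_{16}^{\mathsf T}=16I$ and symmetry.

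For a tetrad pair $\{T,O\setminus T\}$, the eight octads meet $O$ in $T$ or in $O\setminus T$, four of each. The term $-\tfrac{\vth}4x_{do}$ couples $D$ with $D+O$, and the hyperplane terms give $J_4-2I_4$ after a sign choice. This yields \eqref{eq:5.16}. Squaring conjugate-linearly gives
\[
(J_4-2I_4)^2\otimes J_\vth\overline{J_\vth}=4I_4\otimes 4I_2=16I,
\]
since $J_\vth\overline{J_\vth}=J_\vth J_{-\vth}=(1+3)I$. Dividing by $16$ gives $\tau_O^2=1$ on these blocks.

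\textbf{Step 4: covariance.} Finally, by Proposition~\ref{prop:octadic-roots}, $H$ is transitive on the pairs $(F,\chi)$. By Theorem~\ref{thm:base-action} and Proposition~\ref{prop:cubic}, $H$ acts by unitary or antiunitary automorphisms. Lemma~\ref{lem:root-covariance} then gives $\tau_{r_{F,\chi}}=h\tau_Oh^{-1}$ for a suitable $h\in H$, so every $\tau_{r_{F,\chi}}$ is again a conjugate-linear antiunitary involution.
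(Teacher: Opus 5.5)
Your plan follows the paper's route: the $N_O^+$-grading, explicit formulas on each block, Lemma~\ref{lem:symmetric-root-maps} to trade antiunitarity for $\tau_O^2=1$, and covariance at the end. Step~1, your plan for $A_X^{(46)}$, the $8$-block squaring, and Step~4 are sound; your Fourier basis is the paper's $w_b=-\sum_{j\in X}\ev_j(b)u_j$, paired with $y_b\mp y_{b+X}$.

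Your $9$-block matrix, however, is wrong. Since $\tau_O(s_O)=\sum_{i\in O}\tau_O(u_i)$, its $x_o$-coefficient is $8\cdot(-\vth/16)=-\vth/2$, so in the orthonormal basis $(s_O,x_o)$ the matrix is $-\tfrac12J_\vth$. With your entry $-\vth/16$, the conjugate-linear square has diagonal entries $\tfrac14+\tfrac3{32}\neq1$, so the identity you assert fails. A quick check would have caught this: Lemma~\ref{lem:symmetric-root-maps} forces a symmetric matrix in an orthonormal basis, and yours is not symmetric.

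The genuine gap is in the $16$-blocks, which carry the substance of the proposition. The obstacle you anticipate is absent. When $|D\cap O|=2$, no $b\in\Hcal_O$ is disjoint from $D$, since otherwise the six points of $D\cap X$ would lie in the octad $b+X$. So the $\vth$-sum in \eqref{eq:5.14} is empty, the block is real, and no rescaling is needed. Also, the sixteen octads correspond to $B_O/\langle X\rangle$, that is, to the translates of $D\cap X$ in $X\cong\F_2^4$, not to $B_O$; note that $D+X$ is a dodecad.

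The real difficulty is the sign pattern. All entries are $\pm\tfrac14$, so row orthogonality depends entirely on the Parker signs, and ``the character table of $2^4$, suitably twisted'' assumes exactly what must be proved. The paper supplies the missing step as follows.
\begin{itemize}
\item Let $Q_D$ be the indicator of $D\cap X$; by \eqref{eq:5.8} it has weight six and nondegenerate polar form $\beta$. Put $b_u=\beta(u,\cdot)+Q_D(u)$; then the lifts $dq_{b_u}$ exhaust the block.
\item The associator together with $q_bq_{b'}=q_{b+b'}$ gives the $(u,w)$ entry as $(-1)^{|D\cap b_u\cap b_w|}$.
\item Since polynomials of degree at most three sum to zero over $\F_2^4$, one gets $\sum_vQ_D(v)b_u(v)b_w(v)=\beta(u,w)$.
\end{itemize}
Only this makes $H_{16}=\bigl((-1)^{\beta(u,w)}\bigr)$ a symmetric character table.

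Your $8$-block phrase ``after a sign choice'' needs the analogous justification. Each relevant $b$ is constant on the affine plane $D\cap X$, so the associators have even exponent, and the calibration $q_X=\Omega o$ fixes the signs of the cross terms.
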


\begin{proof}
The grading \eqref{eq:5.4} reduces the calculation to these blocks;
the displayed formulas split the trivial-character block into the two
exceptional summands.  On $A_O^{(9)}$, the map $\tau_O$ fixes every difference
$u_i-u_j$ and acts on $\langle s_O,x_o\rangle$ by the anti-linear matrix
$-J_\vth/2$; hence
$\overline{(-J_\vth/2)}(-J_\vth/2)=I$.

On $A_X^{(46)}$, put
\[
                    w_b=-\sum_{j\in X}\ev_j(b)u_j.
\]
Character orthogonality gives
\[
 \sum_{j\in X}\ev_j(b)\ev_j(c)
    =16(\delta_{b,c}-\delta_{b+X,c}).
\]
Equations \eqref{eq:5.12}--\eqref{eq:5.13} therefore show that
$\tau_O$ fixes $s_X$ and every $y_b+y_{b+X}$, while
\[
 \tau_O(w_b)=y_b-y_{b+X},\qquad
 \tau_O(y_b-y_{b+X})=w_b.
\]
These vectors decompose the exceptional block, and the paired vectors have
the same norm.  Thus both exceptional blocks are antiunitary involutions.

For a duad block, fix an octad $D$ with $D\cap O=p$ and a lift $d$.
Identify $X$ with $V=\F_2^4$, choosing the origin outside $D\cap X$,
and let $Q_D$ be the indicator of $D\cap X$.
By \eqref{eq:5.8}, $Q_D$ has weight six, $Q_D(0)=0$, and nondegenerate
polar form $\beta(u,v)=Q_D(u+v)+Q_D(u)+Q_D(v)$.
For $u\in V$ put
\[
              b_u(v)=\beta(u,v)+Q_D(u),\qquad d_u=dq_{b_u}.
\]
Since $Q_D(v)+b_u(v)=Q_D(v+u)$, the $X$-parts of $\overline{d_u}$
are the sixteen distinct translates of $D\cap X$.  Thus these are
exactly the sixteen octads meeting $O$ in $p$.

In the signed basis $x_{d_u}$, formula~\eqref{eq:5.14} has matrix
\[
             \frac14 H_{16},\qquad
             (H_{16})_{u,w}=(-1)^{\beta(u,w)}.
\]
To verify the signs, for $u\ne w$ the hyperplane $b_u+b_w$ meets
$\overline{d_u}$ in four points.  As $|D\cap b_u|$ is even, the
Parker associator and $q_bq_{b'}=q_{b+b'}$ give
\[
 (dq_{b_u})q_{b_u+b_w}
      =(-1)^{|D\cap b_u\cap b_w|}dq_{b_w}.
\]
The exponent is
\[
 \sum_{v\in V}Q_D(v)b_u(v)b_w(v)
   =\sum_{v\in V}Q_D(v)\beta(u,v)\beta(w,v)=\beta(u,w).
\]
Indeed every polynomial of degree at most three sums to zero over
$V$; with quadratic part $v_1v_2+v_3v_4$, the remaining degree-four
coefficient is precisely $\beta(u,w)$.  The diagonal coefficient is
also one.  Hence $H_{16}$ is a character table, and character
orthogonality gives $H_{16}H_{16}^t=16I$.

For a tetrad block, fix $D\cap O=T$ and choose affine coordinates on
$X$ with $Q_D(v)=[v\in D\cap X]=v_1v_2$.  The four octads meeting $O$
in $T$ have restrictions $Q_D+b$, where
\[
 b\in\{0,v_1,v_2,v_1+v_2+1\}.
\]
Use the paired lifts $dq_b$, $(dq_b)o$ for $T$ and $O\setminus T$.
Each $b$ is constant on the affine plane $D\cap X$, so the relevant
Parker associators have even exponent.  Substitution in
\eqref{eq:5.14}, using $q_X=\Omega o$, now gives exactly
\[
                \frac14\bigl((J_4-2I_4)\otimes J_\vth\bigr),
\]
where $J_4$ is the all-one matrix.  Within each tetrad half, the
coefficients are $-1$ on the diagonal and $1$ off it; between the two
halves, they are $-\vth$ on matched pairs and $\vth$ on the other pairs.  Since $(J_4-2I_4)^2=4I_4$ and
$J_\vth\overline{J_\vth}^{\,t}=4I_2$, this block is antiunitary.
This proves antiunitarity on every block.  Lemma~\ref{lem:symmetric-root-maps}
therefore gives $\tau_O^2=1$.
Proposition~\ref{prop:octadic-roots} and
Lemma~\ref{lem:root-covariance} transport these properties to every
octadic root.
\end{proof}

\subsection{A cubic-tensor criterion for root reflections}

Work in an orthonormal complex basis of $A_{\mathbb C}$, and write
\[
 C_{ijk}=\Phi(e_i,e_j,e_k),\qquad
 \|C\|^2=\sum_{i,j,k}|C_{ijk}|^2.
\]
All tensor contractions in this subsection use this Hermitian metric.
Define the symmetric trilinear tensor
\begin{equation}
 K_{pqr}=\sum_{i,j,k,a,b,c}
   \overline{C_{ijk}}\,\overline{C_{abc}}
                 C_{aip}C_{bjq}C_{ckr}.                     \label{eq:quintic-tensor}
\end{equation}
The expression is independent of the orthonormal basis: each internal
index occurs once in a tensor and once in a conjugate tensor.

For $x\in A$ write $L_x(y)=y*x$; although $L_x$ is conjugate-linear,
$L_xL_y$ is $\E$-linear.

\begin{proposition}[Trace form and cubic contraction]\label{prop:CP-tensor-identities}
The multiplication table of the Conway--Parker algebra satisfies
\begin{equation}
                   \|C\|^2=76734=783\cdot98,
                   \qquad K=1002C.                         \label{eq:CP-tensor-identities}
\end{equation}
Moreover the Hermitian form is recovered from multiplication by
\begin{equation}
           \operatorname{Tr}_{\E}(L_xL_y)=98(y,x).
           \label{eq:intrinsic-trace-form}
\end{equation}
\end{proposition}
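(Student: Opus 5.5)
The plan is to prove the three identities in increasing difficulty, beginning with the trace form \eqref{eq:intrinsic-trace-form}, then deducing $\|C\|^2$ from it, and leaving $K=1002C$ for last.

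\textbf{Trace form.} Because $L_xL_y$ is $\E$-linear and $(x,y)\mapsto\operatorname{Tr}(L_xL_y)$ is sesquilinear, linear in $y$ and conjugate-linear in $x$, it suffices to evaluate on the coordinate basis. For $x=u_i$, $y=u_j$ we compute the diagonal entries of $L_{u_i}L_{u_j}$. On $U$ they come from \eqref{eq:2.5}--\eqref{eq:2.6}. On $W$ they come from \eqref{eq:2.7}, whose diagonal entries are $(3/16)^2$ or $(1/16)^2$ according to membership, counted with the numbers $\lambda_1$, $\lambda_2$ and $759$ from Lemma~\ref{lem:Witt-numerology}. The target values are $98/8$ for $i=j$ and $0$ for $i\ne j$. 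For $x=x_d$, $y=x_f$ we must show that the trace vanishes unless $\bar f=\bar d$, and equals $98\cdot(\pm1)$ when $f=\pm d$. The off-octad vanishing follows from the $H$-equivariance of Theorem~\ref{thm:base-action}: the cocode characters separate distinct octads, since a character $\delta$ with $\langle\bar d,\delta\rangle\ne\langle\bar f,\delta\rangle$ exists by the perfect pairing \eqref{eq:1.1}. The mixed $U$--$W$ traces vanish by the same grading argument. For the diagonal value $\operatorname{Tr}L_{x_d}L_{x_d}$, I will sum three contributions. The first is $\|L_{x_d}u_i\|^2$-type terms from \eqref{eq:2.7}. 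The second is $\|x_d*x_d\|$ contributions through \eqref{eq:2.8}. The third is the off-diagonal octad products \eqref{eq:2.9}, counted by the intersection distribution \eqref{eq:Witt-distributions}: there are $280$ octads with product $x_{df}/2$ and $30$ with product $\vth x_{df\Omega}/2$ of norm $3/4$. The $448$ duad-meeting octads contribute zero. The one normalization to watch throughout is the factor $8$ in $(u_i,u_i)=1/8$.

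\textbf{Norm of $C$.} In an orthonormal basis one has $\|C\|^2=\sum_i\operatorname{Tr}(L_{e_i}L_{e_i})$ up to the conjugation conventions. The reason is that $\sum_{j,k}|C_{ijk}|^2$ is exactly the Hilbert--Schmidt norm of $L_{e_i}$, and self-adjointness in the sense of Lemma~\ref{lem:symmetric-root-maps} turns this into a trace of $L_{e_i}L_{e_i}$. Granting \eqref{eq:intrinsic-trace-form}, we obtain $\|C\|^2=98\cdot783$, which gives the first identity.

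\textbf{Cubic contraction.} This step is the real obstacle. My approach is to reduce it to an orbit computation. Both $K$ and $C$ are $\Gamma$-invariant symmetric trilinear forms (in the sense of semilinear invariance), so $K-1002C$ is an $H$-invariant cubic. It is therefore enough to check $K(v_1,v_2,v_3)=1002\,C(v_1,v_2,v_3)$ on representatives of the $H$-orbits of basis triples on which either side can be nonzero. The Lemma~\ref{lem:grading}-type gradings by the cocode kill all triples whose characters do not multiply to the trivial character, so only a short list remains: $(u,u,u)$ with various coincidences, $(u_i,x_d,x_d)$, $(x_d,x_f,x_{df})$ with $|\bar d\cap\bar f|=4$, and the disjoint triple $(x_d,x_f,x_{df\Omega})$. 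For each, I will write $K_{pqr}$ as $\sum\overline{C_{ijk}}M^{(p)}_{ai}\cdots$. Concretely, $K(x,y,z)=\sum_{i,j,k}\overline{C_{ijk}}\,\Phi(\cdot)$-contractions of $L_x$, $L_y$, $L_z$, which is $\operatorname{Tr}$-like expressions of the form $\sum\overline{\Phi(L_xe_i,L_ye_j,L_ze_k)}\ldots$. I then evaluate these using the octad intersection counts of Lemma~\ref{lem:Witt-numerology}, together with the Parker sign rules \eqref{eq:1.3}--\eqref{eq:1.5} to confirm that the signs are coherent, in the same way as in the $28$-hyperplane count of \eqref{eq:5.2a}. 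The disjoint-octad case, where the $\vth$ phases must combine to reproduce $\vth$, is where I expect the bookkeeping to be hardest. Following the paper's pointer, I would organize that case as in Appendix~\ref{app:tensor-contractions}, block by block via the decomposition \eqref{eq:5.15}.
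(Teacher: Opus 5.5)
Your argument for \eqref{eq:intrinsic-trace-form} and for $\|C\|^2$ is the paper's: the slice norms are $98$, distinct slices are orthogonal, and summing the diagonal gives $783\cdot98$. One small correction: to make the octad and mixed traces vanish you need an \emph{even} cocode element with $\langle D+F,\delta\rangle=1$. It exists because $D+F\notin\{0,\one\}$. An odd element acts conjugate-linearly and only shows that the trace is purely imaginary.

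Your reduction of $K=1002C$ to the seven coefficient types by $H$-invariance and the cocode grading is also exactly what the paper does. The gap is in the evaluation, which is the real content of the identity and occupies all of Appendix~\ref{app:tensor-contractions}. You propose to use the Parker rules ``to confirm that the signs are coherent, in the same way as in the $28$-hyperplane count of \eqref{eq:5.2a}.'' That step fails. In $K$ the internal indices run around closed configurations of octads, not inside a single elementary abelian $Q_O$, and the Parker signs do not cancel.
\begin{itemize}
\item For $K_{iDD}$, the associator reduces the four signs to $(-1)^{|D\cap F\cap G|}$. For example, the $280\cdot64$ configurations with pairwise intersections four and triple intersection three enter with sign $-1$.
\item For a triangle $(D,E,F)$, the $WWW$--$WWW$ term carries six signs with product $(-1)^{|E\cap G|/2+|D\cap E\cap H|+|D\cap G\cap H|}$. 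Combined with the powers of $\vth$, this gives weights $\pm1,\pm3,\pm9$.
\end{itemize}
Treating these signs as coherent produces wrong numbers. The unsigned intersection counts of Lemma~\ref{lem:Witt-numerology} cannot evaluate these sums. This difficulty is not confined to the disjoint (trio) case: the sextet-triangle case needs the same signed count.

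What is missing has three parts.
\begin{enumerate}
\item Split each contraction by the $U$/$W$ type of the internal indices. For point indices these are the $UUU$, $UWW$, $WWW$ pieces; for three octad indices the paper writes the contribution as $2A+B+2C_0+D_0$.
\item Compute the Parker sign product for each closed octad configuration from the commutator and associator.
\item Use a concrete Golay model fine enough to count signed configurations. The paper uses the sextet, with common-neighbour counts $1,72,64,3$ by triple intersection. It then uses hexacode coordinates, normalizing sources by hexacode translations, to obtain $16D_0=13634$ and $13738$ for the sextet and trio triangles.
\end{enumerate}
The decomposition \eqref{eq:5.15} you cite does not supply this. It is the grading attached to one octad $O$ and carries no information about signed sums around a sextet triangle or a trio.
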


\begin{proof}
For the norm identity, take $e_i=\sqrt8u_i$ on $U$ and the unit octad
vectors on $W$.  Put $\gamma=\sqrt2/64$ and $\beta=\sqrt2/8$.  The
nonzero coefficient types are
\begin{align}
 C_{ijk}&=\gamma\bigl(-1+16(\delta_{ij}+\delta_{jk}+\delta_{ki})
                         -128\delta_{ij}\delta_{jk}\bigr),
                                                    &&i,\,j,\,k\in I,\notag\\
 C_{iDD}&=\beta\bigl(4[i\in D]-1\bigr),                &&i\in I,\ D\in\Ocal,
                                                            \label{eq:orthonormal-cubic-table}\\
 C_{DEF}&=\tfrac12\sigma\quad\hbox{or}\quad
                      -\tfrac12\vth\sigma,\notag
\end{align}
Here a sextet triangle means three distinct octads with $D+E+F=0$;
they meet pairwise in four points.  The last alternative is a partition
of $I$ into three octads, or \emph{trio}; $\sigma$ is the corresponding
Parker sign.
The squared norm of the slice with first index $i\in I$ is
\[
 \frac{81^2+69\cdot15^2+506}{2048}
       +\frac{253\cdot9+506}{32}
          =\frac{353+2783}{32}=98.
\]
For a first index $D\in\Ocal$, it is
\[
             \frac{2(8\cdot9+16)}{32}
                    +\frac{280}{4}+\frac{30\cdot3}{4}=98.
\]
The distinct slices are orthogonal as well.  The even cocode characters
separate all octad slices from each other and from the point slices.
For two distinct point indices, the inner product of the slices is
\[
 \frac{2(-81)(15)+22\cdot15^2+2\cdot15^2-88\cdot15+22\cdot21}{2048}
       -\frac{33}{32}=0;
\]
here the last term uses
$\sum_D(4[i\in D]-1)(4[j\in D]-1)=-33$, obtained from
$\lambda_0=759$, $\lambda_1=253$, and $\lambda_2=77$.
Consequently $\sum_{a,b}C_{iab}\overline{C_{jab}}=98\delta_{ij}$.
Trace is unchanged by scalar extension, so this proves
\eqref{eq:intrinsic-trace-form} for $x$, $y\in A$; summing its diagonal gives
$\|C\|^2=783\cdot98$.

Both $C$ and $K$ are invariant under $H$, with the same conjugation
rule for semilinear elements.  The even cocode transformations force
the sum of the octad labels of a nonzero coefficient to be $0$ or
$\one$.  The remaining coefficient positions are therefore exactly
those in \eqref{eq:orthonormal-cubic-table}.  The coordinate action
identifies their ratios within seven types: three equality patterns
on $U$, two incidences of a point and an octad, and the two octad-triangle
types.  The odd cocode transformations impose the same real or
$\vth$-phase as in $C$.  Appendix~\ref{app:tensor-contractions} evaluates these contractions
from the Witt design and the Parker associator.  In particular, the two
signed octad sums are derived there by the six-tetrad description of the
Golay code.
Every one of the seven ratios is $1002$.
\end{proof}

\begin{corollary}\label{cor:intrinsic-form}
Every $\E$-linear or $\E$-conjugate-linear algebra automorphism of $A$
is respectively unitary or antiunitary.
\end{corollary}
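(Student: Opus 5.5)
The plan is to use the intrinsic trace-form identity \eqref{eq:intrinsic-trace-form} of Proposition~\ref{prop:CP-tensor-identities}, which recovers the Hermitian form from the multiplication alone:
\[
 (y,x)=\tfrac1{98}\operatorname{Tr}_\E(L_xL_y).
\]
Let $g$ be an $\E$-linear or $\E$-conjugate-linear algebra automorphism of $A$, and write $L_x(y)=y*x$ as before.

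First, multiplicativity $g(y*x)=g(y)*g(x)$ gives $gL_x=L_{g(x)}g$. Hence
\[
 L_{g(x)}L_{g(y)}=gL_xL_yg^{-1}
\]
as maps of $A$. Here $L_xL_y$ is $\E$-linear.

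Second, I compute traces. If $g$ is $\E$-linear, conjugation by $g$ preserves $\operatorname{Tr}_\E$, so
\[
 \operatorname{Tr}_\E(L_{g(x)}L_{g(y)})=\operatorname{Tr}_\E(L_xL_y),
 \qquad\text{hence}\qquad (g(y),g(x))=(y,x),
\]
and $g$ is unitary. If $g$ is conjugate-linear, pick any $\E$-basis $e_k$ and write $T=L_xL_y$ with matrix $M$. Then $gTg^{-1}$ has matrix $\overline{M}$ in the basis $g(e_k)$. Indeed, $gTg^{-1}(g e_k)=g(Te_k)=g\bigl(\sum_l M_{lk}e_l\bigr)=\sum_l\overline{M_{lk}}\,g(e_l)$. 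So $\operatorname{Tr}(gTg^{-1})=\overline{\operatorname{Tr}T}$, which gives
\[
 (g(y),g(x))=\overline{(y,x)},
\]
and $g$ is antiunitary.

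The only point needing care is the one just checked: the trace identity is stated over $\E$ for $x,y\in A$, and a semilinear conjugation conjugates traces. No nondegeneracy or scale issue arises, because the constant $98$ is nonzero and identical on both sides. I expect no real obstacle here; the substantive input is the already established identity \eqref{eq:intrinsic-trace-form}.
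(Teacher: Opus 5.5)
Your proposal is correct and is the paper's proof: both derive $L_{g(x)}L_{g(y)}=gL_xL_yg^{-1}$ from multiplicativity and then apply \eqref{eq:intrinsic-trace-form}, with the trace preserved when $g$ is $\E$-linear and conjugated when $g$ is conjugate-linear. Your basis computation in the conjugate-linear case just writes out the step the paper states in one line.
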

\begin{proof}
For an automorphism $g$, multiplicativity gives
\[
                 L_{g(x)}L_{g(y)}=gL_xL_yg^{-1}.
\]
The trace on the right is $\operatorname{Tr}_{\E}(L_xL_y)$ when $g$ is
$\E$-linear, and its conjugate otherwise.
Apply \eqref{eq:intrinsic-trace-form}.
\end{proof}

\begin{theorem}[Tensor criterion]
\label{thm:root-tensor-criterion}
Let $r\in A$ satisfy $(r,r)=9$ and $r*r=10r$.  If $\tau_r$ is
antiunitary, then it is an involutory algebra automorphism.
\end{theorem}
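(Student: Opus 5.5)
Involutority is already settled: by Lemma~\ref{lem:symmetric-root-maps}, antiunitarity of $\tau_r$ gives $\tau_r^2=1$. What remains is multiplicativity,
\[
 \tau_r(x*y)=\tau_r(x)*\tau_r(y).
\]
I plan to recast this as the vanishing of a norm.

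Since $\tau_r$ is conjugate-linear and $*$ is conjugate-bilinear, the defect
\[
 \Delta(x,y,z)=(\tau_r(x*y),z)-(\tau_r x*\tau_r y,z)
\]
defines a tensor on $A_{\mathbb C}$. Working in an orthonormal basis, I write $\tau_r$ as an antiunitary matrix $T$, so that $\tau_r(x)=T\bar x$. Then $\Delta$ becomes a difference of two cubic tensors: the first is $C$ contracted once with $T$, the second is $C$ contracted with $T$ in the other two slots. It suffices to show $\|\Delta\|^2=0$.

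Expanding $\|\Delta\|^2$ gives three pieces.
\begin{enumerate}[label=\textup{(\roman*)},leftmargin=2.4em]
\item The squared norm of $\tau_r(x*y)$ paired against $z$. Because $T$ is unitary, this equals $\|C\|^2=76734$ by Proposition~\ref{prop:CP-tensor-identities}.
\item The squared norm of $\tau_rx*\tau_ry$. This likewise equals $\|C\|^2$, since $T$ acts unitarily on the two input slots.
\item The cross term. Here I substitute the explicit form
\[
 \tau_r(x)=x*r-(r,x)r,\qquad T=\text{(contraction of $C$ with $r$)}-r\otimes\bar r .
\]
The result is a polynomial in $C$, $\bar C$, and $r$, in which each $T$ contributes one $C$ contracted with $r$.
\end{enumerate}

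The cross term contains $T$ three times in total and $C$ twice more. The dominant part is therefore the full quintic-type contraction. I expect it to reduce to $K$ evaluated on $(r,r,r)$. By $K=1002C$ from Proposition~\ref{prop:CP-tensor-identities}, this equals $1002\,\Phi(r,r,r)=1002(r,r*r)=1002\cdot 90$.

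The remaining pieces involve the rank-one parts $r\otimes\bar r$. Each of them should reduce, using only $r*r=10r$, $(r,r)=9$, and cubic symmetry (Proposition~\ref{prop:cubic}), to one of three kinds of quantity:
\begin{enumerate}[label=\textup{(\alph*)},leftmargin=2.4em]
\item $\operatorname{Tr}(L_rL_r)=98\cdot 9$, by \eqref{eq:intrinsic-trace-form};
\item traces such as $\operatorname{Tr}(L_rL_rL_rL_r)$, which I would instead express through the contraction $\sum C\bar C C\bar C$ with $r$ inserted, and again control via $K$ or via $\tau_r^2=1$;
\item scalars such as $(r*r,r*r)=900$ and $\Phi(r,r,r)=90$.
\end{enumerate}
For the quartic trace I would use $\tau_r^2=1$ directly. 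It gives
\[
 L_r\overline{L_r}=1+(\text{rank-one corrections in } r),
\]
from which $\operatorname{Tr}(L_r^4)$ follows in closed form. Every term should thus become an explicit number, and the plan is to verify that they combine to $2\cdot76734-2\operatorname{Re}(\text{cross})=0$.

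The main obstacle is the bookkeeping of the cross term. Roughly eight contraction diagrams arise from expanding the three copies of $T$. Two points need care. First, the conjugations must land so that each diagram is genuinely $K$, $\|C\|^2$, or a trace of $L_rL_r$. Second, the identities $\tau_r^2=1$ and antiunitarity must be used to collapse terms in which two $C$'s with $r$ insertions meet.

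I would first compute the single diagram in which all three $T$'s are the $C$-contraction part, and check that it yields $1002\cdot90$. The numerical coincidence to aim for is then
\[
 1002\cdot 90 \;+\; (\text{lower terms}) \;=\; 76734 .
\]
If the constants fail to balance, I would fall back on a Cauchy--Schwarz form of the argument. This would show only that $\operatorname{Re}$ of the cross term equals $\|C\|^2$, which forces the two tensors to be equal and hence forces $\Delta=0$.
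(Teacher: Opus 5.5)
Your plan is essentially the paper's proof: since $T$ is symmetric, your cross term is exactly the paper's $M=\sum C_{ijk}C_{abc}(T_r)_{ai}(T_r)_{bj}(T_r)_{ck}$, and expanding $T_r=R_r-P_r$ gives $\overline{K(r,r,r)}=90180$, three one-$P_r$ terms each equal to $\operatorname{tr}\bigl((R_r\overline{R_r})^2\bigr)=10782$ (from $R_r\overline{R_r}=I+11r\bar r^{\,t}$, i.e.\ $\tau_r^2=1$), three two-$P_r$ terms each equal to $9000$, and one three-$P_r$ term equal to $8100$; with alternating signs these combine to $76734=\|C\|^2$. Your item (a) never occurs, and the Cauchy--Schwarz ``fallback'' is not an alternative route but the same numerical identity, so it is not needed.
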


\begin{proof}
Regard $r$ as a column vector in the orthonormal basis above.  Write
\[
 (R_r)_{ij}=\sum_p\overline{C_{ijp}}\,\bar r_p,
       \qquad P_r=rr^t,\qquad T_r=R_r-P_r.
\]
Thus $x*r=R_r\bar x$ and $\tau_r(x)=T_r\bar x$.  Both $R_r$ and $T_r$ are
symmetric.  Lemma~\ref{lem:symmetric-root-maps} gives $\tau_r^2=1$,
so $T_r\overline{T_r}=I$.  The root equation is $R_r\bar r=10r$; expanding
$T_r\overline{T_r}=I$ gives
\begin{equation}
                         R_r\overline{R_r}=I+11r\bar r^{\,t}.           \label{eq:root-R-square}
\end{equation}
On $A_{\mathbb C}$ its eigenvalues are $100$ on $\mathbb Cr$ and $1$ on
the orthogonal complement.

Let $C^{\tau_r}$ be the tensor of
$\overline{\Phi(\tau_rx,\tau_ry,\tau_rz)}$.  It has the same norm as
$C$.  The Hermitian inner product of these two tensors is
\[
 M=\sum_{i,j,k,a,b,c}C_{ijk}C_{abc}(T_r)_{ai}(T_r)_{bj}(T_r)_{ck}.
\]
Expand each $T_r=R_r-P_r$.  The term with no $P_r$ is
$\overline{K(r,r,r)}=1002\cdot90=90180$ by
Proposition~\ref{prop:CP-tensor-identities}; this scalar value is the
only use of $K=1002C$ in the argument.  The identity supplies this value
for every root $r$ with antiunitary root map, including roots not yet
known to lie in the constructed families.  Each term with one $P_r$ is
\[
             \operatorname{tr}\bigl((R_r\overline{R_r})^2\bigr)
                               =10000+782=10782.
\]
Each term with two $P_r$'s is
$10^2\bar r^{\,t}R_r\bar r=10^3(r,r)=9000$, and the term with three is
$\Phi(r,r,r)^2=90^2=8100$.  Therefore
\begin{equation}
 M=90180-3\cdot10782+3\cdot9000-8100
                                =76734=\|C\|^2.              \label{eq:cubic-defect-zero}
\end{equation}
It follows that $\|C^{\tau_r}-C\|^2=2\|C\|^2-2\operatorname{Re}M=0$.
Thus $\Phi(\tau_rx,\tau_ry,\tau_rz)=\overline{\Phi(x,y,z)}$.
Together with antiunitarity and nondegeneracy of the Hermitian form,
this is precisely $\tau_r(x*y)=\tau_r(x)*\tau_r(y)$.
\end{proof}

\begin{corollary}\label{cor:octadic-automorphism}
Every octadic root map $\tau_{r_{F,\chi}}$ is an antiunitary involutory
algebra automorphism.
\end{corollary}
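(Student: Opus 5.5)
The corollary follows by combining Theorem~\ref{thm:root-tensor-criterion} with the octadic computations already made; no new calculation is needed.

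First, fix an octad $F$ and a character $\chi\in B_F^*$. By Proposition~\ref{prop:octadic-roots}, the vector $r=r_{F,\chi}$ satisfies the root equations
\[
(r,r)=9,\qquad r*r=10r.
\]
By Proposition~\ref{prop:octadic-involution}, the map $\tau_r$ is a conjugate-linear antiunitary involution. For $r_F=r_{F,1}$ this comes from the block decomposition \eqref{eq:5.15} together with Lemma~\ref{lem:symmetric-root-maps}. For general $(F,\chi)$ it comes from Lemma~\ref{lem:root-covariance}, which applies because $H$ is transitive on pairs $(O,\chi)$ and acts by unitary or antiunitary algebra automorphisms (Theorem~\ref{thm:base-action} and Proposition~\ref{prop:cubic}).

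These are exactly the hypotheses of Theorem~\ref{thm:root-tensor-criterion}. That theorem then shows $\tau_r$ preserves the cubic form up to conjugation, so $\tau_r$ is an involutory algebra automorphism. Its only external input is the scalar identity $K(r,r,r)=1002\,\Phi(r,r,r)$, supplied by Proposition~\ref{prop:CP-tensor-identities}, and that identity holds for every $r\in A$.

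An alternative route is to prove only the case $r_O$ with $\chi=1$ via the tensor criterion. One then transports multiplicativity to all octadic roots using $\tau_{g(r)}=g\tau_rg^{-1}$ with $g\in H$, since a conjugate of an automorphism by an automorphism is again an automorphism. Either route works.

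The one point to check is that antiunitarity really holds in the form Theorem~\ref{thm:root-tensor-criterion} requires. That theorem invokes Lemma~\ref{lem:symmetric-root-maps} to convert antiunitarity into $T_r\overline{T_r}=I$, and antiunitarity was established blockwise in Proposition~\ref{prop:octadic-involution}. So there is no real obstacle: the corollary is a direct application of the theorem, with the heavy lifting already done in the block decomposition and in the contraction identity $K=1002C$ of Appendix~\ref{app:tensor-contractions}.
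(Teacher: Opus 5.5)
Your proposal is correct and matches the paper's proof: Proposition~\ref{prop:octadic-roots} supplies the root equations, Proposition~\ref{prop:octadic-involution} supplies antiunitarity (transported to every $(F,\chi)$ by $H$-covariance), and Theorem~\ref{thm:root-tensor-criterion} upgrades this to an involutory algebra automorphism. One minor correction: the criterion also uses the norm identity $\|C\|^2=76734$ from Proposition~\ref{prop:CP-tensor-identities}, not only the scalar value $K(r,r,r)=1002\,\Phi(r,r,r)$.
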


\begin{proof}
Apply Theorem~\ref{thm:root-tensor-criterion} to
Propositions~\ref{prop:octadic-roots} and~\ref{prop:octadic-involution}.
\end{proof}

\subsection{The absolute bound for reflecting rays}

Call a root \emph{reflecting} if its root map is antiunitary.  By
Theorem~\ref{thm:root-tensor-criterion}, it is then an involutory algebra
automorphism.

\begin{lemma}
\label{lem:reflecting-root-pairing}
If $r$, $s$ are reflecting roots on distinct rays, then
\begin{equation}
                              (r,s)\in\{0\}\cup\mu_3.         \label{eq:forced-root-pairing}
\end{equation}
\end{lemma}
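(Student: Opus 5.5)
The plan is to extract scalar constraints on $c=(r,s)$ from the root equations together with antiunitarity of $\tau_r$ and $\tau_s$. I will first show $|c|\le1$, and then pin down $|c|\in\{0,1\}$ and the phase from one further polynomial identity. Only the first step is routine; the second is where I am least sure of the method.

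\textbf{Step 1: the bound $|c|\le1$.} By Proposition~\ref{prop:cubic} and the root equation $s*s=10s$,
\[
 (s,\tau_r s)=(s,s*r)-\overline{(r,s)}\,(s,r)=\Phi(r,s,s)-\bar c^{\,2}=10c-\bar c^{\,2}.
\]
Since $\tau_r$ is antiunitary, $\tau_r s$ has norm $9$, so Cauchy--Schwarz gives $|10c-\bar c^{\,2}|\le 9$. Write $\rho=|c|$. Then
\[
 |10c-\bar c^{\,2}|=\rho\,|10-\rho e^{-3i\theta}|\ge \rho(10-\rho),
\]
which forces $\rho\le1$ or $\rho\ge9$. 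If $\rho=9$, equality in Cauchy--Schwarz for $r,s$ gives $s=\lambda r$ with $|\lambda|=1$. The root equation then gives $\bar\lambda^2=\lambda$, so $\lambda\in\mu_3$ and $s$ lies on the ray of $r$, which is excluded. Hence $\rho\le1$. Equality $\rho=1$ together with $c^3=1$ is exactly the situation of Lemma~\ref{lem:nonorthogonal-rigidity}.

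\textbf{Step 2: excluding $0<\rho<1$ and fixing the phase.} I would work with $R_r$, using \eqref{eq:root-R-square}. On $r^\perp$, the map $x\mapsto x*r$ agrees with $\tau_r$ and is an antiunitary involution. Decompose $s=\tfrac{\bar c}{9}\,r+s'$ with $s'\perp r$. Then compute $\|r*s\|^2=(r,(R_s\overline{R_s})r)=9+11\rho^2$ in two ways: once from this formula, and once from $r*s=\tau_s r+\bar c\,s$, using $(s,\tau_s r)=10\bar c-c^2$ (the same computation as in Step~1 with $r,s$ swapped). Comparing the two expressions gives a real identity in $\rho$ and $\operatorname{Re}(c^3)$.

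To obtain a second, independent identity, I would evaluate the quintic contraction of Proposition~\ref{prop:CP-tensor-identities} on $(r,r,s)$. Expand $K(r,r,s)$ through $R_r$ and $R_s$ and use $R_r\overline{R_r}=I+11r\bar r^{\,t}$ and its analogue for $s$, exactly as in the proof of Theorem~\ref{thm:root-tensor-criterion}. Then compare with $1002\,\Phi(r,r,s)=10020\,\bar c$ (up to conjugation conventions). Together with the first identity, this should be a polynomial system in $c,\bar c$. I expect its only solutions with $\rho\le1$ to be $c=0$ and $c^3=1$.

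\textbf{Main obstacle.} The hard step is Step~2. The contraction $K(r,r,s)$ contains mixed terms such as $\operatorname{tr}(R_r\overline{R_s}R_r\overline{R_s})$ that the root equations alone do not obviously determine. If that evaluation does not close, I would try positivity instead. The Gram matrix of $r$, $s$, $\tau_rs$, $\tau_sr$, with entries $(r,\tau_r s)=\bar c$ and $(s,\tau_r s)=10c-\bar c^{\,2}$ and so on, must be positive semidefinite. I would check whether its determinant inequalities already exclude $0<\rho<1$, and whether equality at $\rho=1$ forces $c^3=1$. If even this fails, the fallback is to iterate: $\tau_r s$ is again a reflecting root by Theorem~\ref{thm:root-tensor-criterion}, so applying Step~1 to the pair $(s,\tau_r s)$, whose pairing is $10c-\bar c^{\,2}$, gives $|10c-\bar c^{\,2}|\in[0,1]\cup\{9\}$. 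This should shrink the admissible region to $\{0\}\cup\mu_3$.
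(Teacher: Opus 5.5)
Your Step~1 is correct. At $\rho=1$ it even gives $c^3=1$, since $|10-e^{-3i\theta}|\le 9$ forces $e^{-3i\theta}=1$. The real content of the lemma, however, is excluding $0<|c|<1$, and none of your Step~2 routes does this.

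\begin{itemize}
\item \emph{The norm comparison is a tautology.} Since $\tau_s(s)=s$, one has $(s,\tau_s r)=(\tau_s s,\tau_s r)=c$; the value $10\bar c-c^2$ you used is $(r,\tau_s r)$. With the correct value, both computations give $\|r*s\|^2=9+11\rho^2$.
\item \emph{Positivity cannot help for small $\rho$.} The vectors $\tau_r s=t-cr$ and $\tau_s r=t-\bar c\,s$ lie in the span of $r$, $s$, $t=r*s$. The Gram matrix of these three has diagonal $9,9,9+11\rho^2$ and off-diagonal entries $(r,s)=c$, $(r,t)=10\bar c$, $(s,t)=10c$, so it is positive definite near $c=0$.
\item \emph{The $K(r,r,s)$ route} is left unevaluated.
\item \emph{Iterating Step~1 along $(r,s)\to(s,\tau_r s)\to\cdots$ cannot close.} Take real $c=5-2\sqrt6$. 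Then $10c-\bar c^{\,2}=1\in\mu_3$. The next pairing is $10\cdot1-1=9$, i.e.\ $\tau_s\tau_r s=\tau_r s$, exactly as Lemma~\ref{lem:nonorthogonal-rigidity} predicts. Every modulus constraint $|c_k|\in[0,1]\cup\{9\}$ along the chain is satisfied.
\end{itemize}

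The missing idea is to use multiplicativity of $\tau_r$ as an exact identity in the algebra, not through norms.

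\begin{itemize}
\item Put $t=r*s$. By \eqref{eq:root-R-square}, $t*r=s+11\bar c\,r$ and $t*s=r+11c\,s$.
\item Since $\tau_r$ is an algebra automorphism (Theorem~\ref{thm:root-tensor-criterion}), $\tau_r(s)=t-cr$ satisfies $(t-cr)*(t-cr)=10(t-cr)$. This gives $t*t=10t+2\bar c\,s+(12\bar c^{\,2}-10c)r$.
\item Interchanging $r$ and $s$ replaces $c$ by $\bar c$ and fixes $t$, which gives a second expression for the same vector $t*t$.
\item Subtracting yields $(\bar c^{\,2}-c)r+(\bar c-c^2)s=0$.
\item Since $r$, $s$ lie on distinct rays they are linearly independent, so $\bar c^{\,2}=c$. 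Hence $c=0$ or $c^3=1$.
\end{itemize}

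This is the paper's argument, and it makes your Step~1 unnecessary.

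Your chain could be rescued only by bringing in comparable product information. For example, $\|r*s\|^2=9+11\rho^2$, $(r,t)=10\bar c$ and $(s,t)=10c$ give $(r,\tau_s\tau_r s)=9-9\rho^2+c^3$. At $c=5-2\sqrt6$ this is about $8.91$, which Step~1 applied to that pair of reflecting roots forbids.
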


\begin{proof}
Put $c=(r,s)$ and $t=r*s$.  From $\tau_r^2(s)=s$, $\tau_r(r)=r$, and
$(r,t)=\Phi(r,r,s)=10\bar c$, one obtains
\[
                t*r=s+11\bar c\,r,\qquad t*s=r+11c\,s.
\]
Because $\tau_r$ preserves the root equation of $s$, the vector
$\tau_r(s)=t-cr$ has square $10(t-cr)$.  Expanding this equation gives
\[
        t*t=10t+2\bar c\,s+(12\bar c^{\,2}-10c)r.
\]
Interchanging $r$ and $s$ gives
\[
        t*t=10t+2c\,r+(12c^2-10\bar c)s.
\]
Subtract the two expressions:
\begin{equation}
              (\bar c^{\,2}-c)r+(\bar c-c^2)s=0.             \label{eq:pairing-polynomial}
\end{equation}
The vectors are linearly independent.  Indeed, proportional norm-$9$
roots differ by a scalar of modulus one, and their square equations
force that scalar to have cube one.  Therefore $\bar c^{\,2}=c$ and
$c^2=\bar c$.  Either $c=0$, or $c^3=1$, as asserted.
\end{proof}

\begin{lemma}[Quadratic bound]\label{lem:quadratic-unisolvence}
Let $v_1$, $\ldots$, $v_N$ lie in a positive-definite Hermitian space of
complex dimension $m$, with $(v_i,v_i)=a>1$ and
$(v_i,v_j)\in\{0\}\cup\mu_3$ for $i\ne j$.  Then their symmetric
squares are linearly independent, and
\begin{equation}
                              N\leq\binom{m+1}{2}.            \label{eq:quadratic-absolute-bound}
\end{equation}
\end{lemma}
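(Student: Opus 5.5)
The plan is the standard Gram-matrix argument for absolute bounds, applied in the symmetric square. For each $i$, let $w_i=v_i\otimes v_i$ lie in $\operatorname{Sym}^2$ of the space, which has complex dimension $\binom{m+1}{2}$. With the induced Hermitian form we have
\[
 (w_i,w_j)=(v_i,v_j)^2 .
\]
So the Gram matrix $G$ of the $w_i$ has diagonal entries $a^2$. Its off-diagonal entries are either $0$ or $c^2$ with $c\in\mu_3$, and in the latter case $c^2$ is again a cube root of unity. Hence every off-diagonal entry has modulus $0$ or $1$. It suffices to show that $G$ is nonsingular: then the $w_i$ are linearly independent and $N\le\binom{m+1}{2}$.

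Positive semidefiniteness of $G$ alone is not enough, so we need a comparison with the Gram matrix $G_1=((v_i,v_j))$ of the original vectors. I would use the entrywise relation
\[
 (v_i,v_j)^2=\overline{(v_i,v_j)}\quad\text{for }i\ne j .
\]
This holds because $c^2=\bar c$ for $c\in\mu_3$, and it also holds trivially for $c=0$. Consequently
\[
 G=\overline{G_1}+(a^2-a)I .
\]
The matrix $\overline{G_1}$ is the Gram matrix of the conjugate vectors, so it is positive semidefinite. Since $a>1$ gives $a^2-a>0$, $G$ is the sum of a positive semidefinite matrix and a positive scalar matrix, hence positive definite. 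Therefore the $w_i$ are linearly independent, and the bound follows because all $w_i$ lie in a space of dimension $\binom{m+1}{2}$.

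The main point to check is this identity $c^2=\bar c$ on $\{0\}\cup\mu_3$. It is exactly what turns the quadratic Gram matrix into a shifted conjugate of the linear one, and it is where the hypothesis $a>1$ enters; the rest is routine. I also need to verify that the Hermitian form on $\operatorname{Sym}^2$, or on $V\otimes V$ (which suffices since the $w_i$ lie in the symmetric part), satisfies $(x\otimes x,y\otimes y)=(x,y)^2$ and is positive definite. Both hold for the tensor-product Hermitian form. In the paper's application one takes $m=783$, $a=9$, and the pairings from Lemma~\ref{lem:reflecting-root-pairing}, which gives $N\le 306936$.
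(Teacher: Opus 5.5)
Your proposal is correct and is exactly the paper's proof: the Gram matrix of the symmetric squares is the entrywise square $G\circ G=(a^2-a)I+\overline G$, because $z^2=\bar z$ on $\{0\}\cup\mu_3$, and it is positive definite since $\overline G$ is positive semidefinite and $a^2-a>0$. One small correction to your wording: the hypothesis $a>1$ enters only through $a^2-a>0$, not through the identity $c^2=\bar c$.
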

\begin{proof}
Let $G=((v_i,v_j))$.  The Gram matrix of the symmetric squares is
\[
                  G\circ G=(a^2-a)I+\overline G>0,
\]
since $z^2=\bar z$ for $z\in\{0\}\cup\mu_3$.  This proves independence
in $\operatorname{Sym}^2(\mathbb C^m)$ and hence the bound.
\end{proof}

Applied to reflecting roots in $A\subset A_{\mathbb C}$, these lemmas
give at most
$\binom{784}{2}=306936$ rays.  We next construct enough duadic roots
to attain this bound.

\subsection{Duadic roots from one chosen octad pair}

For a duad $p\subset I$ put
\begin{equation}
 \C_p=\{c\in\C:c\cap p=\varnothing\},\qquad
 \Xi_p=\{c\in\C_p:|c|=8\text{ or }16\}.                    \label{eq:6.1}
\end{equation}
Then $\dim\C_p=10$, and $\Xi_p$ contains $330$ words of weight $8$ and
$77$ of weight $16$.  Write $\C_p^*=\operatorname{Hom}(\C_p,\{\pm1\})$.
For the moment choose just one pair of octads $F$, $F'$ with $F\cap F'=p$.
No independence of this choice is assumed.

\begin{lemma}\label{lem:shortened-duad}
For such a pair,
\begin{equation}
                          \C_p=B_F\oplus B_{F'}.              \label{eq:6.2}
\end{equation}
\end{lemma}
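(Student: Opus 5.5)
The plan is to prove two inclusions and then compare dimensions.

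For the inclusion $B_F+B_{F'}\subseteq\C_p$: a word of $B_F$ is disjoint from $F$, and $p\subset F$, so it is disjoint from $p$. The same argument applies to $B_{F'}$ with $p\subset F'$. Since $\C_p$ is a subspace, it contains $B_F+B_{F'}$.

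Next I show that the sum is direct. Take $c\in B_F\cap B_{F'}$. Then $c$ is disjoint from $F\cup F'$, which has $8+8-2=14$ points, so the support of $c$ lies in a $10$-point set. The nonzero Golay words have weight $8$, $12$, $16$ or $24$, so the only possibility is that $c$ is an octad inside the complement $(F\cup F')^c$. I claim this cannot happen, and will check this with the Golay code itself. The word $F+F'$ has weight $12$, so it is a dodecad, and its complement $\one+F+F'$ is also a dodecad. That complement equals $p\cup (F\cup F')^c$, which has $2+10=12$ points. If an octad $D$ lay inside $(F\cup F')^c$, then $D+\one+F+F'$ would have weight $12-8=4$, which is impossible. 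Hence $B_F\cap B_{F'}=0$.

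For the dimensions: $\dim B_F=\dim B_{F'}=5$ by the identification $B_O=RM(1,4)$, so $\dim(B_F\oplus B_{F'})=10$. The statement just before the lemma asserts $\dim\C_p=10$. I will justify this rather than just cite it. The restriction map $\C\to\F_2^p$ is onto, because $\C$ is self-dual and no nonzero Golay word has support of size at most $2$, so $\F_2^p$ contains no nonzero cocode-trivial vector. Its kernel is therefore $\C_p$, with $\dim\C_p=12-2=10$. Equal dimensions together with the inclusion give the equality $\C_p=B_F\oplus B_{F'}$.

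The only step that needs any care is the directness of the sum, and it reduces to the weight-$4$ exclusion above. Everything else is dimension counting from facts already stated in the paper.
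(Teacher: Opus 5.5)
Your proof is correct and is essentially the paper's argument. The paper also reduces directness to ruling out an octad $D$ in the ten-set $I\setminus(F\cup F')$, noting that $F+F'+D$ would have weight $20$; that word is the complement of your weight-$4$ word $D+\one+F+F'$, and the same observation reappears as Lemma~\ref{lem:no-octad-dodecad}. Your explicit checks of the inclusion $B_F+B_{F'}\subseteq\C_p$ and of $\dim\C_p=10$ just fill in steps the paper leaves implicit.
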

\begin{proof}
Both summands have dimension five.  A nonzero word in their intersection
would be an octad $D$ supported on the ten-set $I\setminus(F\cup F')$.
Then $F+F'+D$ would have weight $20$, which does not occur in the Golay
code.  The intersection is zero, proving the assertion.
\end{proof}

Fix a section $s:\C\to P$ and put
\begin{equation}
 z_c=\begin{cases}x_{s(c)},&|c|=8,\\
                  \vth x_{s(c)\Omega},&|c|=16.
       \end{cases}                                           \label{eq:6.4}
\end{equation}
For a cocode element $\delta$,
\begin{equation}
                    T_\delta(z_c)=(-1)^{\langle c,\delta\rangle}z_c.
                                                               \label{eq:6.5}
\end{equation}
In the second case conjugation of $\vth$ cancels the extra character
of the all-one word.

\begin{lemma}\label{lem:no-octad-dodecad}
No octad is contained in a Golay dodecad.  If $F\cap F'$ is a duad,
there is no octad disjoint from $F\cup F'$.
\end{lemma}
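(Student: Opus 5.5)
For the first assertion I will argue by weights in the Golay code. Suppose an octad $D$ lies inside a dodecad $C$. Then $C+D$ is a Golay word, and since $D\subseteq C$ it has weight $|C|-|D|=12-8=4$. The minimum weight of $\C$ is eight, so this is impossible. The only input is that $\C$ has no nonzero words of weight less than $8$.

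For the second assertion, let $F$ and $F'$ be octads with $|F\cap F'|=2$. Then $|F\cup F'|=14$, so the complement $R=I\setminus(F\cup F')$ is a $10$-set. An octad $D$ disjoint from $F\cup F'$ would therefore have to lie inside $R$.

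I see two quick arguments that rule this out, and I would give the first.

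\begin{enumerate}
\item The word $F+F'$ has weight $8+8-4=12$ and support $F\triangle F'$. The sets $F$, $F'$ and $D$ are pairwise disjoint except for $F\cap F'$, and $D$ avoids both of them. Hence $F+F'+D$ is a Golay word of weight $12+8=20$. No Golay word has weight $20$, since the weights are $0,8,12,16,24$. This is the same argument used in the proof of Lemma~\ref{lem:shortened-duad}.
\item Alternatively, $F+F'$ is a dodecad, so $\one+F+F'$ is also a dodecad. Its support is the complement of $F\triangle F'$, namely $R\cup p$ with $p=F\cap F'$. Then $D\subseteq R\subseteq R\cup p$ would be an octad inside a dodecad, contradicting the first assertion.
\end{enumerate}

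Either route closes the proof. There is no real obstacle here; the only step to check is the support computation, that $D$ avoids $F\cup F'$ so the weights simply add.
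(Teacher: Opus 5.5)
Your proposal is correct. The first assertion is argued exactly as in the paper. For the second, the paper uses your alternative (2): the complementary dodecad $\one+F+F'$ contains $I\setminus(F\cup F')$, so the second assertion follows from the first. Your preferred route (1) is the weight-$20$ argument that the paper uses in the proof of Lemma~\ref{lem:shortened-duad}. The two routes are equivalent, since adding $\one$ turns a weight-$20$ word into a weight-$4$ word.
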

\begin{proof}
Subtracting a contained octad from a dodecad would give a word of weight
four.  For the second assertion the complement of $F+F'$ is a dodecad
containing $I\setminus(F\cup F')$.
\end{proof}

\begin{lemma}\label{lem:orthogonal-octadic-pairs}
If $F\cap F'=p$, then $(r_{F,\chi},r_{F',\chi'})=0$ for all phases.
\end{lemma}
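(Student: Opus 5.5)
The plan is to compute the Hermitian product directly from \eqref{eq:5.1} and \eqref{eq:4.1}, splitting it into the $U$-part and the $W$-part, since $U\perp W$.

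\emph{The $U$-part.} The $U$-component of $r_{F,\chi}$ is $\frac12\bigl(-s_F+s_{F^c}\bigr)$, and similarly for $F'$. With $(u_i,u_j)=\frac18\delta_{ij}$, the product is
\[
\frac1{32}\sum_{i\in I}\eps_F(i)\eps_{F'}(i),
\]
where $\eps_F=-1$ on $F$ and $+1$ off $F$. The point sets are as follows:
\begin{itemize}
\item $|F\cap F'|=2$ points with sign $+$;
\item $12$ points in exactly one of $F,F'$, with sign $-$;
\item $10$ points outside both, with sign $+$.
\end{itemize}
The sum is $2-12+10=0$, so the $U$-part vanishes.

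\emph{The $W$-part.} Octad vectors are orthogonal unless they lie over the same octad, so only octads occurring in both roots contribute. The octads in the support of $r_{F,\chi}$ are:
\begin{itemize}
\item $F$ itself;
\item the thirty octads of $\Hcal_F$, i.e.\ the octads disjoint from $F$.
\end{itemize}
The same holds for $F'$. We check that the two supports share no octad.
\begin{itemize}
\item $F\ne F'$.
\item $F$ is not disjoint from $F'$, since they meet in $p$; symmetrically $F'$ is not disjoint from $F$.
\item A common octad would have to be disjoint from both $F$ and $F'$, hence contained in $I\setminus(F\cup F')$. Lemma~\ref{lem:no-octad-dodecad} excludes this.
\end{itemize}
Hence every octad line in one root is orthogonal to every octad line in the other, whatever the Parker signs, phases $\chi,\chi'$, and the $\vth$-coefficients. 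So the $W$-part vanishes as well.

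Adding the two parts gives $(r_{F,\chi},r_{F',\chi'})=0$ for all phases. There is no real obstacle here. The only step needing care is the support comparison, and it reduces entirely to Lemma~\ref{lem:no-octad-dodecad}. Note also that the $\vth x_o$ term sits over the octad $F$ itself, not over $F^c$; this is why it is covered by the case $F\ne F'$.
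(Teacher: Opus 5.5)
Your proposal is correct and is essentially the paper's proof. The $U$-parts pair to $(2-12+10)/32=0$, which is the paper's $(2-6-6+10)/32$. The $W$-supports share no octad line: $F$ and $F'$ appear in neither opposite root, and a common remaining coordinate would be an octad disjoint from $F\cup F'$, which Lemma~\ref{lem:no-octad-dodecad} rules out.
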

\begin{proof}
The $U$-parts pair to $(2-6-6+10)/32=0$.  The distinguished octad
coordinates occur in neither opposite root, and a common remaining
coordinate would be an octad disjoint from $F\cup F'$, excluded by the
preceding lemma.
\end{proof}

\begin{proposition}[Duadic fibre]\label{prop:chosen-duadic-fibre}
The $32^2$ products $r_{F,\chi}*r_{F',\chi'}$ are distinct roots.  For a
sign function $\eta_p:\Xi_p\to\{\pm1\}$ determined by the chosen pair,
they are exactly
\begin{equation}
 r_{p,\psi}=\frac18\left(U_p+
        \sum_{c\in\Xi_p}\eta_p(c)\psi(c)z_c\right),\qquad
 U_p=-7\sum_{i\in p}u_i+\sum_{i\notin p}u_i,
 \quad\psi\in\C_p^*.                                      \label{eq:6.22}
\end{equation}
Every map $\tau_{r_{p,\psi}}$ is an antiunitary involutory algebra
automorphism.  The cocode acts transitively on this fibre, with kernel of
order four.
\end{proposition}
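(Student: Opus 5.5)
We would obtain the fibre as the image of one octadic root under the other octadic root map. By Lemma~\ref{lem:orthogonal-octadic-pairs}, $(r_{F,\chi},r_{F',\chi'})=0$, so
\[
\tau_{r_{F,\chi}}(r_{F',\chi'})=r_{F',\chi'}*r_{F,\chi}.
\]
By Corollary~\ref{cor:octadic-automorphism}, $\tau_{r_{F,\chi}}$ is an antiunitary involutory algebra automorphism. Hence this product has norm $9$ and satisfies the root equation, because automorphisms carry roots to roots. By Lemma~\ref{lem:root-covariance} its root map is
\[
\tau_{r_{F,\chi}}\tau_{r_{F',\chi'}}\tau_{r_{F,\chi}}^{-1},
\]
a conjugate of an antiunitary involutory automorphism, so it is again one. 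This disposes of the root and automorphism assertions without touching the new coordinates.

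The substance is the explicit shape \eqref{eq:6.22}. We would compute $r_F*r_{F'}$ for trivial phases by expanding both roots as in \eqref{eq:5.1} and multiplying term by term.

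\emph{The $U$-part.} Use \eqref{eq:2.5}--\eqref{eq:2.8}. The $U*U$ products are evaluated from the four regions $p$, $F\setminus p$, $F'\setminus p$, and the complementary ten-set. The $x_f*x_{f'}$ terms with $\bar f=\bar f'$ contribute nothing, since the two octad supports are distinct by Lemma~\ref{lem:shortened-duad}. We expect exactly $U_p/8$.

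\emph{The $W$-part.} The $U$-parts act diagonally on octad vectors by \eqref{eq:2.7}. The cross products $x_f*x_{f'}$ with $\bar f\in\{F\}\cup\Hcal_F$ and $\bar f'\in\{F'\}\cup\Hcal_{F'}$ produce
\begin{itemize}
\item octads (intersection $4$), or
\item $\vth$ times complementary octads (intersection $0$), or
\item zero (intersection $2$).
\end{itemize}
By Lemma~\ref{lem:shortened-duad}, every $c\in\C_p$ is uniquely $b+b'$ with $b\in B_F$ and $b'\in B_{F'}$. So each $c\in\Xi_p$ should receive contributions from a definite set of pairs $(b,b')$. We must show that these contributions add to $\pm\frac18 z_c$, with sign $\eta_p(c)$ depending only on Parker signs of the chosen lifts and sections. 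We must also show that the total vanishes on octads not in $\Xi_p$, i.e.\ those meeting $p$. The latter should follow from the grading: the product lies in the character space fixed by the even cocode elements that fix both roots.

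The phase dependence is then formal. Characters $\chi\in B_F^*$ and $\chi'\in B_{F'}^*$ combine to $\psi=\chi\oplus\chi'\in\C_p^*$ via \eqref{eq:6.2}. A cocode element $\delta$ sends $r_{F,\chi}*r_{F',\chi'}$ to the product of the translated roots, and acts on $z_c$ by \eqref{eq:6.5}. So it suffices to do $\chi=\chi'=1$ and transport by the surjection $\C^*\to\C_p^*$.

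Distinctness of the $1024$ products follows because distinct $\psi$ give distinct coefficient vectors: $\Xi_p$ spans $\C_p$, since octads missing $p$ span it. Transitivity is immediate. The kernel is the set of $\delta$ trivial on $\C_p$, which is $\C_p^\perp$ in $\C^*$, of order $2^{12-10}=4$.

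\textbf{Main obstacle.} The hard step is the coefficient bookkeeping in the $W$-part. We must count, for each weight-$8$ and weight-$16$ word $c$ of $\C_p$, the contributing pairs $(b,b')$ and their intersection types, and verify that the Parker signs agree so the contributions add rather than cancel. We would attack it as in \eqref{eq:5.2a}: use Lemma~\ref{lem:elem-abelian} on $Q_F$ and $Q_{F'}$ to make the signs coherent, and use the Witt counts of Lemma~\ref{lem:Witt-numerology} for the multiplicities. If coherence fails in general, we would simply define $\eta_p$ as the resulting sign, provided the magnitudes are uniform. As a fallback check, the norm $(r,r)=9$ is already known, and equals
\[
\frac1{64}\left(\frac{2\cdot49+22}{8}+330+3\cdot77\right)=\frac{15+561}{64}=9,
\]
which confirms that every coefficient has modulus $\tfrac18$ once none vanish.
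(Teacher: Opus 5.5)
Your outer layer matches the paper. The product equals $\tau_{r_{F,\chi}}(r_{F',\chi'})$ by Lemma~\ref{lem:orthogonal-octadic-pairs}. Corollary~\ref{cor:octadic-automorphism} and covariance give the root and automorphism assertions. Only $\tfrac14U_F*U_{F'}$ contributes to $U$, and it equals $U_p/8$. Distinctness, transitivity and the kernel $\C_p^\perp$ of order four go exactly as you say.

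\textbf{The gap.} The step that carries the real content, the shape \eqref{eq:6.22}, is left open, and your plan for it targets a difficulty that does not exist. For $0\ne b\in B_F$ let $w_F(b)$ be the coordinate term of $r_{F,\chi}$ labelled by $b$: this is $y_b$ for $b\in\Hcal_F$ and $\vth x_o$ for $b=I\setminus F$; define $w_{F'}$ similarly. Then $w_F(b)*w_{F'}(b')$ lands on at most one coordinate, and its label in $\C_p$ is $b+b'$. The terms $U_F*w_{F'}(b')$ and $U_{F'}*w_F(b)$ land on the labels $b'$ and $b$. By uniqueness of the decomposition in \eqref{eq:6.2}, each $z_c$ with $c\in\Xi_p$ can receive a contribution only from the product attached to the unique pair $(b,b')$ with $c=b+b'$. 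So nothing can add or cancel, and there is no Parker-sign coherence to establish. $\eta_p(c)$ is simply the sign of that single term, which is the definition you offered only as a fallback. Lemma~\ref{lem:elem-abelian} and the Witt counts play no role. The same labelling shows that octads meeting $p$ in one point are never hit. Your grading argument for this is correct ($T_{p+\C}$ fixes both roots) but unnecessary.

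\textbf{What must actually be proved.} Each of these single terms must be $\pm\tfrac18z_c$ with the correct power of $\vth$. The input for that is Lemma~\ref{lem:no-octad-dodecad}, which your plan never invokes: no octad avoids $F\cup F'$. Without it, suppose a weight-$8$ word $b'\in B_{F'}$ with coordinate vector $x_{f'}$ were disjoint from $F$. Then $U_F*x_{f'}=\tfrac32x_{f'}$ and $(\vth x_o)*x_{f'}=\tfrac32x_{of'\Omega}$, which put real coefficients $\tfrac38$ on weight-$8$ coordinates.

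With the lemma the case analysis closes:
\begin{itemize}
\item Every octad $D$ in the support of $r_{F'}$ has $|D\cap F|\in\{2,4\}$, so the scalar $(3-|D\cap F|)/2$ in $U_F*x_D$ is $\pm\tfrac12$.
\item The two weight-$16$ labels $I\setminus F$ and $I\setminus F'$ sum to the dodecad $F+F'$ and contribute nothing.
\item In the remaining cases the weight of $c$ fixes the intersection type of its unique pair. The term is therefore nonzero, has modulus $\tfrac18$, and, after conjugating $\vth$ where needed, carries exactly the $\vth$-factor of $z_c$.
\end{itemize}

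Your norm check cannot replace this analysis. The identity $(r,r)=9$ controls only the sum of the squared moduli. To conclude that every modulus is $\tfrac18$ you would need a per-term lower bound and the list of configurations that occur, which is precisely the case analysis above.
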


\begin{proof}
Write $r_{F,\chi}=\tfrac12(U_F+\sum_{0\ne b\in B_F}\chi(b)w_F(b))$,
where $w_F(b)=\vth^{\nu(b)}x_{q_b\Omega^{\nu(b)}}$ and $\nu(b)=0$, $1$
according as $|b|=8$, $16$.  The two $W$-supports are disjoint.  Consequently
only $\tfrac14U_F*U_{F'}$ contributes to $U$, and the coordinate
multiplication table gives $U_p/8$.

For a coordinate $z_c$, use its unique decomposition $c=b+b'$ in
\eqref{eq:6.2}.  If both components are nonzero, there is exactly one
$W$--$W$ contribution, and its coefficient is
$\pm\chi(b)\chi'(b')/8$.  To check the magnitude, two weight-$16$
components have a dodecad as their sum and contribute no coordinate.
A weight-$16$ component and a weight-$8$ component cannot have a
weight-$8$ sum: the latter component would then avoid both $F$ and $F'$.
In the remaining cases the factor $\vth$ in the product is exactly the
factor prescribed by $z_c$, leaving magnitude $1/8$.

If $b=0$, the contribution is $\tfrac14U_F*w_{F'}(c)$.
For a supporting octad $D$ the scalar in $U_F*x_D$ is
$(3-|D\cap F|)/2$.  Here $|D\cap F|=2$ or $4$: intersections zero and
eight are excluded by the same no-octad-in-a-dodecad argument.
The magnitude is again $1/8$ after division by four, including the
conjugation of $\vth$.  The case $b'=0$ is identical.  Codewords of
weight $12$ label no $W$-coordinate, so there are no further terms.

Thus the phase dependence is $\chi(b)\chi'(b')$.  The direct sum
\eqref{eq:6.2} identifies $B_F^*\times B_{F'}^*$ with $\C_p^*$, proving
\eqref{eq:6.22}.  The labels $\Xi_p$ span $\C_p$, since they contain
the nonzero words of both summands; hence the $1024$ characters give
$1024$ distinct vectors.

By orthogonality each product is $\tau_{r_{F,\chi}}(r_{F',\chi'})$.
Corollary~\ref{cor:octadic-automorphism} gives its norm and root equation, and
covariance gives
\[
 \tau_{r_{p,\psi}}=
       \tau_{r_{F,\chi}}\tau_{r_{F',\chi'}}\tau_{r_{F,\chi}}.
\]
Finally \eqref{eq:6.5} gives all character translations of $\C_p^*$;
the annihilator of its ten-dimensional code has order $2^{12-10}=4$.
\end{proof}

Make one octad-pair
choice for each duad and define
\[
 \Rcal=\Rcal_{\rm b}\sqcup\Rcal_{\rm o}\sqcup\Rcal_{\rm d}
       =\{[r_i]\}\sqcup\{[r_{O,\chi}]\}\sqcup\{[r_{p,\psi}]\}.
\]
The $U$-coordinates distinguish the three shapes and their supports,
and their nonzero real entries distinguish them even up to $\mu_3$.
Thus
\begin{equation}
 |\Rcal|=24+759\cdot32+\binom{24}{2}\,1024
                     =306936=\binom{784}{2}.                 \label{eq:7.1}
\end{equation}
All these are roots with antiunitary involutory algebra-automorphism
root maps, and their number attains the absolute bound.  Completeness
will now remove every octad-pair choice made in this construction.

\subsection{Extremal completion}\label{sec:extremal-completion}

\begin{theorem}[Extremal closure]\label{thm:extremal-closure}
The set $\Rcal$ is exactly the set of rays of reflecting
roots.  In particular every semilinear isometric algebra automorphism
of $A$ permutes $\Rcal$; this includes $H$ and every $\tau_r$ with $[r]\in\Rcal$.
\end{theorem}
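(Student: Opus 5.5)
The proof rests on the extremal case of Lemma~\ref{lem:quadratic-unisolvence}. Let $\Rcal'$ be the set of rays of all reflecting roots; by construction $\Rcal\subseteq\Rcal'$. By Lemma~\ref{lem:reflecting-root-pairing}, any choice of representatives of the rays in $\Rcal'$ has pairwise inner products in $\{0\}\cup\mu_3$ up to conjugation. Since $\{0\}\cup\mu_3$ is closed under conjugation, the lemma applies: the symmetric squares $v\otimes v$ of representatives are linearly independent in $\operatorname{Sym}^2(A_{\mathbb C})$, and this space has dimension $\binom{784}{2}=306936$.

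The count \eqref{eq:7.1} shows that $\Rcal$ already has exactly $306936$ rays. Their symmetric squares therefore form a basis of $\operatorname{Sym}^2(A_{\mathbb C})$. Suppose some reflecting root $s$ has $[s]\notin\Rcal$. Adjoining $s$ to a set of representatives of $\Rcal$ gives a family of $306937$ vectors that still satisfies the hypotheses of Lemma~\ref{lem:quadratic-unisolvence}. Here I have to be careful with two points. First, distinct rays must give genuinely distinct vectors, which holds because rays are $\mu_3$-orbits and pairwise products of representatives on distinct rays lie in $\{0\}\cup\mu_3$, never of modulus $9$. Second, the norm must satisfy $a=9>1$. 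The lemma then bounds the family by $306936$, a contradiction. Hence $\Rcal'=\Rcal$. The same argument also shows that $\Rcal$ does not depend on the octad-pair choices: any other choice produces reflecting roots, which must already lie in $\Rcal$.

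For the permutation statement, let $g$ be a semilinear isometric algebra automorphism, either linear unitary or conjugate-linear antiunitary. Corollary~\ref{cor:intrinsic-form} makes the isometry automatic. If $r$ is a reflecting root, then $g(r)$ has norm $9$ and satisfies $g(r)*g(r)=g(r*r)=10g(r)$, with the conjugation of the real scalar $10$ harmless. By Lemma~\ref{lem:root-covariance}, $\tau_{g(r)}=g\tau_rg^{-1}$, which is antiunitary because it is a conjugate of an antiunitary map by an isometry. So $g(r)$ is again reflecting, and $g(\mu_3 r)=\mu_3 g(r)$. Thus $g$ maps $\Rcal'=\Rcal$ into itself, and it permutes $\Rcal$ since it is injective and $\Rcal$ is finite.

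This applies to $H$ by Theorem~\ref{thm:base-action} and Proposition~\ref{prop:cubic}. It applies to each $\tau_r$ with $[r]\in\Rcal$ because these are antiunitary involutory automorphisms: Proposition~\ref{prop:basic-axis} covers the basic roots, Corollary~\ref{cor:octadic-automorphism} the octadic roots, and Proposition~\ref{prop:chosen-duadic-fibre} the duadic roots. The only delicate step is the bookkeeping above, namely checking that the representatives of $\Rcal$ are pairwise on distinct rays, which the remark before \eqref{eq:7.1} about $U$-shapes already provides, and that the new root keeps the hypotheses of the lemma. The substance of the argument is already contained in the absolute bound.
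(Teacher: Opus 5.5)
Your proof is correct and follows the paper's argument. The pairing lemma together with the quadratic absolute bound $\binom{784}{2}=306936$ rules out any further reflecting ray, and covariance of root maps shows that semilinear isometric automorphisms preserve the reflecting property and hence permute $\Rcal$; your extra bookkeeping (distinct representatives, $a=9>1$, and antiunitarity of $g\tau_rg^{-1}$) only makes explicit what the paper leaves implicit.
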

\begin{proof}
All $306936$ rays in $\Rcal$ are reflecting by the preceding construction.
If a further reflecting ray existed, Lemma~\ref{lem:reflecting-root-pairing}
would give a set of $306937$ representatives with the pairings required
in Lemma~\ref{lem:quadratic-unisolvence}.  This contradicts
\[
                          306936=\binom{783+1}{2}.
\]
A semilinear isometric algebra automorphism preserves the norm and root
equation.  Covariance of root maps shows that it also preserves the
reflecting property.  It therefore permutes precisely $\Rcal$.
\end{proof}

\begin{corollary}\label{cor:duadic-phase}
For every pair of octads $F$, $F'$ with $F\cap F'=p$, the $32^2$ products
$r_{F,\chi}*r_{F',\chi'}$ are exactly the $1024$ roots
$r_{p,\psi}$ of \eqref{eq:6.22}.  Thus this fibre is independent of the
chosen octad pair.  Its coordinate shape is
\begin{equation}
 \frac18\bigl(-7^2,1^{22}\mid
                    \pm\vth^{77},\ \pm1^{330},\ 0^{352}\bigr).\label{eq:6.24}
\end{equation}
\end{corollary}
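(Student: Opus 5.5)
The plan is to apply Proposition~\ref{prop:chosen-duadic-fibre} to the arbitrary pair $F,F'$ and then use Theorem~\ref{thm:extremal-closure} to compare with the fixed choice. Nothing in the proof of Proposition~\ref{prop:chosen-duadic-fibre} depended on which pair was chosen. So for any $F,F'$ with $F\cap F'=p$, the $1024$ products $r_{F,\chi}*r_{F',\chi'}$ are distinct reflecting roots of the displayed form \eqref{eq:6.22}, with some sign function $\eta'_p$ depending on the new pair. By Lemma~\ref{lem:orthogonal-octadic-pairs} and the proof of that proposition, each product equals $\tau_{r_{F,\chi}}(r_{F',\chi'})$. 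Since Theorem~\ref{thm:extremal-closure} says $\tau_{r_{F,\chi}}$ permutes $\Rcal$, every such product lies on a ray of $\Rcal$.

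Next I identify which family these rays belong to. The $U$-part of each product is $U_p/8$, with real entries $-7/8$ on $p$ and $1/8$ off $p$. As observed before \eqref{eq:7.1}, this pattern distinguishes the duadic family with support $p$ from the basic and octadic families, and from duadic families for other duads. Multiplying by a nontrivial element of $\mu_3$ would make these real entries non-real. Hence each new product equals, exactly and not merely up to $\mu_3$, one of the $1024$ roots $r_{p,\psi}$ from the fixed choice. The new products are pairwise distinct, so this map is an injection between two sets of size $1024$, hence a bijection. This proves the fibre statement and its independence of the octad pair.

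For the coordinate shape \eqref{eq:6.24}, I read it off \eqref{eq:6.22}. The $U$-part gives $(-7^2,1^{22})$. The $W$-coordinates $z_c$ with $c\in\Xi_p$ contribute $\pm1$ for the $330$ octads disjoint from $p$ and $\pm\vth$ for the $77$ weight-$16$ words. Each weight-$16$ word is the complement of an octad containing $p$, so its coordinate sits on that octad via $\Omega$. The remaining $352$ octads, which meet $p$ in one point, get $0$. These counts agree with \eqref{eq:Witt-duad}. All of these entries carry the overall factor $1/8$.

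The step I expect to need the most care is the exact identification, up to sign and not just up to $\mu_3$. This rests on the $U$-coordinates being real and nonzero, which is immediate from the formula for the $U$-component.
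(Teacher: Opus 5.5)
Your proposal is correct and follows the paper's proof. You apply Proposition~\ref{prop:chosen-duadic-fibre} to the new pair and use Theorem~\ref{thm:extremal-closure} to place the $1024$ products in $\Rcal$. The real $U$-part $U_p/8$ then pins down both the duadic fibre over $p$ and the cube-root scalar, and the shape comes from the Witt counts. The only cosmetic difference is that you get membership in $\Rcal$ from $\tau_{r_{F,\chi}}$ permuting $\Rcal$, whereas the paper uses directly that the products are reflecting roots; both come from Theorem~\ref{thm:extremal-closure}.
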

\begin{proof}
Apply Proposition~\ref{prop:chosen-duadic-fibre} to the new pair.
Its $1024$ distinct products are reflecting roots and so belong to
$\Rcal$.  Their $U$-part is $U_p/8$, which distinguishes exactly the
already chosen fibre over $p$ and fixes its cube-root scalar.
Equality of the two fibres follows.  The coordinate multiplicities
are the Witt-design counts $77$, $330$, $352$.
\end{proof}

\begin{remark}
The theorem classifies reflecting roots, not arbitrary solutions of
$(r,r)=9$, $r*r=10r$.
\end{remark}

\begin{lemma}\label{lem:intrinsic-support}
For $i\ne j$ and for octadic and duadic roots one has
\begin{align}
 (r_i,r_j)&=1,                                  \label{eq:9.2a}\\
 (r_i,r_{O,\chi})
   &=\begin{cases}1,&i\in O,\\0,&i\notin O,\end{cases}          \label{eq:9.2}\\
 (r_i,r_{p,\psi})
   &=\begin{cases}1,&i\in p,\\0,&i\notin p.\end{cases}        \label{eq:9.7}
\end{align}
Thus the octad or duad support of a nonbasic ray is recovered intrinsically
as the set of basic rays having nonzero pairing with it.
\end{lemma}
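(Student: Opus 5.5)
The plan is a direct computation. Each pairing is a sum over $U$-coordinates, weighted by $(u_i,u_j)=\delta_{ij}/8$. Since $r_i\in U$ and $U\perp W$, the $W$-parts of the octadic and duadic roots never contribute. Everything then reduces to counting $U$-coefficients. I record the $U$-coefficient vectors: $r_i$ has $-7$ at $i$ and $1$ elsewhere; $r_{O,\chi}$ has $-\tfrac12$ on $O$ and $\tfrac12$ on $X$; $r_{p,\psi}$ has $-\tfrac78$ on $p$ and $\tfrac18$ off $p$. All coefficients are real, so the conjugate-linearity in the second slot plays no role. In each case the pairing is $\tfrac18\sum_k a_k b_k$ over the $24$ coordinates.

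\emph{Basic pairs.} For $i\ne j$, the coordinates $i$ and $j$ each contribute $-7\cdot1$, and the other $22$ coordinates contribute $1$ each. The total is $-14+22=8$, so the pairing is $8/8=1$, which is \eqref{eq:9.2a}.

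\emph{Octadic roots.} If $i\in O$, the sum is $-7\cdot(-\tfrac12)+7\cdot(-\tfrac12)+16\cdot\tfrac12=\tfrac72-\tfrac72+8=8$, giving $1$. If $i\notin O$, the sum is $-7\cdot\tfrac12+8\cdot(-\tfrac12)+15\cdot\tfrac12=-\tfrac72-4+\tfrac{15}2=0$. This is \eqref{eq:9.2}.

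\emph{Duadic roots.} Here the $U$-part is $U_p/8$ by \eqref{eq:6.22}, and I use the same pattern. If $i\in p$, the sum is $\tfrac18\bigl(49-7-22\bigr)=\tfrac{20}{8}$. This gives the pairing $\tfrac{20}{64}$, which is not $1$, so I must have made a slip somewhere. Checking the norm computation $(r_i,r_i)=\tfrac18(49+23)=9$, the convention is coefficient products times $\tfrac18$, and the pairing with a duadic root really is $\tfrac1{64}(49-7+22\cdot?)$. I recompute with the signs correct: coordinate $i$ gives $(-7)(-7)=49$; the other point of $p$ gives $(1)(-7)=-7$; the $22$ remaining points give $1\cdot1=22$. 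The total is $64$, and $\tfrac{64}{8\cdot8}=1$. If $i\notin p$, the sum is $2\cdot(-7)(1)+(-7)(1)+21=-14-7+21=0$. This is \eqref{eq:9.7}.

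\emph{Intrinsic support.} The final sentence of the lemma follows at once: a nonbasic ray pairs nontrivially exactly with the basic rays $[r_i]$ for $i$ in its octad or duad. Multiplying a root by $\mu_3$ does not change whether a pairing is zero, so this set depends only on the ray. There is no real obstacle here; the only care needed is with the sign bookkeeping and the $\tfrac18$ normalization, which tripped me once above.
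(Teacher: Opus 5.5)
Your proposal is correct and follows the paper's proof, which likewise reads all three pairings off the real $U$-coordinates in \eqref{eq:4.3}, \eqref{eq:5.1}, and \eqref{eq:6.22}, using $U\perp W$ to discard the phase-dependent $W$-parts. In a final write-up, delete the abandoned $\tfrac18(49-7-22)$ line and keep only the corrected count $49-7+22=64$.
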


\begin{proof}
All three identities follow immediately from \eqref{eq:4.3},
\eqref{eq:5.1}, and \eqref{eq:6.22}; the phase-dependent $W$-parts are
orthogonal to every basic root.
\end{proof}

\begin{lemma}\label{lem:root-spanning}
The basic and octadic roots span $A$.  More precisely, every coordinate
generator is a linear combination of these roots with coefficients in
$\mathbb Z[\omega,1/2]$.
\end{lemma}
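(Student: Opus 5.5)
We need to express $u_i$ and $x_d$ as $\mathbb Z[\omega,1/2]$-combinations of basic and octadic roots. The plan is to first recover $U$ from the basic roots and then recover each octad line from the thirty-two octadic roots over it by character averaging.

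\emph{Step 1: the $U$-part.} Put $s=\sum_{j\in I}u_j$. Then $\sum_i r_i=(-7+23)s=16s$, so $s=\frac1{16}\sum_i r_i$. Since $r_i=s-8u_i$, we get
\[
 u_i=\tfrac18(s-r_i)=\tfrac1{128}\sum_j r_j-\tfrac18 r_i .
\]
All coefficients are powers of $2$, so every $u_i$ lies in the $\mathbb Z[1/2]$-span of the basic roots. Consequently $U_O=-\sum_{i\in O}u_i+\sum_{i\in X}u_i$ lies in the same span.

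\emph{Step 2: octad vectors by averaging over characters.} Fix an octad $O$ with its calibrated basis. By \eqref{eq:5.1},
\[
 2r_{O,\chi}-U_O=\vth\chi(X)x_o+\sum_{b\in\Hcal_O}\chi(b)y_b .
\]
Multiply by $\chi(b_0)$ and sum over the $32$ characters $\chi\in B_O^*$. Character orthogonality on $B_O\cong2^5$ kills every coordinate except the one labelled $b_0$, which receives the factor $32$. For $b_0\in\Hcal_O$ this gives
\[
 y_{b_0}=\tfrac1{32}\sum_\chi\chi(b_0)\bigl(2r_{O,\chi}-U_O\bigr),
\]
which has coefficients in $\mathbb Z[1/2]$. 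For $b_0=X$ we get $\vth x_o$ the same way. The constant terms drop out because $\sum_\chi\chi(b_0)=0$ for $b_0\ne0$.

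\emph{Step 3: clearing the factor $\vth$.} The only delicate point is recovering $x_o$ from $\vth x_o$, since $\vth^{-1}=-\vth/3$ is not in $\mathbb Z[\omega,1/2]$. Two routes avoid this. First, every octad $D$ appears as some $y_b$ for a suitable choice of $O$: take $O$ disjoint from $D$, which is possible since there are $30$ such octads. Step 2 with that $O$ then gives $x_d$, up to the Parker sign, with coefficients in $\mathbb Z[1/2]$. Alternatively, one can use the $\vth x_o$ appearing in \eqref{eq:5.10}--\eqref{eq:5.11}, but the first route suffices.

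Checking that every coordinate generator is reached is the only step I expect to need care: every $u_i$ comes from Step 1, and every $x_d$ is reached as a $y_b$ coordinate at a disjoint octad in Step 3. Since the coordinate generators span $A$, the basic and octadic roots span $A$, with coefficients in $\mathbb Z[\tfrac12]\subseteq\mathbb Z[\omega,1/2]$.
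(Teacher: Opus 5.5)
Your proposal is correct and follows essentially the same route as the paper. The paper likewise extracts $u_i=\frac1{128}\sum_j r_j-\frac18 r_i$ from the basic roots. For each octad $D$ it then picks one of the $30$ octads $F$ disjoint from $D$ and uses character orthogonality in the calibrated fibre at $F$, namely $\sum_{\chi\in B_F^*}\chi(D)r_{F,\chi}=16y_D$ with $y_D=\pm x_D$, which is exactly your Step~2 combined with the first route of Step~3. The alternative route through \eqref{eq:5.10}--\eqref{eq:5.11} is unneeded, since those formulas describe the map $\tau_O$ rather than root combinations, so you can simply drop it.
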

\begin{proof}
The basic-root formulas give
\begin{equation}
 u_i=\frac1{128}\sum_{j\in I}r_j-\frac18r_i.
 \label{eq:basic-coordinate-extraction}
\end{equation}
For an octad $D$, choose an octad $F$ disjoint from $D$; there are
$30$ choices by Lemma~\ref{lem:Witt-numerology}.  Character orthogonality
in the calibrated fibre at $F$ gives
\begin{equation}
 \sum_{\chi\in B_F^*}\chi(D)r_{F,\chi}=16y_D,
 \qquad y_D=\pm x_D.
 \label{eq:CP-coordinate-Fourier}
\end{equation}
Thus all $24+759$ coordinate generators have the asserted expressions.
\end{proof}

\section{The three-transposition group and its frame geometry}
\label{sec:three-transposition}
\subsection{The faithful three-transposition action}

Let
\[
       \widehat G=\langle\tau_r:[r]\in\Rcal\rangle,
\]
using the chosen coordinate representatives.  Extremal closure makes this a
group of semilinear isometric algebra automorphisms preserving $\Rcal$.

By \eqref{eq:root-scalar} and
Lemma~\ref{lem:scalar-automorphisms}, a ray determines a projective root
involution.  Denote its induced permutation of $\Rcal$ by $d_{[r]}$,
and put
\[
       Z=\mu_3\operatorname{id}_A,
       \qquad
       \Dcal_{\rm CP}=\{d_{[r]}:[r]\in\Rcal\}.
\]
This permutation is also the action of the projective class $\tau_rZ$.
Lemma~\ref{lem:root-covariance} descends to the projective covariance
identity
\begin{equation}
                  g d_{[r]}g^{-1}=d_{[g(r)]}                  \label{eq:projective-covariance}
\end{equation}
for every semilinear algebra automorphism $g$ preserving $\Rcal$.
For later use write
\[
              d_i:=d_{[r_i]},\qquad
              d_{O,\chi}:=d_{[r_{O,\chi}]},\qquad
              d_{p,\psi}:=d_{[r_{p,\psi}]}.
\]
Define the faithful ray permutation group
\begin{equation}
 G_{\rm CP}:=\langle\Dcal_{\rm CP}\rangle
       \leq\operatorname{Sym}(\Rcal).                         \label{eq:9.5a}
\end{equation}

\begin{lemma}[Reflection calculus]\label{lem:reflection-calculus}
Let $[r]$, $[s]\in\Rcal$ be distinct rays.  Choose the canonical root
representatives and put $c=(r,s)$.
\begin{enumerate}[label=(\alph*)]
\item If $c=0$, then $t=r*s=\tau_r(s)$ is a root and
\[
       \tau_t=\tau_r\tau_s\tau_r=\tau_s\tau_r\tau_s.
\]
Hence $d_{[r]}d_{[s]}$ has order $3$.
\item If $c\in\mu_3$, then
\begin{equation}
 \tau_r(s)=\bar c\,s,\quad r*s=cr+\bar c\,s,
 \qquad(\tau_r\tau_s)^2=c\operatorname{id}_A.                 \label{eq:9.6}
\end{equation}
Hence $d_{[r]}$ and $d_{[s]}$ commute.
\end{enumerate}
\end{lemma}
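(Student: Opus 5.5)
The plan is to apply root covariance (Lemma~\ref{lem:root-covariance}) to the automorphism $g=\tau_r$, and then pass to rays. By Theorem~\ref{thm:extremal-closure}, $\tau_r$ and $\tau_s$ are antiunitary involutory algebra automorphisms. So for every $x\in A$ covariance gives $\tau_{\tau_r(x)}=\tau_r\tau_x\tau_r$. Both parts will follow from identifying $\tau_r(s)$ and then applying this identity with $x=s$.

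For (a), the defining formula \eqref{eq:4.2} with $(r,s)=0$ gives $\tau_r(s)=s*r=r*s=t$. Since $\tau_r$ is an antiunitary algebra automorphism, $(t,t)=(s,s)=9$ and $t*t=\tau_r(s*s)=\tau_r(10s)=10t$, because $10$ is real. Hence $t$ is a root, and it is reflecting by covariance, so $\tau_t=\tau_r\tau_s\tau_r$. By commutativity of $*$ we also have $t=s*r=\tau_s(r)$, which gives $\tau_t=\tau_s\tau_r\tau_s$. Then
\[
 (\tau_r\tau_s)^3=(\tau_r\tau_s\tau_r)(\tau_s\tau_r\tau_s)=\tau_t^2=1,
\]
so the order of $d_{[r]}d_{[s]}$ divides $3$. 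To see that it is not $1$, I will compute $(s,t)=(s,s*r)=\Phi(s,s,r)$. By symmetry of $\Phi$ this equals $(r,s*s)=10(r,s)=0$, so $[t]\ne[s]$. Projective covariance \eqref{eq:projective-covariance} gives $d_{[s]}[r]=[t]$ and $d_{[r]}[t]=[s]$. Therefore $d_{[r]}d_{[s]}$ sends $[r]$ to $[s]\ne[r]$, and it has order exactly $3$.

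For (b), Lemma~\ref{lem:nonorthogonal-rigidity} applies directly and gives $\tau_r(s)=\bar c\,s$. Substituting into \eqref{eq:4.2} yields $r*s=\tau_r(s)+(r,s)r=cr+\bar c\,s$. Covariance together with \eqref{eq:root-scalar} then gives
\[
 \tau_r\tau_s\tau_r=\tau_{\bar c s}=\bar c^{\,2}\tau_s=c\,\tau_s .
\]
Right multiplication by $\tau_s$, using $\tau_s^2=1$, gives $(\tau_r\tau_s)^2=c\,\mathrm{id}_A$. This lies in $Z$, so the induced ray permutations satisfy $(d_{[r]}d_{[s]})^2=1$; that is, $d_{[r]}$ and $d_{[s]}$ commute.

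The only delicate point is bookkeeping with semilinearity. The products $\tau_r\tau_s$ and $\tau_r\tau_s\tau_r$ mix linear and conjugate-linear maps, and scalars must be moved carefully. For example, $c\,\tau_s\tau_s$ equals $c\,\mathrm{id}$ because $c$ stands to the left, so it is never conjugated. One must also check that covariance is applied with $\tau_r^{-1}=\tau_r$.
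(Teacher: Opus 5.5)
Your proposal is correct and follows essentially the same route as the paper. Covariance with $g=\tau_r$ gives the braid relations in (a); together with Lemma~\ref{lem:nonorthogonal-rigidity} and \eqref{eq:root-scalar} it gives $(\tau_r\tau_s)^2=c\operatorname{id}_A$ in (b); and exactness of order $3$ comes from a ray-moving argument (you show $d_{[r]}d_{[s]}([r])=[s]\neq[r]$, while the paper shows $d_{[r]}\ne d_{[s]}$).

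Two small points. First, the ray images $d_{[s]}([r])=[t]$ and $d_{[r]}([t])=[s]$ follow directly from the definition of $d_{[\cdot]}$ together with $\tau_r^2=1$, not from \eqref{eq:projective-covariance}. Second, your computation $(s,t)=0$ is never used in your version of the nontriviality step.
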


\begin{proof}
Assume first that $c=0$ and put $t=\tau_r(s)=r*s$.  Since $\tau_r$ is an algebra automorphism preserving $\Rcal$,
Lemma~\ref{lem:root-covariance} gives
\[
       \tau_t=\tau_r\tau_s\tau_r;
\]
commutativity gives the symmetric braid expression.  Moreover
\[
 (s,t)
  =(s,r*s)
  =(r,s*s)
  =10(r,s)=0.
\]
Thus $[t]\ne[s]$.  The involution $d_{[r]}$ moves $[s]$ to $[t]$, whereas
$d_{[s]}$ fixes $[s]$; hence $d_{[r]}\ne d_{[s]}$.  The braid relation
therefore makes $d_{[r]}d_{[s]}$ an element of exact order $3$.

The first assertion in (b) is
Lemma~\ref{lem:nonorthogonal-rigidity}; the remaining assertions follow
from Lemma~\ref{lem:root-covariance} and \eqref{eq:root-scalar}.
\end{proof}

\begin{lemma}\label{lem:basic-fixed-ray}
For any $[r]$, $[s]\in\Rcal$,
\begin{equation}
       d_{[r]}([s])=[s]\quad\Longleftrightarrow\quad(r,s)\ne0.
                                                        \label{eq:basic-fixed-ray}
\end{equation}
Moreover, for every $i\in I$,
\begin{equation}
                    d_{[r]}=d_i\quad\Longleftrightarrow\quad[r]=[r_i].
                                      \label{eq:basic-transposition-separation}
\end{equation}
\end{lemma}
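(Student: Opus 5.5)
The first equivalence comes straight from the reflection calculus; the second reduces to the first plus a pairing count.

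\textbf{First equivalence \eqref{eq:basic-fixed-ray}.} If $[r]=[s]$, both sides hold, since $\tau_r(r)=r$ and $(r,r)=9$. So assume the rays are distinct and apply Lemma~\ref{lem:reflection-calculus}.
\begin{itemize}
\item If $c=(r,s)\in\mu_3$, then $\tau_r(s)=\bar c s$, so $d_{[r]}$ fixes $[s]$.
\item If $c=0$, then $t=\tau_r(s)$ satisfies $(s,t)=0$, as computed in the proof of that lemma. Hence $[t]\ne[s]$, and $d_{[r]}$ moves $[s]$.
\end{itemize}
By Lemma~\ref{lem:reflecting-root-pairing}, and since $\Rcal$ consists of reflecting rays (Theorem~\ref{thm:extremal-closure}), these two cases are exhaustive.

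\textbf{Second equivalence \eqref{eq:basic-transposition-separation}.} The direction $[r]=[r_i]\Rightarrow d_{[r]}=d_i$ is trivial. For the converse, suppose $d_{[r]}=d_i$. By the first part, the set of rays $[s]$ with $(r,s)\ne0$ must equal the set of rays with $(r_i,s)\ne0$. So it suffices to show that the fixed-ray set $F([r])=\{[s]:(r,s)\ne0\}$ determines $[r]$, at least when compared against a basic ray.

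Test this against the basic rays using Lemma~\ref{lem:intrinsic-support}.
\begin{itemize}
\item The basic rays in $F([r_i])$ are all $24$ of them, because $(r_i,r_j)=1$ and $(r_i,r_i)=9$.
\item If $[r]=[r_{O,\chi}]$ is octadic, only the $8$ basic rays with index in $O$ lie in $F([r])$.
\item If $[r]=[r_{p,\psi}]$ is duadic, only $2$ basic rays lie in $F([r])$.
\end{itemize}
Hence $[r]$ is basic, say $[r]=[r_j]$.

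To separate $[r_j]$ from $[r_i]$ for $j\ne i$, use octadic rays. Choose an octad $O$ with $i\in O$ and $j\notin O$. Then $(r_i,r_{O,\chi})=1$ and $(r_j,r_{O,\chi})=0$ by \eqref{eq:9.2}. So $d_i$ fixes $[r_{O,\chi}]$ while $d_j$ moves it, and therefore $d_j\ne d_i$.

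The only delicate point is keeping canonical representatives so that pairings are genuinely in $\{0\}\cup\mu_3$. Lemma~\ref{lem:reflecting-root-pairing} handles this, since its conclusion is invariant under rescaling $r,s$ by elements of $\mu_3$.
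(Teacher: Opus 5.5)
Your proof is correct and follows the paper's argument essentially step for step. The first equivalence comes from the reflection calculus together with the pairing dichotomy; for the second, $d_i$ fixes all $24$ basic rays, which forces $[r]$ to be basic via Lemma~\ref{lem:intrinsic-support}, and an octad through $i$ avoiding $j$ then separates $d_i$ from $d_j$. The only detail you leave implicit is that such an octad exists, which the paper justifies by $\lambda_1-\lambda_2=253-77>0$.
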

\begin{proof}
The equal-ray case is immediate.  For distinct rays the preceding
lemma gives the fixed-ray assertion when $(r,s)\ne0$; when $(r,s)=0$,
it gives $(s,\tau_r(s))=0$, so the ray moves.

If $d_{[r]}=d_i$, then $[r]$ pairs nontrivially with every basic ray,
since $d_i$ fixes all of them.  The coordinate supports in
Lemma~\ref{lem:intrinsic-support} force $[r]=[r_j]$ for some $j$.
For $j\ne i$, an octad containing $i$ but not $j$ distinguishes
$d_i$ from $d_j$ by the fixed-ray assertion; such octads exist since
$253-77>0$.  Thus $j=i$.
\end{proof}

\begin{proposition}
\label{prop:one-class}
The zero-pairing graph on $\Rcal$ is connected, $G_{\rm CP}$ is
transitive on $\Rcal$, and $[r]\mapsto d_{[r]}$ is injective.
The projective root reflections form one conjugacy class.
\end{proposition}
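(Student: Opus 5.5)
The plan is to first prove connectivity of the zero-pairing graph on $\Rcal$, and then derive transitivity and the one-class statement from the reflection calculus. For connectivity I would use the intrinsic supports of Lemma~\ref{lem:intrinsic-support} together with explicit zero pairings.

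\emph{Octadic rays reach basic rays.} By \eqref{eq:9.2}, an octadic ray $[r_{O,\chi}]$ pairs to zero with every basic ray $[r_i]$ with $i\notin O$. Hence every octadic ray is adjacent to a basic ray.

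\emph{Duadic rays reach basic rays.} By \eqref{eq:9.7}, a duadic ray $[r_{p,\psi}]$ is orthogonal to $[r_i]$ for every $i\notin p$.

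\emph{The basic rays are connected to each other.} The basic rays pair to $1$ with one another by \eqref{eq:9.2a}, so they need a connection through a third ray. Given $i\ne j$, choose an octad $O$ avoiding both $i$ and $j$; there are $330$ such octads by \eqref{eq:Witt-duad}. Then any $[r_{O,\chi}]$ is orthogonal to both $[r_i]$ and $[r_j]$, giving the path $[r_i]-[r_{O,\chi}]-[r_j]$. Since every ray is adjacent to some basic ray, the graph is connected.

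\emph{Transitivity.} If $(r,s)=0$, Lemma~\ref{lem:reflection-calculus}(a) gives $t=\tau_r(s)$ and $\tau_t=\tau_r\tau_s\tau_r$. The braid relation shows that $d_{[s]}d_{[r]}$ sends $[s]$ to $[r]$; concretely, $\tau_s\tau_r(s)=\tau_s(t)=r$ up to $\mu_3$. So adjacent rays lie in the same $G_{\rm CP}$-orbit, and connectivity yields transitivity on $\Rcal$. By the projective covariance \eqref{eq:projective-covariance}, transitivity on rays then gives $g\,d_{[r]}\,g^{-1}=d_{[g(r)]}$, so all of $\Dcal_{\rm CP}$ is a single conjugacy class.

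\emph{Injectivity.} Suppose $d_{[r]}=d_{[s]}$. Pick $g\in G_{\rm CP}$ with $g[r]=[r_1]$, using transitivity. Conjugating by $g$ gives $d_1=d_{[g(s)]}$, and \eqref{eq:basic-transposition-separation} then forces $[g(s)]=[r_1]=[g(r)]$. Hence $[s]=[r]$.

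\emph{Where care is needed.} The main delicate point is making sure that $d_{[r]}$ is well defined and conjugation-covariant as a permutation of $\Rcal$, so that the covariance step can be applied to elements of $G_{\rm CP}$. Here $G_{\rm CP}$ is defined on rays, so each element lifts to $\widehat G$, which consists of semilinear algebra automorphisms preserving $\Rcal$ by Theorem~\ref{thm:extremal-closure}. These lifts are isometric by Corollary~\ref{cor:intrinsic-form}, so \eqref{eq:projective-covariance} applies to them. The remaining steps are routine: the Witt count of octads avoiding a duad, and the zero pairings that can be read off directly from the coordinates.
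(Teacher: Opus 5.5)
Your proof is correct and follows the paper's argument essentially step for step. Connectivity comes from the supports plus an octad avoiding two given points, transitivity from moving along zero-pairing edges via $t=r*s$, and the class and injectivity assertions from covariance and \eqref{eq:basic-transposition-separation}. One small correction: $\tau_s(t)=r$ (exactly, not just up to $\mu_3$) does not come from the braid relation; it follows from $t=s*r=\tau_s(r)$, using $(s,r)=0$, together with $\tau_s^2=1$.
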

\begin{proof}
Every nonbasic ray has a basic point outside its support and is
therefore joined to a basic ray in the zero-pairing graph.  Any two
basic rays have an octad avoiding both coordinates: there are $330$
such octads by Lemma~\ref{lem:Witt-numerology}.  An octadic ray over
one of them joins the two basic rays by a path of length two.
Thus the graph is connected.
Along any edge, putting $t=r*s$ gives
\[
                  d_{[s]}([r])=[t],\qquad d_{[r]}([t])=[s].
\]
Its two endpoints are therefore in one $G_{\rm CP}$-orbit.
Connectivity proves transitivity.

If $d_{[r]}=d_{[s]}$, carry $[r]$ to a basic ray by this transitivity.
Covariance and Lemma~\ref{lem:basic-fixed-ray} force the image of $[s]$
to be the same basic ray.  Hence $[r]=[s]$.  Transitivity and covariance
also prove the conjugacy-class assertion.
\end{proof}

\begin{corollary}\label{cor:uniform-valencies}
Every ray has $31671$ distinct nonorthogonal partners and $275264$
orthogonal partners.
\end{corollary}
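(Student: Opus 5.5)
By Proposition~\ref{prop:one-class}, $G_{\rm CP}$ is transitive on $\Rcal$ and acts by semilinear isometries. Such isometries preserve whether a pairing $(r,s)$ vanishes, because conjugation does not change whether it is zero. Hence the numbers of nonorthogonal and orthogonal partners of a ray are constant on $\Rcal$. It is therefore enough to count them for one convenient ray, and I will take the basic ray $[r_i]$. The two counts must add up to $|\Rcal|-1=306935$, and indeed $31671+275264=306935$. So it suffices to show that $[r_i]$ has exactly $31671$ nonorthogonal partners.

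The count of partners of $[r_i]$ comes directly from Lemma~\ref{lem:intrinsic-support}, which lists the pairings with every ray in each family.
\begin{itemize}
\item For the $23$ other basic rays, $(r_i,r_j)=1\ne0$, so all $23$ are nonorthogonal partners.
\item An octadic ray $[r_{O,\chi}]$ is nonorthogonal to $[r_i]$ exactly when $i\in O$. There are $\lambda_1=253$ such octads, each with $32$ phases, giving $253\cdot32=8096$ rays.
\item A duadic ray $[r_{p,\psi}]$ is nonorthogonal to $[r_i]$ exactly when $i\in p$. There are $23$ such duads, each with $1024$ phases, giving $23\cdot1024=23552$ rays.
\end{itemize}
The total is
\[
 23+8096+23552=31671.
\]

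Two points need a word of justification. First, the distinct-ray counts of~\eqref{eq:7.1} are what allow the multiplications in each family; Corollary~\ref{cor:duadic-phase} guarantees that each duadic fibre consists of exactly $1024$ rays, independent of the choice of octad pair. Second, the pairing values in Lemma~\ref{lem:intrinsic-support} were stated for the chosen representatives, but whether a pairing vanishes depends only on the rays. There is no real obstacle here: the only delicate point is that ray transitivity transports the counts from $[r_i]$ to every ray.
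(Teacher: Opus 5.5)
Your proposal is correct and follows essentially the same route as the paper: reduce to a basic ray $[r_i]$ by the transitivity of Proposition~\ref{prop:one-class}, read off $23+253\cdot32+23\cdot1024=31671$ nonorthogonal partners from Lemma~\ref{lem:intrinsic-support}, and subtract from $306936-1$. Your extra remarks, that isometries preserve vanishing of pairings and that the fibre sizes come from the distinct-ray count \eqref{eq:7.1}, are harmless elaborations of the same argument.
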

\begin{proof}
By transitivity it suffices to count at a basic ray $[r_i]$.
Lemma~\ref{lem:intrinsic-support} gives $23$ other basic rays,
$32$ rays over each of the $253$ octads through $i$, and $1024$ rays
over each of the $23$ duads through $i$.  Thus the nonorthogonal
valency is
\[
             23+253\cdot32+23\cdot1024=31671,
\]
and the orthogonal valency is $306936-1-31671=275264$.
\end{proof}

\begin{theorem}[Three-transposition group]\label{thm:axis-group}
The pair
\[
       (G_{\rm CP},\Dcal_{\rm CP})
\]
is a finite centerless $3$-transposition group.  The generating set
$\Dcal_{\rm CP}$ is one conjugacy class of size $306936$.  Its diagram,
where adjacency means product order $3$, has degree $275264$; the commuting
codiagram has degree $31671$.
\end{theorem}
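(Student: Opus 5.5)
We assemble the statement from the results already proved. \emph{Finiteness} is immediate, since by definition \eqref{eq:9.5a} $G_{\rm CP}$ is a subgroup of $\operatorname{Sym}(\Rcal)$ and $|\Rcal|=306936$ by \eqref{eq:7.1}. \emph{Generation and the class assertion} come from Proposition~\ref{prop:one-class}: $\Dcal_{\rm CP}$ generates $G_{\rm CP}$ by definition, it is a single conjugacy class, and $[r]\mapsto d_{[r]}$ is injective, so $|\Dcal_{\rm CP}|=306936$. Each $d_{[r]}$ is an involution, being nontrivial: a zero-pairing partner exists by Corollary~\ref{cor:uniform-valencies}, and it is moved by Lemma~\ref{lem:basic-fixed-ray}. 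It is the projective image of $\tau_r$, and $\tau_r^2=1$.

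For the \emph{$3$-transposition property}, take distinct $d_{[r]}\ne d_{[s]}$. By injectivity the rays $[r]\ne[s]$ are distinct, and Lemma~\ref{lem:reflecting-root-pairing} gives $(r,s)\in\{0\}\cup\mu_3$. Lemma~\ref{lem:reflection-calculus} then splits into two cases. If $(r,s)=0$, the product $d_{[r]}d_{[s]}$ has order exactly $3$. If $(r,s)\in\mu_3$, the two involutions commute, so the product has order $2$ because they are distinct. Hence every product of two elements of $\Dcal_{\rm CP}$ has order $1$, $2$ or $3$. Moreover, adjacency in the diagram is precisely orthogonality of the rays, and commuting of distinct elements is precisely nonorthogonality. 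Corollary~\ref{cor:uniform-valencies} and injectivity then give the degrees $275264$ and $31671$.

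The main point is \emph{centerlessness}. Let $z\in Z(G_{\rm CP})$. Then $z$ commutes with every $d_{[r]}$. I would like to conclude that $z$ fixes every ray, and the route I would try first is through covariance. By \eqref{eq:projective-covariance}, an element $g$ of $G_{\rm CP}$ that is induced by a semilinear automorphism satisfies $gd_{[r]}g^{-1}=d_{[g r]}$. Every element of $G_{\rm CP}$ is the image of some word in the $\tau_r$, that is, of an element of $\widehat G$, so covariance applies to $z$. Centrality gives $d_{[zr]}=d_{[r]}$, and injectivity then gives $z[r]=[r]$ for all rays. So $z$ is the identity permutation, and $G_{\rm CP}$ is centerless.

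The step needing care is that the permutation action of $\widehat G$ on $\Rcal$ really is the group $G_{\rm CP}$ generated by the $d_{[r]}$. This holds because each $\tau_r$ preserves $\Rcal$ (Theorem~\ref{thm:extremal-closure}) and acts on rays exactly as $d_{[r]}$ does, which is what legitimizes the use of covariance above.
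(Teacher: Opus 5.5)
Your proof is correct and follows essentially the paper's argument: finiteness from faithfulness on $\Rcal$, product orders from Lemma~\ref{lem:reflection-calculus}, the single class and injectivity from Proposition~\ref{prop:one-class}, the degrees from Corollary~\ref{cor:uniform-valencies}, and centerlessness from the projective covariance \eqref{eq:projective-covariance} plus injectivity of $[r]\mapsto d_{[r]}$. The only slip is cosmetic: $d_{[r]}\ne d_{[s]}$ forces $[r]\ne[s]$ simply because $[r]\mapsto d_{[r]}$ is a well-defined map, not because it is injective.
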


\begin{proof}
Lemma~\ref{lem:reflection-calculus} shows that products of distinct
members of $\Dcal_{\rm CP}$ have order $2$ or $3$, and
Proposition~\ref{prop:one-class} gives distinctness and the one-class
assertion.  The group is finite because it acts faithfully on the finite set
$\Rcal$.  If $z$ is central, then conjugation by $z$ fixes every member of
$\Dcal_{\rm CP}$.  Covariance gives
$d_{z[r]}=z d_{[r]}z^{-1}=d_{[r]}$; injectivity of the ray--reflection map
therefore implies that $z$ fixes every ray.  The action on $\Rcal$ is
faithful, so $z=1$.  The two degrees are the counts in
Corollary~\ref{cor:uniform-valencies}.
\end{proof}

\subsection{The scalar kernel and the perfect linear lift}

\begin{lemma}[Scalar kernel]\label{lem:scalar-kernel}
The group $\widehat G$ contains $Z$.  Every semilinear algebra
automorphism fixing each ray of $\Rcal$ belongs to $Z$.
In particular the kernel of $\widehat G$ on $\Rcal$ is $Z$, and
$\widehat G$ is finite.
\end{lemma}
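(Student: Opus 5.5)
To show that $\widehat G$ contains $Z$, I will use Lemma~\ref{lem:reflection-calculus}(b). Take two basic roots $r_i$, $r_j$ with $i\ne j$. By \eqref{eq:9.2a} their pairing is $c=(r_i,r_j)=1$, which gives only the identity. So I will instead pick a pair with pairing $\omega$. Take an octadic root $r_{O,\chi}$ and a basic root $r_i$ with $i\in O$; here \eqref{eq:9.2} gives pairing $1$. Multiplying $r_{O,\chi}$ by $\omega$ changes the representative but not the map up to $\tau_{\omega r}=\omega^2\tau_r$ by \eqref{eq:root-scalar}. That scalar rescaling already produces $Z$ as long as $\tau_r\in\widehat G$ and $\omega^2\tau_r\in\widehat G$. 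That second condition is not automatic, because $\widehat G$ is generated by the chosen coordinate representatives.

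The cleaner route is \eqref{eq:9.6}: $(\tau_r\tau_s)^2=c\operatorname{id}_A$ for reflecting roots $r,s$ with $(r,s)=c\in\mu_3$. Both $\tau_r$ and $\tau_s$ are then in $\widehat G$ whenever $r,s$ are chosen representatives. So I need two chosen representatives with pairing $\omega$ or $\omega^2$. Octadic roots over the same octad will do: by \eqref{eq:5.1}, $(r_{O,\chi},r_{O,\chi'})=\frac14(3+3\chi(X)\chi'(X)+\sum_{b\in\Hcal_O}\chi(b)\chi'(b))$. This is real, so it gives $\pm$-type values, which is not what I want. Instead I will take any $g\in\widehat G$ and a chosen root $s$; then $g(s)$ is a root whose ray lies in $\Rcal$, so $g(s)=\lambda s'$ with $s'$ the chosen representative and $\lambda\in\mu_3$. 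Covariance (Lemma~\ref{lem:root-covariance}) gives $g\tau_sg^{-1}=\tau_{\lambda s'}=\lambda^2\tau_{s'}$, and both sides other than $\lambda^2$ lie in $\widehat G$. It therefore suffices to exhibit one $g$ and $s$ with $\lambda\neq1$. For this I will use a duadic root: a product $r_{F,\chi}*r_{F',\chi'}=\tau_{r_{F,\chi}}(r_{F',\chi'})$ is a root that coincides with a chosen $r_{p,\psi}$ only up to $\mu_3$. If the scalars happen to be trivial, I will fall back on applying $\tau_{r_i}$ (conjugate-linear) to a chosen root $s$ with $(r_i,s)=1$ and $\vth$-coordinates, and tracking the phase. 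The fallback is a finite coordinate check, and I expect locating one nontrivial phase to be the only real obstacle. Since $Z$ has prime order, one nonscalar-trivial instance gives all of $Z$.

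For the second assertion, let $g$ be a semilinear algebra automorphism fixing every ray. By Corollary~\ref{cor:intrinsic-form}, $g$ is unitary or antiunitary. It fixes each basic ray, so $g(r_i)=\lambda_ir_i$. The pairings $(r_i,r_j)=1$ are preserved up to conjugation, which forces all $\lambda_i$ to be equal to a common $\lambda$. Similarly each octadic root satisfies $g(r_{O,\chi})=\mu r_{O,\chi}$, and the pairing $1$ with $r_i$ for $i\in O$ forces $\mu=\lambda$. By Lemma~\ref{lem:root-spanning}, basic and octadic roots span $A$. If $g$ is linear, then $g=\lambda\operatorname{id}\in Z$. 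If $g$ is conjugate-linear, $g(\vth r_i)=-\vth\lambda r_i$, while $\vth r_i$ spans the same line but not the same ray. To exclude this case I will compare against an octadic root with a $\vth$-coefficient and its image under the conjugate-linear map composed with $\lambda^{-1}$. This map fixes the real $U$-part and negates $\vth$, so it sends $r_{O,\chi}$ to a vector with a different phase in the $x_o$-coordinate that is not a $\mu_3$-multiple of $r_{O,\chi}$. That contradicts ray-fixing.

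Consequently the kernel of $\widehat G$ on $\Rcal$ is exactly $Z$. Since $\widehat G/Z$ embeds in $\operatorname{Sym}(\Rcal)$, $\widehat G$ is finite.
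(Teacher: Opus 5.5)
There is a genuine gap in the first assertion, $Z\le\widehat G$. Your second paragraph is essentially sound, but it only shows that the ray kernel is contained in $Z$. You correctly reduce the first assertion to finding one nontrivial phase. That means either chosen representatives $r,s$ with $(r,s)=c\in\{\omega,\bar\omega\}$, so that \eqref{eq:9.6} gives $(\tau_r\tau_s)^2=c\,\mathrm{id}_A$, or some $g\in\widehat G$ and chosen $s$ with $g(s)=\lambda s'$ and $\lambda\ne1$. You never exhibit one, and both candidates you name provably give $\lambda=1$.

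Every chosen representative has a nonzero real $U$-part, which singles it out within its ray. Hence $\lambda\ne1$ exactly when $g(s)$ has a nonreal $U$-part. The products $r_{F,\chi}*r_{F',\chi'}$ have $U$-part $U_p/8$, so they are exactly the chosen $r_{p,\psi}$; this is precisely the argument of Corollary~\ref{cor:duadic-phase}. Your fallback fails as well. For $(r_i,s)=1$, Lemma~\ref{lem:nonorthogonal-rigidity} gives $\tau_{r_i}(s)=s$. More generally, $\tau_{r_i}=t_i$ is a cocode element, and cocode elements send chosen basic, octadic, and duadic representatives to chosen representatives on the nose. Your search for a nonreal pairing stopped at two octadic roots over the same octad, where the pairings are indeed real.

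The missing step is to use octadic roots over two \emph{disjoint} octads $O$, $F$. Average $(r_{O,\chi},r_{F,\psi})$ over all $\chi\in B_O^*$ and $\psi\in B_F^*$. Every $W$-coordinate carries a nontrivial character and averages to zero, leaving $(U_O/2,U_F/2)=-\tfrac14$. By Lemma~\ref{lem:reflecting-root-pairing} each of these pairings lies in $\{0,1,\omega,\bar\omega\}$. A negative average therefore forces one of them to be $\omega$ or $\bar\omega$. Then \eqref{eq:9.6}, or equivalently $\tau_r(s)=\bar c\,s$ in your covariance formulation, puts a generator of $Z$ in $\widehat G$.

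The paper uses the same nonreal pairing to dismiss the conjugate-linear case of the second assertion: an antiunitary $\lambda^{-1}g$ fixing all chosen representatives would make all their pairings real. Your coordinate version of that step also works, but only once you justify that $\lambda^{-1}g$ fixes every $u_i$ and every $x_D$. This holds because these are rational combinations of basic and octadic roots by \eqref{eq:basic-coordinate-extraction} and \eqref{eq:CP-coordinate-Fourier}. As written, your claim that the map ``fixes the real $U$-part and negates $\vth$'' is asserted rather than derived.
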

\begin{proof}
There are chosen root representatives with nonreal pairing.  Indeed, for disjoint
octads $O$, $F$, character averaging in \eqref{eq:5.1} gives
\begin{equation}
 \frac1{32^2}\sum_{\chi\in B_O^*,\,\psi\in B_F^*}
       (r_{O,\chi},r_{F,\psi})
   =\left(\frac{U_O}{2},\frac{U_F}{2}\right)=-\frac14,
 \qquad U_O=-\sum_{i\in O}u_i+\sum_{i\notin O}u_i.
 \label{eq:nonreal-pairing-average}
\end{equation}
The allowed pairings are $0$, $1$, $\omega$, $\bar\omega$, so one of these
pairings is $\omega$ or $\bar\omega$.  Equation~\eqref{eq:9.6}
therefore puts a generator of $Z$ in $\widehat G$.

Let $g$ be a semilinear algebra automorphism fixing every ray.  It is
an isometry by Corollary~\ref{cor:intrinsic-form}.  Write
$g(r)=\lambda_r r$ on
the chosen coordinate representatives.  All $\lambda_r$ belong to $\mu_3$.
Whenever $(r,s)=1$, (anti)unitarity implies $\lambda_r=\lambda_s$.
The graph with edges $(r,s)=1$ is connected: the basic roots form a
clique and every other root is joined to a basic root by
Lemma~\ref{lem:intrinsic-support}.  Hence all $\lambda_r$ are a common
$\lambda$.  If $g$ is linear, Lemma~\ref{lem:root-spanning} gives $g=\lambda I$.
If $g$ is conjugate-linear, $\lambda^{-1}g$ fixes all chosen root representatives,
which would make all their pairings real, contradicting
\eqref{eq:nonreal-pairing-average}.  Thus the kernel is $Z$.
\end{proof}

\begin{proposition}[Parity and perfectness]\label{prop:parity-perfect}
Let $\widehat G^+$ be the $\E$-linear subgroup of $\widehat G$.
Then
\[
 \widehat G'=\widehat G^+,\qquad
 (\widehat G^+)'=\widehat G^+,
 \qquad [G_{\rm CP}:G_{\rm CP}']=2,
 \qquad (G_{\rm CP}')'=G_{\rm CP}'.
\]
There are exact sequences
\begin{align}
 1\longrightarrow Z\longrightarrow\widehat G
   \longrightarrow G_{\rm CP}\longrightarrow1,             \label{eq:12.1}\\
 1\longrightarrow Z\longrightarrow\widehat G^+
   \longrightarrow G_{\rm CP}'\longrightarrow1.            \label{eq:12.2}
\end{align}
The second is a nonsplit central extension.
\end{proposition}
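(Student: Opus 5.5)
The plan is to read off \eqref{eq:12.1} from the scalar kernel, then get parity and the commutator subgroups by working in $G_{\rm CP}$ first and lifting to $\widehat G$.

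\emph{The extension \eqref{eq:12.1} and the parity map.} Each $\tau_r$ preserves $\Rcal$ by Theorem~\ref{thm:extremal-closure}, so $\widehat G$ acts on $\Rcal$. Its image is generated by the $d_{[r]}$, hence is $G_{\rm CP}$. By Lemma~\ref{lem:scalar-kernel} the kernel of this action is $Z\le\widehat G$, which gives \eqref{eq:12.1}.

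Next, sending an element to $0$ or $1$ according as it is $\E$-linear or conjugate-linear is a homomorphism $\widehat G\to C_2$ with kernel $\widehat G^+$. Every generator $\tau_r$ is odd, so $[\widehat G:\widehat G^+]=2$. Since $Z$ is linear, this map factors through $G_{\rm CP}$ and gives a surjection $G_{\rm CP}\to C_2$. Its kernel is the image of $\widehat G^+$, which shows that $\widehat G^+$ maps onto a subgroup of index two.

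\emph{Abelianizations.} For the lower bound on the commutator subgroups, fix two rays with $(r,s)=0$. Lemma~\ref{lem:reflection-calculus}(a) gives the exact braid relation $\tau_r\tau_s\tau_r=\tau_s\tau_r\tau_s$. In any abelian quotient this relation forces the images of $\tau_r$ and $\tau_s$ to coincide. The zero-pairing graph is connected (Proposition~\ref{prop:one-class}), so all generators have a common image, and every abelian quotient of $\widehat G$ has order at most $2$.

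Together with the parity map this gives $\widehat G'=\widehat G^+$. Passing to the quotient by $Z$ gives $[G_{\rm CP}:G_{\rm CP}']=2$, with $G_{\rm CP}'$ the image of $\widehat G^+$. This yields \eqref{eq:12.2} as an exact sequence; its centrality is clear because $Z$ consists of scalars.

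\emph{Perfectness of $G_{\rm CP}'$.} I apply the same trick to $G_{\rm CP}''$, which is normal in $G_{\rm CP}$. For a zero-pairing pair, $d_{[r]}d_{[s]}$ has order $3$. Moreover $d_{[r]}d_{[s]}=[d_{[r]}d_{[t]},\,d_{[s]}d_{[t]}]$-type identities inside the $S_3$ generated by the pair show it is the inverse of a commutator of elements of $G_{\rm CP}'$. More concretely, $(d_{[r]}d_{[s]})^{-1}$ is conjugate to $d_{[r]}d_{[s]}$ by $d_{[r]}$, and an order-$3$ element inverted by an element of $G'$ is a commutator in $G'$.

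If that local argument is awkward, I would fall back on a different route. In $G_{\rm CP}/G_{\rm CP}''$ the subgroup $G_{\rm CP}'/G_{\rm CP}''$ is abelian and generated by the images of the products $d_{[r]}d_{[s]}$. These have order dividing $3$ for zero-pairing pairs and dividing $2$ for commuting pairs, and they are inverted by conjugation by a generator. I would then combine this with connectivity to force these images to be trivial. That gives $(G_{\rm CP}')'=G_{\rm CP}'$.

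\emph{Perfectness of $\widehat G^+$ and nonsplitting.} I expect this to be the main obstacle. Since $(\widehat G^+)'$ maps onto the perfect group $G_{\rm CP}'$, we have $\widehat G^+=(\widehat G^+)'Z$. Hence both perfectness of $\widehat G^+$ and nonsplitting reduce to showing $Z\le(\widehat G^+)'$; a splitting would make $\widehat G^+\cong Z\times G_{\rm CP}'$, which is not perfect.

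To exhibit a scalar $\omega$ as a product of commutators of $\E$-linear elements, I would start from the commuting pair with $c=(r,s)=\omega$ produced in Lemma~\ref{lem:scalar-kernel}. There \eqref{eq:9.6} gives $(\tau_r\tau_s)^2=\omega\operatorname{id}_A$ exactly. I would then adjoin a third root $t$ orthogonal to both $r$ and $s$; such roots exist by the valency count of Corollary~\ref{cor:uniform-valencies}. In the finite group $\langle\tau_r,\tau_s,\tau_t\rangle$, the braid relations with $t$ conjugate $\tau_r\tau_s$ by linear elements $\tau_t\tau_r$ and $\tau_t\tau_s$. I would compute in this group until $\omega$ appears as a commutator of two linear words.

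If that fails, I would use a larger root configuration, for instance four roots forming a Hessian-type $3^{1+2}$ pattern. There the scalar is the commutator of two order-$3$ linear elements, as in an extraspecial group $3^{1+2}$ realized by root products.
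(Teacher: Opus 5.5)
\textbf{Verdict: there is a genuine gap at the perfectness of $\widehat G^+$.} Most of your argument is correct. This covers \eqref{eq:12.1}, the parity map, $\widehat G'=\widehat G^+$, $[G_{\rm CP}:G_{\rm CP}']=2$, exactness and centrality of \eqref{eq:12.2}, and the reduction of both perfectness of $\widehat G^+$ and nonsplitting to $Z\le(\widehat G^+)'$.

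Your ``more concretely'' argument for $G_{\rm CP}''=G_{\rm CP}'$ is flawed: it inverts $d_{[r]}d_{[s]}$ by $d_{[r]}$, which is odd and so not in $G_{\rm CP}'$. Your fallback via two connected graphs is the argument that works.

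The gap is the step you flag as the main obstacle. You never show $Z\le(\widehat G^+)'$, and your first plan cannot succeed. Suppose $(r,s)=\omega$ and $t$ is orthogonal to both. Then $\langle d_{[r]},d_{[s]},d_{[t]}\rangle$ is a quotient of $W(A_3)\cong S_4$ in which $d_{[r]}\ne d_{[s]}$ commute, so it is $S_4$. Its kernel upstairs is exactly $Z$, because $(\tau_r\tau_s)^2=\omega\operatorname{id}_A$. Hence the $\E$-linear part $E$ of $\langle\tau_r,\tau_s,\tau_t\rangle$ is a central extension of $A_4$ by $Z$. Since the Schur multiplier of $A_4$ has order $2$ and $|Z|=3$, we get $Z\cap E'=1$. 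So $\omega$ is never a product of commutators of linear words in that group. The Hessian $3^{1+2}$ alternative remains a hope: you have neither exhibited the configuration nor computed its lift.

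The missing idea is that your fallback argument already works upstairs in $\widehat G$, provided you choose the second graph correctly.
\begin{itemize}
\item For zero pairing, the braid relation together with $\tau_r^2=\tau_s^2=1$ gives $(\tau_r\tau_s)^3=1$ exactly in $\widehat G$.
\item For chosen representatives with $(r,s)=1$ \emph{exactly}, \eqref{eq:9.6} gives $(\tau_r\tau_s)^2=\operatorname{id}_A$ exactly. By contrast, pairing $\omega$ gives order $6$.
\item The graph of exact unit pairings among the chosen representatives is connected. The basic roots pairwise pair to $1$, and every nonbasic chosen root pairs to $1$ with the basic roots in its support, as in the proof of Lemma~\ref{lem:scalar-kernel}.
\end{itemize}
Now $\widehat G^+$ is generated by the products $\tau_r\tau_s$. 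Telescoping each product along a path in either graph shows that its image in the abelianization of $\widehat G^+$ has order dividing both $3$ and $2$, hence is trivial. Thus $\widehat G^+$ is perfect, and $Z\le(\widehat G^+)'$ comes for free; indeed $\omega\operatorname{id}_A=(\tau_r\tau_s)^2$ is itself a product of such generators. Perfectness of $G_{\rm CP}'$ follows by taking images, and nonsplitting follows exactly as you said. This is the paper's proof, and it never needs to exhibit $\omega$ as an explicit commutator.
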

\begin{proof}
Every generator $\tau_r$ is conjugate-linear.  Thus $\widehat G^+$
is the even-word subgroup, generated by all products $\tau_r\tau_s$.
For a path $r=r_0$, $r_1$, $\ldots$, $r_k=s$,
\[
 \tau_r\tau_s=(\tau_{r_0}\tau_{r_1})
             (\tau_{r_1}\tau_{r_2})\cdots
             (\tau_{r_{k-1}}\tau_{r_k}).
\]
The zero-pairing graph is connected, and its edge products have order
$3$ by Lemma~\ref{lem:reflection-calculus}(a).  The unit-pairing
graph is connected by the preceding proof, and its edge products have
order $2$ by \eqref{eq:9.6}.  Hence the abelianization of
$\widehat G^+$ has exponent dividing both $3$ and $2$, so is trivial.
Since $\widehat G/\widehat G^+\cong C_2$, this also gives
$\widehat G'=\widehat G^+$.

The scalar kernel is linear, so parity descends to a surjection
$G_{\rm CP}\to C_2$.  Its kernel is the perfect image of
$\widehat G^+$, hence equals $G_{\rm CP}'$.  This proves both exact
sequences and the remaining perfectness assertion.  The second
extension is central because its maps are linear; a splitting would
make the perfect group $\widehat G^+$ a direct product
$Z\times G_{\rm CP}'$, which has a nontrivial abelian quotient.
\end{proof}

\begin{definition}\label{def:CP-Fischer}
The permutation group $G_{\rm CP}$ of \eqref{eq:9.5a}, generated by the
projective reflections of the explicitly constructed root-ray set $\Rcal$, is
called the \emph{Conway--Parker Fischer group} and is also denoted
$G_{24}$.
\end{definition}

\subsection{The standard frame stabilizer}\label{sec:frame-stabilizer}

Let
\[
             \mathcal F_0=\{d_i:i\in I\},
             \qquad E_0=\langle\mathcal F_0\rangle.
\]
By Corollary~\ref{cor:relations},
\begin{equation}
                         E_0\cong\C^*\cong2^{12}.              \label{eq:standard-basic-group}
\end{equation}
Lemma~\ref{lem:scalar-kernel} makes the ray kernel of $H$ equal
to $H\cap Z=1$: a scalar map permuting the coordinate vectors $u_i$
must be the identity.  Thus $H$, and in particular its cocode subgroup,
acts faithfully on $\Rcal$.  Write $\overline H$ for its ray image.
It stabilizes $\mathcal F_0$ and induces the full Mathieu group on its
$24$ members.

For a nonbasic ray $[r]$, define its support relative to $\mathcal F_0$ by
\[
 \operatorname{supp}_{\mathcal F_0}([r])
   =\{i\in I:(r_i,r)\ne0\}.
\]
Lemma~\ref{lem:intrinsic-support} identifies these supports with the
Golay octads and duads.  In particular the octadic supports, together with
the $24$ basic points, recover the Witt design intrinsically from the
Hermitian ray configuration.

\begin{proposition}[Coordinate-basis stabilizer]
\label{prop:pointwise-U}
Let $g$ be an $\E$-linear or $\E$-conjugate-linear algebra automorphism
with $g(u_i)=u_i$ for all $i\in I$.  Then $g=T_\delta$ for a unique
$\delta\in\C^*$.  Its semilinearity is $\parity(\delta)$.
\end{proposition}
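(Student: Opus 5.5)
We first reduce to the case where $g$ is $\E$-linear; then we show that $g$ acts diagonally on the octad coordinates by a multiplicative character; finally we use the uniqueness of cocode characters. If $g$ is conjugate-linear, we compose with a fixed odd basic map $t_i=T_{\{i\}+\C}$, which is conjugate-linear and fixes every $u_j$ (Proposition~\ref{prop:basic-axis}). The composite $t_ig$ is then $\E$-linear and fixes $U$ pointwise. Once we have $t_ig=T_{\delta'}$ with $\delta'$ even, we get $g=T_{\delta'+\{i\}+\C}$, whose parity is odd. So it suffices to treat linear $g$ and show that $g=T_\delta$ with $\parity(\delta)=0$.

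Let $g$ be linear with $g(u_i)=u_i$ for all $i$. Put $e_D=\sum_{i\in D}u_i$. By \eqref{eq:2.7}, $W_D:=\E x_d$ is exactly the joint solution space in $A$ of
\[
16\,u_i*y=3y\ (i\in D),\qquad 16\,u_i*y=-y\ (i\notin D).
\]
To see this, decompose $y$ into its $U$-part and its $W$-part. By \eqref{eq:2.5}--\eqref{eq:2.6}, the $U$-part would have to be a common eigenvector of all the operators $L_{u_i}$ with these prescribed eigenvalues, and a short check with the real table shows that no nonzero vector of $U$ is one. On $W$ the operators $L_{u_i}$ are diagonal, and the octad $D$ is determined by its incidence pattern with the points. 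The equations are conjugate-linear in $y$, but their solution set is still an $\E$-subspace because all coefficients are real. Since $g$ fixes each $u_i$ and is multiplicative, it preserves each line $W_D$, so $g(x_d)=\lambda_d x_d$ with $\lambda_{-d}=\lambda_d$. Next, \eqref{eq:2.8} gives
\[
2\,g(x_d)*g(x_d)=|\lambda_d|^2\,(2x_d*x_d),
\]
while applying $g$ to \eqref{eq:2.8} gives $2x_d*x_d$ again, because the right-hand side lies in $U$. Hence $|\lambda_d|=1$. (Alternatively, Corollary~\ref{cor:intrinsic-form} gives unitarity directly.)

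Now compare products. For octads meeting in four points, \eqref{eq:2.9} gives $\bar\lambda_d\bar\lambda_f=\lambda_{df}$. For disjoint octads, linearity of $g$ on $\vth$ gives $\bar\lambda_d\bar\lambda_f=\lambda_{df\Omega}$. Define $\lambda$ on all of $\C$ by $\lambda(\bar d)=\lambda_d$ on octads and extend it multiplicatively. Only the product relations are needed to show that this extension exists and that it is a character of the octad span $\C$. Two points must be handled. First, the relations involve conjugates, so we must show that the $\lambda_d$ are real, i.e. $\lambda_d=\pm1$. Second, the disjoint-octad relation must be made consistent with the sextet relation.

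\textbf{The main obstacle} is proving that $\lambda_d=\pm1$. The plan is as follows. Choose a sextet triangle $D,E,F$ with $\bar D+\bar E+\bar F=0$. Its relations give $\lambda_F=\overline{\lambda_D\lambda_E}$, together with the two relations obtained by cyclic permutation. Substituting one into another yields $\lambda_D=\lambda_D\overline{\lambda_E}\lambda_E\cdot(\text{conjugates})$, and combining the three relations with $|\lambda|=1$ should give $\lambda_D^3=\overline{\lambda_D}^{\,3}$, i.e. $\lambda_D^6=1$. A second relation, coming from a trio $D,E,G$ with $\lambda_G=\overline{\lambda_D\lambda_E}$, then pins down the cube roots of unity. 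If this gets messy, there is a fallback. By Lemma~\ref{lem:root-spanning}, $g$ carries the octadic roots $r_{O,\chi}$ to roots whose $U$-parts are unchanged. By Theorem~\ref{thm:extremal-closure} the images are again octadic roots over the same octad $O$, since these are the only rays with that $U$-part, possibly up to $\mu_3$. Comparing the real coefficients $\chi(b)$ then forces every $\lambda_d\in\{\pm1\}$ after the $\mu_3$ factor is removed; that factor is trivial because $g$ fixes the $U$-part of the root exactly.

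Once all $\lambda_d$ are signs, the relations say that $\lambda$ is a character of the octad span, which is $\C$ by Lemma~\ref{lem:Witt-numerology}. The disjoint case additionally forces $\lambda(\one)=1$. By the perfect pairing \eqref{eq:1.1}, $\lambda=(-1)^{\langle\cdot,\delta\rangle}$ for a unique $\delta$, and $\lambda(\one)=1$ means $\parity(\delta)=0$. Then $g$ and $T_\delta$ agree on the basis, so $g=T_\delta$. Uniqueness of $\delta$ is the faithfulness statement in Theorem~\ref{thm:base-action}.
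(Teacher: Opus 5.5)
\textbf{Gap in the last step.} Once $\lambda_d=\pm1$, all you know is two local triangle relations:
\begin{itemize}
\item $\lambda(D)\lambda(E)=\lambda(D+E)$ when $|D\cap E|=4$;
\item $\lambda(D)\lambda(E)=\lambda(D+E+\one)$ when $D\cap E=\varnothing$.
\end{itemize}
You assert that these make $\lambda$ the restriction of a character of $\C$. That the octads span $\C$ gives only \emph{uniqueness} of such a character. \emph{Existence} requires that every linear dependence among the $759$ octads (modulo $\one$) follow from the triangle relations.

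This is not a routine check. For any $\pm1$-valued $\lambda$ satisfying those relations, the linear map $u_i\mapsto u_i$, $x_d\mapsto\lambda_dx_d$ already preserves every entry of \eqref{eq:2.5}--\eqref{eq:2.9}. So your unproved claim is equivalent to the proposition itself for linear $g$: you reduce $g$ to diagonal form and then assume the conclusion.

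The paper supplies the missing global step as follows.
\begin{itemize}
\item It composes $g$ with a cocode element so that $g$ fixes one duadic root over $p=\{i,j\}$. This is possible because extremal closure makes $g$ permute that fibre and the cocode is transitive on it.
\item This pins $\mu_D=1$ on the $330$ octads avoiding $p$ and $\mu_D=(-1)^\epsilon$ on the $77$ containing $p$.
\item The sextet relation then propagates one sign through the connected graph (degree $105$) on the $176$ octads containing $i$ but not $j$, and likewise for $j$.
\item One cross relation gives $a+b=\epsilon$.
\end{itemize}
You need this normalization-and-propagation step, or some other genuinely global argument.

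\textbf{Your ``main obstacle'' is not an obstacle.} By \eqref{eq:2.4}, $(\lambda x_d)*(\lambda x_d)=\bar\lambda^{2}(x_d*x_d)$, not $|\lambda|^2(x_d*x_d)$. So \eqref{eq:2.8} gives $\bar\lambda_d^{\,2}=1$, i.e.\ $\lambda_d=\pm1$ at once; this is exactly how the paper argues.

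Likewise, your joint eigenspace is $\mathbb Q x_d$, not the $\E$-line. Since $y\mapsto u_i*y$ is conjugate-linear, $16u_i*(\mu x_d)=3\bar\mu x_d$ for $i\in D$, so only rational multiples solve the equations. This also forces $\lambda_d$ real.

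\textbf{Your primary plan for that step cannot work.} With $|\lambda_d|=1$, every sextet or trio relation collapses to $\lambda_D\lambda_E\lambda_F=1$, which the constant function $\lambda\equiv\omega$ satisfies. No combination of these relations yields $\lambda_D^6=1$.

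Your fallback through octadic roots is valid, but unnecessary. It rests on unitarity, covariance and Theorem~\ref{thm:extremal-closure}, not on Lemma~\ref{lem:root-spanning}.

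The reduction to linear $g$ via $t_i$ and the uniqueness argument are fine.
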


\begin{proof}
Multiplicativity makes $g$ commute with $L_{r_i}:x\mapsto x*r_i$.
By \eqref{eq:root-R-square},
$W=\bigcap_i\ker(L_{r_i}^2-I)$ because the basic roots span $U$.
Thus $g$ preserves $W$.
On $W$ these multiplication operators equal $t_i$.  The common
character spaces of the even cocode on $W$ are the individual octad
lines: two octads define the same character only when their difference
is $0$ or $\one$, and the latter cannot occur.
Hence $g(x_D)=\mu_Dx_D$.  Applying $g$ to
\eqref{eq:2.8} gives $\bar\mu_D^2=1$, so $\mu_D=\pm1$.
In particular $g$ is a semilinear isometry.

Fix a duad $p=\{i,j\}$ and one duadic root over $p$.
By extremal closure, $g$ permutes the roots in this fibre; their
$U$-part fixes the cube-root scalar.  The cocode is transitive on the
fibre, so after composing with a cocode element we may assume that
$g$ fixes this root.  Let $\epsilon$ record whether this new $g$ is
conjugate-linear.  The nonzero duadic coordinates then give
\begin{equation}
 \mu_D=1\quad(D\cap p=\varnothing),\qquad
 \mu_D=(-1)^\epsilon\quad(p\subset D).                     \label{eq:duad-fixed-signs}
\end{equation}

It remains to determine the signs on
$\mathcal A_i=\{D:i\in D,\ j\notin D\}$ and $\mathcal A_j$.
Join two members of $\mathcal A_i$ when their intersection has size
four.  Their sum is an octad disjoint from $p$, so
\eqref{eq:2.9} and \eqref{eq:duad-fixed-signs} force their signs to
agree.  This graph has $176$ vertices and degree $105$, hence is
connected: each component would have at least $106$ vertices.
Indeed, for $D\in\mathcal A_i$, there are $140$ other octads through
$i$ meeting $D$ in four points by \eqref{eq:Witt-point-inside}.
Among octads through $i$, $j$, the corresponding counts satisfy
$m_2+m_4=77$ and $m_2+3m_4=7\cdot21=147$, so $m_4=35$.
The degree is therefore $140-35=105$.
The same argument applies to $\mathcal A_j$.

Write the two constant signs as $(-1)^a$, $(-1)^b$.
There are $70-35=35$ octads $E\in\mathcal A_j$ meeting the fixed
$D\in\mathcal A_i$ in four points, by
\eqref{eq:Witt-point-refinement}.  Their sum contains $p$, so its
product relation gives $a+b=\epsilon$.  Thus on every octad line
\[
            \mu_D=(-1)^{a[i\in D]+b[j\in D]},
\]
and $g=T_{a\{i\}+b\{j\}+\C}$.  Undoing the initial cocode
composition proves the assertion.  Uniqueness follows from the
faithfulness of the cocode action.
\end{proof}

\begin{lemma}\label{lem:basic-ray-scalar-rigidity}
Let $g$ be an $\E$-linear unitary or $\E$-conjugate-linear antiunitary
algebra automorphism.  If $g$ fixes every basic ray $[r_i]$, then there are
unique $\lambda\in\mu_3$ and $\delta\in\C^*$ such that
\[
                              g=\lambda T_\delta.
\]
\end{lemma}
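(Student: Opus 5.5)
The plan is to reduce to Proposition~\ref{prop:pointwise-U} by first showing that $g$ multiplies all basic roots by one common cube root of unity.

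\emph{Step 1: common scalar on the basic rays.} Since $g$ fixes every ray $[r_i]$, we can write $g(r_i)=\lambda_i r_i$ with $\lambda_i\in\mu_3$. For $i\ne j$ we have $(r_i,r_j)=1$ by \eqref{eq:9.2a}. Unitarity, or antiunitarity in the conjugate-linear case, gives
\[
\lambda_i\bar\lambda_j=(g(r_i),g(r_j))=(r_i,r_j)\ \text{or its conjugate},
\]
and in both cases this equals $1$. Hence all $\lambda_i$ are equal to a single $\lambda\in\mu_3$.

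\emph{Step 2: removing the scalar.} By Lemma~\ref{lem:scalar-automorphisms}, multiplication by $\lambda^{-1}$ is an $\E$-linear algebra automorphism. We must be careful in the conjugate-linear case, because $\lambda^{-1}g(\mu x)=\lambda^{-1}\bar\mu\, g(x)$, so the scalar multiplies on the left only. Put $g'=\lambda^{-1}g$. This is again an automorphism of the same semilinearity type, and $g'(r_i)=r_i$ for all $i$. The $u_i$ are rational combinations of the $r_j$ by \eqref{eq:basic-coordinate-extraction}, and rational coefficients are fixed by conjugation. Therefore $g'(u_i)=u_i$ in both the linear and the conjugate-linear case.

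\emph{Step 3: identifying $g'$.} Proposition~\ref{prop:pointwise-U} now gives $g'=T_\delta$ for a unique $\delta\in\C^*$. Hence $g=\lambda T_\delta$.

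\emph{Uniqueness.} Suppose $\lambda T_\delta=\lambda' T_{\delta'}$. Evaluating at $u_i$ gives $\lambda u_i=\lambda' u_i$, so $\lambda=\lambda'$. Then $T_\delta=T_{\delta'}$, and the faithfulness of the cocode action in Theorem~\ref{thm:base-action} gives $\delta=\delta'$.

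The only delicate point is Step 2, where the scalar must be composed on the correct side in the conjugate-linear case. Everything else follows directly from earlier results. The hypothesis that $g$ is an isometry is exactly what Step 1 needs; by Corollary~\ref{cor:intrinsic-form} it holds automatically in any case.
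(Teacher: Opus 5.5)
Your proof is correct and follows the paper's argument step for step. You get a common scalar from $(r_i,r_j)=1$ and (anti)unitarity, remove it, and pass from the $r_i$ to the $u_i$ by a rational change of basis; the paper phrases this as invertibility of $J-8I$ over $\mathbb Q$, while you cite the explicit inverse \eqref{eq:basic-coordinate-extraction}. You then apply Proposition~\ref{prop:pointwise-U} and read off uniqueness from the action on $U$ and the faithful cocode action.
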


\begin{proof}
Write $g(r_i)=\lambda_i r_i$ with $\lambda_i\in\mu_3$.  Since
$(r_i,r_j)=1$ for $i\ne j$, (anti)unitarity gives
$\lambda_i\overline{\lambda_j}=1$; hence all $\lambda_i$ are one
common scalar $\lambda$.  The automorphism $\lambda^{-1}g$ fixes every
$r_i$.  The matrix relating the $r_i$ to the $u_i$ is $J-8I$, which is
invertible over $\mathbb Q$, so $\lambda^{-1}g$ fixes every $u_i$.  Proposition~\ref{prop:pointwise-U}
then gives $\lambda^{-1}g=T_\delta$ for a unique $\delta\in\C^*$.
Uniqueness of $\lambda$ follows from the action on $U$, and uniqueness of
$\delta$ from the Golay code--cocode pairing.
\end{proof}

\begin{proposition}[Octadic frame switch]
\label{prop:intrinsic-frame-switch}
For every octad $O$, the standard frame has two switched companions
obtained from the two parity classes of octadic phases over $O$.  More precisely, with the evaluation characters $\ev_j$ of
\eqref{eq:evaluation-character},
\begin{equation}
 \tau_O(r_i)=r_i\ (i\in O),\qquad
 \tau_O(r_j)=r_{O,\ev_j}\ (j\in X).                             \label{eq:frame-switch-basic}
\end{equation}
Consequently the frame $d_{O,\chi}(\mathcal F_0)$ depends only on the
single sign $\chi(X)$.  If $\chi(X)=\chi'(X)$, then
$d_{O,\chi'}d_{O,\chi}$ stabilizes $\mathcal F_0$ and fixes $O$ pointwise;
on $X\cong AG(4,2)$ it is the translation determined by the character
$\rho=\chi\chi'$ with $\rho(X)=1$.  For $\rho\ne1$ this translation is
nontrivial.
\end{proposition}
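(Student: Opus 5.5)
The plan is to compute $\tau_O(r_i)$ directly from the explicit formulas \eqref{eq:5.10}--\eqref{eq:5.12}, and then get everything else by covariance. Recall $r_i=-8u_i+s_O+s_X$ and that $\tau_O$ is conjugate-linear with real coefficients here.

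\emph{Points inside the octad.} For $i\in O$, \eqref{eq:5.10} shows that $\tau_O$ fixes $u_i-u_{i'}$ for $i,i'\in O$. It maps $s_O$ to $-\tfrac12 s_O-\tfrac{\vth}{2}x_o$, by summing \eqref{eq:5.10} over $O$. Summing \eqref{eq:5.12} over $X$ gives $\tau_O(s_X)=s_X$, because $\sum_{j\in X}\ev_j(b)=0$ for every hyperplane $b$. Write $r_i=-8(u_i-\tfrac18 s_O)+s_X$, using $\sum_{i'\in O}(u_i-u_{i'})=8u_i-s_O$. This is a combination of vectors fixed by $\tau_O$, so $\tau_O(r_i)=r_i$. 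Alternatively, Lemma~\ref{lem:nonorthogonal-rigidity} with $(r_O,r_i)=1$ from \eqref{eq:9.2} gives this at once.

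\emph{Points outside the octad.} For $j\in X$, \eqref{eq:5.12} gives $-8\tau_O(u_j)=-\tfrac12 s_X+\tfrac12\sum_b\ev_j(b)y_b$. Adding $\tau_O(s_O)+\tau_O(s_X)=-\tfrac12 s_O-\tfrac{\vth}{2}x_o+s_X$ yields
\[
\tau_O(r_j)=\tfrac12\Bigl(-s_O+s_X-\vth x_o+\sum_{b\in\Hcal_O}\ev_j(b)y_b\Bigr).
\]
Since $j\in X$, we have $\ev_j(X)=-1$, so $\vth\,\ev_j(X)x_o=-\vth x_o$. Comparing with \eqref{eq:5.1} gives $\tau_O(r_j)=r_{O,\ev_j}$. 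This proves \eqref{eq:frame-switch-basic}.

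\emph{Transport to other phases.} Let $\chi\in B_O^*$ be arbitrary. By Proposition~\ref{prop:octadic-roots} some cocode element $T_\delta$ carries $r_O$ to $r_{O,\chi}$; its restriction to $B_O$ is $\chi$, up to the parity twist on the $x_o$ coefficient, which $\chi(X)$ absorbs. Covariance (Lemma~\ref{lem:root-covariance}) gives $\tau_{O,\chi}=T_\delta\tau_OT_\delta^{-1}$. Since $T_\delta$ fixes each $r_i$, we get $\tau_{O,\chi}(r_j)=T_\delta(r_{O,\ev_j})$, which equals $r_{O,\chi\ev_j}$ up to a scalar in $\mu_3$. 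Hence $d_{O,\chi}(\mathcal F_0)$ consists of the $d_i$ with $i\in O$ together with the $d_{O,\chi\ev_j}$ with $j\in X$.

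\emph{Dependence only on $\chi(X)$.} The set $\{\chi\ev_j:j\in X\}$ is the coset of characters with value $-\chi(X)$ on $X$. Indeed, the sixteen evaluation characters $\ev_j$ are distinct and all take value $-1$ at $X$, so they exhaust the sixteen characters with $\ev(X)=-1$. Therefore the frame depends only on $\chi(X)$.

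\emph{The product of two switches.} Suppose $\chi(X)=\chi'(X)$ and put $\rho=\chi\chi'$, so $\rho(X)=1$. Then $d_{O,\chi'}d_{O,\chi}$ maps $\mathcal F_0$ to itself and fixes each $d_i$ with $i\in O$. On $X$ it sends $d_j$ to $d_{O,\chi\ev_j}$, and then to $d_k$ where $\chi'\ev_k=\chi\ev_j$, that is, $\ev_k=\rho\,\ev_j$. Characters of $B_O=RM(1,4)$ trivial on $X$ correspond to linear functionals on $\F_2^4$, and multiplying $\ev_j$ by one corresponds to translating $j$ by a fixed vector $v_\rho$. When $\rho\ne1$ we have $v_\rho\ne0$, so the translation is nontrivial.

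\emph{Main obstacle.} The delicate step is the sign bookkeeping: the coefficient of $x_o$ under cocode transport, and the fact that the $\mu_3$-scalars attached to rays are harmless. Working with rays and with $d$'s makes the identification up to $\mu_3$ sufficient.
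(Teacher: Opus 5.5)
Your proposal is correct. It differs from the paper mainly in how you compute $\tau_O(r_j)$ for $j\in X$.

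\textbf{Your route.} You substitute the reference formulas \eqref{eq:5.10}--\eqref{eq:5.12} into $r_j=-8u_j+s_O+s_X$. Your arithmetic checks out, including $\tau_O(s_O)=-\tfrac12 s_O-\tfrac{\vth}{2}x_o$ and $\tau_O(s_X)=s_X$.

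\textbf{The paper's route.} It uses a one-line structural argument instead. Since $(r_O,r_j)=0$, commutativity of $*$ gives
\[
\tau_O(r_j)=r_j*r_O=\tau_{r_j}(r_O)=t_j(r_O).
\]
Proposition~\ref{prop:basic-axis}, together with the cocode action on \eqref{eq:5.1}, then identifies this as $r_{O,\ev_j}$.

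\textbf{Comparison.} The paper's argument never touches the block formulas, which it records only ``for reference.'' Moreover, $(r_{O,\chi},r_j)=0$ holds for every phase, so the same trick gives $\tau_{r_{O,\chi}}(r_j)=t_j(r_{O,\chi})=r_{O,\chi\ev_j}$ directly. This makes your separate cocode-transport step unnecessary. In exchange, your computation is a concrete verification that doubles as a consistency check of \eqref{eq:5.10}--\eqref{eq:5.12}.

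For $i\in O$, your alternative via Lemma~\ref{lem:nonorthogonal-rigidity} is exactly the paper's argument. The remaining steps also match the paper: the evaluation characters exhaust the characters with value $-1$ on $X$, and $\rho$ acts as the translation $\ev_{j+v}=\rho_v\ev_j$.

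One minor point: $T_\delta(r_{O,\ev_j})$ equals $r_{O,\chi\ev_j}$ exactly, not merely up to a scalar in $\mu_3$. This follows from the same sign computation as in Proposition~\ref{prop:octadic-roots}.
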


\begin{proof}
If $i\in O$, the pairing is $1$, so
Lemma~\ref{lem:nonorthogonal-rigidity} gives $\tau_O(r_i)=r_i$.
For $j\notin O$, the pairing is zero and commutativity gives
\[
       \tau_O(r_j)=r_j*r_O=t_j(r_O)=r_{O,\ev_j}.
\]
Cocode covariance gives
\[
        d_{O,\chi}:d_j\longmapsto d_{O,\chi \ev_j}\qquad(j\in X),
\]
while the eight basic transpositions in $O$ are fixed.

The sixteen evaluation characters $\ev_j$ are exactly the characters of
$B_O$ taking the value $-1$ on $X$.  Hence multiplication by $\chi$ shows
that the image frame depends only on $\chi(X)$.  If $\chi$ and $\chi'$ have
the same parity, put $\rho=\chi\chi'$.  For every $j\in X$ there is a
unique $k\in X$ with
\[
                              \ev_k=\rho \ev_j.
\]
The product $d_{O,\chi'}d_{O,\chi}$ therefore sends $j$ to $k$.  Under the
standard identification $B_O=RM(1,4)$, the characters with value $+1$ on
$X$ are the translation characters: $\ev_{j+v}=\rho_v \ev_j$.  Thus the
induced permutation is $j\mapsto j+v$, nontrivial when $\rho\ne1$.
\end{proof}

We use the following standard $M_{24}$ data, in addition to its
$5$-transitivity: the octad stabilizer induces $A_8$ on the octad,
and its pointwise kernel $T_O^{\mathrm{tr}}\cong2^4$ acts regularly by translations
on the sixteen complementary points; see, for example,
\cite[Chapter~5]{WilsonBook}.  The following lemma will be used for the frame
stabilizer here and for the residue centralizers in Section~\ref{sec:intrinsic-groups}.

\begin{lemma}[Octad translations]
\label{lem:Mathieu-translation-generation}
For $|S|\leq2$, the pointwise stabilizer $(M_{24})_{(S)}$ is generated
by the subgroups $T_O^{\mathrm{tr}}$ for octads $O$ containing $S$.
\end{lemma}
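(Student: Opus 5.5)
Let $K_S$ be the subgroup of $M_{24}$ generated by the groups $T_O^{\mathrm{tr}}$ with $S\subseteq O$; clearly $K_S\le(M_{24})_{(S)}$. I will prove equality by an order count, using only the standard $M_{24}$ data just quoted: $5$-transitivity, the octad stabilizer $2^4{:}A_8$ with $A_8$ acting on the octad, and the regular translation kernel $T_O^{\mathrm{tr}}$ on the complementary sixteen points. The plan is to show in each case that $K_S$ is transitive on $I\setminus S$, then identify a point stabilizer inside $K_S$ and induct on $|S|$.

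\emph{Case $|S|=2$, $S=\{i,j\}$.} Transitivity: given $k,l\notin S$, choose an octad $O\supseteq S$ with $k,l\notin O$. There are $77$ octads through $S$, and at most $2\cdot21$ of them contain $k$ or $l$, so such an $O$ exists. Then $T_O^{\mathrm{tr}}$ is regular on $I\setminus O$ and moves $k$ to $l$. Hence $[K_S:K_S\cap (M_{24})_{(S\cup\{k\})}]=22$.

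Next I show that $K_S$ contains the pointwise stabilizer of $\{i,j,k\}$, which has order $|M_{21}|=20160$ and is isomorphic to $L_3(4)$. It is simple, so it suffices to show that $K_S\cap (M_{24})_{(i,j,k)}$ is a nontrivial normal subgroup of $(M_{24})_{(i,j,k)}$. I expect this to be the main obstacle. Normality is clear, because $(M_{24})_{(i,j,k)}$ fixes $S$ and therefore normalizes $K_S$, since conjugation permutes the octads through $S$; it then also normalizes the intersection. For nontriviality, take an octad $O$ through $i,j,k$. Then $T_O^{\mathrm{tr}}\le K_S$ fixes $O$ pointwise, so it is a nontrivial subgroup of the intersection. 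With that, $|K_S|\ge22\cdot20160=|M_{22}|$, which gives $K_S=(M_{24})_{(S)}$. The step to double-check is the simplicity of $L_3(4)$; if I want to avoid invoking it, I would instead generate $(M_{24})_{(i,j,k)}$ directly by the translation groups of octads through $i,j,k$ that lie in $K_S$.

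\emph{Cases $|S|\le1$.} The same two steps apply. Transitivity on $I\setminus S$ again comes from an octad through $S$ avoiding two given points, which exists by the counts $253-2\cdot77>0$ and $759-\dots>0$. Then $K_S\supseteq K_{S\cup\{k\}}=(M_{24})_{(S\cup\{k\})}$, because every octad containing $S\cup\{k\}$ also contains $S$, so its translation group is one of the generators of $K_S$. Orbit–stabilizer now gives $|K_S|\ge|I\setminus S|\cdot|(M_{24})_{(S\cup\{k\})}|=|(M_{24})_{(S)}|$, so equality holds.
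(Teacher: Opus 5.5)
Your argument is correct, but it takes a different route from the paper's. You count orders and induct downward from $|S|=2$. Transitivity of $K_S$ on $I\setminus S$, together with $K_S\supseteq(M_{24})_{(S\cup\{k\})}$, gives $|K_S|\ge|(M_{24})_{(S)}|$. The base case rests on $K_{\{i,j\}}\cap(M_{24})_{(i,j,k)}$ being a nontrivial normal subgroup of the simple group $(M_{24})_{(i,j,k)}\cong L_3(4)$.

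The paper uses neither the induction nor the simplicity of $L_3(4)$. It uses octads through sets $T\supseteq S$ with $|T|\le4$, via $\lambda_{|T|}-2\lambda_{|T|+1}>0$, to make your $K_S$ (the paper's $R$) $(5-|S|)$-transitive on $I\setminus S$. Hence $(M_{24})_{(S)}=R\,(M_{24})_{(\Pi)}$ for a five-set $\Pi\supseteq S$ with octad $O$. Since $(M_{24})_{(\Pi)}$ is $T_O^{\mathrm{tr}}$ extended by $A_3$, the quotient $(M_{24})_{(S)}/R$ is cyclic of order dividing $3$. It is then killed by the perfect group $A_{8-|S|}$ that $(M_{24})_{(S)}\cap(M_{24})_O$ induces on $O$.

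Your version is quicker to state, but it imports a fact beyond the data quoted before the lemma: the identification of the three-point stabilizer with $L_3(4)$ and its simplicity. That contradicts your opening claim to use only the quoted data. The paper's argument needs only $5$-transitivity, the octad stabilizer $2^4{:}A_8$, and perfectness of $A_n$ for $n\ge5$, and it treats all $|S|\le2$ uniformly.

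Your proposed fallback, generating $(M_{24})_{(i,j,k)}$ by the translation groups of octads through $i,j,k$, is exactly the $|S|=3$ case of the lemma. The paper's argument proves that case too: the counts $11$ and $3$ give $2$-transitivity on the remaining $21$ points, and $A_5$ is perfect. That is the cleanest way to make your base case self-contained.
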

\begin{proof}
Let $M=(M_{24})_{(S)}$ and $R=\langle T_O^{\mathrm{tr}}:S\subseteq O\rangle$.
This subgroup is normal in $M$.  For a set $T\supseteq S$ of at most
four points and distinct $x$, $y\notin T$, the inequality
\[
 \lambda_{|T|}-2\lambda_{|T|+1}>0
\]
provides an octad containing $T$ and avoiding both $x$, $y$.
The five differences for $|T|=0$, $1$, $2$, $3$, $4$ are
$253$, $99$, $35$, $11$, $3$.  A translation in its $T_O^{\mathrm{tr}}$ fixes $T$ and takes
$x$ to $y$.  Consequently $R$ is $(5-|S|)$-transitive on
$I\setminus S$.

Choose a five-set $\Pi\supseteq S$ and let $O$ be its unique octad;
write $M_O=(M_{24})_O$ for its setwise stabilizer.
The transitivity just proved gives $M=R(M_{24})_{(\Pi)}$.
The latter stabilizer is contained in $M_O$ and, modulo $T_O^{\mathrm{tr}}$,
induces the group $A_3$ on $O\setminus\Pi$.  Therefore $M/R$ is a
cyclic group of order dividing three.  On the other hand,
\[
 (M\cap M_O)/T_O^{\mathrm{tr}}\cong A_{8-|S|}
\]
is perfect.  Since $T_O^{\mathrm{tr}}\leq R$ and $M/R$ is abelian, the image of
$M\cap M_O$ in $M/R$ is trivial.  This kills
$(M_{24})_{(\Pi)}$ and proves $M=R$.
\end{proof}

\begin{theorem}[Frame stabilizer]\label{thm:frame-rigidity}
The standard frame has stabilizer
\begin{equation}
 (G_{\rm CP})_{\mathcal F_0}=\overline H\cong2^{12}.M_{24}.
 \label{eq:frame-kernel}
\end{equation}
More generally, the semilinear algebra automorphisms permuting the
basic rays are exactly $ZH$.
\end{theorem}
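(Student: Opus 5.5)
The plan is to prove the second assertion first and then deduce the frame stabilizer from it, using the octadic switches to show that $G_{\rm CP}$ realizes all of $M_{24}$.

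\textbf{Semilinear automorphisms permuting the basic rays.} Let $g$ be an $\E$-linear or $\E$-conjugate-linear algebra automorphism permuting the basic rays. By Corollary~\ref{cor:intrinsic-form} it is unitary or antiunitary. By Theorem~\ref{thm:extremal-closure} it permutes $\Rcal$, and it preserves whether a pairing vanishes. Write $\pi$ for the permutation of $I$ it induces on basic rays. By Lemma~\ref{lem:intrinsic-support}, the nonbasic rays pairing nontrivially with exactly eight basic rays are the octadic rays, and their supports are exactly the octads. Hence $\pi$ maps octads to octads, and since the octads span $\C$ (Lemma~\ref{lem:Witt-numerology}), $\pi\in\Aut(\C)=M_{24}$. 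Choose $h\in H$ with $\pi_h=\pi$; this is possible because $H\to M_{24}$ is onto. Then $h^{-1}g$ fixes every basic ray. By Lemma~\ref{lem:basic-ray-scalar-rigidity}, $h^{-1}g=\lambda T_\delta$ with $\lambda\in\mu_3$, so $g\in ZH$. Conversely $ZH$ permutes the $u_i$ and hence the basic rays. This proves the second assertion.

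\textbf{The inclusion $(G_{\rm CP})_{\mathcal F_0}\le\overline H$.} Let $x$ lie in this stabilizer and lift it to $\hat x\in\widehat G$, using the exact sequence \eqref{eq:12.1}. By \eqref{eq:projective-covariance} and the injectivity of $[r]\mapsto d_{[r]}$ (Proposition~\ref{prop:one-class}), $x$ permutes $\mathcal F_0$ exactly when $\hat x$ permutes the basic rays. The previous step then gives $\hat x\in ZH$, and since $Z$ acts trivially on rays, $x\in\overline H$.

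\textbf{The reverse inclusion by counting.} By Proposition~\ref{prop:basic-axis}, the cocode image $\overline{\C^*}$ equals $E_0$, which lies in $G_{\rm CP}$ and is normal in the stabilizer. It remains to show that the stabilizer induces all of $M_{24}$ on $\mathcal F_0$. By Proposition~\ref{prop:intrinsic-frame-switch}, the products $d_{O,\chi'}d_{O,\chi}$ with $\chi(X)=\chi'(X)$ lie in $G_{\rm CP}$, stabilize $\mathcal F_0$, and induce the full translation group $T_O^{\mathrm{tr}}$ on $I$. Lemma~\ref{lem:Mathieu-translation-generation} with $S=\varnothing$ shows that these translations generate $M_{24}$. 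So the stabilizer maps onto $M_{24}$, and its kernel consists of elements fixing every $d_i$, hence every basic ray. By the first step such an element lies in the image of $ZH$, and its lift fixes every basic ray. Lemma~\ref{lem:basic-ray-scalar-rigidity} then places it in $\overline{\C^*}=E_0$, which is already contained in the stabilizer. Therefore
\[
 |(G_{\rm CP})_{\mathcal F_0}|=2^{12}|M_{24}|=|H|=|\overline H|,
\]
using that $H$ acts faithfully on $\Rcal$ (shown just before Proposition~\ref{prop:pointwise-U}). Combined with the inclusion above, this gives equality.

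The main obstacle is this last surjectivity onto $M_{24}$ inside $G_{\rm CP}$. Elements of $H$ are not a priori words in the root reflections, so membership of $\overline H$ in $G_{\rm CP}$ cannot be read off directly. The argument avoids this by exhibiting enough elements of $G_{\rm CP}$ through the octadic switches and then counting orders. The identification of $\pi$ with an element of $M_{24}$ via the intrinsic octad supports is routine by comparison.
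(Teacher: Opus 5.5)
Your proposal is correct and follows essentially the same route as the paper. You identify the coordinate permutation through the intrinsic octad supports and reduce to Lemma~\ref{lem:basic-ray-scalar-rigidity} to obtain $ZH$, then get surjectivity onto $M_{24}$ from the octadic switches of Proposition~\ref{prop:intrinsic-frame-switch} and Lemma~\ref{lem:Mathieu-translation-generation}. The only differences are cosmetic: you make the lift through $\widehat G$ explicit and finish with an order count, whereas the paper concludes directly that a subgroup of $\overline H$ containing the kernel $E_0$ and surjecting onto $M_{24}$ is all of $\overline H$.
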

\begin{proof}
A semilinear algebra automorphism $g$ permuting the basic rays is an
isometry by Corollary~\ref{cor:intrinsic-form}, and permutes $\Rcal$
by Theorem~\ref{thm:extremal-closure}.  It therefore preserves the octadic
supports of Lemma~\ref{lem:intrinsic-support}.  Since the octads span
$\C$ by Lemma~\ref{lem:Witt-numerology}, its coordinate permutation
preserves $\C$ and belongs to $M_{24}=\Aut(\C)$.  Choose $h\in H$
inducing that permutation.
The map $h^{-1}g$ fixes all basic rays, so
Lemma~\ref{lem:basic-ray-scalar-rigidity} gives
$h^{-1}g=\lambda T_\delta$ for some $\lambda\in\mu_3$, $\delta\in\C^*$.
Thus $g\in ZH$; conversely every element of $ZH$ permutes the basic
rays.  This also proves $(G_{\rm CP})_{\mathcal F_0}\leq\overline H$.

The subgroup $(G_{\rm CP})_{\mathcal F_0}$ contains $E_0$.  Its
coordinate image contains every $T_O^{\mathrm{tr}}$ by
Proposition~\ref{prop:intrinsic-frame-switch}.  Lemma~\ref{lem:Mathieu-translation-generation}
with $S=\varnothing$ makes this image all of $M_{24}$.  Since $E_0$
is the kernel of $\overline H\to M_{24}$, it follows that
$(G_{\rm CP})_{\mathcal F_0}=\overline H$.
\end{proof}

\subsection{Frame conjugacy and the group order}\label{sec:frames-order}

\begin{definition}
A \emph{Fischer frame} is a maximal subset of $\Dcal_{\rm CP}$ whose
members commute pairwise.  The standard frame is the set
$\mathcal F_0=\{d_i:i\in I\}$ introduced above.
\end{definition}

\begin{lemma}\label{lem:standard-frame}
The set $\mathcal F_0$ is a Fischer frame of cardinality $24$, and
$\langle\mathcal F_0\rangle=E_0\cong2^{12}$.
\end{lemma}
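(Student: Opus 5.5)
The lemma has three parts: $\mathcal F_0$ is pairwise commuting, it is maximal among commuting subsets of $\Dcal_{\rm CP}$, and it has $24$ distinct members generating $E_0\cong2^{12}$.

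\emph{Commuting and cardinality.} By \eqref{eq:9.2a}, $(r_i,r_j)=1$ for $i\ne j$. Lemma~\ref{lem:reflection-calculus}(b) then shows that $d_i$ and $d_j$ commute. Injectivity of $[r]\mapsto d_{[r]}$ (Proposition~\ref{prop:one-class}) gives $|\mathcal F_0|=24$.

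\emph{Generated group.} The identification $\langle\mathcal F_0\rangle=E_0\cong2^{12}$ follows from Corollary~\ref{cor:relations}. The reflections $d_i$ are the ray images of the $t_i$, and $H$ acts faithfully on $\Rcal$ because $H\cap Z=1$. Hence the relations \eqref{eq:4.9} persist in $G_{\rm CP}$.

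\emph{Maximality.} This is the only real point. Suppose $d_{[r]}$ commutes with every $d_i$ and $[r]$ is not basic. By Lemma~\ref{lem:reflection-calculus}(a), $d_{[r]}$ commutes with $d_i$ exactly when $(r,r_i)\ne0$; zero pairing gives product of order $3$. So $[r]$ would have to pair nontrivially with all $24$ basic rays.

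Lemma~\ref{lem:intrinsic-support} rules this out. The basic rays pairing nontrivially with an octadic ray form an octad, and those pairing with a duadic ray form a duad. Neither is all of $I$, so some $i$ has $(r_i,r)=0$, and $d_{[r]}$ fails to commute with that $d_i$. Equivalently, this is the first half of the proof of Lemma~\ref{lem:basic-fixed-ray}: a ray fixed by all $d_i$ must be basic.

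Hence no ray outside $\mathcal F_0$ can be adjoined, and $\mathcal F_0$ is a Fischer frame. None of these steps presents a genuine obstacle; the only care needed is to invoke both halves of Lemma~\ref{lem:reflection-calculus}, so that commuting is equivalent to nonzero pairing.
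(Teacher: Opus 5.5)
Your proof is correct and follows the paper's argument. Commutation and the Golay relations come from Corollary~\ref{cor:relations}, carried faithfully into $G_{\rm CP}$, and maximality comes from Lemma~\ref{lem:reflection-calculus} together with the octad and duad supports of Lemma~\ref{lem:intrinsic-support}; your explicit appeal to part~(a) for non-commutation at zero pairing makes precise the ``exactly'' that the paper attributes to part~(b).
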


\begin{proof}
The basic involutions commute and have precisely the Golay-code relations
by Corollary~\ref{cor:relations}.  By
Lemma~\ref{lem:reflection-calculus}(b) and
Lemma~\ref{lem:intrinsic-support}, an octadic transposition commutes with
exactly the eight basic transpositions in its octad support, and a duadic
transposition with exactly the two in its duad support.  Hence no member of
$\Dcal_{\rm CP}\setminus\mathcal F_0$ commutes with all of
$\mathcal F_0$, proving maximality.
\end{proof}

\begin{theorem}[Frame conjugacy]\label{thm:frame-conjugacy}
All Fischer frames are conjugate in $G_{\rm CP}$.  In particular every
frame has $24$ members.
\end{theorem}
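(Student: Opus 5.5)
The outline mentions ``Sylow conjugacy gives conjugacy of frames,'' so the plan is a Sylow argument on the elementary abelian $2$-groups generated by frames, combined with the rigidity of $E_0$ established above.

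\textbf{Step 1: a frame generates an elementary abelian $2$-group whose transpositions are exactly the frame.} Let $\mathcal F$ be any Fischer frame and put $E=\langle\mathcal F\rangle$. Since the members of $\mathcal F$ commute pairwise and are involutions, $E$ is elementary abelian. By maximality of $\mathcal F$, any $d\in\Dcal_{\rm CP}$ lying in $E$ commutes with all of $\mathcal F$, so $d\in\mathcal F$. Hence
\[
\mathcal F=E\cap\Dcal_{\rm CP}.
\]

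\textbf{Step 2: $E_0$ sits in a Sylow $2$-subgroup with a controlled normalizer.} Let $S$ be a Sylow $2$-subgroup of $G_{\rm CP}$ containing a Sylow $2$-subgroup of $\overline H$. By Theorem~\ref{thm:frame-rigidity}, $(G_{\rm CP})_{\mathcal F_0}=\overline H$. Since $\mathcal F_0=E_0\cap\Dcal_{\rm CP}$, this stabilizer is exactly $N(E_0)$, so $N(E_0)=\overline H$.

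I will show that $E_0$ is weakly closed in $S$ with respect to $G_{\rm CP}$: if $E_0^g\le S$ then $E_0^g=E_0$. Granting this, Alperin/Sylow-style arguments give: for any frame $E$, choose $g$ with $E^g\le S$. Then it remains to show that $E^g$ is conjugate to $E_0$.

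The cleaner route is to show directly that every frame group $E$ is contained in a conjugate of $E_0$. Concretely, I would prove that any elementary abelian subgroup generated by commuting transpositions lies in a conjugate of $E_0$. Then $\mathcal F\subseteq\mathcal F_0^g$, and maximality forces $\mathcal F=\mathcal F_0^g$.

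\textbf{Step 3: every set of pairwise commuting transpositions lies in a conjugate of $\mathcal F_0$ (the main obstacle).} I would argue by induction on $|\mathcal F|$.

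First, by transitivity (Proposition~\ref{prop:one-class}), conjugate so that $d_1\in\mathcal F$. Every member of $\mathcal F$ commutes with $d_1$. By Lemma~\ref{lem:reflection-calculus}, such a member is either basic, octadic over an octad through $1$, or duadic over a duad through $1$.

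Next, I want the centralizer of $d_1$ to act on these $31671$ partners with orbits that can be pushed into $\mathcal F_0$. Specifically, I aim to show that $C(d_1)\cap\langle\overline H,d_{O,\chi}\ (1\in O)\rangle$ moves any commuting partner to a basic transposition. The tools are the octadic switches (Proposition~\ref{prop:intrinsic-frame-switch}), with $\tau_O(r_j)=r_{O,\ev_j}$ for $j\notin O$ and $\tau_O$ fixing $r_1$ when $1\in O$. These carry basic rays to octadic rays over octads through $1$. Similarly, the conjugates $\tau_F\tau_{F'}\tau_F$ reach duadic rays over duads through $1$.

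Once a second element is basic, I iterate: pass to the centralizer of the basic subset already obtained, and use the pointwise stabilizers $(M_{24})_{(S)}$ of Lemma~\ref{lem:Mathieu-translation-generation} together with switches over octads containing $S$.

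The delicate part is the inductive step once $|S|\ge3$. There the available switches must still move a remaining octadic or duadic element commuting with all of $S$ into a basic one. Octads containing $S$ control these elements, since the support of such an element must contain $S$. In particular a duadic element can commute with at most two basic ones, so once $|S|\ge3$ only octadic elements with $S\subseteq O$ remain, and I expect a single switch over $O$ to suffice there.

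Finally, with $\mathcal F\subseteq\mathcal F_0^g$, Lemma~\ref{lem:standard-frame} and maximality give $\mathcal F=\mathcal F_0^g$, which has $24$ members.
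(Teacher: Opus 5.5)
Your Step~3 is a valid proof, but by a genuinely different route from the paper's, and the step you flag as delicate is in fact immediate.

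\emph{Octadic members.} If $e=d_{O,\chi}$ commutes with $d_i$ for all $i\in S$, then $S\subseteq O$. For any $j\notin O$, the single reflection $g=d_{O,\chi\ev_j}$ centralizes every $d_i$ with $i\in O$. By Proposition~\ref{prop:intrinsic-frame-switch} it also interchanges $d_j$ and $d_{O,\chi}$. This works for every $|S|$, not only for $|S|\ge3$.

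\emph{Duadic members.} A duadic $e=d_{p,\psi}$ with $S\subseteq p$ is first moved to some octadic $d_{F',\chi'}$ by a reflection $d_{F,\chi}$ with $F\cap F'=p$; this uses Corollary~\ref{cor:duadic-phase}. That reflection centralizes $d_i$ for all $i\in F\supseteq S$.

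Hence induction on $|\mathcal F|$ places every commuting set inside a conjugate of $\mathcal F_0$, and maximality finishes the proof. Neither the weak-closure detour of Step~2 nor Lemma~\ref{lem:Mathieu-translation-generation} is needed.

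The paper instead strengthens your Step~1. Since $\Dcal_{\rm CP}$ is a class of $3$-transpositions, two of its members lying in a common $2$-group cannot have product of order $3$, so they commute. Hence $\mathcal F=\Dcal_{\rm CP}\cap S$ for every Sylow $2$-subgroup $S\supseteq\langle\mathcal F\rangle$. Sylow conjugacy then carries one such intersection to any other, which gives conjugacy of frames.

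The paper's argument takes two lines, uses no coordinates, and holds in any $3$-transposition group. Your argument avoids Sylow's theorem and gives explicit conjugating words, using at most two reflections per added member. The cost is that it depends on the switch and duadic-fibre calculations.
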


\begin{proof}
Let $\mathcal F$ be a frame and let $S$ be a Sylow $2$-subgroup containing
the elementary abelian group $\langle\mathcal F\rangle$.  If
$d$, $e\in\Dcal_{\rm CP}\cap S$, then $de\in S$ and its order is at most $3$.
It cannot have order $3$, so $d$ and $e$ commute.  Thus
$\Dcal_{\rm CP}\cap S$ is a commuting set containing $\mathcal F$, and
maximality gives
\[
                         \mathcal F=\Dcal_{\rm CP}\cap S.
\]
For a second frame choose a Sylow $2$-subgroup $S'$.  Sylow conjugacy sends
$S$ to $S'$, and since $\Dcal_{\rm CP}$ is a conjugacy class it sends
$\Dcal_{\rm CP}\cap S$ to $\Dcal_{\rm CP}\cap S'$.  Hence the two frames
are conjugate.  The standard frame has size $24$.
\end{proof}

\begin{proposition}\label{prop:small-clique}
For $0\leq s\leq5$, the group $G_{\rm CP}$ is transitive on ordered
commuting $s$-tuples of distinct members of $\Dcal_{\rm CP}$.  Hence the
number of common commuting extensions is uniform, and for $0\leq s\leq5$
one has
\begin{equation}
\begin{array}{c|rrrrrr}
 s&0&1&2&3&4&5\\ \hline
 a_s&306936&31671&3510&693&180&51.
\end{array}                                                     \label{eq:clique-numbers}
\end{equation}
\end{proposition}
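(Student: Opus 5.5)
Induction on $s$ inside the standard frame: show that every ordered commuting $s$-tuple is conjugate to one inside $\mathcal F_0$, then use $5$-transitivity of $M_{24}$ on $\mathcal F_0$ through the frame stabilizer $\overline H$ of Theorem~\ref{thm:frame-rigidity}. By Theorem~\ref{thm:frame-conjugacy}, a commuting $s$-tuple lies in some frame $\mathcal F$, since every commuting set extends to a maximal one. A conjugating element $g$ carries $\mathcal F$ to $\mathcal F_0$, so the tuple becomes an ordered $s$-tuple of distinct basic transpositions $d_{i_1},\dots,d_{i_s}$. The frame stabilizer $\overline H\cong2^{12}.M_{24}$ induces $M_{24}$ on $\mathcal F_0$. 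Since $M_{24}$ is $5$-transitive on $I$ and $s\le5$, any two such ordered tuples are conjugate. This proves transitivity, and hence the common number $a_s$ of commuting extensions is independent of the tuple.

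To compute $a_s$, count at the standard tuple $d_1,\dots,d_s$ (labelling points of $I$). A ray $[r]$ commutes with $d_i$ exactly when $(r,r_i)\neq0$, by Lemma~\ref{lem:basic-fixed-ray} and Lemma~\ref{lem:reflection-calculus}. Lemma~\ref{lem:intrinsic-support} then gives the count: commuting extensions are the basic rays $[r_j]$ with $j\notin S$, the $32$ octadic rays over each octad containing $S$, and the $1024$ duadic rays over each duad containing $S$. Hence
\[
a_s=(24-s)+32\lambda_s+1024\,n_s,
\]
where $n_s$ counts duads containing $S$: $n_0=276$, $n_1=23$, $n_2=1$, and $n_s=0$ for $s\ge3$. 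With $\lambda_s=(759,253,77,21,5,1)$ this gives:
\begin{itemize}
\item $s=0$: $24+24288+282624=306936$;
\item $s=1$: $23+8096+23552=31671$;
\item $s=2$: $22+2464+1024=3510$;
\item $s=3$: $21+672=693$;
\item $s=4$: $20+160=180$;
\item $s=5$: $19+32=51$.
\end{itemize}
These agree with \eqref{eq:clique-numbers}. Here $a_s$ is read as the number of transpositions outside the tuple that commute with all of its members, and $a_0$ as $|\Dcal_{\rm CP}|$, which equals $|\Rcal|$ by the injectivity in Proposition~\ref{prop:one-class}.

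The only delicate point is that every commuting set is contained in a frame, which is immediate from finiteness. Also, Theorem~\ref{thm:frame-conjugacy} gives conjugacy of frames as sets, so the ordering is recovered only afterwards, through the full $M_{24}$ action. That action requires $\overline H\le G_{\rm CP}$, which is exactly the content of Theorem~\ref{thm:frame-rigidity}. Everything else is direct substitution.
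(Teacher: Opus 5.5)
Your proposal is correct and follows the paper's proof essentially step for step. You extend the tuple to a frame, conjugate it to $\mathcal F_0$, use the full $M_{24}$ induced by $\overline H$ together with its $5$-transitivity, and then count $a_s=(24-s)+32\lambda_s+1024n_s$ from the coordinate supports. The opening word ``induction'' is unnecessary, since no inductive step is actually used.
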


\begin{proof}
Every commuting set extends to a frame.  Move that frame to
$\mathcal F_0$ by Theorem~\ref{thm:frame-conjugacy}; its stabilizer induces
the full $M_{24}$ by Theorem~\ref{thm:frame-rigidity}.  The
$5$-transitivity of $M_{24}$ gives the first assertion.

For an $s$-subset $S\subseteq\mathcal F_0$, the number of octads containing
$S$ is $\lambda_s$ from Lemma~\ref{lem:Witt-numerology}.  Put
\[
                  (n_0,\ldots,n_5)=(276,23,1,0,0,0)
\]
for the corresponding numbers of duads.  Therefore
\[
                         a_s=(24-s)+32\lambda_s+1024n_s,
\]
which gives \eqref{eq:clique-numbers}.
\end{proof}

\begin{theorem}[Frames through a pentad]\label{thm:pentad-frames}
Every commuting pentad is contained in exactly three Fischer frames.
\end{theorem}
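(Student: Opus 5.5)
By Proposition~\ref{prop:small-clique} and Theorem~\ref{thm:frame-conjugacy}, $G_{\rm CP}$ is transitive on commuting pentads, so it suffices to treat one pentad. I take $S=\{d_i:i\in\Pi\}\subseteq\mathcal F_0$ for a five-set $\Pi\subseteq I$. The plan is to classify all frames containing $S$ by working inside the set $D_S$ of the $a_5=51$ transpositions commuting with all of $S$. A frame through $S$ is exactly $S$ together with a maximal commuting subset of $D_S$, and by Theorem~\ref{thm:frame-conjugacy} this subset must have size $19$.

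\textbf{Describing $D_S$.} Using Lemma~\ref{lem:intrinsic-support} and Lemma~\ref{lem:reflection-calculus}(b), a transposition commutes with all $d_i$, $i\in\Pi$, exactly when its ray pairs nontrivially with all these $r_i$. Three kinds of transposition do so.
\begin{enumerate}[label=(\roman*)]
\item The $19$ basic ones $d_j$, $j\notin\Pi$.
\item The $32$ octadic ones $d_{O,\chi}$, where $O$ is the unique octad through $\Pi$.
\item No duadic ones, since a duad cannot contain five points.
\end{enumerate}
This agrees with $a_5=19+32=51$.

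\textbf{Commutation inside $D_S$.}
\begin{enumerate}[label=(\roman*)]
\item Basic with basic: $d_j$ and $d_k$ always commute.
\item Basic with octadic: $d_j$ and $d_{O,\chi}$ commute iff $j\in O$. There are only $3$ such $j$ outside $\Pi$.
\item Octadic with octadic: two rays over the same octad $O$ have pairing $(r_{O,\chi},r_{O,\chi'})$. I would compute it from \eqref{eq:5.1} as
\[
\tfrac14\Bigl(3+3\chi(X)\chi'(X)+\textstyle\sum_{b\in\Hcal_O}\chi(b)\chi'(b)\Bigr).
\]
Writing $\rho=\chi\chi'\ne1$, the character sum over all of $B_O$ vanishes, so $\sum_{b\in\Hcal_O}\rho(b)=-1-\rho(X)$. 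Hence the pairing is $\tfrac14(2+2\rho(X))$, which is $1$ when $\rho(X)=1$ and $0$ when $\rho(X)=-1$. So the $32$ octadic rays form two cliques of $16$, one for each parity class $\chi(X)=\pm1$, with no commuting across the classes.
\end{enumerate}

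\textbf{Counting maximal commuting subsets of size $19$.} I run through the cases.
\begin{enumerate}[label=(\roman*)]
\item With no octadic member, the only option is the $19$ basic ones, giving $\mathcal F_0$.
\item With octadic members, they all lie in one parity class. The basic members must then lie in $O\setminus\Pi$, so there are at most $3$ of them. Reaching $19$ forces the whole parity class ($16$) together with the $3$ basic points of $O\setminus\Pi$.
\end{enumerate}
This gives exactly two further candidates. Each is commuting, and is maximal since it has $19$ elements. In fact they are the frames $d_{O,\chi}(\mathcal F_0)$ of Proposition~\ref{prop:intrinsic-frame-switch}, consistent with \eqref{eq:frame-switch-basic}: the $d_i$ with $i\in O$ are fixed and the $d_j$ with $j\in X$ go to one parity class. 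In total there are exactly three frames through $S$.

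The main obstacle is the octadic pairing computation and making sure that pairing $1$, rather than some other cube root of unity, really occurs inside a parity class. The character-sum argument above settles this; alternatively it follows from the switched frames already being commuting sets.
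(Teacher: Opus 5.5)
Your proposal is correct and follows essentially the same route as the paper. You reduce to a coordinate pentad, identify the $51$ commuting transpositions as the $19$ remaining basic ones and the $32$ octadic ones over the octad through $\Pi$, use $(r_{O,\chi},r_{O,\chi'})=\tfrac12\bigl(1+(\chi\chi')(X)\bigr)$ to split the octadic ones into two commuting parity classes, and read off the three frames; the only difference is that you force the whole parity class plus $O\setminus\Pi$ via the frame size $24$ from Theorem~\ref{thm:frame-conjugacy}, whereas the paper appeals directly to maximality.
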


\begin{proof}
By Proposition~\ref{prop:small-clique}, take a coordinate pentad
$\Pi\subseteq\mathcal F_0$.  A transposition commuting with all five members
has coordinate support containing $\Pi$, by
Lemma~\ref{lem:intrinsic-support}.  It is therefore one of the nineteen
remaining basic transpositions or an octadic transposition over the unique
octad $O$ containing $\Pi$; no duadic support contains five points.

If no octadic transposition occurs, maximality forces the standard frame.
If an octadic transposition over $O$ occurs, the only compatible basic
transpositions outside $\Pi$ are the three indexed by $O\setminus\Pi$.
For distinct $\chi$, $\chi'\in B_O^*$, substitution in \eqref{eq:5.1} and
character orthogonality give
\[
 (r_{O,\chi},r_{O,\chi'})
      =\frac12\bigl(1+(\chi\chi')(X)\bigr).
\]
Thus compatible octadic phases have one fixed value on $X=O^c$, so only
one parity class of sixteen can occur.  All three remaining basic
transpositions and all sixteen phases of that parity are mutually
compatible, so maximality forces all of them.  The two parity choices are
precisely the two octadic switches of
Proposition~\ref{prop:intrinsic-frame-switch}.  Together with
$\mathcal F_0$, these are the three frames through $\Pi$.
\end{proof}

\begin{theorem}[Frame count and group order]\label{thm:order-CP}
The number of Fischer frames is
\begin{equation}
 N_{\rm fr}=\frac{3a_0a_1a_2a_3a_4}{24\cdot23\cdot22\cdot21\cdot20}
          =2\,503\,413\,946\,215.                             \label{eq:frame-count}
\end{equation}
Consequently
\begin{align}
 |G_{\rm CP}|&=N_{\rm fr}\,2^{12}|M_{24}|                     \label{eq:group-order}\\
   &=2^{22}3^{16}5^2 7^3\cdot11\cdot13\cdot17\cdot23\cdot29\notag\\
   &=2\,510\,411\,418\,381\,323\,442\,585\,600.\notag
\end{align}
\end{theorem}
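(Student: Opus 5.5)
The proof is a double count of incidences between frames and ordered commuting pentads, followed by orbit--stabilizer for the action of $G_{\rm CP}$ on frames.

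\emph{Counting pentads.} By Proposition~\ref{prop:small-clique}, the number of ordered commuting $5$-tuples of distinct transpositions is $a_0a_1a_2a_3a_4$. Each step multiplies by the uniform number of commuting extensions of the previous tuple, and $a_s$ counts exactly the members outside the $s$-tuple that commute with all of it.

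\emph{Counting frames.} By Theorem~\ref{thm:frame-conjugacy}, every frame has $24$ members. So each frame contains $24\cdot23\cdot22\cdot21\cdot20$ ordered commuting pentads, and these exhaust the ordered pentads lying in it. By Theorem~\ref{thm:pentad-frames}, each commuting pentad lies in exactly three frames; this is independent of the ordering. Counting pairs (frame, ordered pentad in it) in two ways gives
\[
N_{\rm fr}\cdot 24\cdot23\cdot22\cdot21\cdot20=3\,a_0a_1a_2a_3a_4 .
\]
This is \eqref{eq:frame-count}. The numerical value then follows by arithmetic with $(a_0,\dots,a_4)=(306936,31671,3510,693,180)$. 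I would present the factorizations
\[
306936=2^3\cdot3^3\cdot7^2\cdot29,\quad 31671=3^4\cdot17\cdot23,\quad 3510=2\cdot3^3\cdot5\cdot13,
\]
\[
693=3^2\cdot7\cdot11,\quad 180=2^2\cdot3^2\cdot5,
\]
so that the quotient is visibly an integer with the stated prime factorization.

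\emph{Group order.} The frames form a single $G_{\rm CP}$-orbit by Theorem~\ref{thm:frame-conjugacy}. The stabilizer of $\mathcal F_0$ is $\overline H\cong2^{12}.M_{24}$ by Theorem~\ref{thm:frame-rigidity}, and this is a group of order $2^{12}|M_{24}|$ because $H$ acts faithfully on $\Rcal$. Orbit--stabilizer then gives \eqref{eq:group-order}. With $|M_{24}|=2^{10}3^3\cdot5\cdot7\cdot11\cdot23$, the final product is
\[
2^{22}3^{16}5^27^3\cdot11\cdot13\cdot17\cdot23\cdot29 .
\]

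The only real content lies in the cited results; the remaining obstacle is bookkeeping. One must check that ordered tuples are counted consistently on both sides, and that the prime exponents come out right: the $2$-, $3$- and $7$-adic cancellations against the denominator $24\cdot23\cdot22\cdot21\cdot20=2^6\cdot3^2\cdot5\cdot7\cdot11\cdot23$. I would record the prime factorization of $N_{\rm fr}$ explicitly, namely $3^{14}\cdot5\cdot7^2\cdot13\cdot17\cdot29$, as a check.
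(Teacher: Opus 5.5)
Your argument is the paper's proof. You double-count pairs (frame, ordered pentad) using Proposition~\ref{prop:small-clique}, Theorem~\ref{thm:frame-conjugacy} and Theorem~\ref{thm:pentad-frames}. You then apply orbit--stabilizer with the stabilizer $\overline H$ of Theorem~\ref{thm:frame-rigidity}, whose order is $2^{12}|M_{24}|$ because the ray kernel of $H$ is $H\cap Z=1$.

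One arithmetic correction to your proposed check. Your own factorizations put $3^{1+3+4+3+2+2}=3^{15}$ in the numerator against $3^2$ in the denominator. Hence $N_{\rm fr}=3^{13}\cdot5\cdot7^2\cdot13\cdot17\cdot29=2\,503\,413\,946\,215$, not $3^{14}\cdot5\cdot7^2\cdot13\cdot17\cdot29$. With $3^{14}$, multiplying by $2^{12}|M_{24}|$ would give $3^{17}$, which contradicts the correct $3^{16}$ you state for $|G_{\rm CP}|$.
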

\begin{proof}
Count pairs $(\mathcal F,\Pi)$ with $\Pi$ an ordered pentad in the
frame $\mathcal F$.  Proposition~\ref{prop:small-clique} gives
$a_0a_1a_2a_3a_4$ ordered commuting pentads, each in three frames by
Theorem~\ref{thm:pentad-frames}.  Every frame contains
$24\cdot23\cdot22\cdot21\cdot20$ ordered pentads.  This proves
\eqref{eq:frame-count}.

Frames form one orbit and have stabilizer $2^{12}.M_{24}$ by
Theorems~\ref{thm:frame-conjugacy} and~\ref{thm:frame-rigidity}.
Orbit--stabilizer and $|M_{24}|=2^{10}3^3\cdot5\cdot7\cdot11\cdot23$
give \eqref{eq:group-order}.
\end{proof}

\section{Residue orders and elementary simplicity}
\label{sec:intrinsic-groups}

Write $G_{24}=G_{\rm CP}$ and $G_{24}^+=G_{24}'$.  For a subset
$S\subseteq I$ with $s=|S|\leq2$, put
\begin{equation}
 \begin{gathered}
 E_S=\langle d_i:i\in S\rangle,\qquad
 C_S=C_{G_{24}}(d_i:i\in S),\\
 D_S=\{d\in\Dcal_{\rm CP}\setminus\{d_i:i\in S\}:
                  [d,d_i]=1\ (i\in S)\}.
 \end{gathered}
 \label{eq:residue-definitions}
\end{equation}
Thus $E_S\cong2^s$, $E_S\leq Z(C_S)$, and $|D_S|=a_s$.
For $S=\varnothing$ these conventions give $C_S=G_{24}$ and
$D_S=\Dcal_{\rm CP}$.  The two residue groups are defined by
\begin{equation}
 G_{23}=C_{\{i\}}/\langle d_i\rangle,\qquad
 G_{22}=C_{\{i,j\}}/\langle d_i,d_j\rangle\quad(i\ne j).
 \label{eq:intrinsic-residue-groups}
\end{equation}
Small-clique homogeneity makes their isomorphism types independent of
the chosen point or pair.  We prove their orders, faithful
three-transposition actions, and simplicity directly.

\subsection{The faithful residue actions and their orders}

\begin{lemma}\label{lem:residue-kernels}
The kernel of the conjugation action of $C_S$ on $D_S$ is exactly $E_S$.
\end{lemma}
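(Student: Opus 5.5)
The proof has two inclusions. For $s=0$ the statement is that $G_{24}$ acts faithfully by conjugation on $\Dcal_{\rm CP}$, so this is the case to settle first. The inclusion $E_S\le\ker$ is immediate, since each $d_i$ with $i\in S$ commutes with every member of $D_S$ by definition. The converse inclusion is the real content.

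\textbf{Case $s=0$.} Let $g\in G_{24}$ centralize every member of $\Dcal_{\rm CP}$. Projective covariance \eqref{eq:projective-covariance} gives $d_{g[r]}=d_{[r]}$ for every ray. Injectivity of $[r]\mapsto d_{[r]}$ (Proposition~\ref{prop:one-class}) then shows that $g$ fixes every ray, and faithfulness on $\Rcal$ gives $g=1=E_\varnothing$. This is just the centerless argument of Theorem~\ref{thm:axis-group}.

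\textbf{Case $s\ge1$.} By Proposition~\ref{prop:small-clique}, we may assume $S\subseteq I$ indexes basic transpositions in $\mathcal F_0$. Let $g\in C_S$ centralize $D_S$. Then $g$ commutes with $d_j$ for every $j\in I\setminus S$, because these lie in $D_S$. By hypothesis it also commutes with $d_i$ for $i\in S$. Hence $g$ centralizes every $d_j$, $j\in I$, so by covariance and injectivity $g$ fixes every basic ray.

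Lift $g$ to a semilinear automorphism in $\widehat G$. It is an isometry by Corollary~\ref{cor:intrinsic-form}, so Lemma~\ref{lem:basic-ray-scalar-rigidity} gives a lift of the form $\lambda T_\delta$. Thus $g$ is the ray image of the cocode element $T_\delta$, that is, $g\in E_0$ by Corollary~\ref{cor:relations}; write $g=\prod_{k\in K}d_k$ with $\delta=K+\C$.

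Next, $g$ must centralize every octadic and duadic transposition in $D_S$. These are exactly those whose support contains $S$, by Lemma~\ref{lem:reflection-calculus}(b) and Lemma~\ref{lem:intrinsic-support}. Now $T_\delta$ maps $r_{O,\chi}$ to a root over $O$ with the phase translated by the restriction of $\delta$ to $B_O$. By covariance and injectivity it fixes $d_{O,\chi}$ exactly when that restriction is trivial, i.e.\ $\langle b,\delta\rangle=0$ for all $b\in B_O$. Similarly, $T_\delta$ fixes $d_{p,\psi}$ exactly when $\delta$ annihilates $\C_p$, using \eqref{eq:6.5} and the fact that the duadic roots are distinct vectors (Proposition~\ref{prop:chosen-duadic-fibre}). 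Strictly, fixing the ray means agreement up to $\mu_3$, but the $U$-part pins down the scalar.

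\textbf{Main step.} It remains to show that if $\delta$ annihilates $B_O$ for every octad $O\supseteq S$, and $\C_p$ for every duad $p\supseteq S$, then $\delta$ lies in the span of the cocode classes of the points of $S$. Equivalently, the codes $B_O$ and $\C_p$ with $O,p\supseteq S$ must span $\{c\in\C:c\cap S=\varnothing\}$, which is the annihilator of that span. I expect this spanning statement to be the main obstacle, and I would handle it by cases.

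For $s=2$ with $S=p$, the single code $\C_p$ already equals the set of words avoiding $p$, so nothing is needed.

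For $s=1$ with $S=\{i\}$, I would use the duad codes $\C_{\{i,j\}}$. Each has dimension $10$ inside the $11$-dimensional code $\C_{\{i\}}$ of words avoiding $i$, and two distinct such hyperplanes already span $\C_{\{i\}}$. Alternatively, in both cases one can first use the octads avoiding $S$ to get octads avoiding every $O\supseteq S$, then apply the spanning argument of Lemma~\ref{lem:Witt-numerology}, adapted to words disjoint from $S$.

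The conclusion is that $\delta\in\langle\{i\}+\C:i\in S\rangle$, i.e.\ $g\in E_S$, which completes the proof.
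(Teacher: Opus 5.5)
Your proof is correct and is essentially the paper's. You land in $E_0$ by lifting to $\widehat G$ and applying Lemma~\ref{lem:basic-ray-scalar-rigidity}, which is exactly what Theorem~\ref{thm:frame-rigidity} packages. You then cut down using the duadic fibres: your statement that $\C_{\{i,j\}}$ and $\C_{\{i,k\}}$ span $\{c\in\C:i\notin c\}$ is the dual of the paper's $\langle d_i,d_j\rangle\cap\langle d_i,d_k\rangle=\langle d_i\rangle$. The octadic conditions you derive are never needed. You should justify that these two hyperplanes are distinct; as in the paper, this follows because no nonzero Golay word is supported on $\{i,j,k\}$.
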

\begin{proof}
For $s=0$ this is the faithful action already constructed.  Suppose
$s=1$ or $2$, and let $g$ be in the kernel.  It fixes the basic
transpositions outside $S$, and centralizes those in $S$, so it fixes
the entire standard frame pointwise.  Theorem~\ref{thm:frame-rigidity}
therefore puts $g$ in the cocode group $E_0$.

For a duad $p$, the cocode kernel on its duadic fibre is
$\C_p^\perp=\langle d_i:i\in p\rangle$:
Proposition~\ref{prop:chosen-duadic-fibre} gives its order four,
and the two coordinate evaluations on $p$ annihilate $\C_p$.
If $s=2$, take $p=S$; all roots in this fibre belong to $D_S$, so
$g\in E_S$.  If $S=\{i\}$, take two distinct duads
$\{i,j\}$ and $\{i,k\}$.  Their kernels intersect in $\langle d_i\rangle$,
since three coordinate cocodes are independent: a nonzero word
supported on three points cannot belong to the Golay code.  This again
gives $g\in E_S$.  Conversely every element of $E_S$ fixes $D_S$.
\end{proof}

\begin{theorem}[Residue orders]\label{thm:residue-orders}
The groups in \eqref{eq:intrinsic-residue-groups} act faithfully on
$31671$ and $3510$ points, respectively, and
\begin{align}
 |G_{24}^+|
  &=2^{21}3^{16}5^2 7^3\cdot11\cdot13\cdot17\cdot23\cdot29,
       \label{eq:three-intrinsic-orders}\\
 |G_{23}|
  &=2^{18}3^{13}5^2\cdot7\cdot11\cdot13\cdot17\cdot23
    =4\,089\,470\,473\,293\,004\,800,\notag\\
 |G_{22}|
  &=2^{17}3^9 5^2\cdot7\cdot11\cdot13
    =64\,561\,751\,654\,400.\notag
\end{align}
\end{theorem}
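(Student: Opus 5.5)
The faithfulness statements follow at once from Lemma~\ref{lem:residue-kernels}. The quotient $C_S/E_S$ acts on $D_S$ with kernel exactly $E_S/E_S$, so $G_{23}$ and $G_{22}$ embed in $\operatorname{Sym}(D_{\{i\}})$ and $\operatorname{Sym}(D_{\{i,j\}})$. By Proposition~\ref{prop:small-clique} these sets have sizes $a_1=31671$ and $a_2=3510$. The order of $G_{24}^+$ is half of $|G_{\rm CP}|$, because $[G_{\rm CP}:G_{\rm CP}']=2$ by Proposition~\ref{prop:parity-perfect}; we then apply Theorem~\ref{thm:order-CP}.

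For the residue orders I use orbit--stabilizer on commuting tuples. By Proposition~\ref{prop:small-clique}, $G_{24}$ is transitive on ordered commuting $s$-tuples for $s\le5$. Taking $s=1$, the centralizer $C_{\{i\}}$ is the stabilizer of $d_i$ under conjugation, because $\Dcal_{\rm CP}$ is a single conjugacy class. Hence
\[
 |C_{\{i\}}|=|G_{24}|/a_0 .
\]
Taking $s=2$, the stabilizer of the ordered pair $(d_i,d_j)$ is $C_{\{i,j\}}$, and transitivity on ordered commuting pairs gives
\[
 |C_{\{i,j\}}|=|G_{24}|/(a_0a_1).
\]
Dividing by $|E_S|=2^s$ then gives
\[
 |G_{23}|=\frac{|G_{24}|}{2\cdot306936},\qquad
 |G_{22}|=\frac{|G_{24}|}{4\cdot306936\cdot31671}.
\]

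The remaining work is arithmetic with the factorizations
\[
 306936=2^3\cdot3^3\cdot7^2\cdot29,\qquad
 31671=3^4\cdot17\cdot23 .
\]
For $G_{23}$ this gives $2^{22-4}3^{16-3}5^27^{3-2}\cdot11\cdot13\cdot17\cdot23$, which matches the statement. For $G_{22}$ it gives $2^{17}3^{9}5^2\cdot7\cdot11\cdot13$, which also matches.

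The main point to check carefully is that $E_S$ is exactly of order $2^s$, that is, $E_S\cong2^s$. For $s=2$ this holds because no Golay word has weight $\le2$, so by Corollary~\ref{cor:relations} the elements $d_i,d_j$ are independent.

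A second point is that the pair stabilizer equals the full centralizer. Any element centralizing $d_i$ and $d_j$ fixes the ordered pair under conjugation, and conversely any element fixing the ordered pair centralizes both. So this is immediate, and transitivity on ordered pairs is already supplied by Proposition~\ref{prop:small-clique}.

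Finally, I will cross-check the decimal values $4\,089\,470\,473\,293\,004\,800$ and $64\,561\,751\,654\,400$ against the prime factorizations above.
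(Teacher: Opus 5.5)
Your proposal is correct and follows the paper's proof essentially line by line. Both arguments get faithfulness from Lemma~\ref{lem:residue-kernels}, the index two from Proposition~\ref{prop:parity-perfect}, and the indices $a_0$ and $a_0a_1$ by orbit--stabilizer on ordered commuting points and pairs (Proposition~\ref{prop:small-clique}), then divide by $|E_S|=2^s$; your factorizations $306936=2^3\cdot3^3\cdot7^2\cdot29$ and $31671=3^4\cdot17\cdot23$ check out.
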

\begin{proof}
The order of $G_{24}$ is Theorem~\ref{thm:order-CP}, and
Proposition~\ref{prop:parity-perfect} gives the index two.  The
stabilizers of an ordered commuting point and pair have indices
$a_0$ and $a_0a_1$, respectively, by
Proposition~\ref{prop:small-clique}.  Dividing these stabilizer orders
by $|E_S|=2^s$ gives
\[
 |G_{23}|=\frac{|G_{24}|}{2a_0},\qquad
 |G_{22}|=\frac{|G_{24}|}{4a_0a_1}.
\]
Substitution gives \eqref{eq:three-intrinsic-orders}.  The preceding
lemma proves faithfulness on $D_S$, whose sizes are $a_1$, $a_2$.
\end{proof}

\subsection{Generation of the centralizers}

\begin{proposition}
\label{prop:residue-generation}
For $|S|\leq2$,
\[
                 C_S=\langle E_S,D_S\rangle.
\]
Thus $C_S/E_S$ is generated by the images of $D_S$.
\end{proposition}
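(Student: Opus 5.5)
The plan is to compare orders, or more precisely to reduce to the frame stabilizer via a transitivity argument. Put $K_S=\langle E_S,D_S\rangle\leq C_S$; the inclusion holds because members of $D_S$ commute with each $d_i$, $i\in S$. For $S=\varnothing$ the statement is the definition of $G_{\rm CP}$, so assume $s=1$ or $2$. I will show that $K_S$ is transitive on the commuting $(5-s)$-tuples extending $S$ inside $D_S$. Then I will show that $K_S$ contains the full stabilizer in $C_S$ of such a configuration. A Frattini-type argument then gives $C_S=K_S$.

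The first step is transitivity of $K_S$ on $D_S$ and on the relevant cliques. By Proposition~\ref{prop:small-clique}, $C_S$ is transitive on ordered commuting tuples extending the fixed $S$, so it suffices to move things within $K_S$. For transitivity on $D_S$, imitate Proposition~\ref{prop:one-class}. Two members $d,e\in D_S$ that do not commute generate $S_3$, and $d e d\in D_S$ is conjugate to $e$ by $d\in K_S$. The noncommuting graph on $D_S$ is connected, which I check at the standard frame. Any basic $d_j$ with $j\notin S$ and any other basic $d_k$ with $k\notin S$ are joined through an octadic transposition over an octad containing $S$ and avoiding $j,k$; such octads exist by the count $\lambda_s-2\lambda_{s+1}>0$ from Lemma~\ref{lem:Mathieu-translation-generation}. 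Every nonbasic member of $D_S$ has support containing $S$ and is joined to some basic $d_j$ with $j\notin S$ outside its support. Thus $K_S$ is transitive on $D_S$. Iterating inside the smaller residues reduces everything to the frame level.

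The second step is cleaner. It shows $C_S=K_S\cdot (C_S)_{\mathcal F_0}$, where $(C_S)_{\mathcal F_0}$ is the frame stabilizer in $C_S$, and then that $(C_S)_{\mathcal F_0}\leq K_S$. Take $g\in C_S$. The frame $g(\mathcal F_0)$ contains $\{d_i:i\in S\}$. I want $k\in K_S$ with $kg(\mathcal F_0)=\mathcal F_0$. To get it, I use the argument of Theorem~\ref{thm:frame-conjugacy} inside $C_S$: Sylow $2$-subgroups of $C_S$ containing $E_S$. The frames through $S$ are exactly $\Dcal_{\rm CP}\cap T$ for Sylow subgroups $T$ of $C_S$, since $E_S$ is central in $C_S$ and so lies in every Sylow subgroup. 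Hence $C_S$ acts transitively on them, but I need $K_S$ to do so. For this, note that $K_S$ is normal in $C_S$, because $D_S$ and $E_S$ are $C_S$-invariant. Sylow subgroups of $K_S$ therefore have the form $T\cap K_S$, and Frattini gives $C_S=K_S N_{C_S}(T\cap K_S)$. Also, $\Dcal_{\rm CP}\cap T\subseteq D_S\cup S\subseteq K_S$, so the frame is $\Dcal_{\rm CP}\cap (T\cap K_S)$, provided $\Dcal\cap T$ lies in a Sylow subgroup of $K_S$. Since $\langle \Dcal\cap T\rangle$ is a $2$-subgroup of $K_S$, this holds, and $K_S$ is transitive on frames through $S$. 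Hence $C_S=K_S\,(C_S)_{\mathcal F_0}$.

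Finally, $(C_S)_{\mathcal F_0}$ is the preimage in $\overline H$ of $(M_{24})_{(S)}$. It contains $E_0$ and is generated by $E_0$ together with lifts of the translation groups $T_O^{\mathrm{tr}}$ for octads $O\supseteq S$, by Lemma~\ref{lem:Mathieu-translation-generation}. Each such translation is realized by $d_{O,\chi'}d_{O,\chi}$ as in Proposition~\ref{prop:intrinsic-frame-switch}, and these octadic transpositions lie in $D_S$ since $S\subseteq O$. The basic $d_j$ with $j\notin S$ lie in $D_S$, and those with $j\in S$ lie in $E_S$, so $E_0\leq K_S$. Therefore $(C_S)_{\mathcal F_0}\leq K_S$, and $C_S=K_S$. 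The main point to check carefully is the normality and Sylow step, specifically that frames through $S$ are conjugate under $K_S$. The transitivity argument of the first step then becomes unnecessary.
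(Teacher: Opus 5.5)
Your proof is correct and follows the paper's argument. Sylow conjugacy inside $\langle E_S,D_S\rangle$ makes it transitive on the frames through $S$, and the octadic switch products $d_{O,\chi'}d_{O,\chi}$ with $S\subseteq O$, together with Lemma~\ref{lem:Mathieu-translation-generation}, put $(C_S)_{\mathcal F_0}=\overline H_{(S)}$ inside it. As you note yourself, the connectivity step is unnecessary, and so is the Frattini/normality detour: it suffices to take Sylow $2$-subgroups of $K_S$ containing the two frames.
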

\begin{proof}
The assertion for $S=\varnothing$ is the definition.  In the other
cases put $L=\langle E_S,D_S\rangle\trianglelefteq C_S$.
It contains $E_0$ and the group generated by every frame through $S$.
The Sylow argument for Theorem~\ref{thm:frame-conjugacy} works inside
$L$: if a Sylow $2$-subgroup $P$ contains a frame $\mathcal F$ through
$S$, then
\[
             D_S\cap P=\mathcal F\setminus\{d_i:i\in S\}.
\]
Indeed, two distinguished involutions in a $2$-group commute, and
maximality of the frame applies.  Sylow conjugacy in $L$ therefore
makes $L$ transitive on frames through $S$.

The intersection $L\cap\overline H$ contains $E_0$.  Its image in
$(M_{24})_{(S)}$ contains $T_O^{\mathrm{tr}}$ whenever $S\subseteq O$: the products
of two octadic reflections of the same phase parity induce exactly
these translations by
Proposition~\ref{prop:intrinsic-frame-switch}, and both reflections
belong to $D_S$.  Lemma~\ref{lem:Mathieu-translation-generation}
then gives $\overline H_{(S)}\leq L$.  Finally, for $g\in C_S$,
frame transitivity in $L$ supplies $a\in L$ such that
$ag\in\overline H_{(S)}$.  Hence $g\in L$.
\end{proof}

\subsection{Rank three and primitivity from the coordinate supports}

\begin{proposition}[Rank-three actions]\label{prop:residue-rank-three}
For $s=0$, $1$, $2$, the faithful group $C_S/E_S$ has rank three on $D_S$.
Its point-stabilizer orbit lengths are
\begin{equation}
\begin{array}{c|r|r|r}
 s&|D_S|&\text{other commuting points}&\text{noncommuting points}\\ \hline
 0&306936&31671&275264\\
 1&31671&3510&28160\\
 2&3510&693&2816.
\end{array}
\label{eq:intrinsic-rank-three}
\end{equation}
All three actions are primitive.
\end{proposition}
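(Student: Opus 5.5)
The plan is to reduce everything to the stabilizer of one point together with the commuting/noncommuting dichotomy. First, $C_S/E_S$ is transitive on $D_S$ by Proposition~\ref{prop:small-clique}, since transitivity on ordered commuting $(s+1)$-tuples says exactly that $C_S$ is transitive on extensions of the fixed $s$-tuple. For a point $d\in D_S$ the remaining points split into the $a_{s+1}$ members of $D_{S\cup\{d\}}$, which commute with $d$, and the $a_s-1-a_{s+1}$ that do not. This gives $306936-1-31671=275264$, $31671-1-3510=28160$ and $3510-1-693=2816$, as in \eqref{eq:intrinsic-rank-three}. The commuting set is a single stabilizer orbit, again by Proposition~\ref{prop:small-clique} applied to $(s+2)$-tuples; this requires $s+2\le5$, which holds.

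The remaining point for rank three is transitivity of the stabilizer of $d$ on the noncommuting points. I would reduce to a normal form. Conjugate so that $S\subseteq\mathcal F_0$ and $d=d_k$ is a basic transposition, with $k\notin S$. A noncommuting $e\in D_S$ is then a transposition pairing zero with $r_k$ and nonzero with every $r_i$, $i\in S$. By Lemma~\ref{lem:intrinsic-support} it is either an octadic ray over an octad $O\supseteq S$ with $k\notin O$, or a duadic ray over a duad $p\supseteq S$ with $k\notin p$. The duadic rays must be moved to octadic ones. For this I would use the fact that $d_{[t]}$ sends $e$ to $d_{[r*s]}$, together with the frame switches of Proposition~\ref{prop:intrinsic-frame-switch}: a duadic root is $\tau_{r_F}(r_{F'})$, so conjugating by a suitable octadic reflection $d_{F,\chi}$ commuting with $d_k$ and with $E_S$ carries it to an octadic ray. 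This needs $S\cup\{k\}\subseteq F$, which is possible since $\lambda_{s+1}>0$ supplies enough octads. Once $e$ is octadic, the group $\overline H_{(S\cup\{k\})}$ acts on the octadic noncommuting rays. By Lemma~\ref{lem:Mathieu-translation-generation} and $5$-transitivity, $(M_{24})_{(S\cup\{k\})}$ is transitive on octads containing $S$ and avoiding $k$, at least for $s\le 2$ where $|S\cup\{k\}|\le3$. The cocode part of $\overline H$ fixing the $d_i$ is transitive on the $32$ phases (Proposition~\ref{prop:octadic-roots}).

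An alternative, cleaner route is a counting check. I would compute the size of the orbit of one octadic noncommuting point under the stabilizer and compare it with the required length. For $s=0$ the octadic count alone is $330\cdot32$; the duadic contributions must then be attached, so the orbit-joining step above is genuinely needed. I expect this joining of the duadic and octadic noncommuting rays into one stabilizer orbit to be the main obstacle. The first thing I would try is the braid identity $d_{[r]}d_{[s]}d_{[r]}=d_{[r*s]}$ of Lemma~\ref{lem:reflection-calculus}(a), using $r$ octadic and commuting with both $d_k$ and $E_S$.

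Primitivity then follows from rank three. A nontrivial block system for a rank-three action must be a union of the point with one suborbit, so it is a block only if the corresponding graph is a disjoint union of cliques, or if the complement graph is. In the commuting graph, two commuting points $d,e$ have common commuting neighbours $a_{s+2}$, while a noncommuting pair has common neighbours in $D_S$ that also commute with both. The commuting relation is not transitive: $d_k$ and $d_l$ commute, yet an octadic ray over an octad containing $k$ but not $l$ commutes with $d_k$ and not with $d_l$. The noncommuting relation is not an equivalence either, since by Lemma~\ref{lem:reflection-calculus}(a) the triple $r$, $s$, $t=r*s$ is pairwise noncommuting while $d_{[t]}\ne d_{[r]}$. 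It remains to exclude the union of the point and the noncommuting suborbit as a block; this would force the commuting graph to be complete multipartite. That is impossible, because two commuting points would then have identical noncommuting neighbourhoods, contradicting the same octad example. Hence all three actions are primitive.
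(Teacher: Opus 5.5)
\textbf{Gap: primitivity for the noncommuting suborbit.} Your argument for the commuting suborbit is fine, but the noncommuting case does not work as written. A pairwise noncommuting triple $r$, $s$, $r*s$ does not show that ``equal or noncommuting'' fails to be an equivalence relation, because such a triple would simply lie in one class. Your fallback argument is reversed. If $\{d\}$ together with its noncommuting points were a block, then two \emph{commuting} points would lie in different blocks. They would therefore have \emph{disjoint} noncommuting neighbourhoods, not identical ones. An element commuting with $d_k$ but not with $d_l$ is consistent with this, so your octad example gives no contradiction.

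What you need is a common noncommuting neighbour of two commuting points. For distinct $k,l\notin S$, take an octadic ray over an octad containing $S$ and avoiding both $k$ and $l$. There are $\lambda_s-2\lambda_{s+1}+\lambda_{s+2}=330$, $120$, $40$ such octads for $s=0,1,2$. This ray lies in both putative blocks through $d_k$ and $d_l$, which forces those blocks to coincide, a contradiction.

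The paper avoids configurations entirely. A nontrivial block through $d$ has size one plus a single subdegree: $31672$ or $275265$, $3511$ or $28161$, $694$ or $2817$. None of these divides $306936$, $31671$, $3510$ respectively. In the noncommuting case it already suffices that the size exceeds half the degree.

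\textbf{Rank-three part.} Apart from this, your argument follows the paper's route. You use small-clique homogeneity for transitivity and for the commuting suborbit. You split the noncommuting points into two coordinate types, with $\overline H_{(S\cup\{k\})}$ and the cocode acting on each type. You then fuse the types by an octadic reflection fixing $S\cup\{k\}$.

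The paper fuses in the opposite direction. It reflects an octadic root over $F$ in an octad $O\supseteq T\cup\{x\}$ with $O\cap F=T\supseteq S$. Lemma~\ref{lem:orthogonal-octadic-pairs} then shows the image is a duadic root.

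Your direction also works, once the conditions are stated correctly:
\begin{enumerate}
\item The reflecting octad $F$ must contain $p\cup\{k\}$, not merely $S\cup\{k\}$. So the relevant count is $\lambda_3=21$, not $\lambda_{s+1}$.
\item You need a partner octad $F'$ with $F\cap F'=p$; sixteen exist.
\item You need Corollary~\ref{cor:duadic-phase} to write $r_{p,\psi}=\tau_{r_{F,\chi}}(r_{F',\chi'})$ for this particular pair $F$, $F'$.
\end{enumerate}
Then $d_{F,\chi}$ carries the duadic point to an octadic point over $F'$, which contains $S$ and avoids $k$.

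Lemma~\ref{lem:Mathieu-translation-generation} is a generation statement requiring $|S|\le2$. It does not give transitivity on octads through $S$ avoiding $k$. Use instead that the octad stabilizer induces $A_8$ on the octad, while its translation kernel is transitive on the complement.

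A small correction to your side count: $506$ octads avoid a point, not $330$.
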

\begin{proof}
Small-clique homogeneity makes $C_S$ transitive on $D_S$ and
$C_{S\cup\{x\}}$ transitive on the points of $D_S$ commuting with a
fixed basic point $d_x$, where $x\notin S$.  These sets have sizes
$a_s$ and $a_{s+1}$.

The points of $D_S$ not commuting with $d_x$ have precisely two
coordinate types: octads $F$ with $S\subseteq F$, $x\notin F$,
and duads $p$ with $S\subseteq p$, $x\notin p$, with all their
phases.  The subgroup $\overline H_{(S\cup\{x\})}$ is transitive
on each type.  For octads, this follows because the octad stabilizer
induces $A_8$ on its points and its translation kernel is transitive
on its complement: it is transitive on configurations consisting
of the ordered $s$ points inside an octad and one point outside.
For duads it follows from $5$-transitivity.  The full cocode, which
fixes the basic frame, is transitive on the phases in both cases.

These two orbits are fused by an octadic reflection.  Fix an octad
$F$ containing $S$ and avoiding $x$.  There is an octad $O$ with
\[
 S\cup\{x\}\subseteq O,\qquad |O\cap F|=2.
\]
To see this, enlarge $S$ to a two-set $T\subset F$.
Of the $\lambda_3=21$ octads through $T\cup\{x\}$, exactly
$\binom62=15$ meet $F$ in four points: the two extra points of $F$
complete a five-set, which lies in a unique octad.  The other six
satisfy $O\cap F=T$ and give the required choice.
Every octadic reflection over $O$ fixes $S\cup\{x\}$.
By Lemma~\ref{lem:orthogonal-octadic-pairs} its action on a root
over $F$ is their product, a duadic root over $O\cap F$.
This duad contains $S$ and avoids $x$, so the two noncommuting
orbits fuse.  The remaining orbit size is $a_s-1-a_{s+1}$,
proving the table.  Faithfulness is
Lemma~\ref{lem:residue-kernels}.

A block containing a point must be a union of that point-stabilizer's
orbits.  Besides a singleton and the whole set, its only possible
sizes are one plus either nontrivial subdegree.  For the three rows
these are
\[
 (31672,275265),\qquad (3511,28161),\qquad (694,2817).
\]
The corresponding degrees are $306936$, $31671$, and $3510$;
neither possible proper block size divides its degree.  The size of
a block in a finite transitive action must divide the degree,
so all three actions are primitive.
\end{proof}

\begin{corollary}
\label{cor:residue-classes}
For $s=1$, $2$, the images of $D_S$ in $C_S/E_S$ are distinct
involutions, form one conjugacy class, and generate the quotient.
Products of distinct images have order two or three according as
the original involutions commute or do not commute.
\end{corollary}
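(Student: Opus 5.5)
We prove the four assertions in turn, working inside $C_S$ with $S\subseteq I$, $|S|\in\{1,2\}$, and writing $\bar g$ for the image of $g\in C_S$ in $C_S/E_S$.

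\emph{Generation.} This is Proposition~\ref{prop:residue-generation}: $C_S=\langle E_S,D_S\rangle$, so the images of $D_S$ generate $C_S/E_S$.

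\emph{Distinctness and involutions.} The images are distinct because the map $D_S\to C_S/E_S$ is injective. Suppose $d,e\in D_S$ have $de^{-1}\in E_S$. Then $d$ and $e$ induce the same permutation of $D_S$ by conjugation, since $E_S$ is the kernel of that action by Lemma~\ref{lem:residue-kernels}. Now $d$ fixes itself under conjugation, so $e$ commutes with $d$. If $d\ne e$, then $de$ is an element of $E_S$, which is generated by basic transpositions in $S$. Using $E_0\cong\C^*$, the product $de$ would have to be an element of $\Dcal_{\rm CP}$ times another, lying in $E_S=\{1,d_i,d_j,d_id_j\}$.

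The cleaner route is to compare conjugation actions on $D_S$ directly. Conjugation by $d$ fixes exactly the members of $D_S$ commuting with $d$, by Lemma~\ref{lem:basic-fixed-ray} and projective covariance. Hence equal actions give equal sets of fixed points, so $d$ and $e$ have the same commuting neighbourhood in $D_S$. The rank-three structure of Proposition~\ref{prop:residue-rank-three} then separates points: the neighbourhood of $d$ in $D_S$ has size $a_{s+1}$, and if $d\ne e$ commute, then $e$ does not lie in its own neighbourhood while it does lie in $d$'s.

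So it suffices to show that $d$ and $e$ commute and then argue as follows. Equal actions give $ded^{-1}=e$ from $d\cdot d\cdot d^{-1}=d$, acting on $e$; hence they commute. Then $d$ is in the fixed set of $d$'s action, namely everything commuting with $d$, including $d$ itself. But is $d$ moved by $e$'s action? No, it commutes. So instead we use a third point. Choose $f\in D_S$ commuting with $e$ but not with $d$. Such an $f$ exists because Lemma~\ref{lem:basic-fixed-ray} together with transitivity shows that the neighbourhoods of distinct transpositions differ; for example, after moving $d,e$ into the standard frame, a basic point $d_x$ with $x\notin S$ is separated by an octad as in that lemma. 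This contradicts the equality of the two actions.

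The images are nontrivial, since $d\notin E_S$ by the same separation argument: $d$ moves some point of $D_S$. Each image is therefore an involution.

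\emph{One conjugacy class.} The images form a single class because $C_S$ is transitive on $D_S$ (Proposition~\ref{prop:small-clique}).

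\emph{Orders of products.} For distinct $d,e\in D_S$, the product $de$ has order $2$ or $3$ in $C_S$ by Lemma~\ref{lem:reflection-calculus}, so the image $\bar d\bar e$ has order dividing that number. The image is nontrivial by distinctness, so its order equals the order of $de$ when that order is prime, which is always the case here.

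The main obstacle is the separation of distinct transpositions by their commuting neighbourhoods inside $D_S$, and the plan is to reduce it to the basic-frame count $253-77>0$ via frame homogeneity.
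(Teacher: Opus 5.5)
Your proposal is correct. Generation, the single conjugacy class, involutivity and the product orders match the paper, and your product-order step ($\bar d\bar e\neq 1$ by distinctness, with order dividing the prime $|de|$) is a clean version of the paper's $2$-group remark. The genuine difference is how you prove distinctness.

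\textbf{The paper's route.} It gets distinctness in one stroke from primitivity. ``Same image'' is a $C_S$-invariant equivalence relation on $D_S$, so by Proposition~\ref{prop:residue-rank-three} it is either equality or universal. It is not universal, because a noncommuting pair has product of order three, which cannot lie in the $2$-group $E_S$.

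\textbf{Your route.} You argue locally. If $\bar d=\bar e$, then $d$ and $e$ induce the same conjugation on $C_S$ (since $E_S\le Z(C_S)$), so they commute; more quickly, $de\in E_S$ cannot have order three. A member of $D_S$ commuting with exactly one of them then gives a contradiction. This bypasses primitivity and uses only small-clique homogeneity and a design count, at the cost of an explicit construction. The paper's version is shorter because primitivity is already available from the simplicity argument.

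\textbf{Two repairs to your write-up.}

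First, delete the abandoned attempts: the $E_0\cong\C^*$ remark, and the neighbourhood-size remark. The latter yields no contradiction, since $d$ and $e$ lie in each other's neighbourhoods.

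Second, the separating count is not $253-77$. Conjugation by $d$ and by $e$ agree only on $C_S$, so the separating element must lie in $D_S$. Proceed as follows:
\begin{itemize}
\item Apply Proposition~\ref{prop:small-clique} to the commuting $(s+2)$-tuple $((d_i)_{i\in S},d,e)$ to move it to $((d_i)_{i\in S},d_x,d_y)$.
\item Take an octadic transposition over an octad containing $S\cup\{y\}$ and avoiding $x$. There are $\lambda_{s+1}-\lambda_{s+2}$ such octads, namely $56$ for $s=1$ and $16$ for $s=2$.
\end{itemize}
An octad merely through $y$ and not $x$, as in Lemma~\ref{lem:basic-fixed-ray}, could give an element outside $C_S$.
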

\begin{proof}
Each member of $D_S$ moves some other member, so its image has
order two.  Equality of two images defines an invariant equivalence
relation on $D_S$.  Primitivity makes this either equality or the
universal relation.  It is not universal: a product of order three
cannot lie in the $2$-group $E_S$.  Thus the images are distinct.
Transitivity of $C_S$ gives the conjugacy-class assertion, and
Proposition~\ref{prop:residue-generation} gives generation.
The same $2$-group argument preserves product order three;
a product of distinct commuting images is a nonidentity involution.
\end{proof}

\subsection{Simplicity}

\begin{lemma}
\label{lem:primitive-involution-simplicity}
Let $J$ be a finite group generated by a conjugacy class $D$ of
involutions, and suppose conjugation on $D$ is faithful and primitive.
Every nontrivial normal subgroup contains $J'$.  Consequently:
\begin{enumerate}[label=(\alph*)]
\item if $J$ is perfect, then $J$ is simple;
\item if $[J:J']=2$ and $|J'|\ne|D|^2$, then $J'$ is simple.
\end{enumerate}
\end{lemma}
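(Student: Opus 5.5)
The plan has three steps: show every nontrivial normal subgroup contains $J'$, then derive (a) at once, then handle (b) with a counting argument.

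\emph{Main claim.} Let $1\ne N\trianglelefteq J$. The group $N$ acts on $D$, and its orbits form a system of blocks for $J$, since $N$ is normal. By primitivity, $N$ is either trivial on $D$ or transitive on $D$. Faithfulness excludes the trivial case because $N\ne1$. So $N$ is transitive on $D$: for $d,e\in D$ there is $n\in N$ with $e=ndn^{-1}$. Then $de=d\,n\,d\,n^{-1}=[d,n]$ lies in $N$, as $N$ is normal. Commutators $[d,e]=(de)^2$ also lie in $N$. Now $J/N$ is generated by the images of $D$. These images all coincide, because $de\in N$ gives $dN=eN$. Hence $J/N$ is cyclic of order at most two, and in particular abelian. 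Therefore $J'\le N$.

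\emph{Part (a).} If $J$ is perfect, every nontrivial normal subgroup contains $J'=J$, so $J$ is simple.

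\emph{Part (b).} Assume $[J:J']=2$ and let $1\ne M\trianglelefteq J'$. The obstacle is that $M$ need not be normal in $J$. Fix $d\in D$. Because $D$ generates $J$, some element of $D$ lies outside $J'$; since $D$ is a single class and $J'$ is normal, every element of $D$ lies outside $J'$. Thus $J=J'\langle d\rangle$, and the only $J$-conjugate of $M$ other than $M$ is $M^d$. Both $M$ and $M^d$ are normal in $J'$.

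Suppose first that $M\cap M^d\ne1$. This intersection is normal in $J$, so the main claim gives $J'\le M\cap M^d$, hence $M=J'$.

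Otherwise $M\cap M^d=1$. Then $MM^d$ is normal in $J$, so $MM^d\supseteq J'$, and therefore $J'=M\times M^d$. In this case $M$ centralizes $M^d$, and $|J'|=|M|^2$. It remains to show this forces $|J'|=|D|^2$, contradicting the hypothesis.

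The conjugation action of $J'$ on $D$ is transitive, by the main claim applied to $N=J'$ and primitivity. Consider the map $M\to D$, $m\mapsto m\,d\,m^{-1}$. Write $m\,d\,m^{-1}=d\,(d^{-1}md)m^{-1}$. The element $(d^{-1}md)m^{-1}$ lies in $M^dM$, and its $M$-component is $m^{-1}$. So the map is injective, giving $|D|\ge|M|$.

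For the reverse bound, note that every element of $J'$ can be written $m\,m'^d$ with $m,m'\in M$. Conjugating $d$ by $m'^d$ gives $d\,m'\,d\,d\,m'^{-1}d=d\,m'\,d\,m'^{-1}$. This shows that every conjugate of $d$ already has the form $m\,d\,m^{-1}$ up to rewriting, so $|D|\le|M|$. I expect this bookkeeping to be the main point needing care.

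Together the two bounds give $|D|=|M|$, hence $|J'|=|M|^2=|D|^2$. This contradicts the hypothesis $|J'|\ne|D|^2$, so $J'$ is simple.
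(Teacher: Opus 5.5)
Your main claim, part (a), and the reduction in (b) to $J'=M\times M^d$ with $|J'|=|M|^2$ follow the paper's proof. Your injectivity argument correctly gives $|D|\ge|M|$, i.e.\ $C_M(d)=1$.

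The gap is the reverse bound $|D|\le|M|$. The displayed computation is wrong. With $m'^d=dm'd$, the conjugate is $(dm'd)\,d\,(dm'^{-1}d)=dm'dm'^{-1}d$, not $dm'dm'^{-1}$. Indeed $dm'dm'^{-1}=m'^d m'^{-1}$ lies in $J'$, whereas you showed every element of $D$ lies outside $J'$, so it cannot be a conjugate of $d$. More importantly, ``up to rewriting'' hides the only substantive point. Your computation never uses that $M$ and $M^d$ commute, and that is exactly what the upper bound rests on.

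The fix is as follows. Since $d$ interchanges $m'$ and $m'^d$ and these commute,
\[
(m'm'^d)^d=m'^dm'=m'm'^d,
\]
so $m'm'^d\in C_{J'}(d)$. Hence $m'^d\in m'^{-1}C_{J'}(d)$, so $m'^d$ conjugates $d$ exactly as $m'^{-1}$ does. It follows that
\[
(mm'^d)\,d\,(mm'^d)^{-1}=(mm'^{-1})\,d\,(mm'^{-1})^{-1}.
\]
Thus the $J'$-orbit $D$ equals the $M$-orbit of $d$, giving $|D|\le|M|$.

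This is precisely the paper's observation that $C_{J'}(d)$ is the diagonal subgroup $\{mm^d:m\in M\}$, of order $|M|$. From it, $|D|=[J':C_{J'}(d)]=|M|$ follows in one line. Your injectivity argument is the other half of that same centralizer computation: it shows the centralizer is no larger than the diagonal.
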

\begin{proof}
The orbits of a normal subgroup form a block system.  A nontrivial
normal subgroup $N$ is therefore transitive, since the action is
faithful.  All members of $D$ then have the same image in $J/N$.
This quotient is generated by one involution, so $J'\leq N$.
This proves the first assertion and (a).

For (b) put $K=J'$ and choose $d\in D\setminus K$; then
$J=K\rtimes\langle d\rangle$.  Suppose that
$1<N<K$ is normal in $K$.  Both $N\cap N^d$ and $NN^d$ are normal
in $J$, so the preceding assertion gives
\[
                 N\cap N^d=1,\qquad NN^d=K.
\]
The two factors commute; hence $K=N\times N^d$, with $d$ interchanging
them.  Its centralizer in $K$ is the diagonal subgroup of order $|N|$.
Also $K$ is transitive on $D$, since $d\in C_J(d)$ and
$J=K\langle d\rangle$.  Thus
\[
 |D|=[K:C_K(d)]=|N|,\qquad |K|=|D|^2,
\]
contrary to the hypothesis.  No proper nontrivial $N$ exists.
\end{proof}

\begin{theorem}[Simplicity]
\label{thm:intrinsic-simplicity}
The groups $G_{24}^+$, $G_{23}$, and $G_{22}$ are nonabelian simple.
\end{theorem}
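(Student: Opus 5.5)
The plan is to apply Lemma~\ref{lem:primitive-involution-simplicity} three times, once to each pair $(C_S/E_S,\text{image of }D_S)$ with $s=0,1,2$. The hypotheses come from earlier results. By Proposition~\ref{prop:residue-generation} and Corollary~\ref{cor:residue-classes} (for $s=1,2$), and by Theorem~\ref{thm:axis-group} (for $s=0$), the image of $D_S$ is a conjugacy class of involutions that generates the group. Faithfulness on $D_S$ is Lemma~\ref{lem:residue-kernels}, and primitivity is Proposition~\ref{prop:residue-rank-three}. The lemma then says that every nontrivial normal subgroup contains the derived subgroup.

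For $G_{24}^+$ we use part~(b) with $J=G_{24}$ and $D=\Dcal_{\rm CP}$. Proposition~\ref{prop:parity-perfect} gives $[J:J']=2$. It remains to check $|G_{24}^+|\ne 306936^2$. This holds because $306936^2=(2^3\cdot3^3\cdot7^2\cdot29)^2$ has no factor $23$, while $|G_{24}^+|$ from Theorem~\ref{thm:residue-orders} does. Nonabelianness is immediate, since $G_{24}^+$ is perfect and nontrivial.

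For $G_{23}$ and $G_{22}$ we want part~(a), so the main point is perfectness. Write $J=C_S/E_S$. It is generated by one class of involutions, so $J/J'$ has order at most $2$, and $J'$ is trivial or simple. The step I expect to be the real obstacle is ruling out $[J:J']=2$. My first attempt is a connectivity argument modeled on Proposition~\ref{prop:parity-perfect}. If two images $\bar d,\bar e$ have product of order $3$, then $\bar d\bar e\in J'$, so $\bar d\equiv\bar e$ modulo $J'$. The noncommuting graph on $D_S$ is connected: it is a nontrivial orbital graph of a primitive action, and primitivity forces connectivity. Hence all generators are congruent modulo $J'$, so $J/J'$ is generated by a single involution, which still allows order $2$.

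To kill that case, I would find two distinct commuting members of $D_S$ whose product lies in $J'$. The product of two commuting members is then a product of two congruent generators, and the task is to show that some such product equals a single generator, or a product of an odd number of them, modulo $E_S$. The Golay relations in Corollary~\ref{cor:relations} give exactly this. Choose an octad $O\supseteq S$. Then $\prod_{i\in O}d_i=1$ in $E_0$. Passing to $C_S/E_S$, we get $\prod_{i\in O\setminus S}\bar d_i=1$, a relation among $8-s$ commuting members of $D_S$. For $s=1$ there are $7$ factors, an odd number, so the generator image in $J/J'$ has odd order and is therefore trivial, giving $J=J'$. For $s=2$ there are $6$ factors, which is even and inconclusive. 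So for $s=2$ I would instead use the relation coming from a dodecad word $C\supseteq S$, or from a weight-$16$ word containing $S$; these give $10$ and $14$ factors, also even. In that case I fall back on a different argument: if $J$ had an index-two subgroup, then its preimage, together with $d_i$ for $i\in S$, would yield an index-two subgroup of $C_S$ not containing $d_j$. I would compare this with $C_{\{i\}}=C_2\times G_{23}$, or with the perfectness just proved for $G_{23}$, via the transitive action of $C_{\{i\}}$ on $D_{\{i\}}$. Alternatively, I would use Lemma~\ref{lem:primitive-involution-simplicity}(b) directly and verify $|J'|\ne3510^2$ using the odd order $|G_{22}|$ compared with $3510^2=2^2 3^6 5^2 13^2$, which shows $J'$ is simple. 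Then $J=J'$ follows from a separate parity obstruction, which is the delicate point. Once perfectness holds, simplicity and nonabelianness follow.
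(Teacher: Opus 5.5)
\textbf{The gap is in the case of $G_{22}$.} Your treatment of $G_{24}^+$ matches the paper: you use Lemma~\ref{lem:primitive-involution-simplicity}(b), separating $|G_{24}^+|$ from $306936^2$ with the prime $23$ where the paper uses $5$. Your treatment of $G_{23}$, via the seven-factor octad relation, also matches.

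For $G_{22}$ you never prove that $J=C_{\{i,j\}}/\langle d_i,d_j\rangle$ is perfect, and without $J=J'$ an index-two subgroup would be a proper nontrivial normal subgroup.
\begin{itemize}
\item Your second fallback only shows that $J'$ is simple. It then defers $J=J'$ to an unspecified ``parity obstruction.'' (Also, $|G_{22}|$ is not odd; it is the primes $7$ or $11$ that separate $|J'|$ from $3510^2$.)
\item Your first fallback is not an argument. Perfectness of $G_{23}$ says nothing directly about the abelianization of the involution centralizer $C_{\{i,j\}}/\langle d_i\rangle$ inside it. The paper proves that centralizer perfect only later, using the very trick you are missing.
\end{itemize}

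\textbf{The missing observation.} The codeword does not need to contain $S$. The relation $\prod_{k\in c}d_k=1$ holds for every Golay word $c$. In $C_S/E_S$, every factor with $k\in S$ is trivial whether or not $c\supseteq S$, so all you need is $|c\setminus S|$ odd.

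Restricting to words that contain both points of $S$ cannot work. All Golay weights are even, so $|c\setminus S|$ is then always even. That is structural, which is why octads, dodecads and weight-$16$ words all failed.

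\textbf{The fix.} Take an octad $O$ containing $i$ but not $j$; there are $\lambda_1-\lambda_2=253-77=176$ of them. The seven elements $\bar d_k$ with $k\in O\setminus\{i\}$ are images of members of $D_S$. So the common image $u$ of the distinguished class in $J/J'$ satisfies $u^7=1=u^2$. Hence $u=1$ and $J$ is perfect, and Lemma~\ref{lem:primitive-involution-simplicity}(a) finishes. This is exactly the paper's argument: it uses an octad meeting $S$ in exactly one point, uniformly for $s=1$ and $s=2$.
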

\begin{proof}
For $G_{24}$, Proposition~\ref{prop:parity-perfect} gives
$[G_{24}:G_{24}^+]=2$, and
Proposition~\ref{prop:residue-rank-three} gives a faithful primitive
action on its generating involution class.  Its subgroup $G_{24}^+$
has order divisible by five, whereas $306936^2$ does not.
Lemma~\ref{lem:primitive-involution-simplicity}(b) proves its simplicity.

For either residue $J=C_S/E_S$ with $s=1$, $2$, the preceding corollary
shows that $J/J'$ is generated by a single involution $u$.
Choose an octad $O$ meeting $S$ in exactly one point.  Such octads
exist: for $s=1$ there are $253$, and for $s=2$ there are
$253-77=176$ through a specified point of $S$ and not the other.
The Golay cocode relation is
\begin{equation}
                       \prod_{i\in O}d_i=1.
                       \label{eq:odd-residue-relation}
\end{equation}
In $J$ the unique factor indexed by $O\cap S$ disappears, leaving
seven members of the distinguished class.  In the abelianization
this gives $u^7=1$.  Together with $u^2=1$ it forces $u=1$;
thus $J$ is perfect.  Primitivity and
Lemma~\ref{lem:primitive-involution-simplicity}(a) prove simplicity.
These orders exclude abelian simple groups.
\end{proof}

\subsection{Central extensions}

\begin{proposition}
\label{prop:intrinsic-centralizer-tower}
For distinct commuting $d$, $e\in\Dcal_{\rm CP}$,
\[
 C_{G_{24}}(d)=\langle d\rangle\times C_{G_{24}^+}(d),
       \qquad C_{G_{24}^+}(d)\cong G_{23}.
\]
The image $\bar e$ of $e$ in $G_{23}$ has centralizer $B$ fitting into
\begin{equation}
       1\longrightarrow\langle\bar e\rangle
       \longrightarrow B\longrightarrow G_{22}\longrightarrow1.
       \label{eq:intrinsic-double-cover}
\end{equation}
The group $B$ is perfect, its center is exactly
$\langle\bar e\rangle$, and this central extension is nonsplit.
\end{proposition}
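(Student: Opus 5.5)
By small-clique homogeneity (Proposition~\ref{prop:small-clique}) we may take $d=d_i$ and $e=d_j$ with $i\ne j$ in the standard frame, so that $C_{G_{24}}(d)=C_{\{i\}}$.

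\textbf{The splitting of $C_{G_{24}}(d)$.} Every $\tau_r$ is conjugate-linear. By Proposition~\ref{prop:parity-perfect} the parity map $G_{24}\to C_2$ therefore sends $d$ to the nontrivial element, so $d\notin G_{24}^+$. Hence $C_{G_{24}^+}(d)=C_{\{i\}}\cap G_{24}^+$ has index two in $C_{\{i\}}$ and meets $\langle d\rangle$ trivially. Since $d$ is central in $C_{\{i\}}$, this gives
\[
C_{\{i\}}=\langle d\rangle\times C_{G_{24}^+}(d),\qquad C_{G_{24}^+}(d)\cong C_{\{i\}}/\langle d_i\rangle=G_{23}.
\]

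\textbf{Identifying $B$.} I would compute the preimage of $C_{G_{23}}(\bar e)$ in $C_{\{i\}}$. An element $g\in C_{\{i\}}$ centralizes $\bar e$ exactly when $gd_jg^{-1}\in\{d_j,d_jd_i\}$. The conjugate $gd_jg^{-1}$ is a transposition commuting with $d_i$, so it lies in $D_{\{i\}}$, and its image in $G_{23}$ is $\bar e$. Corollary~\ref{cor:residue-classes} says that distinct members of $D_{\{i\}}$ have distinct images, so $gd_jg^{-1}=d_j$. Hence
\[
B=C_{\{i,j\}}/\langle d_i\rangle .
\]
The image $\bar e$ of $d_j$ is central in $B$. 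It is nontrivial because $d_j\notin\langle d_i\rangle$ (a single point is not a Golay word, Corollary~\ref{cor:relations}). Moreover $B/\langle\bar e\rangle=C_{\{i,j\}}/\langle d_i,d_j\rangle=G_{22}$, which gives the exact sequence \eqref{eq:intrinsic-double-cover}.

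\textbf{Perfectness.} By Proposition~\ref{prop:residue-generation}, $B$ is generated by $\bar e$ and the image of $D_{\{i,j\}}$. The set $D_{\{i,j\}}$ is a single $C_{\{i,j\}}$-class by Proposition~\ref{prop:residue-rank-three}, so in $B/B'$ all of its members have one common image $u$; write $v$ for the image of $\bar e$. Both $u$ and $v$ have order dividing $2$. I would then apply the Golay relations \eqref{eq:odd-residue-relation}, as in the proof of Theorem~\ref{thm:intrinsic-simplicity}.
\begin{itemize}
\item Take an octad $O$ with $i\in O$ and $j\notin O$; there are $176$ such octads. The factor $d_i$ dies in $B$ and the other seven factors lie in $D_{\{i,j\}}$, so $u^7=1$. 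Hence $u=1$.
\item Take an octad $O$ with $j\in O$ and $i\notin O$. The relation gives $vu^7=1$, hence $v=1$.
\end{itemize}
Thus $B$ is perfect.

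\textbf{Center and nonsplitting.} The image of $Z(B)$ is central in the simple nonabelian group $G_{22}$ (Theorem~\ref{thm:intrinsic-simplicity}), so it is trivial. Therefore $Z(B)\le\langle\bar e\rangle$, and equality holds. If the extension split, then $B\cong C_2\times G_{22}$ would have a quotient $C_2$, contradicting perfectness.

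The main obstacle is the identification of $B$, which rests on excluding $d_jd_i$ as a conjugate of $d_j$. Corollary~\ref{cor:residue-classes} handles this cleanly. The rest is bookkeeping with the Golay relations.
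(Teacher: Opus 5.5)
Your proposal is correct and follows the paper's proof essentially step for step: the parity splitting of $C_{G_{24}}(d)$, the identification $B=C_{\{i,j\}}/\langle d_i\rangle$ via injectivity of the residue classes (Corollary~\ref{cor:residue-classes}), the two Golay octad relations giving $u=1$ and then $v=1$ for perfectness, and centerlessness of $G_{22}$ for the center and the nonsplitting. One small precision: to get $d_j\notin\langle d_i\rangle$ you need that neither $\{j\}$ nor $\{i,j\}$ is a Golay word, and both follow from the minimum weight $8$.
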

\begin{proof}
The first splitting follows from parity: $d$ is central in its
centralizer and has odd parity.  Multiplication by $d$ carries its
odd coset to its even subgroup, giving the stated direct product.

By small-clique homogeneity take $d=d_i$ and $e=d_j$.
Injectivity of the residue class shows that an element of
$C_{\{i\}}/\langle d_i\rangle$ centralizes $\bar e$ precisely
when a representative centralizes $d_j$.  Therefore
\[
 B=C_{\{i,j\}}/\langle d_i\rangle,
\]
which gives \eqref{eq:intrinsic-double-cover}.
By Proposition~\ref{prop:residue-generation}, $B$ is generated by
$\bar e$ and the images of $D_{\{i,j\}}$.  Those images are one
conjugacy class, so write $u$ for their common image in $B/B'$ and
$v$ for that of $\bar e$.  An octad through $i$ and not $j$ gives
$u^7=1$, whence $u=1$.  An octad through $j$ and not $i$ then gives
$vu^7=1$, whence $v=1$.  Thus $B$ is perfect.

Its simple nonabelian quotient $G_{22}$ is centerless, so the center
of $B$ is exactly the kernel of order two.  A splitting would give
$B\cong C_2\times G_{22}$, contradicting perfectness.
\end{proof}

\subsection{The full semilinear automorphism group}
\label{sec:automorphisms}

Write $\Gamma=\Aut_{\rm sl}(A,*)$ for all $\E$-linear or
$\E$-conjugate-linear algebra automorphisms.  By
Corollary~\ref{cor:intrinsic-form} and Theorem~\ref{thm:extremal-closure},
each element of $\Gamma$ preserves $\Rcal$.  The scalar-kernel lemma
identifies its faithful ray image with $\overline\Gamma=\Gamma/Z$.

\begin{corollary}
\label{cor:full-semilinear-group}
Every Fischer frame has stabilizer $2^{12}.M_{24}$, and
\[
            \overline\Gamma=G_{\rm CP},\qquad
            \Aut_{\rm sl}(A,*)=\Gamma=\widehat G.
\]
Thus every semilinear algebra automorphism is generated by root
reflections.
\end{corollary}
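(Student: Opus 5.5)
Three facts will be shown: $\Gamma$ acts on frames transitively through $\widehat G$, every frame stabilizer in $\overline\Gamma$ is $2^{12}.M_{24}$, and hence $\Gamma=\widehat G$.

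First, $\Gamma$ preserves $\Rcal$ by Theorem~\ref{thm:extremal-closure}, and it acts on $\Rcal$ with kernel $Z$ by Lemma~\ref{lem:scalar-kernel}. Write $\overline\Gamma\le\operatorname{Sym}(\Rcal)$ for the faithful image. By projective covariance \eqref{eq:projective-covariance}, every $g\in\overline\Gamma$ normalizes $G_{\rm CP}=\langle\Dcal_{\rm CP}\rangle$ and permutes $\Dcal_{\rm CP}$. It preserves commutation, so it permutes the Fischer frames.

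Second, I compute the stabilizer in $\overline\Gamma$ of the standard frame $\mathcal F_0$. An element stabilizing $\mathcal F_0$ permutes the basic transpositions $d_i$. Since $[r]\mapsto d_{[r]}$ is injective (Proposition~\ref{prop:one-class}), it also permutes the basic rays. The second assertion of Theorem~\ref{thm:frame-rigidity} then places any lift in $ZH$. Hence $\overline\Gamma_{\mathcal F_0}=\overline H=(G_{\rm CP})_{\mathcal F_0}$.

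Third, I show transitivity. Given $g\in\overline\Gamma$, the set $g(\mathcal F_0)$ is a Fischer frame. By Theorem~\ref{thm:frame-conjugacy} there is $a\in G_{\rm CP}$ with $a g(\mathcal F_0)=\mathcal F_0$. Then $ag\in\overline H\le G_{\rm CP}$, so $g\in G_{\rm CP}$, which gives $\overline\Gamma=G_{\rm CP}$. The statement that every frame has stabilizer $2^{12}.M_{24}$ follows by conjugating $\mathcal F_0$, again using Theorem~\ref{thm:frame-conjugacy}; this holds in $G_{\rm CP}=\overline\Gamma$.

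Finally, I lift to $\Gamma$. We have $\widehat G\le\Gamma$, and both groups contain $Z$: $\widehat G$ by Lemma~\ref{lem:scalar-kernel}, and $\Gamma$ by Lemma~\ref{lem:scalar-automorphisms}. Their images in $\operatorname{Sym}(\Rcal)$ agree, both being $G_{\rm CP}$ by \eqref{eq:12.1}, and both have kernel $Z$. Hence $\Gamma=\widehat G$. In particular, every semilinear automorphism is a product of root maps $\tau_r$.

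The only point needing care is the second step. Theorem~\ref{thm:frame-rigidity} is phrased for automorphisms permuting basic rays, while $g$ a priori only permutes the transpositions $d_i$; the injectivity of $[r]\mapsto d_{[r]}$ bridges this gap. No other obstacle is expected.
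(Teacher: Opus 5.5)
Your proposal is correct and follows essentially the same route as the paper. You move $g(\mathcal F_0)$ back to $\mathcal F_0$ by frame conjugacy, apply Theorem~\ref{thm:frame-rigidity} to land in $ZH$, and then use $\overline H\le G_{\rm CP}$ together with $Z\le\widehat G$ to conclude. The differences are cosmetic: you argue first in $\overline\Gamma$ and lift at the end, and you make explicit the injectivity of $[r]\mapsto d_{[r]}$ (and, tacitly, the isometry from Corollary~\ref{cor:intrinsic-form}), which the paper uses without comment.
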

\begin{proof}
Frame conjugacy transfers the standard stabilizer to every frame.
For $g\in\Gamma$, covariance makes $g(\mathcal F_0)$ a frame.  Choose
$a\in\widehat G$ whose ray action takes it to $\mathcal F_0$.
Theorem~\ref{thm:frame-rigidity} then gives $ag\in ZH$.
The ray image of $H$ lies in $G_{\rm CP}$ by that same theorem, so
$H\leq\widehat G$: for $h\in H$, choose a root word with the same
ray action and apply Lemma~\ref{lem:scalar-kernel}, using
$Z\leq\widehat G$.  Therefore $g\in\widehat G$, proving both equalities.
\end{proof}

\begin{corollary}
\label{cor:triple-cover}
The $\E$-linear subgroup $\widehat G^+$ is perfect and has center
$Z\cong C_3$, with
\[
 1\longrightarrow Z\longrightarrow\widehat G^+
   \longrightarrow G_{24}^+\longrightarrow1
\]
a nonsplit central extension.  Furthermore
\[
 \Aut_{\rm sl}(A,*)=\widehat G
   =\widehat G^+\rtimes\langle\tau_r\rangle,
 \qquad |\widehat G|=3|G_{24}|.
\]
Every root reflection inverts $Z$.
\end{corollary}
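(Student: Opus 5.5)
Most of the statement is already available from Proposition~\ref{prop:parity-perfect}, so the plan is to collect it and then treat the center and the splitting over $\langle\tau_r\rangle$ separately. That proposition gives that $\widehat G^+$ is perfect and that $1\to Z\to\widehat G^+\to G_{24}'\to1$ is a nonsplit central extension, with $G_{24}'=G_{24}^+$ by definition. The statement that every root reflection inverts $Z$ is immediate from conjugate-linearity: $\tau_r(\omega x)=\bar\omega\,\tau_r(x)$, so $\tau_r(\omega\operatorname{id})\tau_r^{-1}=\bar\omega\operatorname{id}$. The equality $\Aut_{\rm sl}(A,*)=\widehat G$ is Corollary~\ref{cor:full-semilinear-group}. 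The order $|\widehat G|=3|G_{24}|$ follows from the exact sequence \eqref{eq:12.1}, whose kernel $Z$ is identified in Lemma~\ref{lem:scalar-kernel}.

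For the semidirect product, the first step is to note that $\widehat G^+$ has index two in $\widehat G$. It is the kernel of the linearity character, and every generator $\tau_r$ lies outside it. Next, $\tau_r$ is an involution by Proposition~\ref{prop:basic-axis}, Proposition~\ref{prop:octadic-involution} and Theorem~\ref{thm:root-tensor-criterion}, so $\langle\tau_r\rangle$ has order two and meets $\widehat G^+$ trivially. Together these give $\widehat G=\widehat G^+\rtimes\langle\tau_r\rangle$.

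The remaining point, and the only nonformal one, is that the center of $\widehat G^+$ is exactly $Z$. The inclusion $Z\le Z(\widehat G^+)$ holds because $Z$ consists of $\E$-linear scalars. For the reverse inclusion, take $z\in Z(\widehat G^+)$. Its image $\bar z$ in $G_{24}^+$ is central. Since $G_{24}^+$ is simple and nonabelian by Theorem~\ref{thm:intrinsic-simplicity}, its center is trivial, so $\bar z=1$ and hence $z\in Z$. Thus $Z(\widehat G^+)=Z\cong C_3$, which completes the statement.

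I expect no real obstacle. The only care needed is to route the center computation through the simplicity theorem rather than through $G_{\rm CP}$, since $G_{\rm CP}$ is centerless only as the full group.
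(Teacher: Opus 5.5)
Your proof is correct and follows the paper's argument essentially step for step. You take the extension, perfectness and nonsplitting from Proposition~\ref{prop:parity-perfect}, get the center via simplicity of $G_{24}^+$, split over a conjugate-linear involution $\tau_r$, obtain inversion of $Z$ from conjugate-linearity, and get $\Aut_{\rm sl}(A,*)=\widehat G$ from Corollary~\ref{cor:full-semilinear-group}. One small citation point: for duadic roots the involution property comes from Proposition~\ref{prop:chosen-duadic-fibre}, or for any reflecting root from Lemma~\ref{lem:symmetric-root-maps}, though a single basic $\tau_{r_i}$ already suffices for the splitting.
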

\begin{proof}
The exact sequence, perfectness, and nonsplitting were proved in
Proposition~\ref{prop:parity-perfect}.  Simplicity of $G_{24}^+$
now identifies the center of $\widehat G^+$ with $Z$.
A root reflection has order two and odd semilinear parity, so it
splits the full group over $\widehat G^+$.  Conjugate-linearity gives
$\tau_r(\omega I)\tau_r^{-1}=\bar\omega I$.
Corollary~\ref{cor:full-semilinear-group} identifies $\widehat G$
with all semilinear algebra automorphisms.
\end{proof}

\subsection{Identification with Fischer's groups}\label{sec:identification}

It remains to identify the constructed groups.  We use two distinct
uniqueness results:
Fischer's uniqueness of the exceptional three-transposition pairs, and
Aschbacher's comparison with the local characterizations used in the
classification of finite simple groups.

\begin{corollary}[Identification]\label{cor:fischer-identification}
The constructed three-transposition pairs are Fischer's exceptional pairs
$M(24)$, $M(23)$, and $M(22)$.  Consequently
\[
 G_{24}\cong\Fi_{24}'{:}2,\qquad G_{24}^+\cong\Fi_{24}',\qquad
 G_{23}\cong\Fi_{23},\qquad G_{22}\cong\Fi_{22}.
\]
\end{corollary}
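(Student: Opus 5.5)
The plan is to apply Fischer's classification of finite groups generated by a conjugacy class of $3$-transpositions, in the form given in \cite[Part~I, Chapter~5]{AschbacherThree}, to $(G_{24},\Dcal_{\rm CP})$. Theorem~\ref{thm:axis-group} shows that $\Dcal_{\rm CP}$ is a single generating class of $3$-transpositions in a finite centerless group. To use Fischer's theorem we also need $O_2(G_{24})=O_3(G_{24})=1$. These follow from Lemma~\ref{lem:primitive-involution-simplicity}. Every nontrivial normal subgroup contains $G_{24}^+$, which is nonabelian simple by Theorem~\ref{thm:intrinsic-simplicity}. Hence $G_{24}$ has no nontrivial solvable normal subgroup, and $G_{24}'=G_{24}^+$ is simple.

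Fischer's list of such groups consists of symmetric groups, symplectic and orthogonal groups over $\F_2$, unitary groups over $\F_4$, orthogonal groups over $\F_3$, and the three exceptional groups $M(22)$, $M(23)$, $M(24)$. I would exclude all but $M(24)$ using invariants that are already proved. The class has size $306936$, a transposition centralizes $31671$ others, and the action is rank three with subdegrees $31671$ and $275264$ by Proposition~\ref{prop:residue-rank-three}. The order is $|G_{24}|$ from Theorem~\ref{thm:order-CP}, and $29$ divides it. No classical Fischer group of the right class size has this order: the prime $29$ or the exact $3$-part $3^{16}$ can be compared against the standard order formulas. Alternatively, matching the rank-three parameters already singles out $M(24)$. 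This identifies $(G_{24},\Dcal_{\rm CP})$ with $M(24)=\Fi_{24}$, and so $G_{24}^+\cong\Fi_{24}'$.

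The residues are handled in the same way. Corollary~\ref{cor:residue-classes} shows that $(G_{23},\text{image of }D_{\{i\}})$ and $(G_{22},\text{image of }D_{\{i,j\}})$ are $3$-transposition groups generated by one class. Both groups are simple by Theorem~\ref{thm:intrinsic-simplicity}. Their class sizes are $31671$ and $3510$, and their orders are given in Theorem~\ref{thm:residue-orders}. Fischer's list again leaves only $M(23)$ and $M(22)$: the order of $G_{23}$ is divisible by $23\cdot17$, and $G_{22}$ has class size $3510$ with the exact order $2^{17}3^95^2\cdot7\cdot11\cdot13$. As a cross-check, one could instead use Aschbacher's involution-centralizer characterizations. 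Proposition~\ref{prop:intrinsic-centralizer-tower} shows that a transposition centralizer in $G_{23}$ is a nonsplit $2\cdot G_{22}$, which is the known recognition datum for $\Fi_{23}$.

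The main obstacle is the careful elimination of the classical $3$-transposition families, since their class sizes and orders must be compared uniformly across ranks. I would do this first by showing that no classical family in Fischer's list has a member whose order is divisible by $29$ and whose transposition class has size $306936$. The prime $29$ would enter only through $2^{28}-1$ or $3^{14}-1$, and the classes those groups produce are far larger. The same divisibility argument with $23$ and $13$ would settle the residues. The identifications of $G_{23}$ and $G_{22}$ then follow, completing the corollary.
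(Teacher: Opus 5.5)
Your proof is correct and has the same skeleton as the paper's: check that the pairs satisfy the hypotheses of Fischer's classification, then eliminate every non-exceptional entry numerically. The differences lie in how each step is carried out.

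For the hypothesis, the paper shows $C_{G_{24}}(G_{24}^+)=1$, so that $G_{24}$ embeds in $\Aut(G_{24}^+)$. Your use of Lemma~\ref{lem:primitive-involution-simplicity} is equally valid and slightly more direct: every nontrivial normal subgroup contains the nonabelian simple group $G_{24}'$.

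For the elimination, the paper uses only invariants of the Fischer space. It compares the class sizes $306936$, $31671$, $3510$ with the Hall--Shpectorov list. It then uses the frame sizes $24$, $23$, $22$ to dispose of $S_{784}$, whose maximal commuting sets have size $392$. You use instead the orders from Theorem~\ref{thm:order-CP} and the multiplicative orders of $2$ and $3$ modulo $29$ and $23$. This is self-contained given the standard order formulas, but it needs more case bookkeeping, and the prime shortcut does not work uniformly:
\begin{itemize}
\item $S_{784}$ has exactly the right class size and order divisible by $29$. Here you genuinely need the exact order, or your subdegree comparison ($1564$ and $305371$ against $275264$ and $31671$).
\item For $G_{22}$, divisibility by $13$ excludes nothing in the $\F_3$-orthogonal family. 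Since $3$ has order $3$ modulo $13$, the prime $13$ divides $|O_n(3)|$ for every $n\ge 7$. There you must compare class sizes ($351$, $378$, $1080$, $1107$, $3240$, $3321$, then at least $9801$), as the paper does.
\end{itemize}

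There are also two small slips. Fischer's list also contains the triality groups $P\Omega_8^+(2){:}S_3$ and $P\Omega_8^+(3){:}S_3$; their class sizes are $360$ and $3240$, so they are harmless. And $29$ divides $3^{14}+1$, not $3^{14}-1$.

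Finally, the paper makes explicit two steps you take for granted. First, it explains why Fischer's uniqueness of the Fischer space yields an isomorphism of groups carrying the distinguished class to the distinguished class. Second, via Aschbacher's Part~II characterizations, it shows why $M(22)$, $M(23)$, $M(24)'$ are the sporadic groups usually called $\Fi_{22}$, $\Fi_{23}$, $\Fi_{24}'$. Treating $M(24)=\Fi_{24}$ as a definition, as you do, is acceptable, but it is exactly the naming step the paper spells out.
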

\begin{proof}
\emph{Identification of the three-transposition pairs.}
Theorem~\ref{thm:axis-group}, Corollary~\ref{cor:residue-classes},
and Theorem~\ref{thm:intrinsic-simplicity} give centerless
three-transposition pairs with class sizes $306936$, $31671$, and $3510$.
They are almost simple: the residues are simple, while
$C_{G_{24}}(G_{24}^+)$ is normal and intersects the nonabelian simple
index-two subgroup $G_{24}^+$ trivially.  It has order at most two
and would be central if nontrivial, so centerlessness makes it trivial.
Thus conjugation embeds $G_{24}$ in $\Aut(G_{24}^+)$.  In particular,
none of the three groups has a nontrivial solvable normal subgroup.

Their maximal commuting sets have sizes $24$, $23$, and $22$.
Indeed, commutation is preserved in the residue quotients; adjoining
the fixed point or pair to a maximal commuting residue set gives a
frame, of size $24$ by Theorem~\ref{thm:frame-conjugacy}.
Fischer's classification and uniqueness theorem
\cite{FischerThree,FischerWarwick}, with its complete published treatment
in \cite[Part~I, Chapter~5]{AschbacherThree}, now applies.
Comparing these class sizes with the list in
\cite[Theorem~5.1 and Section~6]{HallShpectorov} excludes all classical
and triality cases.  The only symmetric possibility is $S_{784}$,
since $\binom{784}{2}=306936$; its maximal commuting sets have size
$392$, not $24$.  Thus the class and frame sizes select the exceptional
pairs $M(24)$, $M(23)$, and $M(22)$, respectively.

Fischer's theorem gives uniqueness of the exceptional three-transposition
pairs.  A Fischer-space isomorphism gives an isomorphism of the corresponding
centerless groups: in the Fischer space with lines
$\{d,e,ded\}$ for $|de|=3$, conjugation by $d$ fixes $d$ and the points
not collinear with $d$, and interchanges the other two points on each
line through $d$.  A space isomorphism therefore conjugates these point
involutions.  They generate the faithful conjugation action of the
centerless group, so the resulting identification is an isomorphism of
pairs, preserving the distinguished classes.

\emph{Identification with the locally characterized sporadic groups.}
The additional comparison is supplied by Aschbacher's
involution-centralizer characterizations
\cite[Part~II, Introduction and Chapter~11]{AschbacherThree}.
For the two smaller groups, \cite[Theorems~31.1 and~32.1]{AschbacherThree}
identify the locally characterized groups with the three-transposition
groups $M(22)$ and $M(23)$, respectively.
For the largest group, \cite[Theorem~34.1]{AschbacherThree} identifies the
local type $F_{24}$ with $M(24)'$.
In addition, \cite[Theorem~35.1]{AschbacherThree} proves that a group of
type $\Aut(F_{24})$ is a three-transposition group of type $M(24)$;
its uniqueness as such a group then follows from Fischer's theorem.
Thus the standard sporadic names are
$M(22)\cong\Fi_{22}$, $M(23)\cong\Fi_{23}$, and
$M(24)'\cong\Fi_{24}'$, with $M(24)\cong\Fi_{24}'{:}2$.
For this notation comparison, see also
\cite[Section~6]{AschbacherGuralnickSegev}.
Together with the pair identifications above, these give the asserted
isomorphisms.
\end{proof}

\begin{remark}[Construction and recognition]\label{rem:logical-direction}
The classification and uniqueness results cited here identify the named
pairs and relate them to the local characterizations of the sporadic
groups.  They supply none of our construction, order, simplicity, or
central-extension proofs.  In particular, the use of Aschbacher's
Part~II characterizations is distinct from importing its Monster-based
existence construction.  The Schur multiplier is not determined here.
\end{remark}

\section{The Eisenstein root algebra and its reductions}
\label{sec:integral}

The root configuration defines an integral Eisenstein algebra.  Its
odd-prime localizations recover the coordinate algebra, and reduction
modulo $1-\omega$ gives a ternary algebra with a faithful group action.
The moment identities give the discriminant bound and the ternary
commuting-matrix identity.

\subsection{Quadratic reconstruction and moments}
\label{sec:moments}

The symmetric-square basis determines both the product and the moments.
The second moment will bound the integral discriminant and give the
square-zero identity for the ternary commuting matrix.

\begin{theorem}[Quadratic product rigidity]
\label{thm:quadratic-product-rigidity}
The symmetric squares of one root representative from each ray form a
basis of $\operatorname{Sym}^2_\E(A)$.  There is at most one commutative
conjugate-bilinear product on $A$ satisfying $r*r=10r$ on these roots.
\end{theorem}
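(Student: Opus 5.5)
The basis assertion follows from Lemma~\ref{lem:quadratic-unisolvence} together with the count \eqref{eq:7.1}. By Theorem~\ref{thm:extremal-closure}, the rays in $\Rcal$ are exactly the reflecting rays. Take one representative $r$ from each ray; Lemma~\ref{lem:reflecting-root-pairing} shows that distinct representatives pair in $\{0\}\cup\mu_3$, and each has norm $9>1$. Lemma~\ref{lem:quadratic-unisolvence} then gives linear independence of the $306936$ symmetric squares $r\otimes r$ in $\operatorname{Sym}^2(A_{\mathbb C})$. Since $\dim_\E\operatorname{Sym}^2_\E(A)=\binom{784}{2}=306936$ and the vectors $r\otimes r$ already lie in the $\E$-form, they are an $\E$-basis. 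Two points need a word here. First, independence over $\mathbb C$ implies independence over $\E$. Second, the Gram-matrix argument is carried out in $A_{\mathbb C}$ with the Hermitian form, where the positive definite Gram matrix $(a^2-a)I+\overline G$ certifies independence. Changing the representative within a ray multiplies $r\otimes r$ by $\lambda^2$, which does not affect the basis property.

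For uniqueness, let $\star$ be any commutative conjugate-bilinear product on $A$ with $r\star r=10r$ on the chosen roots. The key step is to convert $\star$ into an $\E$-linear map on $\operatorname{Sym}^2_\E(A)$. Conjugate-bilinearity means that $(x,y)\mapsto \overline{x\star y}$ is bilinear when $A$ is viewed through the conjugate structure. Concretely, compose with a fixed conjugate-linear bijection of $A$, or equivalently regard $x\star y$ as an $\E$-bilinear map $A\times A\to\overline A$, where $\overline A$ is $A$ with conjugated scalar action. Commutativity makes this bilinear map symmetric. It therefore factors through a unique $\E$-linear map
\[
 m_\star:\operatorname{Sym}^2_\E(A)\to\overline A,\qquad m_\star(x\otimes y)=x\star y.
\]
For two such products $\star$ and $*$, the difference $m_\star-m_*$ vanishes on every basis vector $r\otimes r$, because both products send it to $10r$. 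Hence $m_\star=m_*$, and polarization gives $x\star y=x*y$ for all $x,y$. Note that $10r$ is a well-defined element of $\overline A$ independent of the identification, so the prescription $r\star r=10r$ is consistent under rescaling within a ray: replacing $r$ by $\lambda r$ turns both sides into $\bar\lambda^2 10 r=\lambda\cdot 10r$, using \eqref{eq:2.4}.

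The main obstacle is bookkeeping the semilinearity rather than any genuine difficulty. I would handle it by first noting that $\operatorname{Sym}^2$ of the conjugate space $\overline A$ is $\overline{\operatorname{Sym}^2 A}$, so independence of the $r\otimes r$ transfers. One could also avoid this entirely by applying the unisolvence lemma to $\bar r$ in $\overline A$, whose pairings are again in $\{0\}\cup\mu_3$, because $\mu_3$ is closed under conjugation.
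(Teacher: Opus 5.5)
Your proposal is correct and follows the paper's proof: independence from Lemma~\ref{lem:quadratic-unisolvence} together with the count $\binom{784}{2}$ gives a basis, and uniqueness follows because a commutative conjugate-bilinear product is an $\E$-linear map on the symmetric square, fixed by its values $10r$ on that basis. The paper phrases this map as $\operatorname{Sym}^2_\E(\overline A)\to A$ on the conjugate symmetric-square basis, whereas you use the equivalent formulation $\operatorname{Sym}^2_\E(A)\to\overline A$.
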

\begin{proof}
Lemma~\ref{lem:quadratic-unisolvence} gives independence, and
$|\Rcal|=\binom{784}{2}$ gives a basis.  Independence over $\mathbb C$
implies independence over $\E$.  A commutative conjugate-bilinear
product is a linear map $\operatorname{Sym}^2_\E(\overline A)\to A$,
whose values on the conjugate symmetric-square basis are prescribed.
\end{proof}

Choose any representative $r$ of each ray and define
\begin{align}
 M_2(x,y)&=\sum_{[r]\in\Rcal}(x,r)(r,y),\label{eq:8.0}\\
 M_3(x,y,z)&=\sum_{[r]\in\Rcal}(x,r)(y,r)(z,r).
                                                   \label{eq:8.1}
\end{align}
Both are independent of the choices, since $\lambda\bar\lambda=1$
and $\lambda^3=1$ for $\lambda\in\mu_3$.

\begin{theorem}[Moment identities]
\label{thm:second-moment}
The second moment, product, and third moment are
\begin{align}
 M_2(x,y)&=3528(x,y),                                      \label{eq:8.0a}\\
 x*y&=\frac1{360}\sum_{[r]\in\Rcal}(r,x)(r,y)r,
                                            \label{eq:product-reconstruction}\\
 M_3(x,y,z)&=360\Phi(x,y,z).                               \label{eq:8.2}
\end{align}
\end{theorem}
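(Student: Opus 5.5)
The three identities are proved by one mechanism: each moment is an invariant of the group, and Schur's lemma fixes the invariant up to a scalar. The scalars are then obtained by evaluating at a convenient root.

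\emph{Second moment.} The form $M_2$ is Hermitian and invariant under the $\E$-linear group $\widehat G^+$, which permutes $\Rcal$. The antiunitary root maps also preserve it, since $M_2$ only involves the products $(x,r)(r,y)$. We therefore argue that $A_{\mathbb C}$ is an irreducible $\widehat G^+$-module; then $M_2=c(x,y)$ for some constant $c$. An alternative to irreducibility is to restrict to $H^+$. Its character spaces under the even cocode separate the $24$ and $759$ coordinate summands, and $M_{24}$ acts irreducibly on the $23$-dimensional part of $U$. We would then show that the root reflections link $U$ with $W$ and link the vector $\sum u_i$ with its complement. For example, $\tau_O$ mixes these pieces by \eqref{eq:5.10}--\eqref{eq:5.13}, which rules out any nonscalar invariant Hermitian form.

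The constant is computed by a trace: $\operatorname{Tr} M_2=\sum_{[r]}(r,r)=9\cdot306936$, while the trace of $c(x,y)$ is $783c$. This gives
\[
c=\frac{9\cdot306936}{783}=9\cdot392=3528,
\]
which proves \eqref{eq:8.0a}.

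\emph{Product reconstruction.} We define $x\star y=\frac1{360}\sum(r,x)(r,y)r$. This expression is well defined on rays, because $\bar\lambda^2\lambda=\bar\lambda\cdot\bar\lambda\lambda$ cancels correctly for $\lambda\in\mu_3$. It is also commutative and conjugate-bilinear. By Theorem~\ref{thm:quadratic-product-rigidity}, it suffices to check $s\star s=10s$ for every root $s$. By transitivity and covariance, we may take $s=r_i$. We split the sum according to the pairing with $s$.
\begin{itemize}[leftmargin=1.5em]
\item The ray $s$ itself contributes $81s$.
\item The orthogonal rays contribute nothing.
\item The $31671$ nonorthogonal rays have $(r,s)\in\mu_3$; normalizing so that $(r_i,r)=1$, they contribute $\sum r$ over these rays.
\end{itemize}
By Lemma~\ref{lem:intrinsic-support} that last sum is explicit. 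It consists of the other $23$ basic roots, the $32$ octadic roots over each of the $253$ octads through $i$, and the $1024$ duadic roots over each of the $23$ duads through $i$. The phase sums kill all $W$-parts by character orthogonality. The $U$-parts then sum to $279r_i$, which we verify by a short count; this is the expected value, since $81+279=3600$ and $3600/360=10$.

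\emph{Third moment.} Identity \eqref{eq:8.2} follows from the product formula: pairing \eqref{eq:product-reconstruction} with $x$ gives $(x,y*z)$ as $\frac1{360}$ times the sum of triple products. One needs to match the conjugations using the conjugate-bilinearity of $*$.

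\emph{Main obstacle.} The hardest step is the irreducibility needed for the second moment. If it resists, the fallback is direct: $M_2$ commutes with $H^+$, so it is diagonal on octad lines and on the two $M_{24}$-constituents of $U$. The resulting three or four constants are then fixed by explicit evaluation, using the counts above together with the fact that $M_2$ commutes with $\tau_O$.
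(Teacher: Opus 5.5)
Your route is correct in outline and differs from the paper's, but one number in it is wrong. The sum of the normalized nonorthogonal representatives at $r_i$ is not $279r_i$. Indeed $81+279=360$, which would give $r_i\star r_i=r_i$. Write $s=\sum_k u_k$, so that $r_i=s-8u_i$. Using $\lambda_1=253$ and $\lambda_2=77$ for the octadic part, the basic, octadic, and duadic neighbours contribute
\[
 (15s+8u_i)+(1584s-5632u_i)+(1920s-22528u_i)=3519\,r_i .
\]
The identity you need is then $81+3519=3600$. Also, the ray-independence factor is $\lambda^3=1$, because the form is linear in its first argument; your expression $\bar\lambda^2\lambda$ equals $\bar\lambda$ and does not cancel.

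The same count removes what you call the main obstacle. Let $Sx=\sum_{[r]}(x,r)r$ be the operator of $M_2$. The count gives $Sr_i=9r_i+3519r_i=3528r_i$. Every linear or antilinear element of $\widehat G$ permutes the rays and is (anti)unitary, so $S$ commutes with it. Transitivity of $G_{\rm CP}$ on $\Rcal$ then gives $Ss=3528s$ for every root. Since the roots span $A$, $S=3528I$, with no irreducibility argument or trace needed. Your $H^+$ fallback also works. There $S$ is fixed by three constants: one on $\sum u_i$, one on its complement in $U$, and one on $W$. Commuting with $\tau_O$ on a single $u_i$ with $i\in O$, via \eqref{eq:5.10}, forces the three constants to agree.

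The paper uses neither coordinates nor transitivity.
\begin{itemize}
\item It sets $B(x,y)=\sum(r,x)(r,y)r$. The rule $z^2=\bar z$ on $\{0\}\cup\mu_3$ gives $B(s,s)=(S+72I)s$ on every root.
\item Quadratic rigidity then yields $B(x,y)=\frac1{10}(S+72I)(x*y)$.
\item Symmetry of $M_3(x,y,z)=(x,B(y,z))$ and of $\Phi$, together with self-adjointness of $S$, puts $S$ in the centroid.
\item A single reflecting root, for which $L_r^2=I+11(\cdot,r)r$ is invertible, forces $S=\lambda I$.
\item The trace gives $\lambda=3528$, and hence $B(x,y)=360\,x*y$.
\end{itemize}
The paper's argument needs only the extremal pairing configuration and one reflecting root. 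Yours is a direct check that relies on Section~3 (transitivity, supports, character orthogonality), and in exchange it makes the constants $3519$ and $3600$ explicit.
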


\begin{proof}
Put
\[
 Sx=\sum_{[r]}(x,r)r,\qquad
 B(x,y)=\sum_{[r]}(r,x)(r,y)r.
\]
The second-moment operator $S$ is $\E$-linear and self-adjoint.  At a root
$s$, the pairing identity gives
\[
 B(s,s)=\sum_{[r]}(s,r)r+(9^2-9)s=(S+72I)s.
\]
Since the conjugate symmetric squares are a basis,
\begin{equation}
                B(x,y)=\frac1{10}(S+72I)(x*y).              \label{eq:frame-centroid-interpolation}
\end{equation}
The cubic $(x,B(y,z))=M_3(x,y,z)$ is symmetric.  Self-adjointness of
$S$ and symmetry of $\Phi$ therefore give
\[
             \Phi(Sx,y,z)=\Phi(x,Sy,z)=\Phi(x,y,Sz),
\]
and nondegeneracy of the form implies
\begin{equation}
                S(x*y)=(Sx)*y=x*(Sy).                      \label{eq:frame-centroid}
\end{equation}
Thus $S$ belongs to the self-adjoint centroid of the algebra.

One reflecting root forces this centroid element to be scalar.
Fix such a root $r$ and let $L_r(x)=x*r$.  By
\eqref{eq:root-R-square},
\[
             L_r^2x=x+11(x,r)r;
\]
in particular $L_r$ is invertible.  Equation
\eqref{eq:frame-centroid} gives $L_r(Sr)=10Sr$, so
$99Sr=11(Sr,r)r$.  Hence $Sr=\lambda r$ with $\lambda\in\mathbb R$.
Again by \eqref{eq:frame-centroid}, $SL_r=\lambda L_r$;
invertibility gives $S=\lambda I$.  Finally
\begin{equation}
       783\lambda=\operatorname{tr}S=9|\Rcal|
                        =9\cdot306936,
       \qquad\lambda=3528.                                \label{eq:moment-trace-check}
\end{equation}
This proves the second moment.  Substitution in
\eqref{eq:frame-centroid-interpolation} yields $B=360*$, proving
both remaining identities.
\end{proof}

\begin{remark}
The Gram matrix $G$ of the root representatives has eigenvalues
$3528$ and $0$; the matrix $72I+\overline G$ has eigenvalues
$3600$ and $72$.
\end{remark}

\begin{corollary}
\label{thm:ray-preservation-criterion}
Let $g:A\to A$
be $\E$-linear or $\E$-conjugate-linear.  If $g$ permutes the
three-element rays in $\Rcal$, then it is respectively unitary or
antiunitary, and it is an algebra automorphism.
\end{corollary}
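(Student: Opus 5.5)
The plan is to derive both conclusions from the moment identities of Theorem~\ref{thm:second-moment}, using only that $g$ permutes the three-element rays.

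\emph{Step 1: unitarity.} Suppose $g$ permutes $\Rcal$, so $g(r)=\lambda_r r'$ with $[r']\in\Rcal$ and $\lambda_r\in\mu_3$. Since the roots span $A$ (Lemma~\ref{lem:root-spanning}), $g$ is invertible. I would consider the pulled-back form $\langle x,y\rangle=(gx,gy)$ in the linear case, and its conjugate in the conjugate-linear case. Because $g$ permutes rays, substituting $gx,gy$ into \eqref{eq:8.0} and reindexing gives
\[
M_2(gx,gy)=\sum_{[r]}(gx,g r)(g r,gy).
\]
The obstacle is that $(gx,gr)$ need not equal $(x,r)$, since we do not yet know $g$ is unitary. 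To get around this, I would write $S_g=\sum_{[r]}(\,\cdot\,,g r)\,g r$. By \eqref{eq:8.0a} and the ray permutation, $S_g=S=3528\,I$. On the other hand $S_g=gSg^{*}$ up to conjugation, i.e.\ $S_g=3528\,gg^*$ in the linear case. Hence $gg^*=I$, so $g$ is unitary, and the conjugate-linear case gives antiunitarity the same way. Concretely, $\sum_{[r]}(x,gr)gr=g\sum_{[r]}(g^*x,r)r=g(3528\,g^*x)$, and the left side also equals $3528\,x$, since it is the same sum over the permuted representatives (phases cancel as $\lambda\bar\lambda=1$). So this step is routine once the adjoint of a semilinear map is set up correctly.

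\emph{Step 2: multiplicativity.} I would use the product reconstruction \eqref{eq:product-reconstruction}. Now that $g$ is (anti)unitary, $(gr,gx)$ equals $(r,x)$ or its conjugate. Then
\[
g(x*y)=\tfrac1{360}\sum_{[r]}\overline{(r,x)(r,y)}^{\,\epsilon}\,g r,
\]
where $\epsilon$ records semilinearity. Because the sum carries coefficients of the form $(r,x)(r,y)$, it is conjugate-bilinear in $(x,y)$ exactly as $*$ is. Rewriting it with $(gr,gx)(gr,gy)\,gr$ gives $\frac1{360}\sum_{[s]}(s,gx)(s,gy)s=gx*gy$, using $s=gr$ running over the rays and phase invariance $\lambda^3=1$ (the coefficient transforms by $\bar\lambda^2\lambda=\lambda^{3}\cdot\bar\lambda^{3}$ suitably, which is trivial). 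I would check the phase bookkeeping carefully, since the representative changes by $\lambda\in\mu_3$ and the term picks up $\bar\lambda\bar\lambda\lambda$-type factors that must equal $1$. The main obstacle is this semilinear bookkeeping; the substance is already in Theorem~\ref{thm:second-moment}.
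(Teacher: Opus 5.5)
Your argument is correct, but it runs in the opposite order from the paper's proof.

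\textbf{The paper's route.} The paper proves multiplicativity first and isometry second. It transports the product to $x\star y=g^{-1}(g(x)*g(y))$, which is again commutative and conjugate-bilinear and satisfies $r\star r=10r$ on roots. Quadratic product rigidity (Theorem~\ref{thm:quadratic-product-rigidity}) then gives $\star=*$ at once. The (anti)unitarity is read off afterwards from the trace form via Corollary~\ref{cor:intrinsic-form}.

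\textbf{Your route.} You obtain (anti)unitarity first, from the tight-frame identity $S=3528I$. That step uses nothing about the product, so it shows that any semilinear map permuting $\Rcal$ is an isometry for purely configurational reasons. You then need that isometry to transport the reconstruction formula \eqref{eq:product-reconstruction}.

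\textbf{What each buys.} Your approach avoids the trace-form identity, and hence the slice computations behind Proposition~\ref{prop:CP-tensor-identities}. However, it leans on the full moment theorem, whose proof already passes through the symmetric-square rigidity that the paper invokes directly. So the paper's version is shorter.

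\textbf{A correction in Step~2.} Your phase bookkeeping is misstated. The Hermitian form is linear in its first argument, so writing $gr=\lambda s$ gives
\[
(\lambda s,gx)(\lambda s,gy)\,\lambda s=\lambda^3(s,gx)(s,gy)\,s .
\]
The factor is therefore $\lambda^3=1$. A factor of the shape $\bar\lambda\bar\lambda\lambda=\bar\lambda$, as you wrote, would not cancel.
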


\begin{proof}
The roots span $A$, so $g$ is invertible.  Transporting the product by
$g$ gives the commutative conjugate-bilinear product
$x\star y=g^{-1}(g(x)*g(y))$, also when $g$ is conjugate-linear.
Since $g$ maps roots to roots, $r\star r=10r$; quadratic product
rigidity gives $\star=*$.  Thus $g$ is an algebra automorphism,
and Corollary~\ref{cor:intrinsic-form} gives the isometry assertion.
\end{proof}

\subsection{The Eisenstein root module}
\label{sec:Eisenstein-lattice}

Write $\mathfrak o=\mathbb Z[\omega]$ and choose
one signed coordinate $x_D$ over each octad.  Put
\begin{equation}
 L_{\rm CP}=\sum_{[r]\in\Rcal}\mathfrak o r,
 \qquad
 M=\bigoplus_{i\in I}\mathfrak o u_i
       \oplus\bigoplus_{D\in\Ocal}\mathfrak o x_D.
 \label{eq:CP-Eisenstein-lattice}
\end{equation}
The first module is independent of the chosen root representatives.
Let $R=\mathfrak o[1/2]$ and write $L_R=R\otimes_{\mathfrak o}L_{\rm CP}$.

\begin{proposition}
\label{prop:CP-dyadic-localization}
$L_{\rm CP}$ is a free
$\mathfrak o$-module of rank $783$, and
\begin{equation}
                         L_R=M_R.
 \label{eq:CP-dyadic-equality}
\end{equation}
The latter module is closed under $*$ and is self-dual for the Hermitian
form over $R$.  In particular $L_{\rm CP}$ is self-dual at every odd
rational prime.
\end{proposition}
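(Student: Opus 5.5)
The plan is to prove the two inclusions $L_R\subseteq M_R$ and $M_R\subseteq L_R$, and then to read off freeness, closure and self-duality from the simple structure of $M_R$.

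\emph{Step 1: $L_{\rm CP}\subseteq M_R$.} Every root representative has coordinates in $R$. Concretely, $r_i$ lies in $M$. The octadic roots \eqref{eq:5.1} have coefficients in $\tfrac12\mathfrak o$, including the $\vth$-term, because $\vth\in\mathfrak o$. The duadic roots \eqref{eq:6.22} have coefficients in $\tfrac18\mathfrak o$. The Parker signs in $y_b$ and $z_c$ only change the chosen generators $x_D$ by $\pm1$. Hence $L_{\rm CP}\subseteq M_R$.

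\emph{Step 2: $M\subseteq L_R$.} This is exactly Lemma~\ref{lem:root-spanning}. Equation \eqref{eq:basic-coordinate-extraction} writes each $u_i$ using coefficients $1/128$ and $1/8$. Equation \eqref{eq:CP-coordinate-Fourier} writes $x_D=\pm\tfrac1{16}\sum_\chi\chi(D)r_{F,\chi}$. Both expressions have coefficients in $R$. Combining the two steps gives $L_R=M_R$.

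\emph{Step 3: freeness.} $L_{\rm CP}$ is finitely generated and lies in the $\E$-space $A$, so it is torsion-free. Since $\mathfrak o$ is a PID, $L_{\rm CP}$ is free. Its rank equals $\dim_\E(\E L_{\rm CP})=783$, because the roots span $A$.

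\emph{Step 4: closure of $M_R$ under $*$.} Conjugate-bilinearity reduces this to checking the coordinate products \eqref{eq:2.5}--\eqref{eq:2.9}. The denominators occurring there are $128$, $16$ and $2$, all of which are units in $R$. The remaining factor $\vth$ lies in $\mathfrak o$. So $M_R*M_R\subseteq M_R$. Alternatively, closure of $L_{\rm CP}$ itself could be obtained from the product reconstruction \eqref{eq:product-reconstruction}, but only after $L$ is shown to be integral. I do not need that here.

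\emph{Step 5: self-duality over $R$.} The generators $u_i$ and $x_D$ are pairwise orthogonal by \eqref{eq:4.1}. Their norms are $1/8$ and $1$ respectively, and both are units in $R$. So the Gram matrix of this $R$-basis is diagonal with unit entries. Hence $M_R$ is self-dual: its dual lattice is spanned by $8u_i$ and $x_D$, which again span $M_R$.

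\emph{Step 6: self-duality at odd primes.} For an odd rational prime $\ell$, localizing at $\ell$ factors through $R$, because $2$ is invertible there. So the localization of $L_{\rm CP}$ at $\ell$ equals that of $M_R$, which is self-dual by Step 5.

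I see no real obstacle. The only point needing care is bookkeeping the Parker signs and the factor $\vth$ in Steps 1 and 4. I would handle this uniformly by observing that every coordinate generator appears only up to sign $\pm1$, which is a unit.
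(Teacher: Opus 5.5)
Your proposal is correct and follows essentially the same route as the paper. You put the roots in $M_R$ (the paper uses $\tfrac18M$) and use the extraction formulas of Lemma~\ref{lem:root-spanning} for the reverse inclusion. Freeness comes from torsion-freeness over $\mathfrak o$, closure from the $R$-integrality of the structure constants, and self-duality from the diagonal Gram matrix $\operatorname{diag}(\tfrac18I_{24},I_{759})$; the only cosmetic difference is that you write down the dual basis $8u_i$, $x_D$, whereas the paper observes that the determinant $8^{-24}$ is a unit of $R$.
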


\begin{proof}
The chosen root representatives have coordinates in $\frac18M$, so
$L_{\rm CP}\subseteq\frac18M$.  Conversely, the coordinate extraction formulas in
Lemma~\ref{lem:root-spanning} express every $u_i$ and $x_D$ as a
linear combination of basic and octadic roots over $R$.
This proves \eqref{eq:CP-dyadic-equality}.  The finite
set of generators and torsion-freeness show that $L_{\rm CP}$ is a free
module over the Euclidean domain $\mathfrak o$, and the coordinate
expressions give its rank $783$.

Every structure constant in \eqref{eq:2.5}--\eqref{eq:2.9} belongs to
$R$, so $M_R*M_R\subseteq M_R$.  Its coordinate Gram matrix is
\[
                  \operatorname{diag}(\tfrac18 I_{24},I_{759}),
\]
whose determinant $8^{-24}$ is a unit of $R$.  This proves self-duality
over $R$, and hence after localization at every odd prime.
\end{proof}

\begin{proposition}
\label{prop:CP-integral-order}
The root module $L_{\rm CP}$ is an integral Hermitian lattice and is closed under $*$.
Its Hermitian discriminant module $L_{\rm CP}^{\#}/L_{\rm CP}$ is
$2$-primary and is annihilated by $8$.
\end{proposition}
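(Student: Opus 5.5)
Three things must be shown: $L_{\rm CP}$ is integral, it is closed under $*$, and its discriminant module is $2$-primary with exponent dividing $8$.

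\emph{Integrality.} The plan is to check that every pairing of generators lies in $\mathfrak o$. By Lemma~\ref{lem:reflecting-root-pairing}, distinct reflecting rays pair in $\{0\}\cup\mu_3$, and Theorem~\ref{thm:extremal-closure} says every ray of $\Rcal$ is reflecting. Each root also has norm $9$. So the Hermitian form is $\mathfrak o$-valued on a generating set of $L_{\rm CP}$, and sesquilinearity extends this to the whole lattice.

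\emph{Closure under $*$.} Since $*$ is conjugate-bilinear and $\bar{\mathfrak o}=\mathfrak o$, it suffices to show $r*s\in L_{\rm CP}$ for roots $r,s$.
\begin{itemize}
\item If $[r]=[s]$, then $s=\lambda r$ with $\lambda\in\mu_3$, and $r*s=\bar\lambda\,10r\in L_{\rm CP}$.
\item If $(r,s)=c\in\mu_3$, Lemma~\ref{lem:reflection-calculus}(b) gives $r*s=cr+\bar c s$.
\item If $(r,s)=0$, Lemma~\ref{lem:reflection-calculus}(a) gives $r*s=\tau_r(s)$, which is again a root, because $\tau_r$ permutes $\Rcal$ and maps root representatives to $\mu_3$-multiples of representatives.
\end{itemize}
This case split uses only the reflection calculus and avoids the coordinate formulas entirely.

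\emph{The discriminant module.} Integrality gives $L_{\rm CP}\subseteq L_{\rm CP}^\#$. Proposition~\ref{prop:CP-dyadic-localization} shows that $L_{\rm CP}$ is self-dual at every odd prime, so the finite quotient $L_{\rm CP}^\#/L_{\rm CP}$ has no odd part and is $2$-primary.

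For the exponent I would use the second moment, Theorem~\ref{thm:second-moment}. For $x\in L_{\rm CP}^\#$, every $(x,r)$ lies in $\mathfrak o$, and
\[
3528\,x=\sum_{[r]}(x,r)\,r\in L_{\rm CP}.
\]
Hence $3528$ annihilates the quotient. Since $3528=2^3\cdot 441$ and $441$ is odd, it acts invertibly on a $2$-primary group, so already $8$ annihilates it. This is cleaner than any coordinate computation.

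The one delicate point is the meaning of ``$\mathfrak o$-valued form''. I would confirm that the dual is taken with respect to the Hermitian form itself and not its trace to $\mathbb Z$. The moment argument is insensitive to this choice, provided $L^\#=\{x:(x,L)\subseteq\mathfrak o\}$. If instead the trace-dual convention is intended, the inverse different $\vth^{-1}\mathfrak o$ enters. This is a unit factor at $2$ but introduces the prime $3$. In that case I would adjust by noting that $3528$ contains $3^2$ and rechecking self-duality at $3$ via Proposition~\ref{prop:CP-dyadic-localization}, where the coordinate Gram matrix is a unit at $3$.
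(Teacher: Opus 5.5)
Your proof is correct and matches the paper's argument step for step: integrality from Lemma~\ref{lem:reflecting-root-pairing}, closure from the cases of Lemma~\ref{lem:reflection-calculus} together with conjugate-bilinearity, $2$-primarity from Proposition~\ref{prop:CP-dyadic-localization}, and exponent $8$ from $3528x=\sum_{[r]}(x,r)r$ with $3528=8\cdot441$. Your closing hedge is unnecessary, since the dual is taken with respect to the $\mathfrak o$-valued Hermitian form, and its proposed repair would in fact fail: the trace dual is $\vth^{-1}L_{\rm CP}^{\#}$, whose quotient by $L_{\rm CP}$ contains $\vth^{-1}L_{\rm CP}/L_{\rm CP}\cong\F_3^{783}$, so no check at $3$ could make it $2$-primary.
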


\begin{proof}
Lemma~\ref{lem:reflecting-root-pairing} gives
$(L_{\rm CP},L_{\rm CP})\subseteq\mathfrak o$.  The root products are
\[
 r*r=10r,\qquad
 [r*s]\in\Rcal\text{ if }(r,s)=0,
 \qquad
 r*s=cr+\bar c\,s\text{ if }c=(r,s)\in\mu_3,
\]
by Lemma~\ref{lem:reflection-calculus}.  Conjugate-bilinearity proves closure over
$\mathfrak o$.

Proposition~\ref{prop:CP-dyadic-localization} makes the finite
discriminant module $2$-primary.  For $x\in L_{\rm CP}^{\#}$ the
second-moment identity gives
\[
                    3528x=\sum_{[r]}(x,r)r\in L_{\rm CP}.
\]
Since $3528=8\cdot441$ and multiplication by $441$ is invertible on a
finite $2$-primary module, multiplication by $8$ annihilates it.
\end{proof}

\subsection{The ternary algebra and its orthogonal quotient}

\begin{proposition}
\label{prop:CP-mod-three}
Let $\mathfrak p=(1-\omega)$.  Then
\[
 \overline A=L_{\rm CP}/\mathfrak pL_{\rm CP}
           \cong M_R/\mathfrak pM_R
\]
is a $783$-dimensional commutative algebra over $\F_3$ with a
nondegenerate symmetric invariant form.  The product is defined via the
localized algebra $M_R$; by Proposition~\ref{prop:CP-integral-order},
it is also the ordinary reduction of the integral order.  The reductions
of the $306936$ rays in $\Rcal$ are
distinct nonzero isotropic idempotents.

The induced action of
$\widehat G$ factors through a faithful action of $G_{\rm CP}$ on
$\overline A$.
\end{proposition}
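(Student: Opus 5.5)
The plan has three stages. First we identify $\overline A$ and give it an algebra structure and a form. Then we analyse the images of the rays. Finally we obtain the group action and prove it faithful.

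\emph{Structure of $\overline A$.} Since $2$ is a unit modulo $\mathfrak p$, localization away from $2$ does not change the reduction. The equality $L_R=M_R$ of Proposition~\ref{prop:CP-dyadic-localization} therefore gives
\[
L_{\rm CP}/\mathfrak pL_{\rm CP}\cong L_R/\mathfrak pL_R=M_R/\mathfrak pM_R.
\]
The right side is free of rank $783$ over $R/\mathfrak pR=\mathfrak o/\mathfrak p=\F_3$, with basis the reduced $u_i$ and $x_D$. Conjugation acts trivially on $\mathfrak o/\mathfrak p$, because $\omega\equiv1$. Hence the conjugate-bilinear product and the Hermitian form reduce to a bilinear product and a symmetric bilinear form. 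Commutativity is inherited from $*$. Invariance $(x,y*z)=(y,x*z)$ comes from the symmetry of $\Phi$ in Proposition~\ref{prop:cubic}. The Gram matrix $\operatorname{diag}(\tfrac18I_{24},I_{759})$ reduces to $\operatorname{diag}(-I,I)$ over $\F_3$, so the form is nondegenerate. The product on $L_{\rm CP}$ is the ordinary integral one by Proposition~\ref{prop:CP-integral-order}, and it agrees with the localized product.

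\emph{The rays.} A root has $(r,r)=9\equiv0$, so its image is isotropic. From $r*r=10r$ we get $\bar r*\bar r=\bar r$, so the image is an idempotent. A ray is $\mu_3 r$ and $\omega\equiv1$, so the three representatives of a ray have the same image. Nonzeroness: $r$ is not in $\mathfrak pL_{\rm CP}$. Otherwise Lemma~\ref{lem:reflection-calculus} would give $(r,s)\in\mathfrak p$ for every root $s$. This fails for $s$ with $(r,s)\in\mu_3$, which exists by Corollary~\ref{cor:uniform-valencies}, because $\mu_3\cap\mathfrak p=\varnothing$.

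For distinctness, suppose $\bar r=\bar s$ for distinct rays. Pair both sides with a third root $t$; this gives $(r,t)\equiv(s,t)$ modulo $\mathfrak p$ for every $t$. The values lie in $\{0\}\cup\mu_3$, and their residues are $0$ and $1$. So $r$ and $s$ would have exactly the same set of non-orthogonal partners, counting $r$ and $s$ themselves.

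\emph{The main obstacle} is to rule this out. For $t=r$ we get $(r,r)\equiv0$, so $(s,r)\equiv0$ and hence $(r,s)=0$. Then Lemma~\ref{lem:reflection-calculus}(a) applies: $d_{[r]}$ and $d_{[s]}$ generate an $S_3$, and the common non-orthogonal set $N$ is invariant under both reflections. Pass to $\Dcal_{\rm CP}$, where non-orthogonality is commutation by Lemma~\ref{lem:basic-fixed-ray}. The condition then says $d_{[r]}$ and $d_{[s]}$ have the same set of commuting partners. Using transitivity, we move $[r]$ to a basic ray $[r_i]$ and compare with the explicit commuting set from Lemma~\ref{lem:intrinsic-support}. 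The rays commuting with $d_i$ include the other $23$ basic rays, so $s$ would commute with every basic ray. By Lemma~\ref{lem:basic-fixed-ray} this forces $[s]$ to be basic, $[s]=[r_j]$. But basic rays pair to $1$, which contradicts $(r,s)=0$.

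\emph{The group action.} Each $\tau_r$ preserves $L_{\rm CP}$, since it permutes the roots up to $\mu_3$. It is conjugate-linear, hence $\mathfrak p$-semilinear, and it preserves $\mathfrak pL_{\rm CP}$ because $\bar{\mathfrak p}=\mathfrak p$. So $\widehat G$ acts on $\overline A$ by algebra automorphisms. The scalars $Z$ act trivially because $\omega\equiv1$, so the action factors through $G_{\rm CP}$.

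For faithfulness, the kernel is normal in $G_{\rm CP}$. An element acting trivially fixes every reduced ray image. By the distinctness just proved, it then fixes every ray of $\Rcal$. The action of $G_{\rm CP}$ on $\Rcal$ is faithful, so the element is trivial.
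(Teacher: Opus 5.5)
Your proof is correct. The structural part (localization, reduction of the product and form, nondegeneracy, idempotence and isotropy) and the group-action part (root closure, $\bar{\mathfrak p}=\mathfrak p$, $Z$ acting trivially, faithfulness via injectivity on rays) match the paper.

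Where you depart from it is in proving that the reductions of the rays are nonzero and pairwise distinct. The paper reads this off from explicit reduced coordinates:
\begin{itemize}
\item the $U$-coordinates reduce to $2$ at one point and $1$ elsewhere for a basic root, and to $1$ on the support and $2$ off it for octadic and duadic roots, which separates types and supports and shows nonvanishing;
\item within a fibre, the reduced hyperplane coordinates recover $\chi$, and the surviving octad coordinates disjoint from $p$ recover $\psi$, because these words generate $B_F$, respectively $\C_p$, and $1\ne-1$ in $\F_3$.
\end{itemize}
You argue intrinsically instead. Integrality of $L_{\rm CP}$ turns $\bar r=\bar s$ into congruence of all pairings modulo $\mathfrak p$. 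This forces $(r,s)=0$ and identical orthogonality patterns, that is, two noncommuting transpositions with the same commuting partners. Conjugating to a basic ray and applying the support lemma then rules this out.

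Your route uses only integrality, the pairing values $\{0\}\cup\mu_3$, transitivity, and the supports at one basic ray. In effect it shows that distinct rays give distinct rows of the ternary commuting matrix $B$ of Corollary~\ref{cor:CP-ternary-graph-rank}. The paper's route needs no group transitivity, and it yields the explicit reduced coordinate shapes of the idempotents.

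There are two small slips, neither affecting validity. First, the implication $r\in\mathfrak pL_{\rm CP}\Rightarrow(r,s)\in\mathfrak p$, which you also use implicitly in the distinctness step, comes from integrality $(L_{\rm CP},L_{\rm CP})\subseteq\mathfrak o$ (Lemma~\ref{lem:reflecting-root-pairing} and Proposition~\ref{prop:CP-integral-order}), not from Lemma~\ref{lem:reflection-calculus}. Second, after moving $[r]$ to $[r_i]$, the ray $[s]$ commutes with the $23$ basic rays other than $[r_i]$, not with all of them. This still suffices, since a nonbasic support has at most eight points.
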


\begin{proof}
The integer $2$ is invertible modulo $\mathfrak p$, so localization does
not change the quotient.  Proposition~\ref{prop:CP-dyadic-localization}
gives the displayed isomorphism.  Conjugation becomes trivial modulo
$\mathfrak p$, so the product becomes bilinear and the Hermitian form
becomes symmetric.  Its coordinate Gram matrix reduces to
$\operatorname{diag}(2I_{24},I_{759})$, which is nondegenerate.  Cubic
symmetry from Proposition~\ref{prop:cubic} gives invariance.  The root
equations reduce to $\bar r^{\,2}=\bar r$ and
$(\bar r,\bar r)=0$.

The coordinate profiles prove injectivity of reduction.
The reduced $U$-coordinates of a basic root are $2$ at one point and $1$
at the other $23$ points.  Those of an octadic or duadic root are $1$ on
its support and $2$ off it.  These profiles distinguish all three types
and their supports, and also show that every reduction is nonzero.

Within an octadic fibre, the thirty affine-hyperplane coordinates have
nonzero coefficients proportional to $\chi(b)$ after reduction.  These
words generate $B_F$ (a complementary hyperplane pair sums to $I\setminus
F$), and $1\ne-1$ in $\F_3$; hence they recover $\chi$.  Within a duadic
fibre, the surviving weight-$8$ coordinates recover the values of $\psi$
on all octads disjoint from $p$.  These octads span $\C_p$: choose
$F\cap F'=p$ and use $\C_p=B_F\oplus B_{F'}$ from \eqref{eq:6.2}, with
each summand generated by its octads.  Thus they recover $\psi$ as well.
This proves that all $306936$ reductions are distinct.

Global root closure makes $\widehat G$
preserve $L_{\rm CP}$.  All scalar cube roots reduce to $1$, so its
action factors through $G_{\rm CP}$.  The reduced pairing is $1$ for
distinct nonorthogonal rays and $0$ otherwise.

An element acting trivially on $\overline A$ fixes the reductions of all
rays and therefore every ray.  Since $G_{\rm CP}$ acts faithfully on
$\Rcal$, that element is the identity.
\end{proof}

\begin{corollary}
\label{cor:CP-ternary-graph-rank}
Let $B$ be the
adjacency matrix of the commuting codiagram on $\Rcal$.  Then
\[
                  \operatorname{rank}_{\F_3}B=783,
                  \qquad B^2=0\quad\text{over }\F_3.
\]
Consequently its row space is a self-orthogonal ternary code of length
$306936$ and dimension $783$.
\end{corollary}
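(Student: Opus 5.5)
The plan is to realize $B$ as the reduction of a Gram matrix. Let $G$ be the Gram matrix of the chosen root representatives, and reduce its entries modulo $\mathfrak p$. By Lemma~\ref{lem:reflecting-root-pairing}, the entries of $G$ are as follows: $9$ on the diagonal, a value in $\mu_3$ for distinct nonorthogonal rays, and $0$ otherwise. Since $9\equiv0$ and every cube root of unity is $\equiv1$ modulo $\mathfrak p$, the reduced matrix $\overline G$ is exactly $B$. This is the reduced-pairing statement at the end of Proposition~\ref{prop:CP-mod-three}. So $B$ is the Gram matrix of the $306936$ reduced roots $\bar r\in\overline A$ with respect to the nondegenerate symmetric form on the $783$-dimensional space $\overline A$.

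\emph{Rank.} Write $B=R^tQR$. Here $R$ is the $783\times306936$ coordinate matrix of the reduced roots in an $\F_3$-basis of $\overline A$, and $Q$ is the invertible Gram matrix of the form on $\overline A$. The rank of $B$ is therefore at most $783$. For equality I need two facts. First, the reduced roots span $\overline A$; this holds because $L_{\rm CP}$ is generated by the roots, so their images span $L_{\rm CP}/\mathfrak pL_{\rm CP}$. Second, $R$ then has rank $783$, and since $Q$ is invertible, $\operatorname{rank}(R^tQR)=\operatorname{rank}R=783$: the map $R^t$ is injective and $R$ is surjective. The row space of $B$ equals the image of $R^tQ$, which has dimension $783$.

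\emph{Square zero.} Next compute
\[
B^2=R^tQ(RR^t)QR .
\]
Here the matrix $RR^t$ represents the reduced operator $x\mapsto\sum_{[r]}\langle x,\bar r\rangle\bar r$ in coordinates, up to composing with $Q$. The second moment identity \eqref{eq:8.0a} gives
\[
\sum_{[r]}(x,r)r=3528x
\]
on $A$. This identity holds on $L_{\rm CP}$, and reducing it gives the corresponding identity on $\overline A$ with the scalar $3528\equiv0\pmod 3$, because $3528=8\cdot441$ and $3\mid441$. So $\sum_{[r]}\bar r\,\bar r^{\,t}Q=0$ as an operator, i.e.\ $RR^tQ=0$, and hence $B^2=R^tQ(RR^tQ)R=0$. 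One must take care that the choice of representative is harmless. Each summand $(x,r)r$ is ray-independent in characteristic zero, and reduction sends $\mu_3$ to $1$, so no extra signs appear. In the $\F_3$ computation, conjugation has become trivial, so $(x,r)$ reduces to the symmetric form $\langle\bar x,\bar r\rangle$.

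\emph{Code statement.} $B$ is symmetric, so $B^2=0$ means that every pair of rows has inner product $0$. The row space is therefore self-orthogonal, and by the rank computation its dimension is $783$. The step I expect to need the most care is the precise bookkeeping that turns the characteristic-zero moment identity into $RR^tQ=0$ over $\F_3$. The sum runs over rays, the Hermitian form reduces to a symmetric form, and the identity must be read on the lattice $L_{\rm CP}$. That last point is valid because $L_{\rm CP}$ is integral and closed under the relevant operations by Proposition~\ref{prop:CP-integral-order}.
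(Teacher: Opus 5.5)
Your proposal is correct and matches the paper's proof. You write $B=R^tQR$ as the reduced Gram matrix of the spanning reduced roots, obtain rank $783$ from surjectivity of $R$ and injectivity of $R^tQ$, reduce the second-moment identity to $RR^tQ=3528I\equiv0$ so that $B^2=R^tQ(RR^tQ)R=0$, and conclude self-orthogonality from the symmetry of $B$, exactly as the paper does.
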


\begin{proof}
Let $V$ be the $783\times306936$ matrix whose columns are the reduced
roots, in any basis of $\overline A$, and let $J$ be the nonsingular
matrix of its symmetric form.  The columns span $\overline A$ because
the roots generate $L_{\rm CP}$.  The preceding proposition gives
$B=V^tJV$.  The map $V$ is onto and $V^tJ$ is injective; thus
$\operatorname{rank}B=783$.  Reduction of the second-moment identity \eqref{eq:8.0a} gives
$VV^tJ=3528I=0$, and hence
$B^2=V^tJ(VV^tJ)V=0$.  Since $B$ is symmetric, its row space is
self-orthogonal.
\end{proof}

\begin{corollary}
\label{cor:CP-mod-three-quotient}
Put
\[
               w=2\sum_{i\in I}\overline{u_i}\in\overline A.
\]
Then $(w,w)=0$ and $(w,\bar r)=1$ for every root representative $r$
with $[r]\in\Rcal$.  In particular
$w$ is a nonzero vector fixed by $G_{\rm CP}$, and
\[
  0<\F_3w<w^\perp<\overline A,
  \qquad \dim w^\perp=782,
  \qquad Q=w^\perp/\F_3w
\]
is an invariant flag with $Q$ a nondegenerate $781$-dimensional orthogonal
module on which $G_{\rm CP}$ acts faithfully.  Here $Q$ is the quotient
as a module with a form.
\end{corollary}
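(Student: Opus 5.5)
First compute the two scalars directly from the reduced coordinates. The form on $\overline A$ has Gram matrix $\operatorname{diag}(2I_{24},I_{759})$ in the coordinate basis. So $(w,w)=4\cdot24\cdot2=192\equiv0$ in $\F_3$. For a root $r$, only the $U$-part of $\bar r$ contributes to $(w,\bar r)$, since $w\in\overline U$ and $U\perp W$. We have
\[
 (w,\bar r)=2\cdot2\sum_i \bar r_i = \sum_i\bar r_i \pmod 3,
\]
where $\bar r_i$ are the reduced $u_i$-coefficients. Alternatively, and more cleanly, use the profiles listed in the proof of Proposition~\ref{prop:CP-mod-three}. A basic root has entries $2$ once and $1$ twenty-three times, with sum $25\equiv1$. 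An octadic root has $1$ on eight points and $2$ on sixteen, with sum $40\equiv1$. A duadic root has $1$ on two points and $2$ on twenty-two, with sum $46\equiv1$. These profiles are only determined up to the $\mu_3$ scalar, and that scalar reduces to $1$, so the value $(w,\bar r)=1$ is independent of the representative. I would double-check the normalizations against \eqref{eq:4.3}, \eqref{eq:5.1} and \eqref{eq:6.22}. For example, $\tfrac12\equiv2$ and $\tfrac18\equiv2$ modulo $3$, which converts $(-1,1)$ into $(1,2)$ and $(-7,1)$ into $(1,2)$, matching the quoted profiles.

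Next, invariance. Let $g\in G_{\rm CP}$. It permutes the reductions of the rays, and these span $\overline A$ because the roots generate $L_{\rm CP}$. It also preserves the reduced form, since the reductions of unitary and antiunitary maps become orthogonal once conjugation becomes trivial. Hence $(gw,\bar r)=(w,g^{-1}\bar r)=1=(w,\bar r)$ for every root. By nondegeneracy and spanning, $gw=w$. Moreover $w\neq0$ since $(w,\bar r)=1$.

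Finally, the flag and the quotient. Since $w\ne0$ and the form is nondegenerate, $w^\perp$ has dimension $782$. Isotropy gives $\F_3w\subseteq w^\perp$, and the inclusions are proper because $782>1$ and $782<783$. The radical of the restricted form on $w^\perp$ is $w^\perp\cap(w^\perp)^\perp=w^\perp\cap\F_3w=\F_3w$, so $Q$ is nondegenerate of dimension $781$. Invariance of the flag follows because $w$ is fixed.

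Faithfulness on $Q$ is the main obstacle. I would argue it as follows. Let $g$ act trivially on $Q$. For each pair of roots $r,s$, the difference $\bar r-\bar s$ lies in $w^\perp$, so $g(\bar r-\bar s)-(\bar r-\bar s)\in\F_3w$. I would then try to show that $g$ fixes all ray reductions, which would give $g=1$ by Proposition~\ref{prop:CP-mod-three}.

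Write $g\bar r=\bar r+\bar v$ with $\bar v$ independent of $r$ modulo $\F_3w$, so $g\bar r=\bar r+v+c_rw$ for a fixed $v\in w^\perp$. Pairing with roots gives
\[
 (g\bar r,g\bar s)=(\bar r,\bar s)+(\bar r+\bar s,v)+(v,v).
\]
The values $(g\bar r,g\bar s)$ and $(\bar r,\bar s)$ are both determined by commutation. A cleaner route, which I would try first, uses normality. The kernel $N$ of $G_{\rm CP}$ on $Q$ is normal, hence either trivial or contains $G_{\rm CP}'$ by Lemma~\ref{lem:primitive-involution-simplicity}. If $G_{\rm CP}'$ acted trivially on $Q$, it would act unipotently on the three-step flag. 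It would therefore be a $3$-group, contradicting its order. So $N=1$.
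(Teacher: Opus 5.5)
Your proposal is correct and is essentially the paper's proof. The pairings are the same computation: you read them off the reduced $U$-profiles, while the paper pairs $2\sum_i u_i$ with the three root shapes in characteristic zero and then reduces. Invariance of $w$ is the same spanning-plus-nondegeneracy argument. Faithfulness uses the same observation, that the kernel on $Q$ is unitriangular in a flag-adapted basis and hence is a normal $3$-subgroup.

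The only variation is the last step. You invoke the first assertion of Lemma~\ref{lem:primitive-involution-simplicity} directly: a nontrivial normal subgroup would contain $G_{\rm CP}'$, which is not a $3$-group. The paper instead intersects the kernel with the simple group $G_{24}^+$ and uses $[G_{24}:G_{24}^+]=2$. Your abandoned direct computation can simply be dropped; its displayed identity also misses the terms $c_r+c_s$.
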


\begin{proof}
Write $S=\sum_i u_i$ in characteristic zero.  The coordinate formulas give
\[
 (2S,2S)=12,
 \qquad (2S,r_i)=4,
 \qquad (2S,r_{F,\chi})=1,
 \qquad (2S,r_{p,\psi})=\tfrac14.
\]
Reduction gives the stated values.  Every group element preserves the form
and permutes the reduced roots.  Since those roots span $\overline A$,
these pairings determine $w$ uniquely, so the group fixes it.  The radical
of the restricted form on $w^\perp$ is exactly $\F_3w$, proving the
nondegeneracy of $Q$.

The kernel of the action of the isometry stabilizer of the vector $w$
on $Q$ is a $3$-group: in a basis adapted to the flag, its matrices are
upper unitriangular.  The faithfulness on $\overline A$ proved above
therefore makes the kernel of $G_{\rm CP}\to O(Q)$ a normal $3$-subgroup
of $G_{\rm CP}$.  Its intersection with the nonabelian simple subgroup
$G_{24}^+$ is trivial by Theorem~\ref{thm:intrinsic-simplicity}, since
$G_{24}^+$ is not a $3$-group.  It therefore embeds in
$G_{24}/G_{24}^+\cong C_2$, so the kernel is trivial.
\end{proof}

\begin{remark}
The exact $2$-primary discriminant module of $L_{\rm CP}$ and the
irreducibility of the quotient $Q$ are not determined here.  The relation
with integral Moonshine forms and Tate-cohomological constructions is
left for further study.
\end{remark}

\appendix

\section{A reproducible Parker-loop factor set}\label{app:factor-set}

We verify that the ordered-basis factor set in
Lemma~\ref{lem:explicit-factor-set} realizes the square, commutator,
and associator of the Parker loop.  It also gives a direct way to verify
the interchange identity used in Appendix~\ref{app:tensor-contractions}.

\begin{proof}[Proof of Lemma~\ref{lem:explicit-factor-set}]
The bilinear part $\beta$ has zero cocycle defect.  Put
\[
                         \gamma(a,b)=\Theta(a,b,b).
\]
Trilinearity gives
\[
 \gamma(a,b)+\gamma(a+b,c)+\gamma(b,c)+\gamma(a,b+c)
       =\Theta(a,b,c)+\Theta(a,c,b).
\]
With the triangular definition of $\Theta$, the right side differs from
$\iota(a,b,c)$ only by diagonal terms
$\iota(g_i,g_j,g_j)=|g_i\cap g_j|$, which vanish by self-orthogonality.
This proves the associator identity.

For the diagonal identity, polarizing
$q(a)=|a|/4\pmod2$ twice gives its linear, quadratic, and cubic parts.
The term $\Theta(a,a,a)$ is exactly the cubic part, while $\beta(a,a)$ supplies
the linear and quadratic parts; hence $f(a,a)=q(a)$.

Finally let
\[
 G(a,b)=f(a,b)+f(b,a)+\frac{|a\cap b|}{2}.
\]
Put $c(a,b)=f(a,b)+f(b,a)$.  Summing the associator identities
for $(a,a',b)$, $(a,b,a')$, and $(b,a,a')$ gives
\[
             c(a+a',b)+c(a,b)+c(a',b)=\iota(a,a',b).
\]
The function $|a\cap b|/2$ has the same additivity defect.  Thus $G$
is additive in its first variable, and by symmetry in its second.
It therefore suffices to evaluate it on basis vectors, where
$\Theta(g_i,g_j,g_j)=0$ and the triangular definition of $\beta$ gives
$G(g_i,g_j)=0$.  Thus $G=0$, which is the commutator identity.
\end{proof}

\section{The seven cubic-tensor contractions}\label{app:tensor-contractions}

This appendix proves $K=1002C$ from the multiplication table, using
the Witt-design and Parker-loop identities established or stated above.
Throughout, all repeated tensor indices are ordered indices.

\subsection{The point-index contractions}
Put $\gamma=\sqrt2/64$, $\beta=\sqrt2/8=8\gamma$, and
\[
 a_D(i)=4[i\in D]-1,\qquad
 f_{ijk}=-1+16(\delta_{ij}+\delta_{jk}+\delta_{ki})
                        -128\delta_{ij}\delta_{jk}.
\]
The matrices of multiplication by the orthonormal point and octad
vectors, before conjugating the input, have the block forms
\begin{equation}
\begin{gathered}
 R_i=\begin{pmatrix}A_i&0\\0&D_i\end{pmatrix},\qquad
 R_D=\begin{pmatrix}0&v_De_D^t\\e_Dv_D^t&B_D\end{pmatrix},\\
 A_i=(\gamma f_{ijk})_{jk},\qquad
 D_i=\beta\operatorname{diag}(a_E(i)),\qquad v_D=\beta a_D.
\end{gathered}
                                                               \label{eq:contraction-blocks}
\end{equation}
For distinct octads $D$, $E$, the sole nonzero entry in column $E$ of
$B_D$, when it exists, is $\sigma/2$ or $\vth\sigma/2$ in the
coordinate $D+E$ or $\one+D+E$, respectively.  It exists exactly when
$|D\cap E|=4$ or $0$.

We calculate $\overline K$; its coefficient is the contraction of two
copies of $C$ with the three matrices $R_p$, $R_q$, $R_r$ inserted between
them, using the multiplication convention above.

For three point indices put
$\Delta=\delta_{ij}+\delta_{jk}+\delta_{ki}$ and
$\Delta_3=\delta_{ij}\delta_{jk}$.  The four contributions, divided by
$\gamma$, are
\begin{equation}
\begin{array}{c|l}
 \text{indices of the two cubic tensors}&\overline K_{ijk}/\gamma\\ \hline
 UUU&-137/16+1952\Delta-15616\Delta_3\\
 UWW&-1485/16+2640\Delta\\
 WWW\text{, sextet}&-665/2+7840\Delta-89600\Delta_3\\
 WWW\text{, trio}&-4545/8+3600\Delta-23040\Delta_3
\end{array}                                                     \label{eq:point-contraction-table}
\end{equation}
Here the $UWW$ row includes the three positions of the point index.
Their sum is $1002(-1+16\Delta-128\Delta_3)$.

The point block is
\[
 A_i=\gamma\bigl(-J+16(e_i\mathbf1^t+\mathbf1e_i^t+I)
                                      -128e_ie_i^t\bigr).
\]
Expanding the three point blocks in the contraction gives
$\gamma(122f_{ijk}+1815/16)$, the first row of
\eqref{eq:point-contraction-table}; this uses only
$\mathbf1^t\mathbf1=24$ and $e_i^te_j=\delta_{ij}$.
For the second row,
\[
 a_D^tA_i a_D=960\gamma,\qquad
 \sum_D a_D(i)a_D(j)=2816\delta_{ij}-33.
\]
Thus its value is
$960\gamma\beta^4\sum_D(a_D(i)a_D(j)+a_D(j)a_D(k)+a_D(k)a_D(i))$.

For the last two rows, an ordered octad triangle $(D,E,F)$ has
\begin{equation}
\begin{array}{c|r|r|r}
 &\#(D,E,F)&\sum_\ell a_D(\ell)a_E(\ell)
                &\sum_\ell a_D(\ell)a_E(\ell)a_F(\ell)\\ \hline
 \text{sextet}&759\cdot280&24&-120\\
 \text{trio}&759\cdot30&-40&72
\end{array}                                                     \label{eq:triangle-incidence-moments}
\end{equation}
In both rows $\sum_\ell a_D(\ell)=8$.  If the last two entries of a
row are denoted by $s_2$, $s_3$, averaging over point indices gives
\[
 \frac{s_3}{24},\qquad
 \frac{8s_2-s_3}{24\cdot23},\qquad
 \frac{8^3-24s_2+2s_3}{24\cdot23\cdot22}
\]
for three equal, exactly two equal, and three distinct indices.
Multiply by $\beta^3/4$ in the sextet case and by $-3\beta^3/4$ in
the trio case.  This proves the last two rows.  The averaging is valid
because $M_{24}$ is three-transitive on the coordinate positions; it
also follows directly by counting ordered distinct positions in the
fixed tetrad or octad partitions.

\subsection{One point and two equal octad indices}
For $(i,D,D)$ the contributions according to the first cubic tensor's
indices are
\begin{equation}
\begin{array}{c|rrr|r}
 &UUU&UWW&WWW&\overline K_{iDD}/C_{iDD}\\ \hline
 i\in D&65/16&3077/16&6445/8&1002\\
 i\notin D&45/16&1377/16&7305/8&1002
\end{array}                                                     \label{eq:point-octad-contraction-table}
\end{equation}
Write $a=a_D(i)\in\{3,-1\}$.  The $UUU$ contribution is
$15(1+4a)/(16a)$.  Interchanging the two cubic tensors gives the same
term inside $UWW$.  The other $UWW$ terms are
\[
 \frac{740a+645}{8a},\qquad
 \begin{cases}215/4,&a=3,\\225/4,&a=-1,\end{cases}
 \qquad\frac{121}{8}.
\]
The last term comes from the two occurrences of
$(v_D^tv_D)^2=(11/4)^2$.
These expressions follow from \eqref{eq:contraction-blocks},
$v_D^tv_E=3/4$ or $-5/4$ for intersections four or zero, and the
point-refined Witt counts in Lemma~\ref{lem:Witt-numerology}.

The $WWW$ terms containing an index $D$ contribute $215/4$ or $225/4$,
respectively.  For the remaining terms let $F$, $G$ be octads meeting $D$
in zero or four points, let $E$ be the third octad on their triangle,
and put $t=|D\cap E|$.  The products of the four Parker signs reduce to
$(-1)^{|D\cap F\cap G|}$.  The resulting signed weights, after removal
of the common factor $1/16$, sum as follows:
\begin{equation}
\begin{array}{c|rrrr}
 t&8&4&2&0\\ \hline
 \text{signed weight}&550&30240&-17920&-4200.
\end{array}                                                     \label{eq:quadrilateral-count}
\end{equation}
To obtain this row, fix $F$ with $|D\cap F|=4$.  Among the octads $G$
meeting both $D$ and $F$ in four points, the numbers with
$|D\cap F\cap G|=0$, $2$, $3$, $4$ are
\begin{equation}
                              1,\quad72,\quad64,\quad3.       \label{eq:sextet-common-neighbours}
\end{equation}
Indeed the associated sextet gives, respectively, one union of two
tetrads, three choices of four tetrads each supporting $24$ octads,
$64$ octads of pattern $(3,1,1,1,1,1)$, and three unions of two
residual tetrads.  The other configurations use the $30$ hyperplanes
on an octad complement: for a disjoint pair there are $28$ hyperplanes
meeting each in four points.  The contributions to
\eqref{eq:quadrilateral-count} are
\[
\begin{array}{c|r|r|r}
 \text{configuration}&\text{number}&\text{weight}&t\\ \hline
 |D\cap F|=|D\cap G|=|F\cap G|=4,
       \ |D\cap F\cap G|=0&280&1&8\\
 \text{same, triple intersection }2&280\cdot72&1&4\\
 \text{same, triple intersection }3&280\cdot64&-1&2\\
 \text{same, triple intersection }4&280\cdot3&1&0\\
 |D\cap F|=|D\cap G|=4,\ F\cap G=\varnothing&840&-3&0\\
 \text{exactly one of }D\cap F,\,D\cap G\text{ empty}&3360&3&4\\
 D\cap F=D\cap G=\varnothing,\ |F\cap G|=4&840&-3&0\\
 D,\,F,\,G\text{ disjoint}&30&9&8
\end{array}
\]
For a point inside $D$, the average of $a_E(i)$ is $t/2-1$; for a
point outside, it is $1-t/4$.  Hence the weighted sums are $36090$ and
$-13710$.  Dividing by $16a$ gives $6015/8$ and $6855/8$.
Adding the terms containing $D$ proves the $WWW$ entries of
\eqref{eq:point-octad-contraction-table}.

\subsection{Three octad indices}
Let $(D,E,F)$ be a sextet triangle or a trio.  A point index in one
of the two contracted cubic tensors must be sent to an octad index in
the other by \eqref{eq:contraction-blocks}.  Write the contribution,
divided by $\overline{C_{DEF}}$, as
\begin{equation}
                           2A+B+2C_0+D_0.                    \label{eq:WWW-decomposition}
\end{equation}
Here $A$ is the $UUU$--$WWW$ term, $B$ the $UWW$--$UWW$ term,
$C_0$ the $UWW$--$WWW$ term, and $D_0$ the $WWW$--$WWW$ term.
The factors two interchange the two cubic tensors.  Their values are
\begin{equation}
\begin{array}{c|rrrr|r}
 &16A&16B&16C_0&16D_0&2A+B+2C_0+D_0\\ \hline
 \text{sextet}&47&54&1125&13634&1002\\
 \text{trio}&49&150&1023&13738&1002.
\end{array}                                                     \label{eq:WWW-contraction-table}
\end{equation}
The first entry follows by evaluating
\[
 \Phi_U(v_D,v_E,v_F)
 =\gamma\beta^3\left(-8^3
          +16\cdot8\sum_{\{L,M\}}a_L^ta_M
          -128\sum_i a_D(i)a_E(i)a_F(i)\right)
\]
with \eqref{eq:triangle-incidence-moments}; the trio has an additional
minus sign from $C_{DEF}/\overline{C_{DEF}}=-1$.
The second is $6(v_D^tv_E)^2=54/16$ or $150/16$.
For the third entry, fix the point position opposite $D$ and sum
over an octad $G$ meeting each of $E$, $F$ in zero or four points.  Put
$t=|D\cap G|$.  Summing the point index gives
\[
 v_D^tv_G=\frac{2t-5}{4}.
\]
The remaining three octad coefficients, divided by
$\overline{C_{DEF}}$, contribute $q/4$, where the four Parker signs
reduce to $(-1)^{|E\cap F\cap G|}$ and the powers of $\vth$ determine
$q$.  The resulting contributions to sixteen times this single-point
position are
\begin{equation}
\begin{array}{c|c|r|r|r|r}
 &\text{position of }G&\#G&t&q&\#G(2t-5)q\\ \hline
 \text{sextet}&|E\cap G|=|F\cap G|=4,\ u=0&1&8&1&11\\
 &\text{same},\ u=2&72&4&1&216\\
 &\text{same},\ u=3&64&2&-1&64\\
 &\text{same},\ u=4&3&0&1&-15\\
 &\text{one intersection }0,\ \text{one }4&6&4&3&54\\
 &\text{both intersections }0&3&0&-3&45\\ \hline
 \text{trio}&\text{both intersections }4&28&0&-1&140\\
 &\text{one intersection }0,\ \text{one }4&56&4&1&168\\
 &\text{both intersections }0&1&8&3&33
\end{array}                                                    \label{eq:mixed-WWW-count}
\end{equation}
Here $u=|E\cap F\cap G|$ in the sextet rows.  Their multiplicities
are \eqref{eq:sextet-common-neighbours} and the affine-hyperplane
counts on an octad complement.  The sums are $375$ and $341$;
the three point positions give $16C_0=1125$ and $1023$.
The last column is the signed octad count proved in the next subsection.
Together the three coefficient tables prove $K=1002C$.

\subsection{The remaining signed octad count}
For octads $L$, $M$ with intersection four or zero define
\[
 L\diamond M=L+M+\delta(L,M)\one,\qquad
 \delta(L,M)=\begin{cases}0,&|L\cap M|=4,\\1,&L\cap M=\varnothing.
 \end{cases}
\]
Otherwise the pair is inadmissible.  Fix $F=D\diamond E$, and put
$\epsilon=\delta(D,E)$.  For an admissible $G$ relative to $D$, range
through those octads $H$ for which
\[
 J=G\diamond H,\quad G'=D\diamond G,\quad
 H'=E\diamond H,\quad J'=F\diamond J
\]
all exist.  Then $G'$, $H'$, $J'$ automatically form an octad triangle.
Write
\[
\begin{gathered}
 d_0=\delta(G,H),\qquad d_1=\delta(D,G),\qquad d_2=\delta(E,H),\\
 d_3=\delta(F,J),\qquad d_4=\delta(G',H').
\end{gathered}
\]
The contribution to $16D_0$ is
\begin{equation}
\begin{gathered}
 w(G,H)=(-1)^{s+d_0+d_4}
          (-3)^{(d_0+d_1+d_2+d_3+d_4-\epsilon)/2},\\
 s=\frac{|E\cap G|}{2}+|D\cap E\cap H|+|D\cap G\cap H|.
\end{gathered}
                                                               \label{eq:signed-octad-summand}
\end{equation}
The exponent is an integer in $\{0,1,2\}$.  Thus the possible nonzero
weights are among $\pm1$, $\pm3$, $\pm9$.

In a Parker section calibrated on the
central element $\Omega$, the product of the six triangle signs in
this tensor contraction has exponent
\[
\begin{aligned}
 &f(D,E)+f(G,H)+f(D,G)+f(E,H)\\
 &\hspace{13mm}+f(D+G,E+H)+f(D+E,G+H)\\
 &\qquad=\frac{|E\cap G|}{2}
                  +|D\cap E\cap H|+|D\cap G\cap H|\pmod2.
\end{aligned}
\]
This is the interchange identity obtained from the Parker commutator
and associator; alternatively substitution in \eqref{eq:1.12} gives it
by polarization.  Each change of section occurs twice and cancels.
The remaining factors are the displayed powers of $\vth$: the two
$C$'s contribute $(-1)^{d_0+d_4}$, the three multiplication matrices
contribute no additional conjugation, and division by
$\overline{C_{DEF}}$ removes $\vth^\epsilon/2$.
Since $\vth^2=-3$, this proves \eqref{eq:signed-octad-summand}.

To evaluate the sum, use a sextet with tetrads $T_1$, $\ldots$, $T_6$.
For a sextet triangle take
\[
 D=T_1\cup T_2,\quad E=T_1\cup T_3,\quad F=T_2\cup T_3,
\]
and for a trio take the pairs $(T_1,T_2)$, $(T_3,T_4)$, $(T_5,T_6)$.
For fixed $G$, let
\[
                     P_G(z)=\sum_{H\text{ admissible}}z^{w(G,H)}.
\]
Its derivative at $z=1$ is the required signed sum over $H$.
The following two small tables include every $G$ admissible relative
to $D$.  In the first, $R$, $R'$ denote distinct residual tetrads among
$T_4$, $T_5$, $T_6$; a pattern lists the six intersection sizes with the
sextet, and permutations indicated in the description are included in
the multiplicity.
\begin{equation}
\begin{array}{l|r|l|r}
 G\text{ in the sextet case}&\#G&P_G(z)&P_G'(1)\\ \hline
 E\text{ or }F&2&140z+6z^3+3z^9&185\\
 T_1\cup R\text{ or }T_2\cup R&6&4z^{-3}+26z+6z^3&32\\
 T_3\cup R\text{ or }R\cup R'&6&2z^{-9}+4z^{-3}+28z^3+2z^9&72\\
 (0,0,2,2,2,2)&24&40z^3&120\\
 (2,2,2,2,0,0)&72&32z^{-1}+48z+4z^3&28\\
 (2,2,0,2,2,0)&72&16z^{-1}+32z+8z^3&40\\
 (3,1,1,1,1,1),\ 3\text{ in }T_1\text{ or }T_2
       &128&18z^{-1}+38z+6z^3&38
\end{array}                                                     \label{eq:signed-sextet-table}
\end{equation}
For a trio the displayed pattern is $(|D\cap G|,|E\cap G|,|F\cap G|)$:
\begin{equation}
\begin{array}{l|r|l|r}
 G\text{ in the trio case}&\#G&P_G(z)&P_G'(1)\\ \hline
 E\text{ or }F&2&28z+56z^3+z^9&205\\
 (0,4,4)&28&4z^{-3}+90z+4z^3+2z^9&108\\
 (4,4,0)\text{ or }(4,0,4)&56&2z^{-3}+4z^{-1}+28z+2z^3&24\\
 (4,2,2)&224&40z&40
\end{array}                                                     \label{eq:signed-trio-table}
\end{equation}
Consequently
\begin{align*}
 16D_0&=2\cdot185+6\cdot32+6\cdot72+24\cdot120\\
      &\qquad+72\cdot28+72\cdot40+128\cdot38=13634,\\
 16D_0&=2\cdot205+28\cdot108+56\cdot24+224\cdot40=13738
\end{align*}
in the two cases.

\subsection{Evaluation of the two counting tables}
We evaluate the tables in hexacode coordinates.  Label the four positions of each tetrad by $\F_4$, and write
$\zeta^2+\zeta+1=0$.  Use the hexacode
\begin{equation}
 \mathcal H=\{(a,b,c,a+b+c,a+\zeta b+\zeta^2c,
                       a+\zeta^2b+\zeta c):a,b,c\in\F_4\}.
                                                               \label{eq:hexacode-for-count}
\end{equation}
The octads in the six-tetrad Golay description have exactly these forms:
\begin{description}[leftmargin=2.8em,style=nextline]
\item[Type A.] Two complete tetrads: $15$ choices.
\item[Type B.] A weight-four word $h\in\mathcal H$ with support $S$;
 in each $i\in S$ choose $\{0,h_i\}$ or its complement, with an even
 number of complements.  There are three words on each of the fifteen
 supports and eight choices of pairs, giving $360$ octads.
\item[Type C.] A word $h\in\mathcal H$ and a distinguished tetrad $j$;
 take $\F_4\setminus\{h_j\}$ there and $\{h_i\}$ in the other five.
 This gives $6\cdot64=384$ octads.
\end{description}
These are the usual hexacode coordinates of the Golay code
\cite{MacWilliamsSloane}.  Their counts also follow directly from
\eqref{eq:hexacode-for-count}: the hexacode has weights $0$, $4$, $6$, with
multiplicities $1$, $45$, $18$, and exactly three words on each four-set.
The three types total $759$.

The summand \eqref{eq:signed-octad-summand} depends only on the
following column intersections.  Put
\[
 g_i=|G\cap T_i|,\quad b_i=|H\cap T_i|,\quad
 h_i=|G\cap H\cap T_i|.
\]
Regard $D$, $E$, $F$ as their two-element sets of tetrad indices.  Test whether
\[
 \sum_{i\in D}g_i,\qquad \sum_{i\in E}b_i,\qquad\sum_i h_i
\]
are zero or four; these determine $d_1$, $d_2$, $d_0$.  Form
\[
 j_i=g_i+b_i-2h_i\quad(d_0=0),\qquad
 j_i=4-g_i-b_i+2h_i\quad(d_0=1).
\]
Test $\sum_{i\in F}j_i\in\{0,4\}$ to determine $d_3$.  Then
\begin{equation}
 d_4=\epsilon+d_0+d_1+d_2+d_3\pmod2,\qquad
 s=\tfrac12\sum_{i\in E}g_i+
                    \sum_{i\in D\cap E}b_i+\sum_{i\in D}h_i.
                                                               \label{eq:column-count-rule}
\end{equation}
Substitute these integers in \eqref{eq:signed-octad-summand}; if any
test fails, the contribution is zero.  Thus only the three column
vectors $(g_i)$, $(b_i)$, $(h_i)$ need to be counted.

A type-A source is
a union of columns.  A type-C source can be normalized to the word
$h=0$ by translating row labels by a hexacode word.  Such translations
preserve all three octad types.  For type B this follows from Hermitian
self-orthogonality of the hexacode: the number of pair complements
changed has parity
$\operatorname{Tr}_{\F_4/\F_2}\sum_i b_i h_i^2=0$.
A type-B source can similarly be normalized to the pairs $\{0,g_i\}$
on its support $S$.  Indeed the map
\[
 \mathcal H\longrightarrow\F_2^S,\qquad
 b\longmapsto\bigl(\operatorname{Tr}(b_i/g_i)\bigr)_{i\in S}
\]
has image the even four-bit code.  Its image is contained in that code
by self-orthogonality and is all of it because projection of the
hexacode to any three coordinates is a bijection.  Multiplying all row
labels by a common element of $\F_4^\times$ identifies the three nonzero
source words on each type-B support.

Write $\mathrm A_{ij}$ for the union of columns $i$, $j$,
$\mathrm B_S$ for a normalized type-B source on support $S$, and
$\mathrm C_j$ for a type-C source with zero word and distinguished column
$j$.  The admissible normalized source profiles in either setting are
\begin{equation}
\begin{aligned}
 &\mathrm A_{ij}\quad(1\le i<j\le6,\ \{i,j\}\ne\{1,2\}),\\
 &\mathrm B_{3456},\quad
 \mathrm B_{12ij}\quad(3\le i<j\le6),\quad
 \mathrm C_1,\ \mathrm C_2.
\end{aligned}
\label{eq:admissible-counting-profiles}
\end{equation}
Thus there are $14+7+2=23$ profiles in each setting.

For two normalized type-B words with supports $S$, $T$, put
$Z=\{i\in S\cap T:h_i/g_i=1\}$.  The possibilities, with their
multiplicities among the three nonzero hexacode words on $T$, are
\begin{equation}
\begin{array}{c|c|c}
 |S\cap T|&Z&\text{multiplicity}\\ \hline
 4&S\ (T=S)&1\\
 4&\varnothing&2\\
 3&\{i\},\ i\in S\cap T&1\text{ for each }i\\
 2&S\cap T&1\\
 2&\varnothing&2.
\end{array}                                                     \label{eq:hexacode-ratio-table}
\end{equation}
For intersection two, Hermitian orthogonality makes the two ratios
equal.  For intersection three, they are the three distinct nonzero
field elements: equality at two positions, together with the common
zero coordinate, would contradict minimum distance four.  This proves
the table.  If the target pair-complement bits are $\eta_i$ with even
sum, their intersection with the source pairs has size
\begin{equation}
 h_i=\begin{cases}
       2(1-\eta_i),&i\in Z,\\
       1,&i\in(S\cap T)\setminus Z,\\
       0,&i\notin S\cap T.
      \end{cases}                                              \label{eq:pair-intersection-rule}
\end{equation}
Thus the type-B column of either counting table is a sum over fifteen
supports and even four-bit words, using
\eqref{eq:hexacode-ratio-table} and
\eqref{eq:pair-intersection-rule}.

For a normalized type-B source and a type-C target, set
$\nu_i=\operatorname{Tr}(h_i/g_i)$ on $S$.
Every even four-bit word $\nu$ occurs for eight hexacode words.  The
intersection sizes are $1-\nu_i$ in an ordinary target column and
$1+\nu_i$ in its distinguished column, and are zero outside $S$.
For a normalized type-C source, a type-B target depends only on its
support and its even pair-complement bits.  A type-C target depends
only on its zero coordinates.  These zero sets have sizes six, two,
and zero, with multiplicities one, three for each prescribed pair,
and eighteen, respectively.  These statements follow immediately
from \eqref{eq:hexacode-for-count} and its minimum distance.
They complete the evaluation rules for every entry of
\eqref{eq:signed-sextet-table} and \eqref{eq:signed-trio-table}.

For example, consider the sextet row of type $(2,2,2,2,0,0)$.
Normalize the source to pairs $\{0,1\}$ in columns $1$, $2$, $3$, $4$;
$(1,1,1,1,0,0)$ belongs to the hexacode.  Formula
\eqref{eq:column-count-rule} gives the following contributions from
type-B targets; omitted supports contribute zero:
\begin{equation}
\begin{array}{c|l}
 \text{target support}&\text{contribution}\\ \hline
 1234&20z+2z^3\\
 1235,\ 1236&4z^{-1}\text{ for each support}\\
 1345,\ 1346&4z^{-1}+4z\text{ for each support}.
\end{array}                                                     \label{eq:worked-hexacode-row}
\end{equation}
To obtain each row, take the choices of $Z$ in
\eqref{eq:hexacode-ratio-table}, use
\eqref{eq:pair-intersection-rule}, and impose the one even-parity
condition on the four bits.  This gives
$16z^{-1}+28z+2z^3$ in total.
For type-C targets the only contributing pairs
$(j,\nu_1\nu_2\nu_3\nu_4)$ are
\[
 (1,1010),\ (1,1100),\ (3,0000),\ (3,0110),
\]
with weights $1$, $-1$, $-1$, $1$, respectively.  Each occurs eight times, giving
$16z^{-1}+16z$.  Type-A targets contribute $4z+2z^3$.
Adding the three types gives
$32z^{-1}+48z+4z^3$, as in
\eqref{eq:signed-sextet-table}.  In particular its signed sum is
$P_G'(1)=-32+48+12=28$.

The following table records representative three-type subtotals.
The rows of \eqref{eq:signed-sextet-table} and \eqref{eq:signed-trio-table}
record their sums.  In particular, the two trio sources
$\mathrm B_{1234}$ and $\mathrm B_{1256}$ have the same combined
polynomial but different target-type contributions.
\begin{center}
\footnotesize
\setlength{\tabcolsep}{3pt}
\begin{tabular}{l|lll}
 source & type A targets & type B targets & type C targets\\ \hline
 sextet $E$ & $4z+6z^3+3z^9$ & $72z$ & $64z$\\
 sextet $T_1T_4$ & $4z^{-3}+2z+6z^3$ & $24z$ & $0$\\
 sextet $T_3T_4$ & $2z^{-9}+4z^{-3}+4z^3+2z^9$ & $24z^3$ & $0$\\
 sextet $\mathrm B_{3456}$ & $0$ & $24z^3$ & $16z^3$\\
 sextet $\mathrm B_{1234}$ & $4z+2z^3$ & $16z^{-1}+28z+2z^3$ & $16z^{-1}+16z$\\
 sextet $\mathrm B_{1245}$ & $0$ & $16z^{-1}+16z+8z^3$ & $16z$\\
 sextet $\mathrm C_1$ & $z$ & $9z^{-1}+18z+3z^3$ & $9z^{-1}+19z+3z^3$\\ \hline
 trio $E$ & $4z+8z^3+z^9$ & $24z+48z^3$ & $0$\\
 trio $T_3T_5$ & $4z^{-3}+2z+4z^3+2z^9$ & $24z$ & $64z$\\
 trio $\mathrm B_{3456}$ & $4z+2z^3$ & $4z^{-3}+54z+2z^3+2z^9$ & $32z$\\
 trio $T_1T_3$ & $2z^{-3}+4z^{-1}+4z+2z^3$ & $24z$ & $0$\\
 trio $\mathrm B_{1234}$ & $4z+2z^3$ & $2z^{-3}+4z^{-1}+24z$ & $0$\\
 trio $\mathrm B_{1256}$ & $6z$ & $2z^{-3}+4z^{-1}+22z+2z^3$ & $0$\\
 trio $\mathrm B_{1235}$ & $0$ & $24z$ & $16z$\\
 trio $\mathrm C_1$ & $2z$ & $18z$ & $20z$
\end{tabular}
\end{center}
For the trio source $\mathrm B_{1256}$, the six contributing type-A
targets have column pairs $12$, $15$, $16$, $25$, $26$, $56$, each of weight $1$.
Its type-A subtotal is $6z$, whereas that for $\mathrm B_{1234}$
is $4z+2z^3$.  The combined polynomial is
$2z^{-3}+4z^{-1}+28z+2z^3$ for both sources.

For the sextet source $\mathrm C_1$, the contributing type-B targets
have supports containing columns $1$, $3$, or the support $2456$.
The first six supports contribute $9z^{-1}+18z$, and $2456$ contributes
$3z^3$.  For type-C targets distinguish $j=1$ and $j=3$.
When $j=1$, only weight-four hexacode words occur; their zero pair is
$23$ or a pair disjoint from $23$, giving
$9z^{-1}+9z+3z^3$.  When $j=3$, the zero word and the three zero pairs
in $\{4,5,6\}$ give $10z$.  This gives the last sextet row.

Applying the column, ratio, and trace rules to each of the $23$ profiles
in \eqref{eq:admissible-counting-profiles}, and summing the three target
types, gives \eqref{eq:signed-sextet-table} and \eqref{eq:signed-trio-table}.
Each type-B support carries $24$ octads; each type-C distinguished
column carries $64$.  In the trio case, profiles $(0,4,4)$ and $(4,4,0)$
each contain four type-A and $24$ type-B octads, giving $28$.
A profile $(4,2,2)$ contains four type-B supports and two type-C
columns, giving $4\cdot24+2\cdot64=224$.  Together with
\eqref{eq:signed-octad-summand}, these counts prove
Proposition~\ref{prop:CP-tensor-identities}.

\newcommand{\etalchar}[1]{$^{#1}$}


\begin{thebibliography}{CCN{\etalchar{+}}85}

\bibitem[AGS07]{AschbacherGuralnickSegev}
M.~Aschbacher, R.~M. Guralnick, and Y.~Segev.
\newblock Elementary abelian {$2$}-subgroups of {Sidki}-type in finite groups.
\newblock {\em Groups Geom. Dyn.}, 1(4):347--400, 2007.

\bibitem[Asc97]{AschbacherThree}
M.~Aschbacher.
\newblock {\em {3-Transposition Groups}}, volume 124 of {\em Cambridge Tracts
  in Mathematics}.
\newblock Cambridge University Press, 1997.

\bibitem[CCN{\etalchar{+}}85]{Atlas}
J.~H. Conway, R.~T. Curtis, S.~P. Norton, R.~A. Parker, and R.~A. Wilson.
\newblock {\em {Atlas of Finite Groups}}.
\newblock Oxford University Press, 1985.

\bibitem[Con85]{ConwayMonster}
J.~H. Conway.
\newblock A simple construction for the {Fischer--Griess} monster group.
\newblock {\em Invent. Math.}, 79:513--540, 1985.

\bibitem[CP]{ConwayParker}
J.~H. Conway and R.~A. Parker.
\newblock A remarkable {Moufang} loop, with an application to the {Fischer}
  group {$\Fi_{24}$}.
\newblock Cited as ``in press'' by Conway in 1985; no published version is
  known.

\bibitem[Fis69]{FischerWarwick}
B.~Fischer.
\newblock Finite groups generated by {$3$}-transpositions.
\newblock University of Warwick Lecture Notes, 1969 (unpublished).

\bibitem[Fis71]{FischerThree}
B.~Fischer.
\newblock Finite groups generated by {$3$}-transpositions. {I}.
\newblock {\em Invent. Math.}, 13:232--246, 1971.

\bibitem[Gri82]{GriessFriendly}
R.~L. Griess, Jr.
\newblock The friendly giant.
\newblock {\em Invent. Math.}, 69:1--102, 1982.

\bibitem[Gri86]{GriessCodeLoops}
R.~L. Griess, Jr.
\newblock Code loops.
\newblock {\em J. Algebra}, 100:224--234, 1986.

\bibitem[H{\"o}h26]{AtlasLean}
G.~H{\"o}hn.
\newblock A constructive {ATLAS} of finite simple groups in {Lean}.
\newblock Research note, 2026.
\newblock Project archive:
  \url{https://zenodo.org/records/22986720}.
\newblock Project repository:
  \url{https://github.com/Moonshine-in-Kansas/atlas}.

\bibitem[HS21]{HallShpectorov}
J.~I. Hall and S.~Shpectorov.
\newblock The spectra of finite {$3$}-transposition groups.
\newblock {\em Arab. J. Math.}, 10:611--638, 2021.

\bibitem[MS77]{MacWilliamsSloane}
F.~J. MacWilliams and N.~J.~A. Sloane.
\newblock {\em {The Theory of Error-Correcting Codes}}.
\newblock North-Holland, 1977.

\bibitem[Nor88]{NortonFi24}
S.~P. Norton.
\newblock On the group {$\Fi_{24}$}.
\newblock {\em Geom. Dedicata}, 25:483--501, 1988.

\bibitem[Sey24]{SeysenMonster}
M.~Seysen.
\newblock A fast implementation of the {Monster} group: the {Monster} has been
  tamed.
\newblock {\em J. Comput. Algebra}, 9:100012, 2024.

\bibitem[Smi79]{SmithWidths}
S.~D. Smith.
\newblock Large extraspecial subgroups of widths {$4$} and {$6$}.
\newblock {\em J. Algebra}, 58:251--281, 1979.

\bibitem[Tim78]{TimmesfeldExtraspecial}
F.~G. Timmesfeld.
\newblock Finite simple groups in which the generalized {Fitting} group of the
  centralizer of some involution is extraspecial.
\newblock {\em Ann. of Math. (2)}, 107:297--369, 1978.
\newblock Correction, \emph{ibid.} \textbf{109} (1979), 413--414.

\bibitem[VD85]{VirotteThesis}
M.-M. Virotte-Ducharme.
\newblock {\em Couples fisch{\'e}riens presque simples}.
\newblock Th{\`e}se, Universit{\'e} Paris~7, 1985.

\bibitem[VD87]{VirotteDucharme}
M.-M. Virotte-Ducharme.
\newblock Une construction du groupe de {Fischer} {$\Fi_{24}$}.
\newblock {\em M{\'e}m. Soc. Math. France (N.S.)}, (27):1--74, 1987.

\bibitem[Wil09]{WilsonBook}
R.~A. Wilson.
\newblock {\em {The Finite Simple Groups}}, volume 251 of {\em Graduate Texts
  in Mathematics}.
\newblock Springer, 2009.

\end{thebibliography}
\end{document}